\newtheorem{theorem}{Theorem}[section]
\newtheorem{claim}[theorem]{Claim}
\newtheorem{corollary}[theorem]{Corollary}
\newtheorem{definition}[theorem]{Definition}
\newtheorem{proposition}[theorem]{Proposition}
\newtheorem{prop}[theorem]{Proposition}
\newcommand{\Ae}{{\mathcal A}}
\newcommand{\Be}{{\mathcal B}}
\newcommand{\Ce}{{\mathcal C}}
\newcommand{\Ee}{{\mathcal E}}
\newcommand{\Pe}{{\mathbb P}}
\newcommand{\Rcal}{{\mathcal R}}
\newcommand{\Ue}{{\mathcal U}}
\newcommand{\OR}{{\rm Ord}}
\newcommand{\rank}{{\rm rank}}
\renewcommand{\emptyset}{\varnothing}
\newcommand{\card}{C\!ARD}
\numberwithin{equation}{section}
\newcommand{\SR}{{\rm SR}}
\newcommand{\GSR}{{\rm GSR}}
\newcommand{\SGSR}{{\rm SGSR}}
\newcommand{\Rg}{\mathop{R\hspace{-0.8pt}g}}
\newcommand{\WI}{\mathop{W\!I}}
\newcommand{\WC}{\mathop{W\!C}}
\newcommand{\PW}{\mathop{P\hspace{-0.8pt}w\hspace{-0.8pt}S\hspace{-0.8pt}e\hspace{-0.8pt}t}}
\newcommand{\Cd}{\mathop{C\hspace{-0.8pt}d}}
\newcommand{\VP}{{\rm VP}}
\newcommand{\Coll}{{\rm Coll}}
\newcommand{\PRP}{\rm{PRP}}
\newcommand{\PSR}{\rm{PSR}}
\newcommand{\SPSR}{\rm{SPSR}}
\newcommand{\ESR}{\rm{ESR}}
\newcommand{\crit}{{\rm crit}}
\newcommand{\range}{{\rm range}}
\newcommand{\lt}[1]{{\smalllt}#1}
\newcommand{\smalllt}{\mathrel{\mathchoice{\raise2pt\hbox{$\scriptstyle<$}}{\raise1pt\hbox{$\scriptstyle<$}}{\raise0pt\hbox{$\scriptscriptstyle<$}}{\scriptscriptstyle<}}}
\newcommand{\seq}[2]{\langle{#1}~\vert~{#2}\rangle}
\newcommand{\calL}{\mathcal{L}}
\begin{document}

\title{Large Cardinals as Principles of Structural reflection}
\author{Joan Bagaria}
\address{ICREA (Instituci\'o Catalana de Recerca i Estudis Avan\c{c}ats) and
\newline \indent Departament de Matem\`atiques i Inform\`atica, Universitat de Barcelona. 
Gran Via de les Corts Catalanes, 585,
08007 Barcelona, Catalonia.}
\email{joan.bagaria@icrea.cat}
\thanks{Part of this research was  supported by  the Spanish
Government under grant MTM2017-86777-P,
and by the Generalitat de Catalunya (Catalan Government) under grant SGR 270-2017. }
\date{\today }

\begin{abstract}
After discussing  the limitations inherent to all set-theoretic reflection principles akin to those studied by  A. L\'evy et. al. in the 1960's, we  introduce new principles of reflection based on the general notion of \emph{Structural Reflection} and argue that they are in strong agreement with the conception of reflection implicit in  Cantor's original idea of the unknowability of the \emph{Absolute}, which was subsequently developed  in the works of Ackermann,  L\'evy, G\"odel, Reinhardt, and others. We then present a comprehensive survey of results  showing that different forms of the new principles of Structural Reflection are equivalent to well-known large cardinals axioms covering all regions of the large-cardinal hierarchy, thereby justifying the naturalness of the latter.
\end{abstract}

\maketitle




In the framework of Zermelo-Fraenkel (ZF) set theory\footnote{For all undefined set-theoretic notions see \cite{Jech} or \cite{Kan:THI}.} the universe $V$ of all sets is usually represented as a stratified cumulative hierarchy of sets  indexed by the ordinal numbers. Namely, $V=\bigcup_{\alpha \in OR}V_{\alpha}$, where

\begin{enumerate}
\item[] $V_0=\emptyset$
\item[] $V_{\alpha +1}=\mathcal{P}(V_\alpha),\;  \mbox{ namely the set of all subsets of }V_{\alpha}, \mbox{ and} $
\item[] $V_{\lambda}=\bigcup_{\alpha <\lambda}V_{\alpha},\mbox{ if $\lambda$ is a limit ordinal.}$
\end{enumerate}
This view of the set-theoretic universe justifies \emph{a posteriori} the ZF axioms. For not only all ZF axioms  are  true in $V$, but they are also necessary to build $V$. Indeed, the axioms of Extensionality, pairing, Union, Power-set, and Separation are used to define the set-theoretic operation given by $$G(x)=\bigcup \{ \mathcal{P}(y): \exists z (\langle z,y\rangle \in x)\}.$$
The axiom of Replacement is then needed to prove by transfinite recursion that the operation $V$ on the ordinals given by $V(\alpha) = G(V\restriction \alpha)$ is well-defined and unique. Then $V(\alpha)=V_\alpha$ is as defined above. The two remaining ZF axioms are easily justified:  the axiom of Infinity leads the ordinal  sequence into the transfinite, and  is the essence of set theory, for its negation (namely, ZFC minus Infinity plus the negation of Infinity) yields a  theory mutually interpretable with Peano Arithmetic; and the axiom of Regularity simply says, in the presence of the other ZF axioms, that there are no more sets beyond those in $V$. 

In the context of ZFC (ZF plus the Axiom of Choice), the \emph{Axiom of Choice} is clearly true in $V$, although it is not necessary to build $V$.

In ZF, or ZFC, other representations of $V$ as a  cumulative hierarchy are  possible. By  ``a cumulative hierarchy"  we mean a   union of sets $X_{\alpha}$, defined by transfinite recursion on   a \emph{club}, i.e., closed and unbounded, class $C$ of ordinals $\alpha$, such that:
\begin{enumerate}
\item[] $\alpha \leq \beta \mbox{ implies }X_{\alpha}\subseteq X_{\beta},$
\item[] $X_{\lambda}=\bigcup_{\alpha <\lambda}X_{\alpha},\mbox{ if $\lambda$ is a limit point of $C$, and}$
\item[] $V=\bigcup_{\alpha\in C}X_{\alpha}.$
\end{enumerate}
For example, let $H_\kappa$, for $\kappa$ an infinite cardinal, be the set of all sets whose transitive closure has cardinality less than $\kappa$. 
Then the $H_\kappa$ also form a cumulative hierarchy indexed by the club class $\card$ of infinite cardinal numbers.
%
(Note however  that to prove $\bigcup_{\kappa \in \card}H_\kappa =V$ one needs the Axiom of Choice to guarantee  that every (transitive) set has a cardinality, and therefore every set belongs to some $H_\kappa$.)
Nevertheless, all representations of $V$ as a cumulative hierarchy  are essentially the same, in the following sense:
Suppose $V=\bigcup_{\alpha\in C}X_{\alpha}$ and $V=\bigcup_{\alpha \in D}Y_{\alpha}$ are two cumulative hierarchies, where $C$ and $D$ are club classes of ordinals. Then there is a club class of ordinals $E\subseteq C\cap D$ such that $X_{\alpha}=Y_{\alpha}$, for all $\alpha \in E$.

Every representation of $V$ as a cumulative hierarchy is subject to the \emph{reflection} phenomenon. Namely, the fact that every sentence of the first-order language of set theory that holds in $V$, holds already at some stage of the hierarchy.
Indeed, the \emph{Principle of Reflection}  of Montague and L\'evy (\cite{Le:AS}), provable in $ZF$, asserts that every formula of the first-order language of set theory true in $V$ holds in some   $V_{\alpha}$. In fact, for every formula $\varphi (x_1,\ldots ,x_n)$ of the language of set theory, ZF proves that  there exists an ordinal $\alpha$ such that for every $a_1,\ldots ,a_n\in V_\alpha$, 
$$\varphi (a_1,\ldots ,a_n)\quad \mbox{   if and only if   }\quad V_{\alpha}\models \varphi(a_1,\ldots ,a_n).$$
Even  more is true: for each natural number $n$, there is a $\Pi_n$ definable club  proper class $C^{(n)}$ of ordinals such that ZF proves that for every $\kappa \in C^{(n)}$, every $\Sigma_n$ formula $\varphi(x_1,\ldots ,x_n)$, and every  $a_1,\ldots ,a_n\in V_{\kappa}$,
$$\varphi (a_1,\ldots ,a_n)\quad \mbox{   if and only if   }\quad V_{\kappa}\models \varphi(a_1,\ldots ,a_n).$$
That is, $(V_{\kappa},\in)$ is a $\Sigma_n$-elementary substructure of $(V,\in)$, henceforth written as  $(V_{\kappa},\in)\preceq_{n}(V,\in)$, or simply as $V_\kappa \preceq_n V$. 

\smallskip

The import of the Principle of Reflection is highlighted by the result of L\'evy \cite{Le:AS} showing that the ZF axioms of Extensionality, Separation, and Regularity, together with the principle of Complete Reflection (CR) imply ZF. The CR principle is the schema asserting that for every formula $\varphi (x_1,\ldots ,x_n)$ of the language of set theory there exists a transitive set $A$ closed under subsets (i.e, all subsets of elements of $A$ belong to $A$) such that for every $a_1,\ldots ,a_n\in A$, 
$$\varphi (a_1,\ldots ,a_n)\leftrightarrow A\models \varphi(a_1,\ldots ,a_n).$$ Since the $V_\alpha$ are transitive and closed under subsets, this shows that the reflection phenomenon, as expressed by the CR principle or  the Reflection Theorem (given the existence of the $V_\alpha$'s), is not only deeply ingrained in the ZF axioms, but it captures the main content of ZF.

In the series of papers \cite{Le:AS, Le:PR,  Le:OPR}   L\'evy considers stronger principles of reflection, formulated as axiom schemata, and he shows them equivalent to the existence of inaccessible, Mahlo, and Hyper-Mahlo cardinals. The unifying idea behind such principles  is clearly stated by L\'evy at the beginning of \cite{Le:PR}: 
\begin{quote}
If we start with the idea of the impossibility of distinguishing, by specific means, the universe from partial universes we shall be led to the following axiom schemata, listed according to increasing strength. These axiom schemata will be called \emph{principles of reflection} since they state the existence of standard models (by models we shall mean, for the time being, models whose universes are sets) which reflect in some sense the state of the universe.
\end{quote}
 This idea of reflection, namely the impossibility of distinguishing the universe from its partial universes (such as the $V_\alpha$),  is also implicit in earlier work of Ackermann (\cite{Ac:ZAM}), but it is L\'evy who demonstrates how it can be used to find new natural theories strengthening ZF. Indeed,  
 in his review of Levy's article \cite{Le:OPR}, Feferman \cite{Fe:RevLe} writes:
\begin{quote}
The author's earlier work demonstrated very well that the diversity of known set-theories could be viewed with more uniformity in the light of various reflection principles, and that these also provided a natural way to ``manufacture" new theories. The present paper can only be regarded as a beginning of a systematic attempt to compare the results.
\end{quote}


More recently, it has been argued by many authors that any intrinsic justification of new set-theoretic axioms, beyond ZF, and in particular the axioms of large cardinals, should be based on stronger forms of L\'evy's reflection principles. In the next section we shall see some of these arguments, as well as the  limitations, clearly exposed by Koellner \cite{Ko:RP}, inherent to all reflection principles akin to those studied by  L\'evy. In Section 2 we will introduce new principles of reflection based on what we call \emph{Structural Reflection} and will argue that they are in strong agreement with the notion of reflection implicit in the original idea of Cantor's of the unknowability of the \emph{absolute}, which was subsequently developed  in the works of Ackermann,  L\'evy, G\"odel, Reinhardt, and others. The rest of the paper, starting with Section 3, will present a series of results  showing that different forms of the new principles of Structural Reflection are equivalent to well-known large cardinals axioms covering all regions of the large-cardinal hierarchy, thereby justifying the naturalness of the latter.
\footnote{The work presented in the following sections started over ten years ago. After a talk I gave in Barcelona in 2011 on large cardinals as principles of structural reflection, John Baldwin, who attended the talk and was at the time editor of the BSL, encouraged me to write a survey article for the Bulletin. Well, here it is. I'm thankful to John for the invitation and I apologise to him for the long delay.}

\section{Set Theoretic axioms as reflection principles}

Cantor (\cite[p.~205, note 2]{C:Grund}) emphasizes the unknowability of the transfinite sequence of all ordinal numbers, which he thinks of as an ``appropriate symbol of the absolute":

\begin{quote}
The absolute can only be acknowledged, but never known, not even 
approximately known. 
\end{quote}
This principle  of the unknowability of the absolute, which in Cantor's work seems to have only a metaphysical  (non-mathematical) meaning (see \cite{Jane}), resurfaces again in the 1950's in the work of Ackermann and L\'evy, taking the  mathematical form of a principle of reflection. Thus, in Ackermann's set theory -- in fact, a theory of classes -- which is formulated in the first-order language of set theory with an additional constant symbol for the class $V$ of all sets, the idea of reflection is expressed in the form of an axiom schema of comprehension:

\begin{quote}
\emph{Ackermann's Reflection:} Let $\varphi(x, z_1, \ldots, z_n)$  be a formula which does not contain the constant symbol $V$. Then for every $\vec{a}=a_1,\ldots ,a_n \in V$,
    $$\forall x (\varphi (x, \vec{a}) \rightarrow x \in V) \rightarrow \exists y (y \in V \land \forall x (x \in y \leftrightarrow \varphi (x, \vec{a}))).$$ 
    \end{quote}
    A consequence of Ackermann's Reflection is that no formula can define $V$, or the class OR of all ordinal numbers, and is therefore in agreement with Cantor's principle of the unknowability of the absolute. 
However, Ackermann's set theory (with Foundation) was shown by L\'evy \cite{Le:AST} and Reinhardt \cite{re:AST} to be essentially equivalent to ZF, in the sense that both theories are equiconsistent and prove  the same theorems about sets. Thus Ackermann's set theory did not provide any real advantage with respect to the simpler and intuitively clearer ZF axioms, and so it was eventually forgotten.
 

Later on, in the context of the wealth of independence results in set theory that were obtained starting in the mid-1960's thanks to the forcing technique, and as a result of the subsequent need for the identification of new set-theoretic axioms, G\"odel (as quoted by  Wang \cite{Wa:LJ}), places Ackermann's  principle (stated in a Cantorian, non-mathematical form)  as the main source for new set-theoretic axioms beyond ZFC:

\begin{quote}
All the principles for setting up the axioms of set theory should be reducible to a form of Ackermann's principle: The Absolute is unknowable. The strength of this principle increases as we get stronger and stronger systems of set theory. The other principles are only heuristic principles. Hence, the central principle is the reflection principle, which presumably will be understood better as our experience increases.
\end{quote}
Thus, according to G\"odel,  the fundamental guiding principle in setting up new axioms of set theory is the unknowability of the absolute, and so any new axiom should be based on such principle. G\"odel's program consisted, therefore,  in formulating stronger and stronger systems of set theory by adding to the base theory, which presumably could be taken as ZFC, new principles akin to  Ackermann's.  

So the question is how should one understand and formulate  the idea of reflection embodied in Ackermann's principle. 
Some light is provided by G\"odel in the following quote from \cite[p.~285]{Wa:LJ},
where he asserts that the indefinability of $V$ should be the source of all axioms of infinity, i.e., all large-cardinal axioms.  

\begin{quote}
Generally I believe that, in the last analysis, every axiom of infinity should be derivable from the (extremely plausible) principle that $V$ is indefinable, where definability is to be taken in [a] more and more generalized and idealized sense. 
\end{quote}
One possible interpretation of G\"odel's principle of the indefinability of $V$ is as an unrestricted version of the Montague-L\'evy Principle of Reflection. Namely: every formula, with parameters, in \emph{any} formal language with the membership relation, that holds in $V$, must also hold in some $V_{\alpha}$. 
This has been indeed the usual way to interprete G\"odel's view of reflection as a justification for the axioms of large cardinals. 
This is made explicit in  Koellner \cite[p.~208]{Ko:RP}, where he identifies reflection principles with generalized forms of the Montague-L\'evy Principle of Reflection. Namely,
\begin{quote}
Reflection principles aim to articulate the informal idea that the height of the universe is ``absolute infinite" and hence cannot be ``characterized from below". These principles assert that any statement true in $V$ is true in some smaller $V_\alpha$. 
\end{quote}
Moreover, he  explicitly interpretes G\"odel's view of reflection as a source of large cardinals in this way (\cite[p.~208]{Ko:RP}):
\begin{quote}
Since the most natural way to assert that $V$ is indefinable is via reflection principles and since to assert this in a ``more and more generalized and idealized sense" is to move to languages of higher-order with higher-order parameters, G\"odel is (arguably) espousing the view that higher-order reflection principles imply all large cardinal axioms. 
\end{quote}
This kind of reflection, namely generalised forms of the  Principle of Reflection  of L\'evy and Montague  allowing for the reflection of second-order formulas, has been used to justify   weak large-cardinal axioms, such as the existence of inaccessible, Mahlo, or even weakly-compact cardinals. Let us see briefly some examples to illustrate how the arguments work.

\medskip

The  Principle of Reflection also holds with proper classes as additional predicates. Namely, for every (definable, with set parameters) proper class $A$, and every $n$, there is an ordinal $\alpha$ such that 
$$(V_{\alpha},\in, A\cap V_{\alpha})\preceq_{n}(V,\in ,A).$$
Thus, if we  interpret second-order quantifiers over $V$ as ranging only over definable classes, then for each $n$,  the following second-order sentence, call it $\varphi_n$, is true in $V$
$$\forall A\exists \alpha (V_{\alpha},\in, A\cap V_{\alpha})\preceq_{n}(V,\in ,A).$$
Now, applying reflection, $\varphi_n$ must reflect to some $V_{\kappa}$. The second-order universal quantifier in $\varphi_n$ is now interpreted in $V_\kappa$, hence ranging over \emph{all} subsets of $V_\kappa$,  which are now available in $V_{\kappa +1}$. So we have that for each subset $A$ of $V_{\kappa}$, there is some $\alpha <\kappa$ such that 
$$(V_{\alpha},\in, A\cap V_{\alpha})\preceq_{n}(V_{\kappa},\in ,A).$$
If this is so, and even just for $n=1$, then $\kappa$ must be an inaccessible cardinal, and this property actually characterises  inaccessible cardinals \cite{Le:AS}. 
Thus the existence of an inaccessible cardinal follows rather easily from the reflection of the single $\Pi^1_1$ sentence $\varphi_1$ to some $V_\kappa$. 

\medskip

Further, the following  stronger form of reflection is provable in ZFC (as a schema): if $C$ is a definable club proper class of ordinals, then for every proper class $A$, and every $n$, there is a club proper class of $\alpha$ in $C$ such that 
$$(V_{\alpha},\in, A\cap V_{\alpha})\preceq_{n}(V,\in ,A).$$
Hence, for each $n$,  the following $\Pi^1_1$ sentence, with $C$ as a second-order parameter,  holds in $V$ (again, interpreting the universal second-order quantifier as ranging over definable classes):
$$\forall A\forall \beta \exists \alpha >\beta (\alpha \in C \wedge (V_{\alpha},\in, A\cap V_{\alpha})\preceq_{n}(V,\in ,A)).$$
Applying reflection, there is $\kappa$ such that for every subset $A$ of $V_\kappa$ there are unboundedly many $\alpha$ in $C\cap \kappa$ such that
$$(V_{\alpha},\in, A\cap V_{\alpha})\preceq_{n}(V_\kappa ,\in ,A).$$
But since $C$ is a club, $\kappa\in C$, and so (in the case of $n\geq 1$) $\kappa$ is an inaccessible cardinal in $C$.
Now consider the following $\Pi^1_1$ sentence, which we have just shown (using $\Pi^1_1$ reflection) to hold in $V$:
$$\forall C(C\mbox{ is a club subclass of ordinals}\rightarrow  \exists \kappa (\kappa \mbox{ innaccessible}\wedge \kappa \in C)).$$
Any cardinal that reflects the $\Pi^1_1$ sentence consisting of the conjunction of $\varphi_1$ above with the last displayed sentence is an inaccessible cardinal $\lambda$ with the property that every club subset of $\lambda$ contains an inaccessible cardinal, i.e., $\lambda$ is a Mahlo cardinal.

\medskip

Furthermore, if we are willing to assume that all $\Pi^1_1$ sentences with second-order parameters reflect, we may as well reflect this property of $V$. So, consider the following $\Pi^1_2$ sentence, which says that $V$ reflects all $\Pi^1_1$ sentences with parameters,
$$\forall A\forall \varphi \in \Pi^1_1 \exists \alpha ((V,\in ,A)\models \varphi \rightarrow (V_\alpha ,\in ,A\cap V_\alpha )\models \varphi ).$$
If $V_\kappa$ reflects this sentence, then $\kappa$ is a   $\Pi^1_1$-indescribable cardinal, i.e.,  a  weakly-compact cardinal (see \cite{Jech}).
Similar arguments, applied to sentences of order $n$, but only allowing first-order and  second-order parameters, yield the existence of $\Sigma^m_n$ and $\Pi^m_n$ indescribable cardinals. 

\medskip

While the arguments just given may seem reasonable, or even natural, we do not think they provide a justification for the existence of the large cardinals obtained in that way. The problem is that the relevant second-order statements  are true in $V$ only when interpreting second-order variables as ranging over \emph{definable} classes, yet when the statements are reflected to some $V_\alpha$ the second-order variables are reinterpreted as ranging over the full power-set of $V_\alpha$. It is precisely this transition from definable classes to the full power-set that yields the large-cardinal strength. Thus, the  fundamental objection is that  second-order  reflection from $V$ to some $V_\alpha$, or to some set, is always problematic because so is unrestricted second-order quantification over $V$, as the full \emph{power-class} of $V$ is not available.

\medskip

Nevertheless, even if one is willing to accept the existence of cardinals $\kappa$ that reflect  $n$-th--order sentences, with parameters of order greater than $2$, one does not obtain  large cardinals much stronger than the indescribable ones. Indeed, to start with, and as first noted in \cite{Re:Rem}, one cannot even have reflection for $\Pi^1_1$ sentences with unrestricted third-order parameters.  For suppose, towards a contradiction, that  $\kappa$ is a cardinal that reflects such sentences. Let $A$ be the collection $\{ V_{\alpha}:\alpha <\kappa\}$  taken as a third-order parameter, i.e., as a subset of $\mathcal{P}(V_{\kappa})$. Then the $\Pi^1_1$ sentence  
$$\forall X\exists x(X\in A\to X=x)$$
where $X$ is a second-order variable and $x$ is first-order, 
 asserts that every element of $A$ is a set. The sentence is clearly true in $(V_{\kappa},\in ,A)$, but false in any $(V_{\alpha},\in ,A\cap \mathcal{P}(V_{\alpha}))$ with $\alpha <\kappa$, because $V_\alpha$ belongs to $A\cap \mathcal{P}(V_{\alpha})$ but is not an element of $V_\alpha$.

One possible way around the problem of second-order reflection with third-order parameters is to allow such parameters, or even higher-order parameters, but to restrict the kind of sentences to be reflected. This is the approach taken by Tait \cite{Ta:Goedel}. He considers the class $\Gamma^{(2)}$ of formulas which, in normal form, have all universal quantifiers restricted to first-order and second-order variables and the only atomic formulas allowed to appear negated are either those of first order or of the form $x\in X$, where $x$ is a variable of first order and $X$ a variable of second order. Tait shows that reflection at some $V_\kappa$ for the class of $\Gamma^{(2)}$ sentences, allowing parameters of arbitrarily-high finite order,  implies that $\kappa$ is an ineffable cardinal (see \cite{Ko:RP}), and that $V_\kappa$ reflects all such sentences whenever $\kappa$ is a measurable cardinal.
A sharper upper bound on the consistency strength of this kind of reflection is given by Koellner  \cite[Theorem 9]{Ko:RP}. He shows that below the first $\omega$-Erd\"os cardinal, denoted by $\kappa (\omega)$,  there exists a cardinal $\kappa$ such that $V_\kappa$ reflects all $\Gamma^{(2)}$ formulas. The existence of $\kappa (\omega)$ is, however,  a rather mild large-cardinal assumption, since it is compatible with $V=L$.

At this point, the question is thus whether reflection can consistently hold (modulo large cardinals) for a wider class of sentences. But Koellner \cite{Ko:RP}, building on   some results of Tait, shows that no $V_\kappa$ can reflect the class of formulas of the form $\forall X\exists Y \varphi (X,Y, Z)$, where $X$ is of third-order, $Y$ is of any finite order, $Z$ is of fourth order, and $\varphi$ has only first-order quantifiers and  its only negated atomic subformulas  are either  of first order or of the form $x\in X$, where $x$ is of first order and $X$ is of second order.
Other kinds of restrictions on the class of sentences to be reflected are possible (see \cite{Mc:CP} for the consistency of some forms  of reflection slightly stronger than Tait's $\Gamma^{(2)}$), but Koellner \cite{Ko:RP} convincingly shows that the existence of a cardinal $\kappa$ such that $V_\kappa$ reflects any reasonable expansion of the class of sentences $\Gamma^{(2)}$, with parameters of order greater than 2,   either follows from the existence of $\kappa(\omega)$ or is outright inconsistent.

These results seem to put an end to the program of providing an \emph{intrinsic}\footnote{See \cite{Ko:RP} for a discussion on \emph{intrinsic} versus \emph{extrinsic} justification of the axioms of set theory.} justification of large-cardinal axioms, even for axioms as strong as the existence of $\kappa(\omega)$, by showing that their existence  follows from strong higher-order reflection properties holding at some $V_\kappa$. In particular, the program cannot even provide justification for the existence of measurable cardinals.
Thus, the conclusion is that if one understands reflection principles as asserting that some sentences (even of higher order, and with parameters) that hold in $V$ must hold in some $V_\alpha$, then reflection principles cannot be used to  justify the existence of large cardinals up to or beyond $\kappa (\omega)$. Moreover, as we already emphasized, a more fundamental problem with  the use of higher-order reflection principles is that either second-order quantification over $V$ is interpreted as ranging over definable classes, in which case second-order reflection does not yield any large cardinals unless one makes the dubious jump from definable classes to the full power-set, or is ill-defined, as the full power-class of $V$ does not exist.

\subsection{A remark on the undefinability of $V$}

Before we go on to propose a new kind of reflection principles, let us take a pause to  consider another possible interpretation of G\"odel's principle of the undefinability of $V$ as a justification for large-cardinal axioms.

The statement that a set $A$ is definable is usually understood in two different senses:
\begin{enumerate}
\item There is a formula $\varphi (x)$ that defines $A$. That is, for every set $a$, $a$ belongs to $A$ if and only if $\varphi (a)$ holds.
The formula $\varphi$ may have parameters, provided they are \emph{simpler} than $A$, e.g., their rank  is less than the rank of $A$.

\item $A$ is the unique solution of a formula $\varphi (x)$. Again, $\varphi$ may have parameters simpler than $A$.
\end{enumerate}
There is, however, no essential difference between (1) and (2). For the formula $\varphi (x)$ defines a set $A$ in the sense of (1) if and only if the formula $\forall x (x\in y \leftrightarrow \varphi (x))$ defines $A$ in the sense of (2).
But if $A$ is a proper class, then (1) and (2) are very different, even if only because (2) needs to be reformulated to make any sense.
 If we understand definability as in (1), then   there are many formulas that define $V$, for instance the formula $x=x$. So,  the notion of  indefinability of $V$ can only be  understood in the sense of (2), once properly reformulated. To express that  $V$ is not the unique solution of a formula, possibly with some sets as parameters, we need to make sense of the fact that a formula is true \emph{of} $V$, as opposed to being   true \emph{in} $V$. 

Let $\mathcal{L}_V$ be the first-order language of set theory expanded with a constant symbol $\bar{a}$ for every set $a$, and a new constant symbol $v$. Define the class $\mathcal{T}$ of sentences of $\mathcal{L}_V$ recursively as follows: $\varphi$ belongs to  $\mathcal{T}$ if and only if 
\begin{enumerate}
\item[] $\varphi$ is of the form $\bar{a}\in \bar{b}$ or $\bar{a}\in v$, for some sets $a, b$ such that $a\in b$, or
\item[] $\varphi$ is of the form $\bar{a}=\bar{a}$ for some set $a$, or $v=v$, or 
\item[] $\varphi \equiv \neg \psi$, and $\psi$ does not belong to $\mathcal{T}$, or
\item[] $\varphi\equiv \psi \wedge \theta$, and both $\psi$ and $\theta$ belong to $\mathcal{T}$, or
\item[] $\varphi \equiv \exists y \psi (y)$,  and there is a set $a$ such that $\psi (\bar{a})$ belongs to $\mathcal{T}$.
\end{enumerate}
The idea is that if $\varphi$ belongs to $\mathcal{T}$, then $\varphi$ is true in the structure $$\bar{V}:= \langle V\cup \{ V\}, \in , V, \langle \bar{a}\rangle_{a\in V}\rangle$$ where the constant $v$ is interpreted as $V$. And conversely, if $\varphi$ is a sentence in the language  $\mathcal{L}_V$ that is true in  $\bar{V}$, then $\varphi \in \mathcal{T}$.
Thus one may construe the principle of indefinability of $V$, as expressed in G\"odel's quote above, as follows

\begin{quotation}
\emph{Indefinability of $V$:} Every  $\varphi \in \mathcal{T}$  is true in some $$\bar{V}_\alpha : = \langle V_\alpha \cup \{ V_\alpha \}, \in , V_\alpha , \langle \bar{a}\rangle_{a\in V_\alpha}\rangle.$$
\end{quotation}
Of course, to express this principle in the first-order language of set theory one needs to do it as a schema. Namely, for each  $n$ let $\mathcal{T}_n$ be the class of $\Sigma_n$ sentences of $\mathcal{T}$, and  let 

\begin{quotation}
\emph{$\Sigma_n$-Indefinability of $V$:} Every   $\varphi \in \mathcal{T}_n$  is true in some $\bar{V}_\alpha$.
\end{quotation}

Given a $\Sigma_n$ sentence $\varphi$ of the language of set theory, where $n\geq 1$, if it is true in $V$, i.e., if $\models_n \varphi$ holds,  then the sentence $\varphi^v$ obtained from $\varphi$ by bounding all quantifiers by $v$ belongs to $\mathcal{T}_n$. Hence, by $\Sigma_n$\emph{-Indefinability of} $V$, $\varphi^v$ is true in some $\bar{V}_\alpha$, and therefore $V_\alpha \models \varphi$. Thus \emph{Indefinability of $V$} directly  implies the Principle of Reflection of Montague-L\'evy (over the theory ZF minus Infinity). Conversely, by  induction on the complexity of $\varphi$, it is easily shown that ZF proves $\Sigma_n$-\emph{Indefinability of $V$}. 
Thus, to derive stronger reflection principles based on the  indefinability of $V$ one needs to understand ``definability", following G\"odel's quote above, in a ``more and more generalized and idealized sense". One could expand the class $\mathcal{T}$  by adding higher-order sentences that are true of $V$. However this will not lead us very far. For if  $\mathcal{T}'$ is any reasonable class of (higher-order) sentences that are true of $V$, then \emph{Indefinability of $V$} for the class $\mathcal{T}'$ will imply  the reflection, in the sense of Montague-L\'evy,  of all sentences in $\mathcal{T}'$. Therefore, the limitations seen above of the extensions of the Montague-L\'evy Principle of Reflection to higher-order formulas  apply also to these generalized forms of \emph{Indefinability of $V$}.

\section{Structural Reflection}
\label{SR}
The main obstacle for  the program of finding an intrinsic justification of large-cardinal axioms via strong principles of reflection lies, we believe,  on a too restrictive interpretation of the notion of reflection, namely the interpretation investigated by Tait,  Koellner, and others, according to which the reflection properties of $V$ are exhausted by generalized forms of the Montague-L\'evy Principle of Reflection  to higher-order logics. 

Let us think again about the notion of reflection as derived from the Cantor-Ackermann principle of the unknowability of the absolute. A  different interpretation of this principle may be extracted from another claim made by G\"odel, as quoted in \cite{Wa:LJ}:  
\begin{quote}
The universe of sets cannot be uniquely characterized (i.e., distinguished from all its initial segments) by any internal structural property of the membership relation in it which is expressible in any logic of finite or transfinite type, including infinitary logics of any cardinal number.
\end{quote}
This quote does not immediately suggest that the uncharacterizability of $V$  should be interpreted  in the sense of the Montague-L\'evy kind of reflection. Rather, what it seems to suggest is some sort of reflection, not (only) of formulas, but of \emph{internal structural properties of the membership relation}. The quote does also state that the properties should be expressible in some logic, and any reasonable logic would do. So maybe G\"odel is not saying here anything new, and he is simply advocating for a generalizsation of the Montague-L\'evy type of reflection to formulas belonging to any (reasonable) kind of logic. But whatever the correct interpretation of G\"odel's quote above may be, let us consider in some detail the idea of reflection of structural properties of the membership relation. Thus, what one would want to reflect is not the theory of $V$, but rather the \emph{structural content} of $V$. 

Whatever one might mean by a ``structural property of the membership relation", it is clear that such a property  should be exemplified in structures of the form  $\langle A, \in , \langle R_i\rangle_{i\in I} \rangle$, where $A$ is a set   and $\langle R_i\rangle_{i\in I}$ is a family of relations on $A$, and where $I$ is a set that may be empty. Moreover, any such property should be expressible  by some formula of the language of set theory, maybe involving some set parameters. Thus, any internal structural property of the membership relation would be formally given by a formula $\varphi(x)$ of the first-order language of set theory, possibly with parameters, that defines a class of structures  $\langle A, \in ,\langle R_i\rangle_{i\in I}  \rangle$ of the same type.  As we shall see later on there is no loss of generality in considering only  classes of structures  whose members are of the form $\langle V_\alpha, \in ,\langle R_i\rangle_{i\in I}  \rangle$. Now, G\"odel's vague assertion (as quoted above) that $V$ ``cannot be uniquely characterized (i.e., distinguished from all its initial segments) by any internal structural property of the membership relation" can be naturally interpreted in the sense that no formula $\varphi (x)$ characterizes $V$, meaning that some  $V_\alpha$ reflects \emph{the structural property defined by $\varphi(x)$}. Let us  emphasize  that what is reflected is not the formula $\varphi(x)$, but the structural property defined by $\varphi(x)$, i.e., the class of structures defined by $\varphi(x)$. This is the crucial difference with the Montague-L\'evy type of reflection. The most natural way to make this precise is to assert  that there exists an ordinal $\alpha$  such that for every structure $A$  in the class  (i.e., for every structure $A$ that satisfies $\varphi(x)$) there exists a structure $B$ also in the class which belongs to $V_\alpha$ and is \emph{like} $A$. Since, in general,  $A$ may be much larger than any $B$ in $V_\alpha$, the closest resemblance of $B$ to $A$ is attained in the case $B$ is isomorphic to an elementary substructure of $A$, i.e., $B$ can be elementarily embedded into $A$.
We can now formulate (an informal and  preliminary version of) the  general principle of Structural Reflection as follows:

\begin{quote}
\begin{itemize}
\item[$\SR$:] (\emph{Structural Reflection}) For every definable, in the first-order language of set theory, possibly with parameters, class $\mathcal{C}$ of relational structures  of the same type there exists an ordinal $\alpha$ that \emph{reflects} $\mathcal{C}$, i.e.,  for every $A$ in $\mathcal{C}$ there exists $B$ in $\mathcal{C}\cap V_\alpha$ and an elementary embedding from $B$ into $A$.
\end{itemize}
\end{quote}
Observe that when $\mathcal{C}$ is a set the principle becomes trivial. 

We do not wish to claim that the $\SR$ principle is what G\"odel had in mind when talking about reflection of internal structural properties of the membership relation, but we do claim that $\SR$ is a form of reflection that derives naturally from the Cantor-Ackermann principle of unknowability of the Absolute and  is at least compatible with G\"odel's interpretation of this principle.

\medskip

In the remaining sections we will survey a collection of results showing the equivalence of different forms of $\SR$ with the existence of different kinds of large cardinals. Our goal is to illustrate the fact that $\SR$ is a general principle underlying a wide variety of large-cardinal principles. Many of the results have already been published (\cite{Ba:CC,  BCMR, BV,  BGS, BW, Lu:SR}) or are forthcoming (\cite{BL}), but some  are new (\ref{thmzerosharp}, \ref{thmzerosharp2}, \ref{remgsr}, \ref{thmzerosharp3}, \ref{thmzerodagger},
\ref{thmzerodagger2}, \ref{Sigma2}, \ref{mainstrong}, \ref{prop2}, \ref{mainsuperstrong2}, \ref{mainsuperstrong3}, \ref{PSRmeasurable}, and \ref{firstremarkable}). Each of these results should be regarded as a small step towards the ultimate objective of showing that all large cardinals are in fact different manifestations of a single general reflection principle.  

\section{From supercompactness to Vop\v{e}nka's Principle}

We shall begin with  the $\SR$ principle, as stated above, which is properly formulated in  the first-order language of set theory  as an axiom schema. Namely, for each natural number $n$ let

\begin{quotation}
\begin{itemize}
\item[$\mathbf{\Sigma_n}$-$\SR$:] (\emph{$\mathbf{\Sigma_n}$-Structural Reflection})    For every $\Sigma_n$-definable, with parameters, class $\mathcal{C}$ of relational structures of the same type there is an ordinal $\alpha$ that  \emph{reflects} $\mathcal{C}$. 
\end{itemize}
\end{quotation}
$\mathbf{\Pi_n}$-$\SR$ may be formulated  analogously. 
We may also define the \emph{lightface}, i.e., parameter-free versions (as customary we use the lightface types $\Sigma_n$ and $\Pi_n$ for that). Namely,

\begin{quotation}
\begin{itemize}
\item[$\Sigma_n$-$\SR$:] (\emph{$\Sigma_n$-Structural Reflection})   For every $\Sigma_n$-definable, without parameters, class $\mathcal{C}$ of relational structures of the same type there exists an ordinal $\alpha$ that  \emph{reflects} $\mathcal{C}$. 
\end{itemize}
\end{quotation}
Similarly for $\Pi_n$-$\SR$. 

A standard closing-off argument shows that $\mathbf{\Sigma_n}$-$\SR$ is equivalent to the assertion that there exists a proper class of ordinals  $\alpha$ such that $\alpha$ reflects \emph{all} classes of structures of the same type that are $\Sigma_n$-definable, with parameters in $V_\alpha$.   Also,  $\Sigma_n$-$\SR$ is equivalent to the assertion that there exists an ordinal  $\alpha$  that  reflects \emph{all} classes of structures of the same type that are $\Sigma_n$-definable, without parameters. Similarly for $\mathbf{\Pi_n}$-$\SR$ and $\Pi_n$-$\SR$. 
Thus, for $\Gamma$ a \emph{definability class} (i.e., one of $\mathbf{\Sigma_n}$, $\mathbf{\Pi_n}$, $\Sigma_n$, or $\Pi_n$), let us say that an ordinal $\alpha$ \emph{witnesses} $\Gamma$-$\SR$ if $\alpha$ reflects all classes of structures of the same type that are $\Gamma$-definable (allowing for  parameters in $V_\alpha$, in the case of boldface classes). Then, in the case of boldface classes $\Gamma$, $\Gamma$-$\SR$ holds if and only if $\Gamma$-$\SR$ is witnessed by a proper class of ordinals.



\medskip

The first observation is that, as the next proposition shows, $\mathbf{\Sigma_1}$-$\SR$ is provable in ZFC. Recall\footnote{See \cite{Ba:CC}.} that, for every $n>0$,  $C^{(n)}$ is the $\Pi_n$-definable club proper class of cardinals $\kappa$  such that $V_\kappa \preceq_{\Sigma_n} V$, i.e., $V_\kappa$ is a $\Sigma_n$-elementary substructure of $V$. In particular, every element of $C^{(1)}$ is an uncountable cardinal and a fixed point of the $\beth$ function. 

\begin{proposition}
\label{sigma1}
The following are equivalent for every ordinal $\alpha$:
\begin{enumerate}
\item $\alpha$ witnesses $\mathbf{\Sigma_0}$-$\SR$
\item $\alpha$ witnesses $\mathbf{\Sigma_1}$-$\SR$  
\item $\alpha \in C^{(1)}$.
\end{enumerate}
\end{proposition}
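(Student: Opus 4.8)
The plan is to prove the cycle $(2)\Rightarrow(1)\Rightarrow(3)\Rightarrow(2)$. The implication $(2)\Rightarrow(1)$ is immediate: every $\mathbf{\Sigma_0}$-definable class of structures is a fortiori $\mathbf{\Sigma_1}$-definable, so any $\alpha$ reflecting all $\mathbf{\Sigma_1}$-definable classes reflects all $\mathbf{\Sigma_0}$-definable ones. Thus the content lies in the other two implications, and throughout I would use the characterisation of $C^{(1)}$ as the class of cardinals $\kappa$ with $V_\kappa\preceq_1 V$, together with the stated fact that every element of $C^{(1)}$ is an uncountable $\beth$-fixed point.

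For $(1)\Rightarrow(3)$ I would show directly that $V_\alpha\preceq_1 V$. Upward absoluteness of $\Sigma_1$ formulas holds for the transitive class $V_\alpha$, so only the downward direction needs work: given a $\Sigma_1$ formula $\exists y\,\psi(y,\vec a)$ with $\psi\in\Sigma_0$ and $\vec a\in V_\alpha$, true in $V$ with some witness $w$, I must produce a witness in $V_\alpha$. The key idea is to package the existential witness into a \emph{class of structures}. Consider the $\mathbf{\Sigma_0}$-definable class $\mathcal C$, with parameters $\vec a$, of all structures $\langle u,\in\restriction u,\vec a\rangle$ such that $u$ is transitive, each $a_i\in u$, and $\exists y\in u\,\psi(y,\vec a)$; every defining clause is bounded, so $\mathcal C$ is genuinely $\mathbf{\Sigma_0}$, and all its members share the same type. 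Taking $u_0=\mathrm{trcl}(\{a_1,\dots,a_k,w\})$ shows $\mathcal C\neq\emptyset$, so by $\mathbf{\Sigma_0}$-$\SR$ at $\alpha$ there is $B\in\mathcal C\cap V_\alpha$. Writing $B=\langle v,\in\restriction v,\vec a\rangle$, transitivity of $V_\alpha$ gives $v\in V_\alpha$ and hence a witness $y\in v\subseteq V_\alpha$; note I use only that $\mathcal C$ meets $V_\alpha$, not the embedding, and no a priori limit assumption on $\alpha$. This yields $V_\alpha\preceq_1 V$. Finally, applying this to the $\Sigma_1$ statement $\exists f\,\exists\mu\,(\mu\in\mathrm{Ord}\wedge f\colon V_\beta\to\mu \text{ is a bijection})$ for each $\beta<\alpha$ forces $|V_\beta|<\alpha$, so $\alpha$ is an uncountable $\beth$-fixed point and therefore $\alpha\in C^{(1)}$.

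For $(3)\Rightarrow(2)$, assume $\alpha\in C^{(1)}$, so $\alpha$ is an uncountable $\beth$-fixed point (this is all I will use; the $\Sigma_1$-elementarity is not needed here). Let $\mathcal C$ be $\mathbf{\Sigma_1}$-definable with parameter $p\in V_\alpha$ via $\varphi(x,p)\equiv\exists y\,\chi(x,y,p)$, $\chi\in\Sigma_0$, and fix $A\in\mathcal C$ with witness $w$, so $\chi(A,w,p)$ holds. Choose a regular $\theta$ with $A,w,p\in H_\theta$ and take $N\prec H_\theta$ with $\mathrm{trcl}(\{p\})\cup\{A,w\}\subseteq N$ and $|N|=|\mathrm{trcl}(\{p\})|+\aleph_0<\alpha$; this bound is available because $p\in V_\alpha=H_\alpha$ and $\alpha$ is uncountable. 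Let $\pi\colon N\cong\bar N$ be the transitive collapse and put $\bar A=\pi(A)$, $\bar w=\pi(w)$. Since $\mathrm{trcl}(\{p\})\subseteq N$ is transitive, $\pi$ fixes $p$; transferring $\chi(A,w,p)$ through the isomorphism $\pi$ and using that $\bar N$ is transitive with $\chi\in\Sigma_0$ gives $\chi(\bar A,\bar w,p)$ in $V$, so $\bar A\in\mathcal C$. Moreover $|\mathrm{trcl}(\bar A)|\le|\bar N|<\alpha$, whence $\bar A\in V_\alpha$ because $\alpha$ is a $\beth$-fixed point, so $\bar A\in\mathcal C\cap V_\alpha$.

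It remains to exhibit an elementary embedding $\bar A\to A$, and this is the step I expect to be the main obstacle, since $\bar A$ is the collapse of $A$ rather than a substructure of it. The map to use is $j=(\pi\restriction(\mathrm{dom}(A)\cap N))^{-1}$, sending $\mathrm{dom}(\bar A)$ into $\mathrm{dom}(A)$. Its elementarity is verified by chasing satisfaction along a chain: for a formula $\sigma$ in the language of the structures and $\vec{\bar a}$ in $\mathrm{dom}(\bar A)$, the equivalences $\bar A\models\sigma(\vec{\bar a})$ iff $\bar N\models\text{``}\bar A\models\sigma(\vec{\bar a})\text{''}$ iff $N\models\text{``}A\models\sigma(j(\vec{\bar a}))\text{''}$ iff $A\models\sigma(j(\vec{\bar a}))$ hold by, respectively, absoluteness of satisfaction in the transitive $\bar N$, the isomorphism $\pi$, and $N\prec H_\theta$. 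The care needed is in keeping the two satisfaction relations separate (that of the structures $A,\bar A$ versus that of the models $N,\bar N$) and in checking that $\pi$ fixes $p$ so that $\bar A$ genuinely lands in $\mathcal C$. With $j$ in hand, $\alpha$ reflects $\mathcal C$, and as $\mathcal C$ was an arbitrary $\mathbf{\Sigma_1}$-definable class with parameters in $V_\alpha$, $\alpha$ witnesses $\mathbf{\Sigma_1}$-$\SR$.
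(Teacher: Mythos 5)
Your proof is correct and follows essentially the same route as the paper: the trivial implication $(2)\Rightarrow(1)$, the reduction of $(1)\Rightarrow(3)$ to reflecting a $\mathbf{\Sigma_0}$-definable class of transitive structures carrying the parameters (you build the witness into the class rather than pulling $\varphi$ back along the embedding, a cosmetic difference), and for $(3)\Rightarrow(2)$ exactly the L\"owenheim--Skolem/Mostowski-collapse argument that the paper attributes to \cite{BCMR} without reproducing it.
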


\begin{proof}
The implication (2)$\Rightarrow$(1) is trivial. The implication (3)$\Rightarrow$(2) is proved in \cite{BCMR}, using a  L\"owenheim-Skolem type of argument. 

To show that (1) implies (3), let $\alpha$ witness $\mathbf{\Sigma_0}$-$\SR$ and suppose $\varphi(a_1,\ldots ,a_n)$ is a $\Sigma_1$ sentence, with parameters $a_1,\ldots ,a_n$ in $V_\alpha$, that holds in $V$.
Let $\Ce$ be the $\Sigma_0$-definable, with $a_1,\ldots ,a_n$ as parameters, class of structures of the form $$\langle M,\in, \{a_1,\ldots,a_n\} \rangle$$ where $M$ is a transitive set that contains $a_1,\ldots ,a_n$. Let $M$ be any transitive set such that $M\models \varphi(a_1,\ldots ,a_n)$. By $\SR$, there is an elementary embedding
 $$j:\langle N,\in, \{a_1,\ldots,a_n\} \rangle\to \langle M,\in ,\{a_1,\ldots,a_n\}\rangle$$
 where $\langle N,\in \{a_1,\ldots,a_n\}\rangle\in \Ce \cap V_\alpha$.
 Since $j$ fixes $a_1,\ldots ,a_n$, by elementarity $N\models \varphi(a_1,\ldots,a_n)$, and by upwards absoluteness for $\Sigma_1$ sentences with respect to transitive sets, $V_\alpha\models \varphi(a_1,\ldots ,a_n)$. This shows $\alpha \in C^{(1)}$. 
\end{proof}

Thus, $\mathbf{\Sigma_1}$-$\SR$ is provable in ZFC, and therefore does not yield any large cardinals. But $\Pi_1$-$\SR$ does, and is indeed very strong. The following theorem  hinges on Magidor's characterization of the first supercompact cardinal as the first cardinal that reflects the $\Pi_1$-definable class of structures of the form $\langle V_\alpha, \in\rangle$, $\alpha$ an ordinal (\cite{Mag}).

\begin{theorem}[\cite{Ba:CC, BCMR}]
\label{thmpi1}
The following are equivalent:
\begin{enumerate}
\item $\Pi_1$-$\SR$.
\item $\Sigma_2$-$\SR$.
\item There exists a supercompact cardinal.
\end{enumerate}
\end{theorem}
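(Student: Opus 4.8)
The plan is to establish the cycle of implications $(3)\Rightarrow(2)\Rightarrow(1)\Rightarrow(3)$. The implication $(2)\Rightarrow(1)$ is immediate: every $\Pi_1$ formula is in particular $\Sigma_2$, so every $\Pi_1$-definable parameter-free class of structures is also $\Sigma_2$-definable, and hence any ordinal witnessing $\Sigma_2$-$\SR$ witnesses $\Pi_1$-$\SR$ as well.

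For $(1)\Rightarrow(3)$ I would invoke Magidor's characterization directly. First recall that the class $\mathcal{C}=\{\langle V_\alpha,\in\rangle : \alpha\in \OR\}$ is $\Pi_1$-definable without parameters (the predicate ``$y=V_\alpha$ for some $\alpha$'' is $\Pi_1$, and the underlying set of such a structure is recovered by a $\Delta_0$ operation). Assuming $\Pi_1$-$\SR$, by the closing-off remark there is an ordinal witnessing it, and any such ordinal in particular reflects $\mathcal{C}$; hence the class of ordinals reflecting $\mathcal{C}$ is nonempty and has a least element $\kappa$. By Magidor's theorem \cite{Mag}, the least cardinal reflecting $\mathcal{C}$ is exactly the least supercompact cardinal, so $\kappa$ is supercompact and $(3)$ holds. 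The deep content here is entirely Magidor's; what I must check on our side is only the $\Pi_1$-definability of $\mathcal{C}$ and that ``$\kappa$ reflects $\mathcal{C}$'' in the $\SR$ sense unwinds to the embedding statement $j\colon\langle V_\beta,\in\rangle\to\langle V_\alpha,\in\rangle$ with $\beta<\kappa$ that Magidor uses.

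The substantive direction is $(3)\Rightarrow(2)$. Fix a supercompact $\kappa$; I claim $\kappa$ witnesses $\Sigma_2$-$\SR$, which by the closing-off remark suffices. Let $\mathcal{C}$ be $\Sigma_2$-definable without parameters, say $x\in\mathcal{C}$ iff $\exists y\,\theta(x,y)$ with $\theta$ a $\Pi_1$ formula, and let $A\in\mathcal{C}$, witnessed by some $y_0$ with $\theta(A,y_0)$. Choose $\eta$ with $A,y_0\in V_\eta$ and, using supercompactness, an elementary embedding $j\colon V\to M$ with $\crit(j)=\kappa$, $j(\kappa)>\eta$, and enough closure (${}^{\lambda}M\subseteq M$ for some $\lambda\geq|V_\eta|$) that $V_\eta\subseteq M$ and $j\restriction A\in M$. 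Now I observe three facts inside $M$: (i) $A\in\mathcal{C}^M$, since $\theta$ is $\Pi_1$ and hence downward absolute to the inner model $M$, so $\theta(A,y_0)$ still holds there and witnesses membership; (ii) $j(A)\in\mathcal{C}^M$, by elementarity of $j$ applied to $A\in\mathcal{C}$, the defining formula being parameter-free; and (iii) $j\restriction A\colon A\to j(A)$ is an elementary embedding, this being absolute between $V$ and $M$. Since moreover $A\in V^{M}_{j(\kappa)}$, the structure $A$ witnesses in $M$ the statement ``there is $B\in\mathcal{C}\cap V_{j(\kappa)}$ with an elementary embedding $B\to j(A)$''. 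Pulling this back along $j$ by elementarity yields, in $V$, a $B\in\mathcal{C}\cap V_\kappa$ with an elementary embedding $B\to A$. As $A\in\mathcal{C}$ was arbitrary, $\kappa$ reflects $\mathcal{C}$.

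The main obstacle is the bookkeeping in $(3)\Rightarrow(2)$: arranging the closure of $M$ so that both $A$ and the restricted map $j\restriction A$ genuinely lie in $M$, and keeping careful track of where each clause of the $\Sigma_2$ definition is evaluated (in $V$, in $M$, or transferred by elementarity) — in particular the crucial use of downward absoluteness of the $\Pi_1$ matrix $\theta$ to secure $A\in\mathcal{C}^M$. Conceptually the deepest input is Magidor's theorem feeding $(1)\Rightarrow(3)$, but as it is quoted I need only verify that $\SR$-style reflection of $\{\langle V_\alpha,\in\rangle\}$ is precisely the hypothesis of that characterization.
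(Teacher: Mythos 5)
Your proof is correct, but it organizes the cycle differently from the paper. The paper's route is: $(1)\Leftrightarrow(3)$ both come from Magidor's characterization, and $(1)\Rightarrow(2)$ is obtained by the purely definability-theoretic reduction $\Ce\mapsto\Ce^\ast$ (replace a $\Sigma_2$-definable class by the $\Pi_1$-definable class of structures $\langle V_\alpha,\in,A\rangle$ with $\alpha$ least in $C^{(1)}$ such that $V_\alpha\models\varphi(A)$, and observe that reflecting $\Ce^\ast$ entails reflecting $\Ce$); this is the content of Proposition \ref{equiv}. You instead prove $(3)\Rightarrow(2)$ directly: from a $\lambda$-supercompactness embedding $j\colon V\to M$ with enough closure that $V_\eta\subseteq M$ and $j\restriction A\in M$, you verify $A\in\Ce^M$ by downward $\Pi_1$-absoluteness, note $A\in V^M_{j(\kappa)}$ and that $j\restriction A\colon A\to j(A)$ is elementary in $M$, and pull the witnessed statement back along $j$. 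That argument is sound (and is essentially what underlies the cited result in \cite{Ba:CC, BCMR}); the one point worth making explicit is that when you pull back, the clause ``$\exists y\,\theta(B,y)$'' is evaluated in $V$, so the reflecting structure $B$ really lies in $\Ce$ as computed in $V$. What the paper's $\Ce^\ast$ reduction buys is uniformity: it works verbatim at every level, giving $\Pi_n$-$\SR\Leftrightarrow\Sigma_{n+1}$-$\SR$ for all $n$ and all parameter sets at once, and is reused throughout the later sections; what your direct embedding argument buys is a self-contained proof of the substantive implication that does not route through the auxiliary class, and it shows in addition that the supercompact $\kappa$ itself witnesses $\Sigma_2$-$\SR$ (indeed, the same computation allows parameters in $V_\kappa$, matching the level-by-level statement following Theorem \ref{thmpi1}).
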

The proof of the theorem shows in fact that the following are equivalent for an ordinal $\kappa$:
\begin{enumerate}
\item $\kappa$ is the least ordinal that witnesses $\SR$ for the $\Pi_1$-definable class of structures  $\langle V_\alpha ,\in\rangle$, $\alpha$ an ordinal.
\item $\kappa$ is the least cardinal that witnesses $\Pi_1$-$\SR$.
\item $\kappa$ is the least cardinal that witnesses $\mathbf{\Sigma_2}$-$\SR$.
\item $\kappa$ is the least supercompact cardinal.
\end{enumerate}
The following global parametrized version then follows. Namely,
\begin{theorem}[\cite{Ba:CC, BCMR}]
\label{thmpi1b}
The following are equivalent:
\begin{enumerate}
\item $\mathbf{\Pi_1}$-$\SR$.
\item $\mathbf{\Sigma_2}$-$\SR$.
\item There exists a proper class of supercompact cardinals.
\end{enumerate}
\end{theorem}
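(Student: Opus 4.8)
The plan is to deduce the global (boldface, proper-class) statement \ref{thmpi1b} from the local result \ref{thmpi1}, from its refined least-witness form, and from the closing-off characterization of boldface $\SR$ recorded just before Proposition \ref{sigma1} (that a boldface $\Gamma$-$\SR$ holds if and only if it is witnessed by a proper class of ordinals). I would establish the cycle of implications $(1)\Rightarrow(3)\Rightarrow(2)\Rightarrow(1)$. The implication $(2)\Rightarrow(1)$ is immediate: since every $\Pi_1$ formula is $\Sigma_2$, every $\mathbf{\Pi_1}$-definable class of structures (with parameters) is $\mathbf{\Sigma_2}$-definable, so an ordinal reflecting all $\mathbf{\Sigma_2}$-classes reflects all $\mathbf{\Pi_1}$-classes; thus $\mathbf{\Sigma_2}$-$\SR$ entails $\mathbf{\Pi_1}$-$\SR$.

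For $(3)\Rightarrow(2)$ I would show that any supercompact cardinal $\kappa$ reflects every $\mathbf{\Sigma_2}$-definable class $\Ce$ whose parameter $p$ has $\rank(p)<\kappa$, which is just the argument of \ref{thmpi1} carried out with an arbitrary parameter. Given $A\in\Ce$, let $\gamma_0$ witness the $\Sigma_2$-definition of $A\in\Ce$, and fix a $\lambda$-supercompactness embedding $j:V\to M$ with $\crit(j)=\kappa$ and $\lambda>\max\{\rank(A),\gamma_0\}$; then $j(p)=p$, $V_\lambda\subseteq M$, so $A\in M$ and $A\in j(\Ce)$. Now $j\restriction A$ is an elementary embedding of $A$ into $j(A)$, and $\rank(A)<\lambda<j(\kappa)$, so inside $M$ there is a member of $j(\Ce)$ of rank below $j(\kappa)$ that elementarily embeds into $A$; pulling this statement back along $j$ yields $B\in\Ce\cap V_\kappa$ with an elementary embedding $B\to A$. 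Hence, from a proper class of supercompacts, every $\mathbf{\Sigma_2}$-class is reflected by choosing a supercompact above the rank of its parameter, so $\mathbf{\Sigma_2}$-$\SR$ holds.

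The crux is $(1)\Rightarrow(3)$: from $\mathbf{\Pi_1}$-$\SR$ I must produce a supercompact cardinal above every ordinal $\delta$, and here I would use parameters to force critical points upward. Fix $\delta$ and let $\Ce_\delta$ be the class of structures $\langle V_\gamma,\in,\langle\xi\rangle_{\xi\le\delta}\rangle$ with $\gamma>\delta$, where each ordinal $\xi\le\delta$ is named by a constant; this class is $\mathbf{\Pi_1}$-definable with parameter $\delta$. Any elementary embedding between members of $\Ce_\delta$ fixes every $\xi\le\delta$, hence has critical point $>\delta$. Applying $\mathbf{\Pi_1}$-$\SR$ and letting $\kappa$ be the least ordinal reflecting $\Ce_\delta$, each $\langle V_\gamma,\dots\rangle$ in $\Ce_\delta$ receives an elementary $j:\langle V_\beta,\dots\rangle\to\langle V_\gamma,\dots\rangle$ with $\delta<\crit(j)<\beta<\kappa$, so in particular $\kappa>\delta$. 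By the minimality analysis underlying Magidor's characterization of supercompactness (the same analysis that drives the least-witness refinement of \ref{thmpi1}), these reflecting embeddings can be taken with $j(\crit(j))=\kappa$, whence $\kappa$ is supercompact. As $\delta$ was arbitrary, there is a proper class of supercompact cardinals.

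The main obstacle is precisely this last step. Reflecting the parameter-free class $\{\langle V_\gamma,\in\rangle:\gamma\in\OR\}$ by itself only locates a supercompact \emph{at or below} the reflecting ordinal, so merely witnessing $\mathbf{\Pi_1}$-$\SR$ does not obviously make a given witness supercompact. What must be verified is that the least ordinal $\kappa$ reflecting $\Ce_\delta$ genuinely meets Magidor's criterion, namely that the reflecting embeddings can be arranged with $j(\crit(j))=\kappa$ rather than with an unrelated critical point; the role of the named ordinals $\xi\le\delta$ is solely to guarantee $\crit(j)>\delta$ and hence to place the resulting supercompact above $\delta$. This critical-point bookkeeping, together with the verification of $\Sigma_2$-absoluteness between $V$ and the ultrapower in the $(3)\Rightarrow(2)$ direction, is where the real content lies; the remainder is the routine passage from single witnesses to proper classes afforded by the closing-off characterization.
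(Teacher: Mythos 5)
Your architecture is sound and your directions $(2)\Rightarrow(1)$ and $(3)\Rightarrow(2)$ are essentially correct: the Magidor-style argument with a $\lambda$-supercompactness embedding $j:V\to M$, $V_\lambda\subseteq M$, $j(p)=p$, does show that any supercompact cardinal reflects every $\mathbf{\Sigma_2}$-class whose parameter lies below it (one small slip: the statement you reflect inside $M$ is that some member of $j(\Ce)\cap V_{j(\kappa)}$ embeds into $j(A)$, witnessed by $A$ itself and $j\restriction A$ --- which lies in $M$ by $\lambda$-closure --- not that it embeds into $A$; pulling back then gives the embedding into $A$). The choice of auxiliary classes $\Ce_\delta$ with constants for every $\xi\leq\delta$ is also the right device for forcing critical points above $\delta$, and is in the spirit of how the cited papers obtain the global version.

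The genuine gap is exactly where you locate it, and flagging it does not close it: the implication $(1)\Rightarrow(3)$ rests entirely on the claim that the least ordinal $\kappa$ reflecting $\Ce_\delta$ satisfies Magidor's criterion, i.e.\ that the reflecting embeddings can be arranged with $j(\crit(j))=\kappa$, and this is asserted by appeal to ``the minimality analysis underlying Magidor's characterization'' rather than carried out. That analysis is the entire content of the hard direction of Theorem \ref{thmpi1}, and the refinement recorded in the paper applies verbatim only to the parameter-free class $\{\langle V_\alpha,\in\rangle:\alpha\in\OR\}$; it is not a formal consequence of that statement that the least reflector of the \emph{expanded} class $\Ce_\delta$ is supercompact. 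In particular you must still rule out that the witnessing embeddings $j:\langle V_\beta,\in,\xi\rangle_{\xi\leq\delta}\to\langle V_\gamma,\in,\xi\rangle_{\xi\leq\delta}$ are identities (hence have no critical point), and you must show that the images $j(\crit(j))$ of the derived $\lambda$-supercompact cardinals are bounded by $\kappa$ --- otherwise one only obtains, for each $\lambda$, some $\lambda$-supercompact cardinal above $\delta$ located who-knows-where, which does not yield a single fully supercompact cardinal. The paper's own route (via \cite{Ba:CC,BCMR}) packages this differently: one shows that \emph{every} witness of $\mathbf{\Pi_1}$-$\SR$ is either supercompact or a limit of supercompact cardinals, and combines this with the closing-off observation that the witnesses form a proper class; either way, the relativized Magidor argument has to be run in full, and your proposal defers precisely that step.
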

The proof of the theorem also shows  that if $\kappa$ witnesses  $\mathbf{\Pi_1}$-$\SR$, then $\kappa$ is either  supercompact  or a limit of supercompact cardinals.

\medskip

Some remarks are in order. First, the equivalence of  $\Pi_1$-$\SR$ and $\Sigma_2$-$\SR$, and also of their boldface forms,  is due to the following general fact.  Given a $\Sigma_{n+1}$ definable (possibly with parameters, and with $n>0$) class  $\mathcal{C}$ of relational structures of the same type, let $\mathcal{C}^\ast$ be the class of structures of the form $\langle V_\alpha , \in ,A\rangle$, where $\alpha$ is the least cardinal in $C^{(n)}$ such that  $A\in V_\alpha$ and $V_\alpha \models \varphi (A)$, where $\varphi (x)$ is a fixed $\Sigma_{n+1}$ formula that defines $\Ce$. Then,
$$A\in \mathcal{C}\mbox{  if and only if  }\langle V_\alpha , \in ,A\rangle \in \mathcal{C}^\ast.$$ 
Now notice that $\mathcal{C}^\ast$ is $\Pi_n$ definable, with the same parameters as $\mathcal{C}$ (see \cite{Ba:CC}). Moreover, if a cardinal $\kappa$ reflects the class $\Ce^\ast$, then it also reflects  $\Ce$: for if $A\in \Ce$, let $\alpha$ be the least cardinal in $C^{(n)}$ such that $\langle V_\alpha ,\in, A\rangle \models \varphi(A)$, where $\varphi (x)$ is a fixed $\Sigma_{n+1}$ formula that defines $\Ce$. Let $j:\langle V_\beta , \in , B\rangle\to \langle V_\alpha ,\in, A\rangle$ be elementary with $\langle V_\beta ,\in,B\rangle \in \Ce^\ast \cap V_\kappa$. Then, since $\beta\in C^{(n)}$ and $V_\beta \models \varphi(B)$, we have that $B\in \Ce$ and the restriction map   $j\restriction A:A\to B$ is an elementary embedding.  

\medskip

For $P$ a set or a proper class and $\Gamma$ a definability class, we shall write $\Gamma (P)$-$\SR$ for the assertion that $\SR$ holds for all $\Gamma$-definable, with parameters in $P$, classes of structure of the same type. Thus, e.g., $\mathbf{\Sigma_n}$-$\SR$ is $\Sigma_n(V)$-$\SR$, and $\Sigma_n$-$\SR$ is $\Sigma_n(\emptyset)$-$\SR$. Our remarks above yield now the following:

\begin{proposition}
\label{equiv}
For $P$ any set or proper class, the assertions $\Pi_n(P)$-$\SR$ and $\Sigma_{n+1}(P)$-$\SR$ are equivalent. In particular $\Pi_n$-$\SR$ and $\Sigma_{n+1}$-$\SR$ are equivalent; and so are $\mathbf{\Pi_n}$-$\SR$ and $\mathbf{\Sigma_{n+1}}$-$\SR$.
\end{proposition}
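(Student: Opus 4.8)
The plan is to prove the two implications separately, since the substantive content has already been isolated in the remarks immediately preceding the statement.

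The direction $\Sigma_{n+1}(P)$-$\SR \Rightarrow \Pi_n(P)$-$\SR$ is the trivial one: it rests only on the syntactic inclusion of the L\'evy hierarchy. Every $\Pi_n$ formula is (vacuously) equivalent to a $\Sigma_{n+1}$ formula, so any class of structures of the same type that is $\Pi_n$-definable with parameters in $P$ is also $\Sigma_{n+1}$-definable with the same parameters. Consequently, if every $\Sigma_{n+1}(P)$-definable class is reflected by some ordinal, then in particular every $\Pi_n(P)$-definable class is.

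The direction $\Pi_n(P)$-$\SR \Rightarrow \Sigma_{n+1}(P)$-$\SR$ carries the real content, and for $n>0$ I would run the $\mathcal{C}\mapsto\mathcal{C}^\ast$ construction recorded in the remarks. Given a $\Sigma_{n+1}$-definable class $\mathcal{C}$ with parameters in $P$, defined by a fixed $\Sigma_{n+1}$ formula $\varphi(x)$, form $\mathcal{C}^\ast$ as the class of structures $\langle V_\alpha,\in,A\rangle$ where $\alpha$ is the least cardinal in $C^{(n)}$ with $A\in V_\alpha$ and $V_\alpha\models\varphi(A)$. As noted in the remarks, $\mathcal{C}^\ast$ is $\Pi_n$-definable with the same parameters (hence in $P$), and $A\in\mathcal{C}$ iff $\langle V_\alpha,\in,A\rangle\in\mathcal{C}^\ast$. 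Applying $\Pi_n(P)$-$\SR$ to $\mathcal{C}^\ast$ produces an ordinal $\kappa$ reflecting $\mathcal{C}^\ast$, and then the embedding-restriction argument from the remarks shows that this same $\kappa$ reflects $\mathcal{C}$: given $A\in\mathcal{C}$, locate the canonical $\langle V_\alpha,\in,A\rangle\in\mathcal{C}^\ast$, pull back an elementary $j\colon\langle V_\beta,\in,B\rangle\to\langle V_\alpha,\in,A\rangle$ with $\langle V_\beta,\in,B\rangle\in\mathcal{C}^\ast\cap V_\kappa$, observe that $B\in\mathcal{C}$ because $\beta\in C^{(n)}$ and $V_\beta\models\varphi(B)$, and restrict $j$ to the underlying structure to obtain an elementary embedding of $B$ into $A$ with $B\in\mathcal{C}\cap V_\kappa$.

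The remaining case is $n=0$, which the $\mathcal{C}^\ast$ construction does not cover, since $C^{(n)}$ is only defined for $n>0$. Here $\Pi_0=\Sigma_0$, so what is required is the equivalence of $\Sigma_0(P)$-$\SR$ and $\Sigma_1(P)$-$\SR$; this is precisely the content of the proof of Proposition \ref{sigma1}, whose L\"owenheim--Skolem and upward-absoluteness argument for $\Sigma_1$ sentences goes through with parameters drawn from $P$ in place of $V_\alpha$. The one point I expect to require genuine care is the verification that $\mathcal{C}^\ast$ is indeed $\Pi_n$-definable, that is, that combining the clause $\alpha\in C^{(n)}$ (which is $\Pi_n$), the minimality requirement on $\alpha$, and the satisfaction condition $V_\alpha\models\varphi(A)$ for the $\Sigma_{n+1}$ formula $\varphi$ collapses into a single $\Pi_n$ predicate. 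This depends on the absoluteness properties of $C^{(n)}$ and is carried out in \cite{Ba:CC}, which I would cite rather than reprove.
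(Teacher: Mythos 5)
Your proposal is correct and follows essentially the same route as the paper: the paper derives Proposition \ref{equiv} directly from the preceding remarks, i.e., the $\mathcal{C}\mapsto\mathcal{C}^\ast$ construction with $\alpha$ the least cardinal in $C^{(n)}$ satisfying $V_\alpha\models\varphi(A)$, the $\Pi_n$-definability of $\mathcal{C}^\ast$ cited from \cite{Ba:CC}, and the restriction of the elementary embedding $j\colon\langle V_\beta,\in,B\rangle\to\langle V_\alpha,\in,A\rangle$ to the underlying structures. Your explicit treatment of the trivial converse direction and of the $n=0$ case via Proposition \ref{sigma1} are correct additions that the paper leaves implicit.
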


Second, the remarks above also show that for principles of Structural Reflection of the form $\Gamma(P)$-$\SR$ the relevant structures to consider are those of the form $\langle V_\alpha ,\in, A\rangle$, where $A\in V_\alpha$. Let us say that a structure is \emph{natural} if it is of this form. 
Therefore, we may reformulate $\Gamma$-$\SR$, for $\Gamma$ a lightface  definability class, as follows: 

\begin{quotation}
\begin{itemize}
\item[$\Gamma$-$\SR$:] (\emph{$\Gamma$-Structural Reflection. Second version})    There exists a cardinal $\kappa$ that  reflects all $\Gamma$-definable classes $\Ce$ of natural structures, i.e., for every $A\in \Ce$ there exists $B\in \Ce \cap V_\kappa$ and an elementary embedding $j:B\to A$.
\end{itemize}
\end{quotation}
The version for $\Gamma$ a boldface definability class being as follows:
\begin{quotation}
\begin{itemize}
\item[]   There exist a  proper class of cardinals $\kappa$ that  reflect all $\Gamma$-definable, with parameters in $V_\kappa$, classes $\Ce$ of natural structures. 
\end{itemize}
\end{quotation}
%
%

\medskip

At the next level of definitional complexity, i.e., $n=2$, we have the following:

\begin{theorem} [\cite{Ba:CC, BCMR}]
The following are equivalent:
\begin{enumerate}
\item $\Pi_2$-$\SR$.
\item There exists an extendible cardinal.
\end{enumerate}
\end{theorem}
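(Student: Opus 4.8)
The plan is to prove both directions, exploiting Proposition \ref{equiv} to pass freely between $\Pi_2$-$\SR$ and $\Sigma_3$-$\SR$ and the ``second version'' reformulation, so that it suffices to work with a single cardinal $\kappa$ reflecting all $\Pi_2$-definable classes of \emph{natural} structures $\langle V_\gamma, \in, A\rangle$. The argument is modeled on the supercompact case (Theorem \ref{thmpi1}), with Magidor's embedding characterization replaced by its extendibility analogue and the club class $C^{(1)}$ replaced by $C^{(2)}$ throughout.

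For the direction (2)$\Rightarrow$(1), assume $\kappa$ is extendible and let $\Ce$ be $\Pi_2$-definable, say by the $\Pi_2$ formula $\varphi(x)$, with $A = \langle V_\alpha, \in, \bar A\rangle \in \Ce$; we may assume $\alpha \geq \kappa$. First I would fix $\lambda \in C^{(2)}$ with $\alpha < \lambda$ and apply extendibility to obtain an elementary $j : V_\lambda \to V_\mu$ with $\crit(j) = \kappa$, $j(\kappa) > \lambda$, and $\mu \in C^{(2)}$. Since $\lambda \in C^{(2)}$ and $\varphi$ is $\Pi_2$, we get $V_\lambda \models \varphi(A)$, and likewise $V_\mu \models \varphi(A)$. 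The restriction $j\restriction A : A \to j(A)$ is then an elementary embedding of structures lying in $V_\mu$, so $V_\mu$ satisfies ``there exist a structure $B$ of rank $< j(\kappa)$ with $\varphi(B)$ and an elementary embedding $B \to j(A)$'' (witnessed by $B = A$). Pulling this existential statement back through $j$ by elementarity, $V_\lambda$ satisfies the same statement with $j(\kappa)$ and $j(A)$ replaced by $\kappa$ and $A$; this yields $B \in V_\kappa$ with $V_\lambda \models \varphi(B)$---hence $B \in \Ce$ by $\Pi_2$-absoluteness---and an elementary embedding $B \to A$. Thus $\kappa$ reflects $\Ce$, and since $\Ce$ was arbitrary, $\kappa$ witnesses $\Pi_2$-$\SR$.

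For the direction (1)$\Rightarrow$(2), let $\kappa$ be the least cardinal witnessing $\Pi_2$-$\SR$; I would show $\kappa$ is extendible, in fact $C^{(1)}$-extendible (which, by Bagaria's results, coincides with extendibility). Fixing $\lambda > \kappa$, the goal is an elementary $j : V_\lambda \to V_\mu$ with $\crit(j) = \kappa$ and $j(\kappa) > \lambda$. The device is to reflect a well-chosen $\Pi_2$-definable class $\Ce$ of natural structures $\langle V_\gamma, \in, \ldots\rangle$ in which $\gamma$ is required to belong to $C^{(1)}$ (a $\Pi_1$ side condition which, together with a minimality or ``next element of $C^{(1)}$'' clause, pushes the definitional complexity up to genuine $\Pi_2$) and which earmarks an ordinal destined to become the image of the critical point. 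Applying $\Pi_2$-$\SR$ to the member of $\Ce$ built over $V_\lambda$ produces $B \in \Ce \cap V_\kappa$ and an elementary embedding into that member; reading this as a small-to-large embedding into a level above $\lambda$ and transferring it appropriately yields the sought extendibility embedding.

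The main obstacle is precisely this last step: arranging that the embedding delivered by $\SR$ has critical point mapping \emph{exactly} to $\kappa$ (rather than to some other ordinal) and satisfies $j(\kappa) > \lambda$ with target level in $C^{(1)}$. Because $\Pi_2$-$\SR$ is lightface, $\kappa$ cannot be inserted as a parameter; instead one must define $\Ce$ so that the earmarked ordinal is pinned down by an intrinsic, parameter-free property---using the minimality of $\kappa$ as $\SR$-witness---and then verify, via a closing-off argument on the achievable critical images in the style of Magidor, that this property forces the critical point of the reflected embedding to be sent to $\kappa$. Keeping the definitional complexity of $\Ce$ at $\Pi_2$ (and no higher) while still encoding enough of $C^{(1)}$ to produce a $C^{(1)}$-extendibility embedding is the delicate point, and is where the jump from supercompactness ($\Pi_1$, Magidor) to extendibility ($\Pi_2$) genuinely resides.
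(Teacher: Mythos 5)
The paper itself does not prove this theorem---it is quoted from \cite{Ba:CC, BCMR}, with only the remark that the first extendible cardinal is the first cardinal witnessing $\SR$ for one particular $\Pi_2$-definable class of natural structures---so I will assess your argument directly. Your direction (2)$\Rightarrow$(1) follows the right template but rests on an unjustified step: you claim that extendibility supplies $j\colon V_\lambda\to V_\mu$ with $\mu\in C^{(2)}$, and you use this to conclude $V_\mu\models\varphi(A)$. Extendibility gives you no control over the correctness of the target $V_\mu$; the ordinal $\mu$ is determined by the embedding, and demanding $\mu\in C^{(2)}$ for all $\lambda$ is essentially the $C^{(2)}$-level of Tsaprounis's target-correct reformulation of $C^{(n)}$-extendibility, which is strictly stronger than extendibility. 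So as written you have proved that a $C^{(2)}$-extendible cardinal witnesses $\Pi_2$-$\SR$, which is not the theorem. The standard repair is to never evaluate $\varphi$ in $V_\mu$ at all: witness instead the statement ``there exist $\delta\in C^{(2)}$ and $B\in V_\delta$ of rank below $j(\kappa)$ with $V_\delta\models\varphi(B)$ and an elementary embedding of $B$ into $j(A)$'' by $\delta=\lambda$, $B=A$, $e=j\restriction A$. For this one only needs $V_\mu\models``\lambda\in C^{(2)}"$, which does follow: since $\lambda\in C^{(1)}$ and being an uncountable fixed point of $\beth$ is first-order over $V_\lambda$, one gets $\mu\in C^{(1)}$ for free, and then $V_\lambda\preceq_{\Sigma_2}V_\mu$ follows by routing $\Pi_1$ facts through $V$ using $V_\lambda\preceq_{\Sigma_2}V$ and $V_\mu\preceq_{\Sigma_1}V$. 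The pullback along $j$ then lands in $V_\lambda$, where $V_\lambda\models``\delta\in C^{(2)}"$ really does imply $\delta\in C^{(2)}$ (composition of $\Sigma_2$-elementarity), so the reflected $B$ genuinely lies in $\Ce$. This is precisely the relativization encoded in the passage from $\Ce$ to $\Ce^\ast$ in the paper's remarks after Theorem 3.2, which you cite but do not actually deploy at the point where it is needed.

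The more serious problem is direction (1)$\Rightarrow$(2), which is the substantive half of the theorem and which you do not prove: you describe the shape of a hoped-for argument and then explicitly flag its key step (forcing the reflected embedding to have the right critical behaviour while keeping the class $\Pi_2$) as an unresolved ``delicate point.'' Beyond being incomplete, the plan is aimed at the wrong target. Your description---an embedding whose critical point is \emph{sent to} $\kappa$, obtained ``in the style of Magidor'' by a closing-off argument on achievable critical images---is the template for Magidor's characterization of \emph{supercompactness}, where one needs $j\colon V_{\bar\lambda}\to V_\lambda$ with $j(\crit(j))=\kappa$ and $\crit(j)<\kappa$. Extendibility requires something structurally different: an embedding with critical point \emph{equal to} $\kappa$ and domain $V_\lambda$ for $\lambda>\kappa$. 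An $\SR$-embedding $e\colon B\to A$ with $B\in V_\kappa$ has critical point below $\kappa$, so no amount of pinning down $e(\crit(e))$ produces an extendibility embedding directly; one must extract further structure (in \cite{Ba:CC} this is done by reflecting a carefully chosen $\Pi_2$ class of structures of the form $\langle V_{\lambda_\alpha},\in,\alpha\rangle$ with $\lambda_\alpha$ tied to $C^{(1)}$, and then deriving from the resulting embeddings a system of extenders whose limit has critical point $\kappa$, much as in the proof of Theorem \ref{mainstrong} in this paper). That derivation is the actual content of the equivalence, and it is absent from your proposal.
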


The proof of the theorem shows that the first extendible cardinal is precisely the first cardinal that witnesses SR for one particular $\Pi_2$-definable class of natural structures.
The parameterized version also holds: 

\begin{theorem}[\cite{Ba:CC, BCMR}]
The following are equivalent:
\begin{enumerate}
\item $\mathbf{\Pi_2}$-$\SR$.
\item There exists a proper class of extendible cardinals.
\end{enumerate}
\end{theorem}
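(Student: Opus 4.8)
The plan is to deduce the boldface equivalence from the lightface version just proved, together with the closing-off characterization of boldface Structural Reflection recorded above. By that characterization, applied to $\Gamma=\mathbf{\Pi_2}$, the principle $\mathbf{\Pi_2}$-$\SR$ holds if and only if there is a proper class of cardinals $\kappa$ each of which reflects \emph{all} $\Pi_2$-definable, with parameters in $V_\kappa$, classes of natural structures. Thus the whole theorem reduces to a single per-cardinal statement: \emph{a cardinal $\kappa$ is extendible if and only if $\kappa$ reflects every $\Pi_2$-definable, with a parameter in $V_\kappa$, class of natural structures.} The proof of the preceding (lightface) theorem already supplies the sharp parameter-free case, since it shows that a cardinal is extendible exactly when it reflects the one particular lightface $\Pi_2$-definable class of natural structures isolated there. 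So the substance of the argument is to upgrade this sharp form to allow parameters in $V_\kappa$, and the point is that the parameter will ride along harmlessly beneath every critical point in play.

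For the direction $(1)\Rightarrow(2)$, assume $\mathbf{\Pi_2}$-$\SR$. By closing-off we obtain a proper class of cardinals $\kappa$ each reflecting all $\Pi_2$-definable classes of natural structures with parameters in $V_\kappa$. In particular each such $\kappa$ reflects the parameter-free class that characterizes extendibility in the lightface theorem, and hence, by the sharp form quoted above, each such $\kappa$ is extendible. This immediately yields a proper class of extendible cardinals.

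For the direction $(2)\Rightarrow(1)$, it suffices, again by closing-off, to show that every extendible $\kappa$ reflects each $\Pi_2$-definable, with a parameter $p\in V_\kappa$, class $\mathcal{C}$ of natural structures. First, passing to the self-certifying class $\mathcal{C}^\ast$ of the remarks preceding Proposition \ref{equiv}, we may assume that membership $\langle V_\beta,\in,S\rangle\in\mathcal{C}$ is witnessed locally, namely determined by satisfaction in $V_\beta$ together with $C^{(2)}$-data below $\beta$, so that it is absolute between $V$ and any $V_\lambda$ with $\lambda\in C^{(2)}$ and $\beta<\lambda$. Now fix $A=\langle V_\alpha,\in,R\rangle\in\mathcal{C}$, and assume $\alpha\geq\kappa$, since otherwise $A\in\mathcal{C}\cap V_\kappa$ and the identity works. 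Choose $\lambda\in C^{(2)}$ with $\lambda>\alpha$, whence $V_\lambda\models\varphi(A,p)$. Using Bagaria's reformulation of extendibility as $C^{(1)}$-extendibility to control the target, take an elementary embedding $j\colon V_\eta\to V_\zeta$ with $\eta$ just above $\lambda$, $\crit(j)=\kappa$, and $j(\kappa)>\lambda$; since $p\in V_\kappa$ we have $j(p)=p$. As $\alpha<\lambda<j(\kappa)$, the structure $A$ has rank below $j(\kappa)$ and $j\restriction A\colon A\to j(A)$ is elementary, so in $V_\zeta$ there is a structure of rank below $j(\kappa)$ lying in $\mathcal{C}$ and embedding elementarily into $j(A)$, namely $A$ itself. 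Applying elementarity of $j$ to this existential statement, whose parameters $j(\kappa),j(A),p$ are the $j$-images of $\kappa,A,p$, produces back in $V_\eta$ a structure $B$ of rank below $\kappa$ that lies in $\mathcal{C}$ and carries an elementary embedding $j'\colon B\to A$. Thus $B\in\mathcal{C}\cap V_\kappa$ together with an elementary embedding into $A$, as required.

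I expect the main obstacle to be precisely the absoluteness bookkeeping in this last step: one must guarantee that the structure $B$ extracted inside $V_\eta$ genuinely belongs to the true class $\mathcal{C}$, and that the embedding $B\to A$ is genuinely elementary in $V$, rather than merely in the ambient rank-initial segment. This is exactly what forces the passage to the self-certifying class $\mathcal{C}^\ast$ and the choice of the auxiliary ordinals in the clubs $C^{(1)}$ and $C^{(2)}$, so that the $\Pi_2$-property and its witnesses transfer correctly among $V$, $V_\lambda$, and $V_\zeta$; by contrast, the presence of the parameter $p$ is harmless, since $p\in V_\kappa$ lies below every critical point in play, and so the boldface argument differs from the lightface one only in carrying $p$ along.
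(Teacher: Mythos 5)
Your direction $(2)\Rightarrow(1)$ follows the standard route (reduce to natural structures via $\mathcal{C}^\ast$, embed $V_\eta\to V_\zeta$ above a $\lambda\in C^{(2)}$, pull the witness back by elementarity), and the absoluteness bookkeeping you flag is indeed the only substantive work there. The genuine gap is in $(1)\Rightarrow(2)$. Your reduction to the ``per-cardinal statement'' that a cardinal is extendible if and only if it reflects every $\Pi_2$-definable-with-parameters-in-$V_\kappa$ class is false in the forward direction, and the inference you draw from the lightface theorem does not support it: that theorem identifies the \emph{least} cardinal reflecting the distinguished parameter-free $\Pi_2$ class with the \emph{least} extendible cardinal, and says nothing about larger witnesses. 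In fact, reflection of a fixed class is upward closed in the ordinals --- if $\delta$ reflects $\mathcal{C}$ and $\kappa>\delta$, then $\mathcal{C}\cap V_\delta\subseteq \mathcal{C}\cap V_\kappa$, so $\kappa$ reflects $\mathcal{C}$ as well. Hence once a single extendible cardinal exists, \emph{every} ordinal above it reflects the parameter-free class, and your argument would then conclude that every sufficiently large cardinal is extendible. The paper itself signals this: a witness of $\mathbf{\Pi_2}$-$\SR$ is only ``extendible or a limit of extendible cardinals.''

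The consequence is that the parameter-free class alone can never yield more than one extendible cardinal, so the parameters do not merely ``ride along harmlessly'' --- they are exactly what makes the boldface principle stronger than the lightface one. The correct argument for $(1)\Rightarrow(2)$ runs through the parameters: for each ordinal $\gamma$ one forms the $\Pi_2$-definable class $\mathcal{C}_\gamma$, with $\gamma$ as a parameter, obtained by relativizing the distinguished class to structures of rank above $\gamma$ (e.g.\ adjoining $\{\gamma\}$ as a distinguished element), and shows that the least cardinal reflecting $\mathcal{C}_\gamma$ is an extendible cardinal above $\gamma$; applying $\mathbf{\Pi_2}$-$\SR$ to each $\mathcal{C}_\gamma$ then gives extendible cardinals unboundedly in the ordinals. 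Your proof as written omits this step entirely, so the implication $(1)\Rightarrow(2)$ is not established.
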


Moreover, if $\kappa$ witnesses  $\mathbf{\Pi_2}$-$\SR$, then $\kappa$ is either  extendible  or a limit of extendible cardinals.

For the higher levels of definitional complexity we need the notion of $C^{(n)}$-extendible cardinal from \cite{Ba:CC, BCMR}: 
$\kappa$ is \emph{$C^{(n)}$-extendible} if for every $\lambda$ greater than $\kappa$ there exists an elementary embedding $j:V_\lambda \to V_\mu$, some $\mu$, with $crit(j)=\kappa$, $j(\kappa) >\lambda$, and $j(\kappa)\in C^{(n)}$.  Note that the only difference with the notion of extendibility is that we require  the image of the critical point to be  in $C^{(n)}$. Also note that every extendible cardinal is $C^{(1)}$-extendible. We then have the following level-by-level characterizations of $\SR$ in terms of the existence of large cardinals:

\begin{theorem}[\cite{Ba:CC, BCMR}]
The following are equivalent for $n\geq 1$:
\begin{enumerate}
\item $\Pi_{n+1}$-$\SR$.
\item There exists a  $C^{(n)}$-extendible cardinal.
\end{enumerate}
\end{theorem}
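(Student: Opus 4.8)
The plan is to prove both implications with the same cardinal $\kappa$: for (2)$\Rightarrow$(1) I would show that any $C^{(n)}$-extendible $\kappa$ witnesses $\Pi_{n+1}$-$\SR$, and for (1)$\Rightarrow$(2) that the least cardinal witnessing $\Pi_{n+1}$-$\SR$ is $C^{(n)}$-extendible. Throughout I would work with the ``second version'' of $\Pi_{n+1}$-$\SR$, reflecting only natural structures $\langle V_\alpha ,\in ,A\rangle$, and freely use Proposition \ref{equiv} together with the two basic absoluteness facts about the classes $C^{(n)}$. The whole argument is the level-$n$ analogue of the extendible ($n=1$) case, which in turn rests on Magidor's characterization of supercompactness (\cite{Mag}).

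For (2)$\Rightarrow$(1), fix a $\Pi_{n+1}$ formula $\psi$ defining a class $\mathcal{C}$ of natural structures and take $A=\langle V_\gamma ,\in ,\vec R\rangle \in \mathcal{C}$. I would first choose $\lambda \in C^{(n)}$ with $A\in V_\lambda$ and apply $C^{(n)}$-extendibility to obtain $j\colon V_\lambda \to V_\mu$ with $\crit(j)=\kappa$, $j(\kappa)>\lambda$ and $j(\kappa)\in C^{(n)}$. The key point is that $\Pi_{n+1}$ formulas are downward absolute to models $V_\delta$ with $\delta \in C^{(n)}$, so $V_{j(\kappa)}\models \psi(A)$; since $A\in V_\lambda \subseteq V_{j(\kappa)}$ and $j\restriction V_\gamma \colon A\to j(A)$ is elementary, the structure $V_\mu$ satisfies ``there is a natural $N\in V_{j(\kappa)}$ with an elementary embedding into $j(A)$ such that $V_{j(\kappa)}\models \psi(N)$'', witnessed by $N=A$. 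Pulling this statement back along $j$ yields, inside $V_\lambda$ and hence in $V$, a natural $B\in V_\kappa$ with an elementary embedding $B\to A$ and $V_\kappa \models \psi(B)$. To conclude $B\in \mathcal{C}$ I would invoke the lemma that every $C^{(n)}$-extendible cardinal lies in $C^{(n+1)}$ (\cite{Ba:CC, BCMR}), so that $V_\kappa \preceq_{\Sigma_{n+1}} V$ and the $\Pi_{n+1}$ property $\psi(B)$ genuinely holds in $V$.

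For (1)$\Rightarrow$(2), let $\kappa$ be least witnessing $\Pi_{n+1}$-$\SR$ and fix $\lambda >\kappa$, which I may assume lies in $C^{(n)}$. Since ``$\kappa$ is the least witness'' is a parameter-free definable property, I would consider the lightface $\Pi_{n+1}$-definable class of natural structures of the form $\langle V_\delta ,\in ,\kappa ,\xi\rangle$ with $\delta \in C^{(n)}$ and $\xi <\delta$, and reflect the member $\langle V_\eta ,\in ,\kappa ,\lambda\rangle$ for some $\eta \in C^{(n)}$, $\eta >\lambda$. This produces $\langle V_{\bar\eta},\in ,\bar\kappa ,\bar\lambda\rangle \in V_\kappa$ with an elementary $e\colon V_{\bar\eta}\to V_\eta$ sending $\bar\kappa \mapsto \kappa$ and $\bar\lambda \mapsto \lambda$. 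The minimality of $\kappa$, together with the fact that by elementarity $\bar\kappa$ must be the least witness as computed inside $V_{\bar\eta}$, pins the critical point to $\crit(e)=\bar\kappa$ with $e(\bar\kappa)=\kappa$. A Magidor-style argument then converts this family of reflection embeddings (as $\eta$ ranges cofinally through $C^{(n)}$) into a genuine extendibility embedding $j\colon V_\lambda \to V_\mu$ with $\crit(j)=\kappa$ and $j(\kappa)>\lambda$; because all the structures involved live over $C^{(n)}$, one reads off $j(\kappa)\in C^{(n)}$, giving $C^{(n)}$-extendibility.

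The main obstacle is the reverse direction: designing the parameter-free $\Pi_{n+1}$ class so that reflection pins the critical point correctly, and then carrying out the Magidor-style conversion of small-to-big reflection embeddings into the sought big-to-bigger extendibility embedding, all while tracking the $C^{(n)}$-membership of $j(\kappa)$. By comparison, the forward direction is essentially routine once one has the two absoluteness facts — downward absoluteness of $\Pi_{n+1}$ to $C^{(n)}$, and the inclusion of $C^{(n)}$-extendible cardinals in $C^{(n+1)}$ — in hand.
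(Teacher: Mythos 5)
Your direction (2)$\Rightarrow$(1) is essentially the standard argument from \cite{Ba:CC, BCMR} (which is where the paper defers the proof): choose $\lambda\in C^{(n)}$ above $A$, use $j(\kappa)\in C^{(n)}$ for downward absoluteness of the $\Pi_{n+1}$ definition to $V_{j(\kappa)}$, reflect the existence of the witness $j\restriction A$ inside $V_\mu$ back along $j$, and use $\kappa\in C^{(n+1)}$ (indeed $C^{(n+2)}$) to promote $V_\kappa\models\psi(B)$ to $B\in\mathcal{C}$. Modulo routine care about $j\restriction A$ having rank below $\mu$, this half is fine.

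The direction (1)$\Rightarrow$(2) has two genuine problems. First, the class of structures $\langle V_\delta,\in,\kappa,\xi\rangle$ you propose is not $\Pi_{n+1}$-definable without parameters: the predicate ``$\kappa$ is the least cardinal witnessing $\Pi_{n+1}$-$\SR$'' quantifies over all $\Pi_{n+1}$ formulas and over witnesses $B\in V_\kappa$ of $\Pi_{n+1}$ properties, which pushes its complexity to roughly $\Pi_{n+2}$; and you cannot instead treat $\kappa$ as a parameter, since the lightface principle admits none (and even the boldface principle only admits parameters in $V_\alpha$ for the reflecting $\alpha$, which excludes $\kappa$ itself). Second, and more seriously, the ``Magidor-style conversion'' is not available: reflection hands you embeddings $e\colon V_{\bar\eta}\to V_\eta$ with $\bar\eta<\kappa$, $\crit(e)=\bar\kappa<\kappa$ and $e(\bar\kappa)=\kappa$, i.e.\ $\kappa$ is the \emph{image} of the critical point. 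That is exactly Magidor's pattern for \emph{supercompactness} of $\kappa$; it does not produce an embedding with critical point $\kappa$ and domain $V_\lambda$ for $\lambda>\kappa$, which is what extendibility demands, and no direct conversion between the two patterns exists (this is precisely the gap between $\Pi_1$-$\SR$/supercompact and $\Pi_2$-$\SR$/extendible). The actual proof in \cite{Ba:CC, BCMR} argues by contradiction: assuming no $C^{(n)}$-extendible cardinal exists, one forms the $\Pi_{n+1}$-definable class of structures $\langle V_{\mu_\delta},\in,\delta,\dots\rangle$ coding, for each $\delta$, the least $\lambda$ witnessing that $\delta$ is not $\lambda$-$C^{(n)}$-extendible; an elementary embedding between two such structures is then shown to restrict to a $C^{(n)}$-extendibility embedding for its \emph{own} critical point (some cardinal below the reflecting $\kappa$), contradicting the coded minimality. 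The extra quantifier needed to express this failure of extendibility is exactly what uses up the $\Pi_{n+1}$ complexity, and the identification of the least $C^{(n)}$-extendible cardinal with the least $\SR$-witness is a separate final step, not the engine of the proof.
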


\begin{theorem}[\cite{Ba:CC, BCMR}]
The following are equivalent for $n\geq 1$:
\begin{enumerate}
\item $\mathbf{\Pi_{n+1}}$-$\SR$.
\item There exists a proper class of $C^{(n)}$-extendible cardinals.
\end{enumerate}
\end{theorem}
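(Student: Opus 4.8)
The plan is to reduce both implications to the lightface characterization established in the immediately preceding theorem, namely that $\Pi_{n+1}$-$\SR$ is equivalent to the existence of a single $C^{(n)}$-extendible cardinal, and to exploit the reformulation recorded above that a boldface principle $\mathbf{\Pi_{n+1}}$-$\SR$ holds if and only if it is \emph{witnessed} by a proper class of cardinals $\kappa$, where $\kappa$ witnesses $\mathbf{\Pi_{n+1}}$-$\SR$ when it reflects every $\Pi_{n+1}$-definable class of structures whose parameters lie in $V_\kappa$. Thus both directions come down to relating the witnesses of $\mathbf{\Pi_{n+1}}$-$\SR$ to the $C^{(n)}$-extendible cardinals; throughout I would use Proposition \ref{equiv} and the $\Ce^\ast$-reduction to restrict attention to natural structures $\langle V_\alpha,\in,A\rangle$.

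For (2) $\Rightarrow$ (1), I would show that every $C^{(n)}$-extendible cardinal $\kappa$ is a witness of $\mathbf{\Pi_{n+1}}$-$\SR$. Given a $\Pi_{n+1}$-definable class $\Ce$ with parameters $\vec p\in V_\kappa$ and some $A\in\Ce$, pick $\lambda>\kappa$ with $A\in V_\lambda$ and $\lambda\in C^{(n)}$, and take an extendibility embedding $j:V_\lambda\to V_\mu$ with $\crit(j)=\kappa$, $j(\kappa)>\lambda$ and $j(\kappa)\in C^{(n)}$. The decisive point is that $\crit(j)=\kappa$, so $j$ fixes $V_\kappa$ pointwise and in particular fixes $\vec p$; hence the parameters behave exactly like constants below the critical point and the lightface argument applies essentially verbatim. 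Concretely, $j\restriction A:A\to j(A)$ is elementary, $A\in V_{j(\kappa)}$ since $j(\kappa)>\lambda$, and $A\in j(\Ce)$ because membership in $\Ce$ is rendered absolute between $V_{j(\kappa)}$ and $V_\mu$ by $j(\kappa)\in C^{(n)}$ after the $\Ce^\ast$-reduction; therefore $V_\mu\models$ ``some member of $j(\Ce)$ in $V_{j(\kappa)}$ is elementarily embeddable into $j(A)$'', and pulling this back along $j$ yields $B\in\Ce\cap V_\kappa$ with an elementary embedding into $A$. Since the $C^{(n)}$-extendibles form a proper class, we obtain a proper class of witnesses, hence $\mathbf{\Pi_{n+1}}$-$\SR$.

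For (1) $\Rightarrow$ (2), I would prove the sharper statement flagged in the excerpt: every witness $\kappa$ of $\mathbf{\Pi_{n+1}}$-$\SR$ is itself $C^{(n)}$-extendible or a limit of $C^{(n)}$-extendible cardinals. Granting this, a proper class of witnesses immediately yields that the $C^{(n)}$-extendibles are unbounded, hence a proper class. To extract $C^{(n)}$-extendibility from a witness $\kappa$ I would reflect the specific $\Pi_{n+1}$-definable class of natural structures used in the lightface implication $(1)\Rightarrow(2)$ (roughly, structures coding the initial segments $V_\alpha$ with $\alpha\in C^{(n)}$ together with the relevant distinguished data), whose reflection at $\kappa$ produces, for every large $\lambda$, an elementary embedding into $V_\lambda$ whose critical image lands in $C^{(n)}$ — precisely the data of a $C^{(n)}$-extendibility embedding.

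The main obstacle is exactly this last step, and specifically the \emph{localization}: a boldface witness $\kappa$ may only use parameters of rank below $\kappa$, so $\kappa$ itself is not available as a parameter, and consequently the reflection need not pin the extendibility embeddings to $\kappa$ — it may instead realize them cofinally below $\kappa$, which is what forces $\kappa$ to be merely a limit of $C^{(n)}$-extendibles rather than one of them. Handling this cleanly requires a L\"owenheim--Skolem/closing-off argument showing that for every $\gamma<\kappa$ the reflection supplied by $\kappa$ furnishes a $C^{(n)}$-extendible cardinal in the interval $(\gamma,\kappa]$, together with a verification — using $\kappa\in C^{(n+1)}$, which follows for any witness much as in Proposition \ref{sigma1} — that the candidate embeddings genuinely satisfy $j(\crit(j))\in C^{(n)}$ and $j(\crit(j))>\lambda$. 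Once the unboundedness of $C^{(n)}$-extendibles below each witness is secured, combining it with the proper class of witnesses supplied by $\mathbf{\Pi_{n+1}}$-$\SR$ completes the argument.
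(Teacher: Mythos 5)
Your proposal is correct and takes essentially the same route as the paper, which states this theorem by citation to \cite{Ba:CC, BCMR} but whose surrounding remarks --- that a boldface $\SR$ principle is equivalent to having a proper class of witnessing ordinals, and that every witness of $\mathbf{\Pi_{n+1}}$-$\SR$ is either $C^{(n)}$-extendible or a limit of $C^{(n)}$-extendible cardinals --- describe exactly your two reductions. Both of your key points (a $C^{(n)}$-extendibility embedding fixes the parameters lying below its critical point, so the lightface argument relativizes; and ordinal parameters below a witness localize the $C^{(n)}$-extendible cardinals unboundedly beneath it) are the standard arguments from the cited sources.
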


Similarly as in the case of supercompact and extendible cardinals, the proofs of the theorems above actually show that the first $C^{(n)}$-extendible cardinal is  the first cardinal that witnesses SR for one single $\Pi_{n+1}$-definable class of natural structures. Also, if $\kappa$ witnesses  $\mathbf{\Pi_{n+1}}$-$\SR$, then $\kappa$ is either a  $C^{(n)}$-extendible  cardinal or a limit of $C^{(n)}$-extendible cardinals.

\medskip

Recall that \emph{Vop\v{e}nka's Principle $(\VP)$} is the assertion that for every proper class $\mathcal{C}$ of relational structures of the same type there exist $A\ne B$ in $\mathcal{C}$ such that $A$ is elementarily embeddable into $B$. In the first-order language of set theory $\VP$ can be formulated as a  schema. The following corollary to the theorems stated above yields a characterization of $\VP$ in terms of $\SR$. Moreover, it shows that, globally, the lightface and boldface forms of $\SR$ are equivalent.

\begin{theorem}[\cite{Ba:CC, BCMR}]
\label{thmVP}
The following schemata are equivalent:
\begin{enumerate}
\item $\SR$, i.e., $\mathbf{\Pi_n}$-$\SR$ for all $n$.
\item $\Pi_n$-$\SR$ for all $n$.
\item There exists a $C^{(n)}$-extendible cardinal, for every $n$.
\item There is a proper class of $C^{(n)}$-extendible cardinals, for every $n$.
\item $\VP$.
\end{enumerate}
\end{theorem}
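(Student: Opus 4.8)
The strategy is to let the level-by-level characterizations already proved carry almost all of the work, reducing the theorem to the single new ingredient, namely the relationship of these principles with $\VP$. First, the boldface theorems give, for each $n\geq 1$, the equivalence of $\mathbf{\Pi_{n+1}}$-$\SR$ with the existence of a proper class of $C^{(n)}$-extendible cardinals (the cases $n=0,1$ being covered by the supercompact and extendible theorems). Since $\Pi_m$-$\SR$ implies $\Pi_{m'}$-$\SR$ for $m'\le m$ (a $\Pi_{m'}$-definable class is $\Pi_m$-definable, so any witness for the former is a witness for the latter), taking the conjunction over all $n$ yields (1)$\Leftrightarrow$(4) at once; the lightface theorems give (2)$\Leftrightarrow$(3) in the same way. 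As a parameter-free class is a special case of a class with parameters, (1)$\Rightarrow$(2) is immediate. It therefore suffices to prove (3)$\Rightarrow$(5) and (5)$\Rightarrow$(1); together with (1)$\Rightarrow$(2)$\Rightarrow$(3) this closes a cycle through (1), (2), (3), (5), and with (1)$\Leftrightarrow$(4) all five conditions coincide.

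For (3)$\Rightarrow$(5) I would assume there is a $C^{(n)}$-extendible cardinal for every $n$ and let $\mathcal{C}$ be a proper class of structures of the same type defined by a $\Sigma_m$ formula $\varphi(x,p)$ with parameter $p$. Fix a $C^{(m)}$-extendible $\kappa$ with $p\in V_\kappa$. As $\mathcal{C}$ is a proper class it has members of rank $\ge\kappa$; choose $B\in\mathcal{C}$ with $\kappa\le\rank(B)<\lambda$ for some $\lambda\in C^{(m)}$, and apply $C^{(m)}$-extendibility to obtain an elementary $j\colon V_\lambda\to V_\mu$ with $\crit(j)=\kappa$, $j(\kappa)>\lambda$, and (choosing $\lambda\in C^{(m)}$, as one may) $\mu\in C^{(m)}$. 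Then $j(p)=p$, and since $V_\lambda\preceq_m V$ gives $V_\lambda\models\varphi(B,p)$, elementarity together with $V_\mu\preceq_m V$ yields $\varphi(j(B),p)$, i.e. $j(B)\in\mathcal{C}$. The restriction $j\restriction B\colon B\to j(B)$ is elementary, and $\rank(B)\ge\kappa=\crit(j)$ forces $\rank(j(B))=j(\rank(B))>\rank(B)$, so $B\ne j(B)$. This is exactly the conclusion of $\VP$ for $\mathcal{C}$.

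For (5)$\Rightarrow$(1) I would argue contrapositively, using the reduction, recorded above, of $\SR$ to \emph{natural} structures $\langle V_\gamma,\in,A\rangle$, whose rank $\gamma$ is intrinsic. Fix $n$ and suppose some $\Pi_n$-definable (with parameters in a fixed $V_\delta$) class $\mathcal{C}$ of natural structures is reflected by no ordinal. Call $A\in\mathcal{C}$ \emph{minimal} if no member of $\mathcal{C}$ of strictly smaller rank embeds into $A$; note that every $A\in\mathcal{C}$ has a minimal $\mathcal{C}$-structure embedding into it, obtained by choosing one of least rank among the $\mathcal{C}$-structures that embed into $A$. If the class $\mathcal{M}$ of minimal structures were bounded in rank, say by $\alpha$, then every $A\in\mathcal{C}$ would have a minimal $B\in\mathcal{M}\cap V_\alpha$ embedding into it, i.e. $\alpha$ would reflect $\mathcal{C}$, contrary to hypothesis; hence $\mathcal{M}$ is a proper class, definable with the same parameters. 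Applying $\VP$ to $\mathcal{M}$ produces two distinct minimal structures together with an elementary embedding $A\to A'$; since the source has rank strictly below that of the target (see below), $A$ is a $\mathcal{C}$-structure of rank smaller than $\rank(A')$ embedding into $A'$, contradicting the minimality of $A'$. This shows $\mathbf{\Pi_n}$-$\SR$ for every $n$, i.e. (1).

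The main obstacle is precisely the parenthetical step in the last paragraph: $\VP$ as stated only guarantees \emph{some} elementary embedding between two distinct members of $\mathcal{M}$, and a priori these could share the same rank (a ``lateral'' embedding), which would not contradict minimality. The fix, carried out in \cite{Ba:CC, BCMR}, is to pass to the standard reformulation of $\VP$ guaranteeing a pair with \emph{strictly increasing} rank; this is obtained by rigidifying the structures of $\mathcal{M}$ with an additional relation coding their rank, so that any elementary embedding must move the rank upward. One should also check that the auxiliary class $\mathcal{M}$, defined with an extra quantifier over embeddings, stays definable and keeps the same parameters as $\mathcal{C}$, so that $\VP$ applies to it; this is routine. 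Finally, it is worth emphasizing the structural payoff of closing the cycle in this manner: the lightface schema (2) is thereby shown to imply the boldface schema (1), so that globally the parameter-free and parametrized forms of $\SR$ coincide, the parameter-free $C^{(n)}$-extendible cardinals of (3) sufficing, through the $\VP$ detour, to recover full parametrized reflection at every level.
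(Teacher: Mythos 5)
Your top-level decomposition is the same as the paper's: Theorem \ref{thmVP} is presented there as a corollary of the preceding level-by-level theorems, with the link to $\VP$ imported from \cite{Ba:CC}. The problem is that the two new arguments you supply for that link have gaps, and one of them sits exactly on the load-bearing point. In (3)$\Rightarrow$(5) you begin by fixing a $C^{(m)}$-extendible $\kappa$ with $p\in V_\kappa$; but (3) only asserts that \emph{some} $C^{(m)}$-extendible cardinal exists, not that there is one above the rank of an arbitrary parameter $p$. Unboundedness of the $C^{(m)}$-extendible cardinals is essentially statement (4), so your cycle (1)$\Rightarrow$(2)$\Rightarrow$(3)$\Rightarrow$(5)$\Rightarrow$(1) silently assumes, at the step (3)$\Rightarrow$(5), precisely the ``lightface implies boldface'' content that the theorem is advertising (the paper explicitly highlights that globally the lightface and boldface forms coincide). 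To close the loop one must actually prove something like (3)$\Rightarrow$(4) --- e.g.\ that a $C^{(m+k)}$-extendible cardinal, for suitable $k$, yields a proper class of $C^{(m)}$-extendible cardinals, using the $\Pi_{m+2}$ complexity of ``$x$ is $C^{(m)}$-extendible'' and a careful correctness argument for the target models --- or follow \cite{Ba:CC} in deriving the proper-class versions from $\VP$ directly. A smaller leap in the same argument: the definition of $C^{(m)}$-extendibility gives $j(\kappa)\in C^{(m)}$, not $\mu\in C^{(m)}$, and choosing $\lambda\in C^{(m)}$ does not by itself make $\mu$ land in $C^{(m)}$; you must either invoke the (true but nontrivial) strengthening of the definition or route the $\Sigma_m$-correctness of $\varphi(j(B),p)$ through $V_{j(\kappa)}$.

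In (5)$\Rightarrow$(1) you correctly flag the lateral-embedding problem, but the repair you sketch does not repair it: adding the rank of a natural structure as a distinguished point only forces $j(\gamma)=\gamma'$, and since elementary embeddings of well-founded $\in$-structures never decrease ordinals this gives $\gamma'\geq\gamma$ --- which you already had --- without excluding $\gamma'=\gamma$ with $A\neq A'$, which is exactly the case that fails to contradict minimality of $A'$. The argument in \cite{Ba:CC, BCMR} does not work with ``minimal'' structures at all: assuming no ordinal reflects $\Ce$, one attaches to each ordinal $\alpha$ a canonically chosen witness $A_\alpha\in\Ce$ such that no member of $\Ce\cap V_\alpha$ embeds into $A_\alpha$, packages it as $\langle V_{\lambda_\alpha},\in,\{\alpha\},A_\alpha\rangle$ so that two such structures with the same index coincide, and checks that an elementary embedding between two distinct such structures must send the index $\alpha$ strictly up to some $\beta$, producing a member of $\Ce\cap V_\beta$ elementarily embeddable into $A_\beta$ and contradicting the choice of $A_\beta$. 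You would need to replace the minimality scheme by an argument of that shape; as written, the same-rank case is a genuine hole, not a routine rigidification.
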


\section{Structural Reflection below supercompactness}

We have just seen that a natural hierarchy of large cardinals in the region between the first supercompact cardinal and  $\VP$ can be characterized in terms of $\SR$. Now the question is if the same is true for other well-known regions of the large cardinal hierarchy. Since $\mathbf{\Sigma_1}$-SR is provable in ZFC and $\Pi_1$-SR implies already the existence of a  supercompact cardinal, if large cardinals weaker than supercompact admit a characterization as principles of structural reflection, then we need to look either for $\SR$ restricted to particular (collections of) $\Pi_1$-definable classes of  structures, or for classes of structures whose definitional complexity is between $\Sigma_1$ and $\Pi_1$ (e.g., $\Sigma_1$-definability with additional $\Pi_1$ predicates), or for weaker forms of structural reflection.
Let us consider first the $\SR$ principle   restricted to particular  definable  classes of structures contained in canonical inner models.

\subsection{Structural Reflection relative to canonical inner models}
\label{sectionL}
 
 There is one single class $\Ce$ of structures in $L$ that is $\Pi_1$-definable in $V$, without parameters, and such that $\SR (\Ce)$ is equivalent to the existence of $0^\sharp$. Namely, let $\Ce$ be the  class of structures of the form $\langle L_\beta ,\in ,\gamma\rangle$, with  
$\gamma <\beta$  uncountable cardinals (in $V$).
 
\begin{theorem}\label{thmzerosharp}
The following are equivalent:
\begin{enumerate}
\item $\SR(\Ce)$ 
%
%
\item $0^\sharp$ exists. 
\end{enumerate}
\end{theorem}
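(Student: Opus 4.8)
Here is the plan. The two directions are quite different, so I would treat them separately.

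For $(2)\Rightarrow(1)$ I would argue directly from Silver indiscernibles. Assuming $0^\sharp$ exists, let $I$ be the closed unbounded class of Silver indiscernibles, and recall the standard facts that every uncountable cardinal lies in $I$, that $L_\kappa\prec L$ for every $\kappa\in I$, and that $L_\kappa=\mathrm{Hull}^{L}(I\cap\kappa)$ for $\kappa\in I$. I would then show that the \emph{single} structure $B=\langle L_{\omega_2},\in,\omega_1\rangle\in\Ce$ reflects every member of $\Ce$, so that any $\alpha$ with $B\in V_\alpha$ (e.g. $\alpha=\omega_2+\omega$) witnesses $\SR(\Ce)$. Given $A=\langle L_\beta,\in,\gamma\rangle\in\Ce$ with $\gamma<\beta$ uncountable cardinals, pick an order preserving injection $h\colon\omega_2\to\beta$ with $h(\omega_1)=\gamma$ (possible since $\omega_1\le\gamma<\beta$), and let $j$ be the elementary map of Skolem hulls induced by $i_\xi\mapsto i_{h(\xi)}$, where $i_\xi$ is the $\xi$-th element of $I$. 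Since $\mathrm{Hull}^{L}(I\cap\omega_2)=L_{\omega_2}$ and the image hull is contained in $\mathrm{Hull}^{L}(I\cap\beta)=L_\beta$, this gives an elementary $j\colon\langle L_{\omega_2},\in\rangle\to\langle L_\beta,\in\rangle$; and $i_{\omega_1}=\omega_1$, $i_\gamma=\gamma$ yield $j(\omega_1)=\gamma$, so $j\colon B\to A$ is the required embedding. The only delicate points are the quoted facts about $I$.

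For $(1)\Rightarrow(2)$ the idea is to manufacture nontrivial elementary embeddings between levels of $L$ and then invoke a characterization of $0^\sharp$. Let $\alpha$ witness $\SR(\Ce)$. For each uncountable cardinal $\gamma\ge\alpha$ fix an uncountable cardinal $\beta>\gamma$ and apply reflection to $A=\langle L_\beta,\in,\gamma\rangle$, obtaining $B=\langle L_{\beta'},\in,\gamma'\rangle\in\Ce\cap V_\alpha$ and an elementary $j_\gamma\colon B\to A$ with $j_\gamma(\gamma')=\gamma$. Since $\gamma'<\beta'<\alpha\le\gamma$, the embedding $j_\gamma$ moves $\gamma'$, so it is nontrivial with $\crit(j_\gamma)\le\gamma'<\alpha$. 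As $V_\alpha$ is a set and there are only set-many ordinals below $\alpha$, whereas $\gamma$ ranges over a proper class, a pigeonhole argument fixes a single source $\bar B=\langle L_{\bar\beta},\in,\bar\gamma\rangle$ and a single critical point $\bar\kappa$ such that, for a proper class of uncountable cardinals $\gamma$, there is an elementary $j_\gamma\colon L_{\bar\beta}\to L_{\beta_\gamma}$ with $\crit(j_\gamma)=\bar\kappa$ and $j_\gamma(\bar\gamma)=\gamma$.

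From this configuration I would extract $0^\sharp$. Each $j_\gamma$ determines an $L$-ultrafilter $U_\gamma=\{X\in\mathcal{P}(\bar\kappa)\cap L_{\bar\beta}:\bar\kappa\in j_\gamma(X)\}$; as there are only set-many such ultrafilters, a further pigeonhole fixes a single $U$ realized by a proper class of the $\gamma$'s. Factoring each $j_\gamma$ through the ultrapower map $L_{\bar\beta}\to\mathrm{Ult}(L_{\bar\beta},U)$ shows the ultrapower is well-founded, and the crucial point is that, because the values $j_\gamma(\bar\gamma)=\gamma$ are cofinal in the ordinals, the same family of embeddings witnesses well-foundedness of the iterated ultrapowers of $L$ by $U$ at every ordinal length. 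An iterable $L$-ultrafilter yields a closed unbounded class of Silver indiscernibles, hence $0^\sharp$. The main obstacle is precisely this last step: reflection directly hands us only that the \emph{single} distinguished ordinal $\bar\gamma$ is elementarily movable to unboundedly many $\gamma$, i.e. one-element indiscernibility, whereas $0^\sharp$ demands full indiscernibility for all finite tuples at all ordinal lengths. Upgrading the cofinal family of embeddings with fixed source and critical point to genuine iterability of the derived $L$-ultrafilter (equivalently, to an uncountable generating set of indiscernibles), by a Gaifman–Silver–Kunen style well-foundedness argument, is where the real work lies.
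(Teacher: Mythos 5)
Your $(2)\Rightarrow(1)$ direction is correct and is essentially the paper's own argument: the paper lets the witnessing ordinal be any uncountable limit cardinal and chooses suitable uncountable cardinals $\gamma'<\beta'$ below it rather than fixing $\omega_1,\omega_2$, but the mechanism --- an order-preserving shift of Silver indiscernibles inducing an elementary embedding of the hulls $L_{\beta'}=\mathrm{Hull}^{L}(I\cap\beta')\to L_\beta$ with $\gamma'\mapsto\gamma$ --- is identical, and the facts about $I$ you quote are the standard ones.

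The $(1)\Rightarrow(2)$ direction has a genuine gap, and one you yourself flag: the upgrade from a cofinal family of embeddings with fixed source and fixed critical point to iterability of the derived $L$-ultrafilter is never carried out, and it is the entire content of that direction as you have set it up. The larger point, though, is that the whole apparatus --- the proper class of targets $\gamma$, the two pigeonhole steps, the factoring through the ultrapower, the iterated ultrapowers --- is unnecessary. A \emph{single} application of reflection already finishes the proof: you obtain a nontrivial elementary embedding $j\colon L_{\beta'}\to L_\beta$ with $\crit(j)\leq\gamma'<\beta'$, where $\beta'$ is an uncountable cardinal of $V$, so $\crit(j)<\beta'=|L_{\beta'}|$. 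Kunen's theorem states that the existence of a nontrivial elementary embedding between levels of $L$ whose critical point is below the cardinality of the domain already implies that $0^\sharp$ exists, and this is exactly what the paper invokes. The hypothesis $\crit(j)<|L_{\beta'}|$ is what makes the derived $L$-ultrafilter countably complete and hence iterable; that is precisely the ``real work'' you defer, so if you decline to quote Kunen's theorem you must reproduce its proof, and your proposal as written does not contain it.
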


\begin{proof}
(1) implies (2): Suppose that $\alpha$ reflects $\Ce$. Pick $V$-cardinals $\gamma < \beta$ with $\alpha \leq \gamma$. By reflection, there are  $V$-cardinals $\gamma'<\beta'<\alpha$   and an elementary embedding 
$$j:\langle L_{\beta'},\in ,\gamma'\rangle \to \langle L_\beta ,\in ,\gamma\rangle.$$
Since $j(\gamma')=\gamma$, $j$ is not the identity. Let $\kappa$ be the critical point of $j$. Thus, $\kappa \leq \gamma'<\beta'$. Hence by Kunen's Theorem (\cite{Ku:EE}) $0^\sharp$ exists.

(2) implies (1): Assume $0^\sharp$ exists. Let $\alpha$ be an uncountable  limit cardinal in $V$. We claim that $\alpha$ reflects $\Ce$. For suppose  $\langle L_\beta , \in ,\gamma\rangle \in \Ce$ with $\alpha \leq \beta$. Let $\gamma'<\beta'<\alpha$ be uncountable cardinals in $V$ such that  $\gamma' \leq \gamma$. Let $I$ denote the class of Silver indiscernibles. Let $j:I\cap [\gamma',\beta']\to I\cap [\gamma ,\beta]$ be order-preserving and such that $j(\gamma')=\gamma$ and $j(\beta')=\beta$. Then $j$ generates  an elementary embedding $$j:\langle  L_{\beta'},\in ,\gamma'\rangle \to \langle L_\beta ,\in ,\gamma\rangle$$
as required.
\end{proof}

The existence of $0^\sharp$ yields also the $\SR$ principle restricted to classes of structures that are definable in $L$.

\begin{theorem}\label{thmzerosharp2}
If $0^\sharp$ exists, then $\SR(\Ce)$ holds for every class $\Ce$ that is definable in $L$, with parameters.
\end{theorem}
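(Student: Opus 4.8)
The plan is to mimic the implication ``$0^\sharp$ exists $\Rightarrow \SR(\Ce)$'' from Theorem~\ref{thmzerosharp}, but with the single class used there replaced by an arbitrary $L$-definable class and with the defining parameter carried along by the embeddings. So I would fix a formula $\psi$ and a parameter $p\in L$ with $A\in\Ce \iff L\models\psi(A,p)$, and let $I$ be the proper class of Silver indiscernibles supplied by $0^\sharp$. I would use the standard facts: $I$ is club and contains every uncountable cardinal; $L=\mathrm{Hull}^{L}(I)$; each Skolem hull $\mathrm{Hull}^{L}(X)$ (formed with the canonical Skolem functions of $L$) is an elementary substructure of $L$; and any order-preserving map between subsets of $I$ induces an elementary embedding of the corresponding hulls. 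Writing $p=s^{L}[\vec e\,]$ with $\vec e=e_{1}<\cdots<e_{k}$ in $I$, I would choose $\alpha$ to be any uncountable limit cardinal above $e_{k}$; then $\alpha$ is a limit point of $I$, and I claim this single $\alpha$ reflects $\Ce$.

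Given $A\in\Ce$, I would pick a finite $F\subseteq I$ with $\vec e\subseteq F$ and $A,p\in\mathrm{Hull}^{L}(F)$, and split $F$ into the part $\vec c$ below $\alpha$ (so $\vec e\subseteq\vec c$) and the part $\vec d=d_{1}<\cdots<d_{s}$ that is $\geq\alpha$. Since $\alpha$ is a limit of indiscernibles strictly above $\max\vec c$, the set $(\max\vec c,\alpha)\cap I$ is infinite, so I may choose indiscernibles $d_{1}'<\cdots<d_{s}'$ inside it. The order-preserving map fixing $\vec c$ and sending $d_{j}'\mapsto d_{j}$ then induces an elementary embedding $k:\mathrm{Hull}^{L}(\vec c\cup\vec{d'})\to\mathrm{Hull}^{L}(\vec c\cup\vec d)=:H_{A}$. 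Writing $A=t^{L}[\vec c,\vec d\,]$, I set $B:=t^{L}[\vec c,\vec{d'}\,]$, so that $k(B)=A$; and since $\vec e\subseteq\vec c$ and $k$ fixes $\vec c$, we have $p\in\mathrm{Hull}^{L}(\vec c)$ and $k(p)=p$.

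It then remains to verify the three requirements on $B$. First, $B\in V_{\alpha}$: the von Neumann rank of $B$ is an ordinal definable in $L$ from the indiscernibles $\vec c\cup\vec{d'}$, all of which lie below the indiscernible $\alpha$, so by the standard fact that any ordinal so definable is itself $<\alpha$ we get $\rank(B)<\alpha$. Second, $B\in\Ce$: both hulls are elementary in $L$, so $L\models\psi(A,p)$ gives $H_{A}\models\psi(A,p)$, whence by elementarity of $k$ together with $k(B)=A$ and $k(p)=p$ we obtain $\mathrm{Hull}^{L}(\vec c\cup\vec{d'})\models\psi(B,p)$ and therefore $L\models\psi(B,p)$. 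Third, $k$ restricts to an elementary embedding $j:=k\restriction|B|:B\to A$ of relational structures: since satisfaction for set-sized structures is definable in the language of set theory, each statement ``$B\models\varphi(\bar b\,)$'' is equivalent to an $\in$-formula about $B$ and $\bar b$, which $k$ preserves because $k(B)=A$; hence $B\models\varphi(\bar b)\iff A\models\varphi(j\bar b\,)$ for all $\varphi$ and all $\bar b$ in the universe of $B$. This yields $B\in\Ce\cap V_{\alpha}$ together with the embedding $j:B\to A$, as required.

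The two genuinely delicate points, which I would isolate as separate remarks about $0^\sharp$, are (i) that the relocation of the ``large'' indiscernibles $\vec d$ can be performed while fixing exactly the indiscernibles $\vec e$ computing the parameter $p$, so that the induced embedding fixes $p$ and hence keeps $B$ inside the \emph{same} parametrized class $\Ce$; and (ii) the passage from $\in$-elementarity of $k$ to elementarity of the restriction $j$ between the abstract relational structures $B$ and $A$. The main obstacle I expect is bookkeeping rather than conceptual: ensuring uniformly, for every $A$, that finitely many high indiscernibles can be pushed below the fixed $\alpha$ above $\max\vec c$, and that this is compatible with fixing $\vec e$. Everything else is manipulation of Skolem terms and indiscernibles exactly as in the proof of Theorem~\ref{thmzerosharp}.
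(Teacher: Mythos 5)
Your strategy is essentially the paper's: express the given structure as a Skolem term in finitely many Silver indiscernibles, replace the indiscernibles at or above the reflecting ordinal by indiscernibles below it while fixing the ones generating the parameter, and use indiscernibility to see that the copy is still in $\Ce$ and that the induced map restricts to the desired embedding. Your handling of the parameter (writing $p=s^{L}[\vec e\,]$ and arranging $\vec e\subseteq\vec c$ so that the induced map fixes $p$), the verification that $B\in\Ce$, and the rank computation showing $B\in V_\alpha$ (via $\mathrm{Hull}^{L}(I\cap\alpha)=L_\alpha$ for $\alpha$ an uncountable cardinal) are all correct and, if anything, more careful than the paper's treatment.

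There is, however, a genuine gap at the final step $j:=k\restriction|B|$. Your $k$ is defined only on $\mathrm{Hull}^{L}(\vec c\cup\vec d\,')$, which is a \emph{countable} set: it contains $B$ as an element but in general does not contain the universe $|B|$ as a subset, so $k\restriction|B|$ is not a function defined on all of $|B|$ and cannot be the required elementary embedding $B\to A$. (What your construction actually yields is an elementary embedding into $A$ of the trace $B\cap\mathrm{Hull}^{L}(\vec c\cup\vec d\,')$, a countable elementary substructure of $B$ — not of $B$ itself.) The paper avoids this by inducing its embedding from an order-preserving map defined on all of $I$, so that the resulting elementary embedding of $L$ into $L$ is defined on every element of the small structure. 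To repair your argument, enlarge the domain of $k$ to a hull containing $|B|$, e.g.\ $\mathrm{Hull}^{L}(I\cap\alpha)=L_\alpha\supseteq|B|$, by extending the finite map $\mathrm{id}_{\vec c}\cup\{(d_j',d_j):j\leq s\}$ to an order-preserving map on all of $I\cap\alpha$ (identity on $I\cap d_1'$, and mapping $I\cap(d_s',\alpha)$ order-preservingly into $I\cap(d_s,\infty)$). Note that this extension need not exist for an arbitrary choice of $\vec d\,'$: one must map $I\cap(d_j',d_{j+1}')$ order-preservingly into $I\cap(d_j,d_{j+1})$, which is impossible if, say, $d_j$ and $d_{j+1}$ are consecutive indiscernibles while $d_j'$ and $d_{j+1}'$ are not. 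Choosing $d_1'<\cdots<d_s'$ to be \emph{consecutive} elements of $I$ inside $(\max\vec c,\alpha)$ removes the obstruction; with that modification the extended map induces an elementary $k:L_\alpha\to L$ with $k(B)=A$ and $k(p)=p$, and your remaining arguments (including the passage from $\in$-elementarity to elementarity of $k\restriction|B|$ as a map of relational structures) go through verbatim.
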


\begin{proof}
Fix $\mathcal{C}$ and a  formula $\varphi(x)$, possibly with ordinals $\alpha_0 <\ldots <\alpha_m$ as parameters,  that defines it in $L$. Let $\kappa$ be a limit of Silver indiscernibles greater than $\alpha_m$. We claim that $\kappa$ reflects $\mathcal{C}$. For  suppose  $B\in \mathcal{C}$. Without loss of generality, $B\not \in L_\kappa$. Since $0^\sharp$ exists, there is an increasing sequence of Silver indiscernibles $i_0,\ldots ,i_n, i_{n+1}$ and a formula $\psi(y, z_0,\ldots ,z_n)$, without parameters, such that 
$$B=\{ y: L_{i_{n+1}}\models \psi (y, i_0,\ldots ,i_n)\}.$$
Choose indiscernibles $j_0 <\ldots < j_n <j_{n+1} <\kappa$, with $\alpha_m < j_0$,  and let
$$A=\{ y: L_{j_{n+1}} \models \psi (y,j_0,\ldots ,j_n)\}.$$
Thus $A\in L_\kappa$. 
Since $L\models \varphi(B)$, we have that
$$L\models \forall x(\forall y (y\in x\leftrightarrow L_{i_{n+1}}\models \psi (y, i_0,\ldots ,i_n))\to \varphi (x)).$$
By indiscernibility, 
$$L\models \forall x(\forall y (y\in x\leftrightarrow L_{j_{n+1}}\models \psi (y, j_0,\ldots ,j_n))\to \varphi (x))$$
which implies $L\models \varphi (A)$, i.e., $A\in \mathcal{C}$. 

Let $j:L\to L$ be an  elementary embedding that  sends $i_k$ to $j_k$, all $k\leq n+1$. Then by indiscernibility, the map $j\restriction A:A\to B$ is an elementary embedding.
\end{proof}

However, the $\SR$ principle restricted to classes of structures that are definable in $L$ falls very short of yielding $0^\sharp$, as we shall next show. 
Let us recall the following definition: 

\begin{definition}[\cite{BGS}]
\label{n-rem}
A cardinal $\kappa$ is \emph{$n$-remarkable}, for $n>0$, if for all $\lambda >\kappa$ in $C^{(n)}$ and every $a\in V_\lambda$, there is $\bar{\lambda} <\kappa$ also in $C^{(n)}$ such that in $V^{\rm{Coll}(\omega , <\kappa)}$ there exists an elementary embedding $j:V_{\bar{\lambda}}\to V_{\lambda}$ with $j({\rm{crit}}(j))=\kappa$ and $a\in {\rm{range}}(j)$.
\end{definition}

A cardinal $\kappa$ is $1$-remarkable if and only if it is remarkable, in the sense of Schindler (see \cite{BGS} and definition \ref{defrem} below).

If $0^\sharp$ exists, then every Silver indiscernible is completely remarkable in $L$  (i.e., $n$-remarkable for every $n>0$). Moreover, the consistency strength of the existence of a $1$-remarkable cardinal is strictly weaker than the existence of a $2$-iterable cardinal, which in turn is weaker than the existence of an $\omega$-Erd\"os cardinal (see \cite{BGS}).

A weaker notion than $n$-remarkability is obtained by eliminating from its definition the requirement that $j(\crit(j))=\kappa$. So, let's define:

\begin{definition}
\label{defarem}
A cardinal $\kappa$ is \emph{almost $n$-remarkable}, for $n>0$, if for all $\lambda >\kappa$ in $C^{(n)}$ and every $a\in V_\lambda$, there is $\bar{\lambda} <\kappa$ also in $C^{(n)}$ such that in $V^{\rm{Coll}(\omega , <\kappa)}$ there exists an elementary embedding $j:V_{\bar{\lambda}}\to V_{\lambda}$ with  $a\in {\rm{range}}(j)$.

We say that  $\kappa$ is \emph{almost completely-remarkable} if it is almost-$n$-remarkable for every $n$.
\end{definition}

\begin{theorem}
\label{remgsr}
A cardinal $\kappa$ is almost $n$-remarkable if and only if in $V^{\rm{Coll}(\omega,<\kappa)}$ $\kappa$ witnesses $\SR(\Ce)$  for every class $\Ce$ that is $\Pi_n$-definable in $V$ with parameters in $V_\kappa$.
\end{theorem}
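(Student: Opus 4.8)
The plan is to prove the biconditional by unwinding both definitions and showing that the collapse-forcing provides exactly the elementary embeddings required for structural reflection, and conversely. Throughout I work inside $V^{\Coll(\omega,<\kappa)}$, and the key translation to keep in mind is Proposition~\ref{equiv} together with the second version of $\Gamma$-$\SR$: since the relevant classes are $\Pi_n$-definable in $V$, the natural structures to reflect are those of the form $\langle V_\lambda,\in,A\rangle$ where $\lambda$ is least in $C^{(n)}$ with $A\in V_\lambda$ and $V_\lambda\models\varphi(A)$ for the defining $\Pi_n$ formula $\varphi$. This is what makes the $\lambda\in C^{(n)}$ clause in the definition of almost $n$-remarkability line up with the witnessing ordinals.

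\textbf{Almost $n$-remarkable $\Rightarrow$ $\SR(\Ce)$ in the extension.}
First I would fix a class $\Ce$ that is $\Pi_n$-definable in $V$ with parameters in $V_\kappa$, say by $\varphi(x)$, and let $A\in\Ce$ be arbitrary (working in $V^{\Coll(\omega,<\kappa)}$; but note the parameters and the defining formula refer to $V$). By passing to the associated class $\Ce^\ast$ of natural structures $\langle V_\lambda,\in,A\rangle$ as in the remark following Theorem~\ref{thmpi1b}, I may assume $A\in V_\lambda$ for $\lambda$ least in $C^{(n)}$ with $V_\lambda\models\varphi(A)$. Now apply almost $n$-remarkability to this $\lambda$ and to $a=A$ taken as the point to be captured: this yields $\bar\lambda<\kappa$ in $C^{(n)}$ and, in $V^{\Coll(\omega,<\kappa)}$, an elementary embedding $j:V_{\bar\lambda}\to V_\lambda$ with $A\in\range(j)$. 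Let $B=j^{-1}(A)$. By elementarity $V_{\bar\lambda}\models\varphi(B)$, and since $\bar\lambda\in C^{(n)}$ and $\varphi$ is $\Pi_n$, the structure $\langle V_{\bar\lambda},\in,B\rangle$ is correctly in $\Ce^\ast$, so $B\in\Ce\cap V_\kappa$. Finally the restriction $j\restriction B:B\to A$ is the required elementary embedding between the structures, exactly as in the last paragraph of the remark after Proposition~\ref{equiv}. The parameters in $V_\kappa$ are handled by noting $\bar\lambda>$ their rank, so $j$ fixes them.

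\textbf{$\SR(\Ce)$ in the extension $\Rightarrow$ almost $n$-remarkable.}
For the converse I would run the argument backwards. Given $\lambda>\kappa$ in $C^{(n)}$ and $a\in V_\lambda$, I want to manufacture a single $\Pi_n$-definable class of structures whose reflection forces the existence of the embedding. The natural choice is to let $\Ce$ be the class of all natural structures $\langle V_\xi,\in,b\rangle$ with $\xi\in C^{(n)}$, coding the relevant data; taking the particular member $\langle V_\lambda,\in,a\rangle\in\Ce$ and applying $\SR(\Ce)$ inside $V^{\Coll(\omega,<\kappa)}$ produces $\langle V_{\bar\lambda},\in,b\rangle\in\Ce\cap V_\kappa$ with an elementary embedding into $\langle V_\lambda,\in,a\rangle$. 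Then $\bar\lambda<\kappa$, $\bar\lambda\in C^{(n)}$ (since $C^{(n)}$-membership is absorbed into the defining $\Pi_n$ condition), $a\in\range(j)$, and $j$ lifts to an embedding $V_{\bar\lambda}\to V_\lambda$, which is precisely the witness demanded by almost $n$-remarkability.

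\textbf{The main obstacle.}
The delicate point, and where I expect the real work to lie, is the interaction between the forcing $\Coll(\omega,<\kappa)$ and the $\Pi_n$-definability of $\Ce$: the class $\Ce$ is defined by a formula interpreted in $V$, but the reflection and the embeddings live in the generic extension, so I must check that $C^{(n)}$ and the truth of $\varphi$ are correctly recognized across the two models. This requires verifying that $\lambda,\bar\lambda\in C^{(n)}$ is preserved (that $V_\lambda\preceq_{\Sigma_n}V$ transfers appropriately), and that the embedding $j:V_{\bar\lambda}\to V_\lambda$ obtained from reflection of natural structures genuinely extends to the full $V_{\bar\lambda}$ rather than just the reduct carrying $A$ — one uses that the structures are natural, so the $\in$-reduct already is all of $V_{\bar\lambda}$. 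Controlling the parameters (ensuring they sit below $\bar\lambda$ and are fixed by $j$) and confirming that no new such parameters are introduced by the collapse are the book-keeping details that make the equivalence exact rather than merely up to consistency strength.
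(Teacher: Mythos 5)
Your proof is correct and follows essentially the same route as the paper's: the forward direction applies almost $n$-remarkability to some $\lambda\in C^{(n)}$ above the given structure, pulls the structure back along $j$, and uses $\bar{\lambda}\in C^{(n)}$ together with the $\Pi_n$-ness of the defining formula to see that the preimage lies in $\Ce\cap V_\kappa$, while the converse reflects exactly the paper's witness class of structures $\langle V_\alpha,\in,a\rangle$ with $\alpha\in C^{(n)}$ and reads off $a\in\range(j)$ from $j(b)=a$. The only blemishes are minor: the detour through $\Ce^\ast$ is unnecessary, and your side remark that $j$ fixes the parameters because $\bar{\lambda}$ exceeds their rank is not a valid inference (an elementary embedding $j:V_{\bar{\lambda}}\to V_\lambda$ can move elements of any rank below its domain's height) --- though the paper's own proof elides the treatment of parameters in exactly the same place, so this does not distinguish your argument from the published one.
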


\begin{proof}
Assume  $\kappa$ is almost $n$-remarkable. Fix $\mathcal{C}$ and a  $\Pi_n$ formula $\varphi(x)$ that defines it in $V$, possibly with parameters in $V_\kappa$.  Suppose  $B\in \mathcal{C}$.  In $V$, let $\lambda\in C^{(n)}$ be  greater than the rank of $B$.  Thus, $V_\lambda \models  \varphi (B)$. Since $\kappa$ is almost $n$-remarkable, there is $\bar{\lambda} <\kappa$ also in $C^{(n)}$ such that in $V^{\rm{Coll}(\omega,<\kappa)}$ there exists an elementary embedding $j:V_{\bar{\lambda}}\to V_{\lambda}$ with $B \in {\rm{range}}(j)$. Let $A$ be the preimage of $B$ under $j$. 
So $A\in V_\kappa$. 
By elementarity of $j$,  
$V_{\bar{\lambda}}\models  \varphi (A)$. 
Hence, since $\bar{\lambda} \in C^{(n)}$, $A\in \mathcal{C}$.
Moreover, $j\restriction A: A\to B$  is an elementary embedding.

Conversely, assume that in $V^{\rm{Coll}(\omega,<\kappa)}$, $\kappa$ witnesses $\SR(\Ce)$ for every class $\Ce$ that is $\Pi_n$-definable in $V$ with parameters in $V_\kappa$. Let $\Ce$ be the  $\Pi_n$-definable class of structures of the form $\langle V_\alpha ,\in, a\rangle$ where $\alpha \in C^{(n)}$ and $a\in V_\alpha$. Given $\lambda \in C^{(n)}$ and $a\in V_\lambda$, in $V^{\rm{Coll}(\omega,<\kappa)}$ there exists some  $\langle V_{\bar{\lambda}}, \in ,b\rangle \in \Ce$ together with an elementary embedding $$j:\langle V_{\bar{\lambda}}, \in, b \rangle \to \langle V_\lambda ,\in ,a\rangle.$$ Since $j(b)=a$, $a\in {\rm{range}}(j)$. This shows $\kappa$ is almost $n$-remarkable. 
\end{proof}

\begin{corollary}
If $\kappa$ is an almost completely-remarkable cardinal in $L$,  then in $L^{\rm{Coll}(\omega,<\kappa)}$ $\kappa$ witnesses $\SR(\Ce)$ for all classes $\Ce$ of structures that are definable in $L$ with parameters in $L_\kappa$.
\end{corollary}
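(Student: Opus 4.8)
The plan is to deduce this directly from Theorem \ref{remgsr}, applied with $L$ in the role of the ambient universe. First I would observe that Theorem \ref{remgsr} is a ZFC-theorem: for each $n>0$ it asserts, provably in ZFC, the equivalence between almost $n$-remarkability of a cardinal and a structural reflection property holding in the corresponding $\mathrm{Coll}(\omega,<\kappa)$-extension. Since $L\models\mathrm{ZFC}$, each such instance relativizes to $L$. Reading every quantifier, every definability class, and the collapse forcing $\mathrm{Coll}(\omega,<\kappa)$ as computed inside $L$, the relativized theorem yields, for each $n>0$: the cardinal $\kappa$ is almost $n$-remarkable in $L$ if and only if in $L^{\mathrm{Coll}(\omega,<\kappa)}$ the cardinal $\kappa$ witnesses $\SR(\Ce)$ for every class $\Ce$ that is $\Pi_n$-definable in $L$ with parameters in $L_\kappa$.

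Next I would invoke the hypothesis. By assumption $\kappa$ is almost completely-remarkable in $L$, which by Definition \ref{defarem} (relativized to $L$) means that $\kappa$ is almost $n$-remarkable in $L$ for every $n>0$. Hence the right-hand side of the relativized equivalence holds for every $n>0$, and crucially with one and the same $\kappa$. Thus in $L^{\mathrm{Coll}(\omega,<\kappa)}$ the cardinal $\kappa$ reflects, uniformly in $n$, every class of structures that is $\Pi_n$-definable in $L$ with parameters in $L_\kappa$.

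Finally I would reduce arbitrary $L$-definability to bounded complexity. Let $\Ce$ be any class of structures definable in $L$ with parameters in $L_\kappa$, say by a formula $\varphi(x)$ with such parameters. The formula $\varphi$ has some fixed syntactic complexity, so there is an $n>0$ with $\varphi\in\Pi_n$; consequently $\Ce$ is $\Pi_n$-definable in $L$ with parameters in $L_\kappa$. Applying the previous paragraph for this particular $n$, we conclude that in $L^{\mathrm{Coll}(\omega,<\kappa)}$ the cardinal $\kappa$ witnesses $\SR(\Ce)$. As $\Ce$ was arbitrary, this is exactly the assertion of the corollary.

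Since the argument is a direct relativization of Theorem \ref{remgsr} to $L$ together with the elementary observation that every formula lies in some $\Pi_n$, I do not expect a serious obstacle. The only point requiring care is that the forcing $\mathrm{Coll}(\omega,<\kappa)$ and the notion of $\Pi_n$-definability appearing in Theorem \ref{remgsr} must be interpreted inside $L$, so that the extension in question is genuinely $L^{\mathrm{Coll}(\omega,<\kappa)}$ and the parameters are drawn from $L_\kappa$ rather than from $V_\kappa$; once this is kept consistent, the deduction goes through verbatim.
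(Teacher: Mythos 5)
Your argument is correct and is exactly the intended derivation: the paper states this corollary without proof as an immediate consequence of Theorem \ref{remgsr} relativized to $L$, using that almost complete remarkability gives almost $n$-remarkability for every $n$ and that any $L$-definable class is $\Pi_n$-definable for some $n$. Your closing caveats (interpreting ${\rm Coll}(\omega,{<}\kappa)$ and the definability classes inside $L$, and noting $(V_\kappa)^L=L_\kappa$ since $\kappa\in (C^{(1)})^L$) are precisely the right points to keep consistent.
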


Similar arguments yield analogous results for $X^\sharp$, for every set of ordinals $X$.  
Given a set of ordinals $X$, let $\mathcal{C}_X$ be the class of structures of the form $\langle L_\beta [X] ,\in ,\gamma\rangle$, where  $\gamma$ and $\beta$  are cardinals (in $V$) and  $sup(X) <\gamma <\beta$. Clearly, $\Ce_X$ is $\Pi_1$ definable with $X$ as a parameter. 

\begin{theorem}\label{thmzerosharp3}
$ $
\begin{enumerate}
\item $\SR(\Ce_X)$ holds if and only if $X^\sharp$ exists.
\item If $X^\sharp$ exists, then $\SR(\mathcal{C})$ holds for all classes  $\mathcal{C}$  that are definable  in $L[X]$, with parameters.
\end{enumerate}
\end{theorem}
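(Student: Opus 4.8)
The plan is to prove both parts by relativizing, essentially word for word, the arguments for Theorems~\ref{thmzerosharp} and \ref{thmzerosharp2}, replacing $L$ by $L[X]$, $0^\sharp$ by $X^\sharp$, and the class of Silver indiscernibles by the class of Silver indiscernibles for $L[X]$ lying above $\sup(X)$. The only genuinely new bookkeeping concerns the parameter $X$, and in particular the clause $\sup(X)<\gamma<\beta$ in the definition of $\Ce_X$, which is exactly what forces the embeddings produced to respect $X$.

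For part (1), the direction $X^\sharp\Rightarrow\SR(\Ce_X)$ proceeds as in Theorem~\ref{thmzerosharp}: granting $X^\sharp$, the class $I$ of Silver indiscernibles for $L[X]$ above $\sup(X)$ is closed and unbounded and, as in the unrelativized case, contains every uncountable $V$-cardinal $>\sup(X)$. I take $\alpha$ to be any uncountable limit cardinal above $\sup(X)$ and, given $\langle L_\beta[X],\in,\gamma\rangle\in\Ce_X$ with $\alpha\le\beta$, I choose uncountable cardinals $\sup(X)<\gamma'<\beta'<\alpha$ with $\gamma'\le\gamma$; an order-preserving map $I\cap[\gamma',\beta']\to I\cap[\gamma,\beta]$ sending $\gamma'\mapsto\gamma$ and $\beta'\mapsto\beta$ then generates an elementary embedding $\langle L_{\beta'}[X],\in,\gamma'\rangle\to\langle L_\beta[X],\in,\gamma\rangle$, witnessing reflection. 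For the converse $\SR(\Ce_X)\Rightarrow X^\sharp$, I reflect an arbitrary pair of $V$-cardinals $\gamma<\beta$ above $\alpha$ in $\Ce_X$ to obtain $\sup(X)<\gamma'<\beta'<\alpha$ and an elementary embedding $j\colon\langle L_{\beta'}[X],\in,\gamma'\rangle\to\langle L_\beta[X],\in,\gamma\rangle$. Since $j(\gamma')=\gamma>\gamma'$, the map $j$ is nontrivial; and since $\sup(X)<\gamma'$ we have $X=X\cap\gamma'$, a definable element of the source sitting below the distinguished constant $\gamma'$, so $j$ fixes $X$. Thus $j$ is a nontrivial elementary embedding between levels of $L[X]$ that fixes $X$, and the relativization of Kunen's theorem (\cite{Ku:EE}) yields $X^\sharp$.

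For part (2), I transcribe the proof of Theorem~\ref{thmzerosharp2}. Because $L[X]$ has definable Skolem functions and every element of $L[X]$ is definable from ordinals (with $X$ as the built-in predicate), I may assume the formula $\varphi(x)$ defining $\mathcal{C}$ carries only ordinal parameters $\alpha_0<\dots<\alpha_m$. Fixing $\kappa$ a limit of Silver indiscernibles for $L[X]$ with $\kappa>\max(\alpha_m,\sup(X))$, and given $B\in\mathcal{C}$ with $B\notin L_\kappa[X]$, I represent $B$ via indiscernibles as $B=\{y:L_{i_{n+1}}[X]\models\psi(y,i_0,\dots,i_n)\}$ and transfer it downward to $A=\{y:L_{j_{n+1}}[X]\models\psi(y,j_0,\dots,j_n)\}\in L_\kappa[X]$, using smaller indiscernibles $\alpha_m,\sup(X)<j_0<\dots<j_{n+1}<\kappa$. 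Indiscernibility gives $L[X]\models\varphi(A)\leftrightarrow\varphi(B)$, so $A\in\mathcal{C}$, and the $L[X]$-embedding induced by $i_k\mapsto j_k$ restricts to an elementary embedding $j\restriction A\colon A\to B$.

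The step I expect to require the most care is the converse in part (1): confirming that the reflected embedding $j$ genuinely fixes the parameter $X$, so that it counts as a nontrivial elementary embedding of levels of $L[X]$ in the language with the predicate for $X$, and hence the relativized form of Kunen's theorem produces $X^\sharp$ rather than merely an $\in$-embedding that ignores $X$. This is precisely where the clause $\sup(X)<\gamma$ in the definition of $\Ce_X$ is used, and it is essentially the only place where the relativized argument departs in substance from the proofs of Theorems~\ref{thmzerosharp} and \ref{thmzerosharp2}.
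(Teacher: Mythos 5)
Your overall plan is exactly the one the paper intends: Theorem~\ref{thmzerosharp3} is stated with no proof beyond the remark that similar arguments to Theorems~\ref{thmzerosharp} and~\ref{thmzerosharp2} apply, and your relativization of the forward direction of (1) and of part (2) is the standard one. The problem lies in the converse direction of (1), at precisely the step you flag as delicate. Your justification that $j$ fixes $X$ --- namely that ``$X=X\cap\gamma'$ is a definable element of the source sitting below the distinguished constant $\gamma'$'' --- does not work. First, the structures in $\Ce_X$ are $\langle L_\beta[X],\in,\gamma\rangle$ in the plain $\in$-language with a single constant; $X$ is a parameter of the definition of the class, not part of the structures, and it need not be definable in $\langle L_{\beta'}[X],\in,\gamma'\rangle$ at all (for $X$ generic over $L$, homogeneity makes $X$ non-ordinal-definable in $L[X]$ and in its levels). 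Second, even for a definable element, elementarity only yields that $j(X)$ satisfies the same definition in the target, not that $j(X)=X$. Third, ``sitting below $\gamma'$'' gives nothing: all you know is $\crit(j)\le\gamma'$, and nothing prevents $\crit(j)\le\sup(X)$, in which case $j$ may move elements of $X$ and $j(X)$ may be a set $Y\neq X$ with $L[Y]=L[X]$; the relativized form of Kunen's theorem then no longer applies directly, and no conclusion about $X^\sharp$ is obtained.

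What does work is this: if $\crit(j)>\sup(X)$, then for a set of ordinals $X$ bounded strictly below the critical point one computes directly that $j(X)=X$ (since $j(X)\subseteq j(\sup(X))+1=\sup(X)+1$ and $\xi\in X$ iff $j(\xi)=\xi\in j(X)$), so $j$ is elementary for the structures with $X$ adjoined as a predicate and the relativized Kunen argument yields $X^\sharp$. The missing ingredient is therefore a reason why the reflected embedding has critical point above $\sup(X)$, and the definition of $\Ce_X$ as written does not force this. The standard remedy is to enrich the structures with constants for every ordinal $\le\sup(X)$ (equivalently, to work with $\langle L_\beta[X],\in,X,\langle\xi\rangle_{\xi\le\sup(X)},\gamma\rangle$, still a $\Pi_1$-definable class with parameter $X$): any elementary embedding between two such structures is then the identity on $\sup(X)+1$, hence has critical point above $\sup(X)$, fixes $X$, and your argument goes through verbatim. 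You should either build this into $\Ce_X$ or supply a separate argument excluding $\crit(j)\le\sup(X)$; as it stands, the converse of (1) is not proved.
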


These results suggest the following forms of $\SR$ restricted to  inner models. Let $M$ be an inner model. Writing $M_\alpha$ for $(V_\alpha)^M$, consider the following principle for $\Gamma$ a lightface definability class:

\begin{quotation}
\begin{itemize}
\item[$\Gamma$-$\SR(M)$:] (\emph{$\Gamma$-Structural Reflection for $M$})   There exists an ordinal $\alpha$ that  reflects every $\Gamma$-definable class $\Ce$ of relational structures of the same type such that $\Ce \subseteq M$, i.e.,    for every $A$ in $\mathcal{C}$ there exists $B$ in $\mathcal{C}\cap M_\alpha$ and an elementary embedding $j$ from $B$ into $A$. (\emph{Warning:} $j$ may not be in $M$.)
\end{itemize}
\end{quotation}
The corresponding version for a boldface $\Gamma$  being as follows:
\begin{quotation}
\begin{itemize}
\item[]   There exist a proper class of ordinals $\alpha$ that  reflect every $\Gamma$-definable, with parameters in $M_\alpha$, class $\Ce$ of relational structures of the same type such that $\Ce \subseteq M$. \end{itemize}
\end{quotation}



The last theorem shows that, for every set of ordinals $X$, the existence of $X^\sharp$ implies $\SR(L[X])$, i.e., $\mathbf{\Sigma_n}$-$\SR(L[X])$, for every $n$.

Similar results may be obtained for other canonical inner models, e.g., $L[U]$, the canonical inner model for one measurable cardinal $\kappa$, where $U$ is the (unique) normal measure on $\kappa$ in $L[U]$.  Let $\Ce^U$ be the  class of structures of the form $\langle L_\beta[U] ,\in ,\gamma\rangle$, with  
$\gamma <\beta$  uncountable cardinals (in $V$). The class $\Ce^U$ is $\Pi_1$-definable in $V$,  with $U$ as a parameter. 
By using well-known facts about $0^\dagger$ due to Solovay (see \cite{Kan:THI} 21), and arguing similarly as in Theorems \ref{thmzerosharp} and  \ref{thmzerosharp2}, respectively, one obtains the following:

\begin{theorem}
\label{thmzerodagger}
The following are equivalent:
\begin{enumerate}
\item $\SR(\Ce^U)$ holds if and only if $0^\dagger$ exists. 

\item 
If $0^\dagger$ exists, then $\SR(\Ce)$ holds for every class $\Ce$ that is definable in $L[U]$, with parameters.
\end{enumerate}
\end{theorem}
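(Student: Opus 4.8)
The plan is to prove the two parts separately, in each case mirroring the corresponding argument for $0^\sharp$ (Theorems \ref{thmzerosharp} and \ref{thmzerosharp2}) and feeding in Solovay's analysis of $0^\dagger$ (\cite{Kan:THI}, \S 21) in place of Silver's theory of indiscernibles for $L$. Throughout I write $\kappa$ for the measurable cardinal of $L[U]$, and I use repeatedly that in any sufficiently tall level $\langle L_\beta[U],\in\rangle$ the cardinal $\kappa$ is definable as the unique measurable cardinal and $U$ as its unique normal measure; hence any elementary embedding between two such levels fixes both $\kappa$ and $U$.

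For the equivalence in (1), the direction $0^\dagger\Rightarrow\SR(\Ce^U)$ is the verbatim analogue of the implication (2)$\Rightarrow$(1) in Theorem \ref{thmzerosharp}. Assuming $0^\dagger$, the uncountable cardinals of $V$ form a club class $I$ of indiscernibles for $L[U]$, all lying above $\kappa$. Taking $\alpha$ to be any uncountable limit cardinal and, given $\langle L_\beta[U],\in,\gamma\rangle\in\Ce^U$ with $\alpha\le\beta$, choosing uncountable $V$-cardinals $\gamma'<\beta'<\alpha$ with $\gamma'\le\gamma$, an order-preserving map $I\cap[\gamma',\beta']\to I\cap[\gamma,\beta]$ sending $\gamma'\mapsto\gamma$ and $\beta'\mapsto\beta$ generates the required elementary embedding $j\colon\langle L_{\beta'}[U],\in,\gamma'\rangle\to\langle L_\beta[U],\in,\gamma\rangle$. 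The only point beyond the $0^\sharp$ case is that one shifts only indiscernibles above $\kappa$, holding $\kappa$, $U$ and all ordinals $\le\kappa$ fixed.

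For the converse $\SR(\Ce^U)\Rightarrow 0^\dagger$ I follow (1)$\Rightarrow$(2) of Theorem \ref{thmzerosharp}, but with one essential precaution. Let $\alpha$ witness $\SR(\Ce^U)$ and choose uncountable $V$-cardinals $\gamma<\beta$ with $\alpha\le\gamma$ and, crucially, $\gamma>\kappa$. Reflection yields $\gamma'<\beta'<\alpha$ and a nontrivial $j\colon\langle L_{\beta'}[U],\in,\gamma'\rangle\to\langle L_\beta[U],\in,\gamma\rangle$ with $j(\gamma')=\gamma$. Since the target satisfies ``there is a measurable cardinal below the distinguished constant'' (witnessed by $\kappa<\gamma$), elementarity forces the same in the source, whence $\kappa<\gamma'$; and $\gamma'=\kappa$ is excluded, since then $\gamma=j(\kappa)=\kappa$ would contradict $\gamma>\kappa$. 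Thus $j$ displaces the ordinal $\gamma'>\kappa$, so $j$ is nontrivial above $\kappa$, and by Solovay's theorem (\cite{Kan:THI}, \S 21) — the $L[U]$-analogue of Kunen's theorem, according to which a nontrivial elementary embedding between levels of $L[U]$ that moves an ordinal above $\kappa$ produces a class of indiscernibles for $L[U]$ above $\kappa$ — we conclude that $0^\dagger$ exists. Here lies the main obstacle: unlike the $0^\sharp$ case, a nontrivial embedding between levels of $L[U]$ does not by itself yield $0^\dagger$, because a critical point below $\kappa$ would merely reproduce $0^\sharp$ (below $\kappa$ one has $L_\kappa[U]=L_\kappa$). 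The forcing of $\gamma'>\kappa$ via the definability of $(\kappa,U)$ is exactly what guarantees that the reflected embedding acts nontrivially on the part of $L[U]$ above $\kappa$, which is the configuration detected by $0^\dagger$.

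Part (2) is the analogue of Theorem \ref{thmzerosharp2}. Fixing $\Ce$ defined in $L[U]$ by $\varphi(x)$ with parameters, I write each element of $L[U]$ as a Skolem term $t^{L[U]}(\vec{\delta},\vec{\imath})$ in ordinals $\vec{\delta}\le\kappa$ (together with $\kappa$ and $U$) and indiscernibles $\vec{\imath}$ above $\kappa$. Taking $\alpha$ to be a limit of indiscernibles lying above $\kappa$ and above all parameters, an arbitrary $B\in\Ce$ is represented by such a term with indiscernibles $\vec{\imath}$; replacing $\vec{\imath}$ by smaller indiscernibles $\vec{\jmath}<\alpha$ (still above $\kappa$ and the parameters) yields $A\in L_\alpha[U]$, indiscernibility gives $A\in\Ce$, and the induced elementary embedding $L[U]\to L[U]$ restricts to an elementary $j\restriction A\colon A\to B$. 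The sole difference from the $0^\sharp$ argument is the bookkeeping of the two sorts of indiscernibles of $L[U]$: only those above $\kappa$ are shifted, while $\kappa$, $U$ and the below-$\kappa$ parameters are held fixed.
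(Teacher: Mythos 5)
Your proposal is correct and takes essentially the same route as the paper, which gives no independent proof of this theorem but simply directs the reader to transpose the arguments of Theorems \ref{thmzerosharp} and \ref{thmzerosharp2} to $L[U]$ using Solovay's analysis of $0^\dagger$. Your write-up supplies exactly that transposition, including the one genuinely new point (forcing the reflected embedding to move an ordinal above $\kappa$ via the definability of $\kappa$ and $U$ in the levels $L_\beta[U]$) that makes the $L[U]$ analogue of Kunen's theorem applicable.
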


Also, similarly as in Theorem \ref{thmzerosharp3},  one can obtain the analogous result for $X^\dagger$, for every set of ordinals $X$.  Namely, 
given a set of ordinals $X$, let $\mathcal{C}^U_X$ be the class of structures of the form $\langle L_\beta [U, X] ,\in ,\gamma\rangle$, where  $\gamma$ and $\beta$  are cardinals (in $V$) and  $sup(X) <\gamma <\beta$. Then $\Ce^U_X$ is $\Pi_1$-definable with $U$ and $X$ as  parameters. 

\begin{theorem}\label{thmzerodagger2}
$ $
\begin{enumerate}
\item $\SR(\Ce^U_X)$ holds if and only if $X^\dagger$ exists.
\item If $X^\dagger$ exists, then $\SR(\mathcal{C})$ holds for all classes  $\mathcal{C}$  that are definable  in $L[U,X]$, with parameters.
\end{enumerate}
\end{theorem}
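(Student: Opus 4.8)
The plan is to run the proofs of Theorems~\ref{thmzerosharp} and~\ref{thmzerosharp2} essentially verbatim, with $L$ replaced by $L[U,X]$, the sharp replaced by the dagger, and Kunen's Theorem replaced by Solovay's analysis of $0^\dagger$ and its relativisation to $X^\dagger$ (see \cite{Kan:THI}, \S21); the relativisation to the parameter $X$ is handled exactly as in the passage from Theorem~\ref{thmzerosharp} to Theorem~\ref{thmzerosharp3}, and the passage from $L$ to $L[U]$ exactly as in Theorem~\ref{thmzerodagger}. The one structural point to keep in mind throughout is that the clause $\sup(X)<\gamma<\beta$ in the definition of $\Ce^U_X$, together with the fact that $\gamma,\beta$ are $V$-cardinals, guarantees that both $U$ and $X$ are elements of, and are correctly defined inside, each structure $\langle L_\beta[U,X],\in,\gamma\rangle\in\Ce^U_X$. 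Consequently any elementary embedding between two members of $\Ce^U_X$ fixes both $U$ and $X$.

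For the forward direction of (1), suppose $\alpha$ reflects $\Ce^U_X$ and pick $V$-cardinals $\gamma<\beta$ with $\alpha\leq\gamma$. By $\SR(\Ce^U_X)$ there are $V$-cardinals $\gamma'<\beta'<\alpha$ with $\sup(X)<\gamma'$ together with an elementary embedding
\[
j:\langle L_{\beta'}[U,X],\in,\gamma'\rangle \to \langle L_{\beta}[U,X],\in,\gamma\rangle .
\]
Since $j(\gamma')=\gamma$, the map $j$ is nontrivial, and by the remark above $j(U)=U$ and $j(X)=X$. The decisive observation, which is exactly what separates the dagger case from the rigid situation of $L$, is that the \emph{internal iteration maps} of the measure $U$ never fix $U$: iterating $U$ moves its critical point upward and carries $U$ to the normal measure on the image, a different set. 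Hence a nontrivial elementary self-embedding of (an initial segment of) $L[U,X]$ that fixes $U$ (and $X$) cannot be an iterate of $U$, and by the relativised form of Solovay's theorem the existence of such a map yields $X^\dagger$.

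For the converse in (1), assume $X^\dagger$ exists and fix the associated class of Solovay indiscernibles for $L[U,X]$ lying above $\sup(X)$. Taking $\alpha$ to be any uncountable limit cardinal above $\sup(X)$, given $\langle L_\beta[U,X],\in,\gamma\rangle\in\Ce^U_X$ with $\alpha\leq\beta$ one chooses cardinals $\gamma'<\beta'<\alpha$ above $\sup(X)$ and an order-preserving map of the indiscernibles in the interval $[\gamma',\beta']$ onto those in $[\gamma,\beta]$ sending $\gamma'\mapsto\gamma$ and $\beta'\mapsto\beta$; this generates the required elementary embedding into $\langle L_\beta[U,X],\in,\gamma\rangle$, exactly as in Theorem~\ref{thmzerosharp}. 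For (2), assuming $X^\dagger$ and a class $\Ce$ defined in $L[U,X]$ by a formula with ordinal parameters below some $\alpha_m$, one takes $\kappa$ to be a limit of indiscernibles above $\alpha_m$ and $\sup(X)$, represents any $B\in\Ce$ from indiscernibles $i_0<\cdots<i_{n+1}$ via a formula $\psi$, transcribes this representation downward onto indiscernibles $j_0<\cdots<j_{n+1}<\kappa$ to obtain $A\in\Ce\cap L_\kappa[U,X]$, and uses the canonical embedding $L[U,X]\to L[U,X]$ sending $i_k\mapsto j_k$ to produce the elementary map $j\restriction A:A\to B$; this is precisely the argument of Theorem~\ref{thmzerosharp2} relativised to $L[U,X]$.

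The main obstacle, and the only place where the argument is not a mechanical transcription of the $0^\sharp$ proofs, is the forward direction of (1): one must invoke Solovay's characterisation of $0^\dagger$ in its relativised, initial-segment form and, above all, verify that the reflected embedding $j$ genuinely fixes the predicate $U$, so that it cannot be (a fragment of) an internal iteration of the measure. It is exactly here that the requirement that the members of $\Ce^U_X$ be tall enough ($\gamma,\beta$ cardinals with $\sup(X)<\gamma<\beta$) earns its keep, by forcing $U$ and $X$ to be correctly computed and hence preserved by $j$.
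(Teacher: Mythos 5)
Your proposal follows exactly the route the paper takes: the paper gives no separate proof of this theorem, deriving it ``similarly as in Theorems~\ref{thmzerosharp}, \ref{thmzerosharp2} and \ref{thmzerosharp3}'' from Solovay's facts about $0^\dagger$ in \cite{Kan:THI} \S 21, which is precisely your plan, and you correctly isolate the one genuinely new point in the dagger setting, namely that the reflected embedding cannot be an internal iterate of the measure because it carries the measure of the domain structure to the measure of the target structure (both restrictions of the same $U$) and hence fixes the relevant measurable cardinal. One caveat: your justification that $j$ fixes $X$ --- that $X$ is ``an element of, and correctly defined inside, each structure'' --- conflates membership with definability (an arbitrary element of $L_\beta[U,X]$ need not be definable there, hence need not be fixed, especially if $\crit(j)\leq\sup(X)$); this is harmless if one reads the structures of $\Ce^U_X$ as carrying $U$ and $X$ as distinguished predicates, so that the relativised Solovay theorem is applied to embeddings elementary in that expanded language, but as written it is the one step I would tighten.
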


Analogous results should also hold for canonical inner models for stronger large-cardinal notions. E.g., for the canonical inner model $L[\mathcal{E}]$  for a strong cardinal, as in \cite{Ko:Ext} or \cite{Mitchell2010}, and its sharp, \emph{zero pistol} $0^\P$. Also for a canonical inner model for a proper class of strong cardinals, as in \cite{Sch:Core}, and its sharp, \emph{zero hand grenade}. For inner models for stronger large cardinal notions, e.g., one Woodin cardinal, the situation is less clear, although analogous results should hold given the appropriate canonical inner model and its corresponding sharp.

\section{Product Structural reflection}

Recall that for any set $S$ of relational structures $\mathcal{A}=\langle A, \ldots \rangle$ of the same type,  the set-theoretic product $\prod S$  is the structure whose universe is the  set of all functions $f$ with domain $S$ such that $f(\mathcal{A})\in A$, for every $\mathcal{A}\in S$, and whose relations are defined point-wise.

In this section we shall consider the following general product form of structural reflection, which is a variation of the Product Reflection Principle $(\PRP)$ introduced in \cite{BW}:

\begin{quotation}
\begin{itemize}
\item[$\PSR$:] (\emph{Product Structural Reflection}) For every definable class of relational structures $\mathcal{C}$ of the same type, $\tau$, there exists an ordinal $\alpha$ that   {\emph{product-reflects}} $\mathcal{C}$, i.e.,  
for every $\Ae$ in $\mathcal{C}$ there exists a set $S$ of structures of type $\tau$ (although not necessarily in $\Ce$) with $\Ae\in S $ and an elementary embedding $j:\prod (\Ce \cap V_\alpha)\to \prod S$.
\end{itemize}
\end{quotation}

Similarly as in the case of $\SR$ (see section \ref{SR}), we may formally define $\PSR$  as a  schema. 
Thus, we say that an ordinal $\alpha$ witnesses $\Gamma(P)$-$\PSR$ (where $\Gamma$ is a    definability  class and $P$ a set or a proper class) if $\alpha$ product-reflects all classes $\Ce$ that are $\Gamma$-definable with parameters in $P$. 
Our remarks in section \ref{SR} also apply here. In particular, an ordinal $\alpha$ witnesses $\mathbf{\Pi_n}$-$\PSR$ if and only if it witnesses $\mathbf{\Sigma_{n+1}}$-$\PSR$. Moreover, we obtain equivalent principles by restricting to classes of natural structures. Thus, for $\Gamma$ a lightface definability class, we define:
\begin{quotation}
\begin{itemize}
\item[$\Gamma$-$\PSR$:] (\emph{$\Gamma$-Product Structural Reflection})    There exists an  ordinal $\alpha$ that   product-reflects all $\Gamma$-definable  classes $\Ce$ of natural structures. 
\end{itemize}
\end{quotation}
The corresponding  version for $\Gamma$ boldface being as follows:
\begin{quotation}
\begin{itemize}
\item[]    There exist a proper class of ordinals $\alpha$ that   product-reflect all $\Gamma$-definable, with parameters in $V_\alpha$, classes $\Ce$ of natural structures. 
\end{itemize}
\end{quotation}

As in  \cite[Proposition 3.2]{BW} one can show that  every cardinal $\kappa$ in $C^{(1)}$ witnesses $\mathbf{\Sigma_1}$-$\PSR$. The converse also holds, and in fact we have the following:

\begin{proposition}
\label{Sigma2}
For every $n$, if $\kappa$ witnesses $\mathbf{\Pi_n}$-$\PSR$, then $\kappa \in C^{(n+1)}$. 
\end{proposition}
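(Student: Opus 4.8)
The plan is to prove, by induction on $n \geq 0$, the statement that if $\kappa$ witnesses $\mathbf{\Pi_n}$-$\PSR$ then $V_\kappa \preceq_{\Sigma_{n+1}} V$, which is precisely $\kappa \in C^{(n+1)}$. The first move is to reduce to the hypothesis $\kappa \in C^{(n)}$: for $n=0$ this is vacuous, since $C^{(0)}$ is the class of all ordinals; for $n \geq 1$ I note that any $\mathbf{\Pi_{n-1}}$-definable class is a fortiori $\mathbf{\Pi_n}$-definable, so $\kappa$ also witnesses $\mathbf{\Pi_{n-1}}$-$\PSR$, and the induction hypothesis (the proposition at index $n-1$) yields $\kappa \in C^{(n)}$. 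Granting $\kappa \in C^{(n)}$, I must verify $\Sigma_{n+1}$-elementarity for a formula $\varphi(x) \equiv \exists y\,\psi(x,y)$ with $\psi \in \Pi_n$ and a parameter $a \in V_\kappa$. The downward direction is immediate: if $V_\kappa \models \varphi(a)$, pick $b \in V_\kappa$ with $V_\kappa \models \psi(a,b)$; since $\kappa \in C^{(n)}$ gives agreement on $\Pi_n$ formulas, $V \models \psi(a,b)$, whence $V \models \varphi(a)$. All the content is in the upward direction.

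Assume $V \models \varphi(a)$. I would consider the class $\Ce$ of natural structures $\langle V_\beta, \in, a, b\rangle$ such that $\beta \in C^{(n)}$, $b \in V_\beta$, and $V_\beta \models \psi(a,b)$; since $\beta \in C^{(n)}$ and $\psi$ is $\Pi_n$, the last clause is equivalent to $V \models \psi(a,b)$, and $\Ce$ is $\Pi_n$-definable with the single parameter $a \in V_\kappa$ (for $n=0$ the clause $\beta \in C^{(0)}$ is dropped and the defining condition is merely $\Delta_0$). Choosing a genuine witness $b$ for $\varphi(a)$ in $V$ and any $\beta \in C^{(n)}$ large enough to contain $a,b$, the structure $\Ae = \langle V_\beta, \in, a, b\rangle$ lies in $\Ce$, so $\Ce \ne \emptyset$. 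Applying the fact that $\kappa$ witnesses $\mathbf{\Pi_n}$-$\PSR$ to $\Ce$ and this $\Ae$, I obtain a set $S$ of structures of the same type with $\Ae \in S$ and an elementary embedding $j : \prod(\Ce \cap V_\kappa) \to \prod S$.

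The key step, which I expect to be the main obstacle, is that $\PSR$ (unlike plain $\SR$) does not hand us a member of $\Ce \cap V_\kappa$ directly, so I must extract even the nonemptiness of $\Ce \cap V_\kappa$ from the existence of $j$. Here I would run a cardinality argument: the type of these structures contains constant symbols interpreting $a$ and $b$, so every structure of this type, in particular every member of $S$, has nonempty universe; hence $\prod S$ is nonempty, and since $\Ae \in S$ has universe $V_\beta$ with more than one element, $\prod S$ has more than one element. As an elementary embedding preserves the sentence $\neg\exists x\forall y\,(y=x)$, the domain $\prod(\Ce \cap V_\kappa)$ must also have more than one element; but were $\Ce \cap V_\kappa$ empty, its product would be the one-point structure consisting of the empty function alone. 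Therefore $\Ce \cap V_\kappa \ne \emptyset$. Fixing any $\langle V_{\bar\beta}, \in, a, \bar b\rangle \in \Ce \cap V_\kappa$, we have $\bar\beta < \kappa$, $\bar\beta \in C^{(n)}$, $\bar b \in V_\kappa$, and $V_{\bar\beta} \models \psi(a,\bar b)$, so that $V \models \psi(a,\bar b)$; appealing once more to $\kappa \in C^{(n)}$ gives $V_\kappa \models \psi(a,\bar b)$, and hence $V_\kappa \models \varphi(a)$. Combining the two directions yields $V_\kappa \preceq_{\Sigma_{n+1}} V$, closing the induction. The only delicate point beyond this one-point trick is the bookkeeping that $\Ce$ stays within the $\Pi_n$ definability class, which rests on the $\Pi_n$-definability of $C^{(n)}$ together with the convention that classes of natural structures are measured by their defining condition; everything else is standard absoluteness.
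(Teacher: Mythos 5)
Your argument is correct for $n\geq 1$, but it reaches the conclusion by a genuinely different route from the paper's. The paper keeps the class large: it consists of \emph{all} structures $\langle V_\alpha,\in,a,\{R^\alpha_\varphi\}_{\varphi\in\Pi_n}\rangle$ with $\alpha\in C^{(n)}$, each carrying its full $\Pi_n$ relational diagram, so that $\Ce\cap V_\kappa$ is trivially nonempty; the work then consists in transferring the statement ``$R_\varphi\neq\emptyset$'' from $\prod S$ (where it is witnessed by the factor $\Ae_\lambda$) back through the elementary embedding to $\prod(\Ce\cap V_\kappa)$, which produces some $\alpha<\kappa$ in $C^{(n)}$ with $V_\alpha\models\exists x\,\varphi(x,a)$. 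You instead shrink the class to the structures that already carry a witness $b$ for $\psi$ as a distinguished constant, so that the only thing to be extracted from the product embedding is $\Ce\cap V_\kappa\neq\emptyset$, and you get that by the counting observation that $\prod\emptyset$ is the one-point structure while $\prod S$ has at least two elements (every factor has nonempty universe and the factor $\Ae$ has universe $V_\beta$), together with the fact that elementary embeddings preserve the sentence $\exists x\exists y\,(x\neq y)$ in both directions. This is sound, and it is arguably more elementary than the paper's argument, since it avoids any reasoning about how relations are evaluated in product structures; what it gives up is the extra information the diagram approach yields (namely a specific $V_\alpha$, $\alpha<\kappa$, reflecting the given $\Sigma_{n+1}$ fact), which you do not need here.

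The one place where your write-up falls short is the base case $n=0$. The predicate ``$y=V_\beta$'' is $\Pi_1$ and not $\Delta_0$, so your class $\Ce$ is not $\mathbf{\Pi_0}$-definable even after dropping the clause ``$\beta\in C^{(0)}$''; the parenthetical claim that the defining condition becomes ``merely $\Delta_0$'' is therefore not right as stated. The repair is the one the paper points to (cf.\ the proof of Proposition \ref{sigma1}): for $n=0$ replace the $V_\beta$'s by arbitrary transitive sets containing $a$ and $b$ and satisfying $\psi(a,b)$; your counting argument then goes through verbatim, with $\Delta_0$-absoluteness for transitive sets playing the role that $\bar\beta\in C^{(n)}$ plays for $n\geq 1$.
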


\begin{proof}
We shall prove the case $n=1$. The general case follows by induction, using a similar argument. The case $n=0$ is similar to the case $n=1$, but simpler, as it suffices  to consider a class of structures with domain a transitive set (see the proof of Proposition \ref{sigma1}). So, suppose $\varphi(x,y)$ is a $\Pi_1$ formula with $x,y$ as the only free variables, $a\in V_\kappa$, and $V\models \exists x\varphi(x,a)$. Let $\Ce$ be the $\Pi_1$-definable, with $a$ as a parameter,  class of structures of the form $\Ae_\alpha = \langle V_{\alpha }, \in , a ,\{ R^{\alpha}_\varphi \}_{\varphi \in \Pi_1} \rangle$ with $\alpha \in C^{(1)}$, and where $\{ R^{\alpha}_\varphi\}_{\varphi \in \Pi_1}$ is the $\Pi_1$ relational diagram for $\langle V_{\alpha}, \in ,a\rangle$, i.e., if $\varphi(x_1,\ldots ,x_n, \overline{a})$, $\overline{a}$ a constant symbol, is a $\Pi_1$ formula in the language of $\langle V_{\alpha},\in ,a \rangle$, then $$R^\alpha_\varphi =\{ \langle a_1,\ldots ,a_n\rangle: \langle V_{\alpha},\in,a\rangle\models ``\varphi[a_1,\ldots ,a_n, a]"\}\, .$$  Let $\lambda >\kappa$ be in $C^{(2)}$, so that $V_\lambda \models \exists x\varphi(x,a)$. By $\PSR$  there exists a set $S$ that contains $\Ae_\lambda$ and an elementary embedding 
$$j:\prod_{\alpha <\kappa} \Ae_\alpha \to \prod S.$$

Since $\Ae_\lambda \models \exists x \varphi(x,a)$, we have $R^\lambda_\varphi  \ne \emptyset$. Hence,  since $\Ae_\lambda \in S$,
$$\prod S \not\models \langle R^\alpha_\varphi\rangle_{\Ae_\alpha \in S}  = \emptyset$$
and therefore, by elementarity of $j$, 
$$\prod_{\alpha <\kappa}\Ae_\alpha \not\models \langle R^\alpha_\varphi\rangle_{\alpha <\kappa}  = \emptyset$$
which implies that $\Ae_\alpha \models R^\alpha_\varphi \ne \emptyset$, for some $\alpha <\kappa$.
Hence, $\Ae_\alpha \models \exists x \varphi(x,a)$. 

Note that, if $\varphi(x,y)$ had been a bounded formula, instead of $\Pi_1$, then we would have, by upward absoluteness, that $\Ae_\kappa \models \exists x\varphi(x,y)$,   thus showing  that $\kappa \in C^{(1)}$. 
Thus, since $\alpha , \kappa\in C^{(1)}$, we have that $V_\alpha \preceq_{\Sigma_1}V_\kappa$, and therefore
$V_\kappa \models \exists x\varphi(x,a)$.

Now suppose $V_\kappa \models \exists x\varphi(x,a)$. Since $\kappa\in C^{(1)}$, by upward absoluteness, $V\models \exists x\varphi(x,a)$.
\end{proof}

Recall  that a cardinal $\kappa$ is \emph{$\lambda$-strong}, where $\lambda>\kappa$,  if there exists an elementary embedding $j: V\to M$, with $M$ transitive, $\crit(j)=\kappa$,  $j(\kappa)>\lambda$, and with $V_\lambda$ contained in $M$. A cardinal $\kappa$ is \emph{strong} if it is $\lambda$-strong for every cardinal $\lambda >\kappa$.

The  following  proposition is proved similarly as in \cite[3.3]{BW}.

\begin{proposition}
\label{prop1}
If $\kappa$ is a strong cardinal, then $\kappa$ witnesses $\mathbf{\Pi_1}$-$\PSR$.
\end{proposition}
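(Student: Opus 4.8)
The plan is to use a strong embedding to push the product $\prod(\Ce\cap V_\kappa)$ forward, exploiting strongness precisely to force the given structure $\Ae$ into the target index set so that it can serve as one of the factors of $\prod S$.

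Fix a $\Pi_1$ formula $\psi(x,p)$ defining $\Ce$, with parameter $p\in V_\kappa$ (and type $\tau\in V_\kappa$), and let $\Ae\in\Ce$ be arbitrary. First I would choose a cardinal $\lambda$ above the rank of $\Ae$ and invoke $\lambda$-strongness of $\kappa$ to fix an elementary embedding $j\colon V\to M$ with $M$ transitive, $\crit(j)=\kappa$, $j(\kappa)>\lambda$, and $V_\lambda\subseteq M$. Since $\crit(j)=\kappa$, the map $j$ fixes $V_\kappa$ pointwise, so $j(p)=p$, $j(\tau)=\tau$, and $j$ carries $\Ce\cap V_\kappa=\{y\in V_\kappa:\psi(y,p)\}$ to the set
$$T:=j(\Ce\cap V_\kappa)=\{y\in (V_{j(\kappa)})^M: M\models\psi(y,p)\},$$
computed in $M$. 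Setting $P:=\prod(\Ce\cap V_\kappa)$, elementarity of $j$ gives $j(P)=(\prod T)^M$, and the restriction $j\restriction P\colon P\to (\prod T)^M$ is an elementary embedding (this is a statement about the two fixed structures $P$ and $j(P)$, hence absolute to $V$).

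The role of strongness is exactly to place $\Ae$ inside $T$. Indeed $\Ae\in V_\lambda\subseteq M$, and since $\psi$ is $\Pi_1$ and $M$ is transitive with $\Ae,p\in M$, downward absoluteness of $\Pi_1$ formulas yields $M\models\psi(\Ae,p)$ from $V\models\psi(\Ae,p)$; moreover $\operatorname{rank}(\Ae)<\lambda<j(\kappa)$, so $\Ae\in (V_{j(\kappa)})^M$ and therefore $\Ae\in T$. Thus I would take $S:=T$, a set of structures of type $\tau$ with $\Ae\in S$, and the composite of $j\restriction P$ with the inclusion into the genuine product as the candidate reflecting embedding.

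The one genuine obstacle is that the product witnessing $S$ must be computed in $V$, whereas $j\restriction P$ only lands in the $M$-product $(\prod T)^M$, whose universe is in general a proper subclass of the universe of the true product $\prod S=\prod T$. The hard part is therefore to verify that the inclusion $(\prod T)^M\hookrightarrow \prod T$ is elementary, so that composing it with $j\restriction P$ yields an elementary embedding $P\to\prod S$ in $V$. I would establish this by a Feferman--Vaught analysis of direct products: every first-order formula about a direct product reduces to Boolean conditions on the finite-or-infinite cardinalities of the index sets $\{\Be\in T:\Be\models\theta(\vec f(\Be))\}$ determined coordinatewise by its parameters. For parameters $\vec f\in (\prod T)^M$ these index sets lie in $M$ and their cardinalities are absolute, while witnesses to existential quantifiers can be assembled coordinatewise inside $M$ (each factor $\Be\in T$ and its universe belong to $M$, and $M\models\mathrm{AC}$). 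Hence $(\prod T)^M$ and $\prod T$ satisfy the same formulas with parameters in $(\prod T)^M$, which gives the required elementarity. This Feferman--Vaught absoluteness of direct products is the step I expect to demand the most care, and is presumably where the cited argument of \cite{BW} does the real work.
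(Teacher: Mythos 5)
Your proposal is correct and follows essentially the same route as the paper's proof: fix $\lambda$ above the rank of $\Ae$, take a $\lambda$-strongness embedding $j:V\to M$ with $j(\kappa)>\lambda$ and $V_\lambda\subseteq M$, let $S=j(\Ce\cap V_\kappa)=\{X\in V^M_{j(\kappa)}: M\models\varphi(X)\}$, restrict $j$ to the product, and use downward absoluteness of $\Pi_1$ formulas to place $\Ae$ in $S$. The only difference is that you make explicit (via a Feferman--Vaught analysis) the step that the internally computed product $(\prod S)^M$ sits elementarily inside the true product $\prod S$, a detail the paper passes over by writing the codomain simply as $\prod(\cdots)$ and deferring to \cite{BW}; your identification of this as the delicate point, and your sketch for handling it, are both accurate.
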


\begin{proof}
Let $\kappa$ be a strong cardinal and let $\Ce$ be a $\Pi_1$-definable, with parameters in $V_\kappa$,  proper class of structures in a fixed  relational language $\tau\in V_\kappa$. Let $\varphi (x)$ be a $\Pi_1$ formula defining it.  

Given any $\mathcal{A}\in \Ce$, let $\lambda\in C^{(1)}$ be greater than or equal to $\kappa$ and with  $\mathcal{A}\in V_\lambda$.

Let $j:V\to M$ be an elementary embedding, with $crit(j)=\kappa$, $V_\lambda \subseteq M$, and $j(\kappa)>\lambda$.

By elementarity, the restriction of $j$ to $\Ce \cap V_\kappa$ yields an elementary embedding
$$h:\prod (\Ce \cap V_\kappa)\to \prod (\{ X: M\models \varphi(X)\} \cap V^M_{j(\kappa)}).$$
Let $S:= \{ X: M\models \varphi(X)\} \cap V^M_{j(\kappa)}$. Since $\mathcal{A}\in V_\lambda$ and $\varphi(x)$ is $\Pi_1$, by downward absoluteness $V_\lambda \models \varphi(\mathcal{A})$. Hence, since the fact that $\lambda \in C^{(1)}$ is $\Pi_1$-expressible and therefore downwards absolute for transitive classes, and since $V_\lambda \subseteq M$, it follows  that  $V_\lambda \preceq_{\Sigma_1}M$ and therefore  $M\models \varphi(\mathcal{A})$. Moreover $\mathcal{A}\in V_\lambda \subseteq V^M_{j(\kappa)}$. Thus, $\Ae \in S$. 
\end{proof}

Since the Product Reflection Principle ($\PRP$) introduced in \cite{BW} when  restricted to $\Pi_1$-definable classes is an easy consequence of $\Pi_1$-$\PSR$, from \cite{BW} we obtain the following: 

\begin{theorem}
The following are equivalent:
\begin{enumerate}
\item $\Pi_1$-$\PSR$
\item There exists a strong cardinal.
\end{enumerate}
\end{theorem}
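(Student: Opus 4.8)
The plan is to establish the equivalence by going through the Product Reflection Principle $(\PRP)$ of \cite{BW}. The forward direction, that a strong cardinal yields $\Pi_1$-$\PSR$, is already handed to us by Proposition \ref{prop1}, so the real content is the reverse direction: deriving the existence of a strong cardinal from $\Pi_1$-$\PSR$. The stated strategy is to observe that $\Pi_1$-$\PSR$ implies the restriction of $\PRP$ to $\Pi_1$-definable classes, and then invoke the analysis already carried out in \cite{BW} to conclude that a strong cardinal exists. So the first thing I would do is recall precisely how $\PRP$ is formulated in \cite{BW} and verify the implication ``$\Pi_1$-$\PSR$ implies $\Pi_1$-restricted $\PRP$''; the excerpt asserts this is ``easy,'' so I expect it amounts to checking that the product-reflection clause of $\PSR$ directly supplies the elementary embedding between products that $\PRP$ demands, once one matches up the quantifier structure over $\Pi_1$-definable classes.

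Concretely, I would first fix a $\Pi_1$-definable class $\Ce$ and an $\Ae \in \Ce$, and unpack the conclusion of $\Gamma$-$\PSR$: there is an ordinal $\alpha$ (equivalently a witnessing cardinal $\kappa \in C^{(2)}$ by Proposition \ref{Sigma2}) together with a set $S \ni \Ae$ of structures of the correct type and an elementary embedding $j : \prod(\Ce \cap V_\alpha) \to \prod S$. The key step is to check that this is exactly the datum $\PRP$ requires for $\Pi_1$-classes, so that the two principles coincide on this definability level. Once that identification is in place, the derivation of a strong cardinal is quoted wholesale from \cite{BW}: their argument produces, from the product-reflection data, the elementary embeddings $j : V \to M$ with critical point $\kappa$ and $V_\lambda \subseteq M$ witnessing $\lambda$-strength for arbitrarily large $\lambda$.

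The main obstacle, and the place where care is genuinely needed, is the verification that $\Pi_1$-$\PSR$ really does entail the $\Pi_1$-restricted form of $\PRP$ rather than some weaker variant. The two principles are phrased slightly differently — $\PSR$ as stated here allows the auxiliary set $S$ of structures to contain members \emph{not} in $\Ce$, whereas the original $\PRP$ of \cite{BW} may impose a different bookkeeping on $S$ — so I would need to confirm that the freedom granted by $\PSR$ is at least as strong as what $\PRP$ requires, and that the restriction to natural structures (via the second version of $\Gamma$-$\SR$ and its $\PSR$ analogue) loses no generality. Assuming this matching goes through as the excerpt indicates it does, the remainder is a direct citation: the theorem follows by combining Proposition \ref{prop1} for the forward direction with the $\PRP$-based extraction of a strong cardinal from \cite{BW} for the converse.
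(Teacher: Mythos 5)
Your proposal matches the paper's own argument: the forward direction is exactly Proposition \ref{prop1} (a strong cardinal witnesses $\mathbf{\Pi_1}$-$\PSR$, hence the lightface version), and the converse is obtained, just as the paper does, by observing that $\Pi_1$-$\PSR$ implies the $\Pi_1$-restricted Product Reflection Principle of \cite{BW} and then citing the extraction of a strong cardinal carried out there. The one point you flag as needing care --- matching the formulation of $\PSR$ (where $S$ may contain structures outside $\Ce$) against that of $\PRP$ --- is precisely the content the paper dismisses as ``an easy consequence,'' so your reading is faithful to the intended proof.
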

as well as its  boldface version:
\begin{theorem}
The following are equivalent:
\begin{enumerate}
\item $\mathbf{\Pi_1}$-$\PSR$
\item There exists a proper class of strong cardinals.
\end{enumerate}
\end{theorem}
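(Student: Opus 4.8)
The plan is to prove the equivalence
\[
\mathbf{\Pi_1}\text{-}\PSR \iff \text{there exists a proper class of strong cardinals}
\]
by upgrading the single-cardinal results already available in the excerpt to their global (boldface) forms, exploiting the general correspondence between boldface principles and proper-class witnessing that was established in Section~\ref{SR}. Recall that, by the closing-off remarks preceding Proposition~\ref{sigma1}, a boldface principle $\mathbf{\Gamma}$-$\PSR$ holds if and only if the set of ordinals $\alpha$ witnessing $\mathbf{\Gamma}$-$\PSR$ (with parameters allowed in $V_\alpha$) is a proper class. So both directions reduce to producing, or extracting, a proper class of witnessing cardinals.

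For the direction from large cardinals to reflection, I would assume there is a proper class of strong cardinals and show each one witnesses $\mathbf{\Pi_1}$-$\PSR$. This is exactly the content of Proposition~\ref{prop1}, which is stated for an arbitrary strong cardinal $\kappa$ and an arbitrary $\Pi_1$-definable class with parameters in $V_\kappa$; its proof makes no use of $\kappa$ being the \emph{least} strong cardinal or of any global hypothesis. Hence if $\kappa$ is strong then $\kappa$ witnesses $\mathbf{\Pi_1}$-$\PSR$, and a proper class of strong cardinals yields a proper class of witnesses, which by the equivalence above gives $\mathbf{\Pi_1}$-$\PSR$. This direction is essentially immediate from the machinery already in place.

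For the converse, I would assume $\mathbf{\Pi_1}$-$\PSR$ and argue that strong cardinals form a proper class. Fix any ordinal $\delta$; the goal is to find a strong cardinal above $\delta$. Since $\mathbf{\Pi_1}$-$\PSR$ holds, there is a witness $\kappa>\delta$, i.e.\ an ordinal $\kappa$ that product-reflects every $\Pi_1$-definable class with parameters in $V_\kappa$. By Proposition~\ref{Sigma2} (with $n=1$) such a $\kappa$ lies in $C^{(2)}$, in particular $\kappa$ is a cardinal. Now I would invoke the argument of \cite{BW} that extracts strongness from product-reflection at $\kappa$: given $\lambda>\kappa$, one applies product-reflection to the canonical $\Pi_1$-definable class of natural structures coding the $V_\alpha$'s together with their $\Pi_1$ relational diagrams (the same class $\Ce$ used in the proof of Proposition~\ref{Sigma2}), and the elementary embedding $j:\prod(\Ce\cap V_\kappa)\to\prod S$ with $\Ae_\lambda\in S$ produces, via a factor/ultrapower-style analysis, an embedding $V\to M$ with critical point $\kappa$, $j(\kappa)>\lambda$, and $V_\lambda\subseteq M$, witnessing $\lambda$-strongness. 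Since $\lambda$ was arbitrary, $\kappa$ is strong, and since $\delta$ was arbitrary, there is a proper class of strong cardinals.

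The main obstacle is this last extraction step: converting a single product-reflection embedding into a genuine $V$-to-$M$ strong-extender embedding, controlling the critical point and the amount of $V_\lambda$ captured in the target. The excerpt signals that this is the delicate part, since Proposition~\ref{prop1} is said to be proved ``similarly as in \cite[3.3]{BW}'' and the final theorems are derived ``from \cite{BW}'' rather than self-contained here. I would therefore lean on the $\PRP$ analysis of \cite{BW}, noting that $\Pi_1$-$\PSR$ implies the $\Pi_1$-restricted Product Reflection Principle (as the excerpt observes), so that the strong-cardinal extraction already carried out there applies verbatim at each witness $\kappa$; the only genuinely new observation needed is the routine boldface bookkeeping that turns ``arbitrarily large witnesses'' into ``a proper class of strong cardinals,'' which follows from the closing-off equivalence together with Proposition~\ref{Sigma2}.
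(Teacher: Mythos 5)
Your right-to-left direction is fine and is exactly the paper's route: Proposition~\ref{prop1} applies to an arbitrary strong cardinal, and the closing-off equivalence converts a proper class of witnesses into the boldface principle. The gap is in the converse, at the step where you claim that the extraction argument turns a witness $\kappa>\delta$ into an embedding $j:V\to M$ with $\crit(j)=\kappa$, so that $\kappa$ itself is strong. Plain (first-version) $\PSR$ only gives an elementary embedding $j:\prod(\Ce\cap V_\kappa)\to\prod S$ with no faithfulness requirement, and without faithfulness the derived ultrafilters $E^\beta_a$ for $\beta<\kappa$ need not be principal, so the critical point of the extracted embedding can lie strictly below $\kappa$. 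The paper says this explicitly: if $\kappa$ witnesses $\Pi_n$-$\PSR$, then only ``some cardinal less than or equal to $\kappa$'' is $\Pi_n$-strong; the control of the critical point (the principality of the small ultrafilters and Claim~\ref{claimcrit} in the proof of Theorem~\ref{mainstrong}) is available only for the \emph{second version} of $\PSR$, whose embeddings are required to be faithful and $\subseteq$-chain-preserving. Compare the situation for $\SR$, where a witness of $\mathbf{\Pi_1}$-$\SR$ is only guaranteed to be supercompact \emph{or a limit of supercompacts}. Consequently your step ``pick a witness $\kappa>\delta$, hence a strong cardinal above $\delta$'' fails as argued: the strong cardinal you extract could be $\leq\delta$, and arbitrarily large witnesses do not by themselves yield arbitrarily large strong cardinals. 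The ``routine boldface bookkeeping'' you invoke at the end is precisely where the work lies.

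The repair uses the boldface allowance of parameters in an essential way. Given $\delta$, apply product-reflection at a witness $\kappa>\delta$ to a class that is $\Pi_1$-definable \emph{with parameter} (legitimate, since parameters in $V_\kappa$ are permitted): for instance, expand the structures $\Ae_\alpha$ of Theorem~\ref{mainstrong} by constants $c_x$ for every $x\in V_{\delta+1}$, a set-sized type with parameter $V_{\delta+1}\in V_\kappa$. Since every structure in $\Ce\cap S$ interprets $c_x$ as $x$, and $j$ preserves the pointwise-interpreted constants of the product, the derived maps $k_\beta$ fix every element of $V_{\delta+1}$; then the same argument that extracts $\omega_1$-completeness from $k_\beta(\omega)=\omega$ shows each $E^\beta_a$ is $\delta^+$-complete, so the extracted embedding has critical point above $\delta$ and the resulting strong cardinal lies in $(\delta,\kappa]$. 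Ranging over all $\delta$ gives the proper class. This parameter-relativized extraction (rather than a critical-point computation at each witness) is the content of the boldface theorem of \cite{BW} that the paper is quoting, together with the observation that $\Pi_1$-$\PSR$ implies the restricted $\PRP$.
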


This shows that strong cardinals are related to $\PSR$ as supercompact cardinals are to $\SR$ (Theorems \ref{thmpi1} and \ref{thmpi1b}). For the higher levels of definability, i.e., $n>1$, the large cardinal notion that corresponds to $\Pi_n$-$\PSR$, analogous to the notion of $C^{(n)}$-extendible cardinal in the case of $\SR$, is the  following: 

\begin{definition} \cite{BW}
\label{defSigmaStrong}
For $\Gamma$ a definability class,  a cardinal $\kappa$  is \emph{$\lambda$-$\Gamma$-strong}, $\lambda$ an ordinal,   if for every $\Gamma$-definable (without parameters)  class $A$ there is   an elementary embedding $j:V\to M$, with $M$ transitive, $\crit(j)=\kappa$, $V_\lambda \subseteq M$,  and $A\cap V_\lambda \subseteq j(A)$.

A cardinal $\kappa$ is \emph{$\Gamma$-strong} if it is $\lambda$-$\Gamma$-strong for every ordinal $\lambda$.


\end{definition}

As with the case of strong cardinals, standard arguments show (cf. \cite{Kan:THI} 26.7(b)) that $\kappa$  is \emph{$\lambda$-$\Gamma$-strong} if and only if for every $\Gamma$-definable (without parameters)  class $A$ there is   an elementary embedding $j:V\to M$, with $M$ transitive, $\crit(j)=\kappa$, $V_\lambda \subseteq M$,  $j(\kappa)>\lambda$, and $A\cap V_\lambda \subseteq j(A)$. As shown in \cite{BW}, every strong cardinal   is $\Sigma_2$-strong. Also, a cardinal is $\Pi_n$-strong if and only if is $\Sigma_{n+1}$-strong. Moreover, if $n\geq 1$ and $\lambda\in C^{(n+1)}$, then the following are equivalent for a cardinal $\kappa <\lambda$:
\begin{enumerate}
\item $\kappa$ is $\lambda$-$\Pi_{n}$-strong. 
\item There is an elementary embedding $j:V\to M$, with $M$ transitive, $\crit(j)=\kappa$, $V_\lambda \subseteq M$,  and  $M\models ``\lambda \in C^{(n)}"$.
\end{enumerate}

Similarly as in Proposition \ref{prop1}, one can prove the following (see \cite{BW} for details):
\begin{prop}
\label{prop6}
If $\kappa$ is a $\Pi_n$-strong cardinal,  then $\kappa$ witnesses $\mathbf{\Pi_{n}}$-$\PSR$.
\end{prop}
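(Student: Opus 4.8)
The plan is to follow the template of Proposition \ref{prop1} essentially verbatim, replacing the single strong embedding by the $\Pi_n$-strong embedding that also moves the defining class correctly. Fix a $\Pi_n$-definable class $\Ce$ of natural structures (which suffices by the reduction to natural structures recorded in Section \ref{SR}), defined without loss of generality by a $\Pi_n$ formula $\varphi(x)$, and suppose $\kappa$ is $\Pi_n$-strong. Given $\Ae\in\Ce$, first I would choose $\lambda\in C^{(n+1)}$ with $\lambda\geq\kappa$ and $\Ae\in V_\lambda$. Since $\varphi$ is $\Pi_n$ and $\Ae\in\Ce$, and $\lambda\in C^{(n+1)}\subseteq C^{(n)}$, we get $V_\lambda\models\varphi(\Ae)$; this is the analogue of the downward-absoluteness step in Proposition \ref{prop1}.

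Next I would apply $\Pi_n$-strength of $\kappa$ to the class $A=\Ce$ itself: there is an elementary embedding $j:V\to M$ with $M$ transitive, $\crit(j)=\kappa$, $V_\lambda\subseteq M$, $j(\kappa)>\lambda$, and $\Ce\cap V_\lambda\subseteq j(\Ce)$. Restricting $j$ to $\Ce\cap V_\kappa$ yields, by elementarity, an embedding
$$
h:\prod(\Ce\cap V_\kappa)\to\prod\bigl(\{X:M\models\varphi(X)\}\cap V^M_{j(\kappa)}\bigr),
$$
and I would set $S:=\{X:M\models\varphi(X)\}\cap V^M_{j(\kappa)}$, exactly as in the strong case. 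The point of $\PSR$ is to produce such an $S$ together with the embedding into $\prod S$, so $h$ is the required elementary embedding once I verify that $\Ae\in S$ and that $S$ has the right type.

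The verification that $\Ae\in S$ splits into two parts. For membership in $V^M_{j(\kappa)}$, note $\Ae\in V_\lambda\subseteq M$ and $\lambda<j(\kappa)$, so $\Ae\in V^M_{j(\kappa)}$ since $V_\lambda\subseteq M$ forces the rank to be preserved below $\lambda$. For $M\models\varphi(\Ae)$ I would use the characterization of $\Pi_n$-strength recorded just before this proposition: because $\lambda\in C^{(n+1)}$, the embedding can be taken with $M\models``\lambda\in C^{(n)}"$, so in $M$ the structure $V^M_\lambda$ is $\Sigma_n$-elementary in $M$; combined with $V_\lambda\subseteq M$ and the fact that $\Pi_n$ truth of $\varphi$ is downward absolute to $V_\lambda$, this gives $M\models\varphi(\Ae)$. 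Hence $\Ae\in S$, and $S$ is a set of structures of the correct type since its members are among the $M$-structures satisfying the type-correct formula $\varphi$. I expect the main obstacle to be precisely this absoluteness bookkeeping — confirming that $V_\lambda\models\varphi(\Ae)$ transfers up to $M\models\varphi(\Ae)$ — which is why I would lean on the stated equivalence $(1)\Leftrightarrow(2)$ for $\lambda$-$\Pi_n$-strength to get $M\models``\lambda\in C^{(n)}"$ rather than argue absoluteness by hand; the product and elementarity steps are then purely formal, as in \cite{BW}.
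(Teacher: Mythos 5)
Your proof is correct and follows essentially the same route as the paper, which itself only says to argue as in Proposition \ref{prop1} with the $\Pi_n$-strong embedding in place of the strong one: choose $\lambda\in C^{(n+1)}$ above $\kappa$ and the rank of $\Ae$, take $j:V\to M$ with $V_\lambda\subseteq M$, $j(\kappa)>\lambda$ and $M\models``\lambda\in C^{(n)}"$, and transfer $\varphi(\Ae)$ from $V_\lambda$ to $M$ to see $\Ae\in S$. The only cosmetic redundancy is that you invoke both the clause $\Ce\cap V_\lambda\subseteq j(\Ce)$ and the $M\models``\lambda\in C^{(n)}"$ characterization, when either one alone already yields $M\models\varphi(\Ae)$.
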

The following theorem then follows from the main result in \cite{BW}:
\begin{theorem}
The following are equivalent for $n\geq 1$:
\begin{enumerate}
\item $\Pi_{n}$-$\PSR$
\item There exists a $\Pi_{n}$-strong cardinal.
\end{enumerate}
\end{theorem}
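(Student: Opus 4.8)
The plan is to prove the equivalence by sandwiching it between the two propositions already at hand and the main characterization of \cite{BW}. The easy direction is $(2)\Rightarrow(1)$: if $\kappa$ is $\Pi_n$-strong, then by Proposition \ref{prop6} $\kappa$ witnesses $\mathbf{\Pi_n}$-$\PSR$, i.e.\ $\kappa$ product-reflects every $\Pi_n$-definable class of natural structures with parameters in $V_\kappa$; a fortiori $\kappa$ product-reflects every parameter-free $\Pi_n$-definable class of natural structures, which is exactly what the lightface principle $\Pi_n$-$\PSR$ asserts. So $(1)$ holds.

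For the substantial direction $(1)\Rightarrow(2)$, let $\kappa$ be an ordinal witnessing $\Pi_n$-$\PSR$. First I would record, via Proposition \ref{Sigma2}, that $\kappa\in C^{(n+1)}$, so in particular $\kappa$ is a cardinal with $V_\kappa\preceq_{\Sigma_{n+1}}V$; this regularity is what will later pin down the critical point. The goal is to show $\kappa$ is $\Pi_n$-strong, i.e.\ that for every $\Pi_n$-definable class $A$ and every ordinal $\lambda$ there is $j:V\to M$ with $M$ transitive, $\crit(j)=\kappa$, $V_\lambda\subseteq M$, and $A\cap V_\lambda\subseteq j(A)$. By the equivalence recorded just after Definition \ref{defSigmaStrong}, for $\lambda\in C^{(n+1)}$ it suffices to produce such a $j$ with $V_\lambda\subseteq M$ and $M\models``\lambda\in C^{(n)}"$, so I would fix an arbitrary $\lambda\in C^{(n+1)}$ above $\kappa$ and aim for that.

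The mechanism is to run the argument of Proposition \ref{prop1} in reverse. I would apply $\Pi_n$-$\PSR$ to the $\Pi_n$-definable class $\Ce$ of natural structures $\Ae_\alpha=\langle V_\alpha,\in,A\cap V_\alpha,\{R^\alpha_\varphi\}_{\varphi\in\Pi_n}\rangle$ with $\alpha\in C^{(n)}$, where the $R^\alpha_\varphi$ form the $\Pi_n$ relational diagram of $\langle V_\alpha,\in,A\cap V_\alpha\rangle$ exactly as in the proof of Proposition \ref{Sigma2}; the diagram both pins the complexity of $\Ce$ at $\Pi_n$ and guarantees that any elementary embedding between members of $\Ce$ is $\Pi_n$-elementary and respects the predicate for $A$. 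Since $\lambda\in C^{(n)}$ we have $\Ae_\lambda\in\Ce$, so product-reflection at $\kappa$ yields a set $S\ni\Ae_\lambda$ and an elementary embedding $j:\prod(\Ce\cap V_\kappa)\to\prod S$. From this product embedding I would extract a genuine class embedding $j^*:V\to M$ by deriving, as in \cite{BW}, a $(\kappa,\lambda)$-extender $E$: composing $j$ with the projection of $\prod S$ onto the $\Ae_\lambda$-coordinate and reading off, for each $a\in V_\lambda$, the ultrafilter on $V_\kappa$ measuring which threads of the domain product are sent below $a$ produces the coordinates of $E$, and one sets $M=\mathrm{Ult}(V,E)$ with $j^*$ the ultrapower map. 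Using $\kappa\in C^{(n+1)}$ together with the relational diagram one checks $\crit(j^*)=\kappa$ and that every $a\in V_\lambda$ is represented in $M$, so $V_\lambda\subseteq M$; elementarity and $\lambda\in C^{(n)}$ then give $M\models``\lambda\in C^{(n)}"$ and $A\cap V_\lambda\subseteq j^*(A)$.

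I expect the extraction of the class embedding $j^*:V\to M$ from the product embedding $j$ to be the main obstacle, and it is precisely the point where the product in $\PSR$, as opposed to ordinary $\SR$, is essential: the threads of $\prod(\Ce\cap V_\kappa)$ encode the coherent system of seeds needed to assemble an extender, and verifying the \L o\'s-style coherence of the derived ultrafilters together with the three extender clauses (critical point $\kappa$, $V_\lambda\subseteq M$, and $A\cap V_\lambda\subseteq j^*(A)$) is the technical heart. Since this is exactly the content of the main result of \cite{BW} once one observes that the restriction of the Product Reflection Principle to $\Pi_n$-definable classes is an immediate consequence of $\Pi_n$-$\PSR$ (as noted for $n=1$ before the theorem), in practice I would discharge this obstacle by that reduction rather than redoing the extender bookkeeping.
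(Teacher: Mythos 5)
Your proposal matches the paper's treatment: the paper likewise obtains $(2)\Rightarrow(1)$ from Proposition \ref{prop6} and $(1)\Rightarrow(2)$ by reducing to the main result of \cite{BW}, which is exactly the reduction you invoke in your final paragraph, and your extender sketch parallels the paper's own argument for Theorem \ref{mainstrong}. The only caveat is that the lightface $\Pi_n$-$\PSR$ used here lacks the faithfulness clause of the \emph{second version} of the principle, so you cannot conclude $\crit(j^\ast)=\kappa$ as you assert, but only that some cardinal $\leq\kappa$ is $\Pi_n$-strong; this still suffices for the purely existential conclusion (2).
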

The corresponding boldface version  also holds. Namely,
\begin{theorem}
The following are equivalent for $n\geq 1$:
\begin{enumerate}
\item $\mathbf{\Pi_{n}}$-$\PSR$
\item There exists a proper class of $\Pi_{n}$-strong cardinals.
\end{enumerate}
\end{theorem}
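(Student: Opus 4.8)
The plan is to prove the two implications separately, following the same pattern used for the boldface theorems on supercompact and $C^{(n)}$-extendible cardinals (e.g.\ Theorem \ref{thmpi1b} and its analogues above), where in each case a \emph{proper class} of the relevant large cardinals is shown to match a boldface reflection principle, and where the key auxiliary fact is that every witness of the principle is itself a large cardinal of the given type, or a limit of such.

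For the direction (2)$\Rightarrow$(1) the argument is immediate from Proposition \ref{prop6}: every $\Pi_n$-strong cardinal $\kappa$ witnesses $\mathbf{\Pi_n}$-$\PSR$, i.e.\ it product-reflects all $\mathbf{\Pi_n}$-definable classes of natural structures with parameters in $V_\kappa$. Hence, if there is a proper class of $\Pi_n$-strong cardinals, there is a proper class of ordinals witnessing $\mathbf{\Pi_n}$-$\PSR$, which is exactly the boldface principle.

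For the direction (1)$\Rightarrow$(2), the key step will be to establish that every ordinal $\kappa$ witnessing $\mathbf{\Pi_n}$-$\PSR$ is either a $\Pi_n$-strong cardinal or a limit of $\Pi_n$-strong cardinals, in exact analogy with the remark following Theorem \ref{thmpi1b}. Granting this, the conclusion follows at once: $\mathbf{\Pi_n}$-$\PSR$ supplies a proper class of witnesses $\kappa$, and each contributes $\Pi_n$-strong cardinals cofinally in the ordinals, so their collection is a proper class. To prove the key step I would relativize the lightface argument of \cite{BW} (which shows that a witness of $\Pi_n$-$\PSR$ produces a $\Pi_n$-strong cardinal) to the boldface setting, now exploiting the availability of parameters in $V_\kappa$. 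First I would invoke Proposition \ref{Sigma2} to note that $\kappa\in C^{(n+1)}$, a property that, together with the choice of a target ordinal $\lambda\in C^{(n+1)}$, aligns the product-reflection data with the $\lambda$-$\Pi_n$-strongness characterization recorded after Definition \ref{defSigmaStrong}. Then, fixing a $\Pi_n$-definable class $A$ and $\lambda$, I would feed into the product-reflection at $\kappa$ the canonical $\Pi_n$-definable class of natural structures coding the relevant $\Pi_n$-diagram (as in the proofs of Propositions \ref{Sigma2} and \ref{prop1}) and read off from the resulting elementary embedding of products an embedding $j\colon V\to M$ with the required properties.

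The main obstacle will be precisely this extraction: producing a genuine elementary embedding $j\colon V\to M$ with $\crit(j)=\kappa$, $V_\lambda\subseteq M$, $j(\kappa)>\lambda$, and $A\cap V_\lambda\subseteq j(A)$ out of the elementary embedding $j\colon\prod(\Ce\cap V_\kappa)\to\prod S$ furnished by $\PSR$. Controlling the critical point and verifying $j(\kappa)>\lambda$—which is what separates true $\Pi_n$-strongness from mere reflection and decides whether $\kappa$ itself is $\Pi_n$-strong or only a limit of such cardinals—is the delicate part. This is exactly the content of the main construction of \cite{BW}, which I would cite rather than reprove.
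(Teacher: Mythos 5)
Your proposal matches the paper's treatment: the paper derives this boldface version from Proposition \ref{prop6} for the direction from a proper class of $\Pi_n$-strong cardinals to $\mathbf{\Pi_n}$-$\PSR$, and from the main result of \cite{BW} (relativized with parameters, as in the remarks following Theorems \ref{thmpi1b} and \ref{mainstrong}) for the converse, exactly as you outline. The only nitpick is your phrase that each witness contributes $\Pi_n$-strong cardinals ``cofinally in the ordinals''---each witness only yields them cofinally below itself (or is itself one), but since the witnesses form a proper class this still gives the desired proper class of $\Pi_n$-strong cardinals.
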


Finally, for the $\PSR$ principle, the statement analogous to Vop\v{e}nka's Principle in the case of $\SR$ is the following:

\begin{definition}\cite{BW}
\emph{$\OR$ is Woodin} if for every definable $A\subseteq V$ there exists some $\alpha$ which is $A$-strong, i.e., for every $\gamma$ there is an elementary embedding $j:V\to M$ with $\crit (j)=\alpha$, $\gamma <j(\alpha)$, $V_\gamma \subseteq M$, and  $A\cap V_\gamma =j(A)\cap V_\gamma$.
\end{definition}

Note that if $\delta$ is a Woodin cardinal (see \cite{Kan:THI} for the definition of Woodin cardinal and its equivalent formulation in terms of $A$-strength), then $V_\delta$ satisfies $\OR$ is Woodin. The following equivalences then follow (cf.$ $ Theorem \ref{thmVP}):

\begin{theorem}[\cite{BW}]
The following schemata are equivalent:
\begin{enumerate}
\item $\PSR$, i.e., $\mathbf{\Pi_n}$-$\PSR$ for all $n$.
\item $\Pi_n$-$\PSR$ for all $n$.
\item There exists a $\Pi_n$-strong cardinal, for every $n$.
\item There is a proper class of $\Pi_n$-strong cardinals, for every $n$.
\item $\OR$ is Woodin.
\end{enumerate}
\end{theorem}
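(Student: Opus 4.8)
The five conditions split into two groups that are already tied together by the per-level theorems cited just above. The boldface equivalence ``$\mathbf{\Pi_n}$-$\PSR$ iff there is a proper class of $\Pi_n$-strong cardinals'', quantified over all $n$, gives at once that (1) and (4) are equivalent; the lightface equivalence ``$\Pi_n$-$\PSR$ iff there is a $\Pi_n$-strong cardinal'' gives that (2) and (3) are equivalent; and (4) trivially implies (3). So the plan is to reduce the whole theorem to proving $(3)\Leftrightarrow(5)$ together with the lightface-to-boldface implication $(3)\Rightarrow(4)$.

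For $(3)\Rightarrow(5)$ I would fix a definable class $A\subseteq V$, note it is $\Pi_n$-definable by some $\Pi_n$ formula $\varphi_A$ for some $n\geq 1$, and invoke (3) to get a $\Pi_n$-strong cardinal $\kappa$; the claim is that $\kappa$ is $A$-strong. Given $\gamma$, pick $\lambda\in C^{(n+1)}$ with $\lambda>\gamma$. Since $\kappa$ is $\lambda$-$\Pi_n$-strong, the stated characterization (valid as $\lambda\in C^{(n+1)}$) yields $j:V\to M$ with $M$ transitive, $\crit(j)=\kappa$, $V_\lambda\subseteq M$, $j(\kappa)>\lambda$, and $M\models$ ``$\lambda\in C^{(n)}$''. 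As $V_\lambda\subseteq M$ is transitive, $V_\lambda^M=V_\lambda$, and $M\models$ ``$\lambda\in C^{(n)}$'' means $V_\lambda\preceq_{\Sigma_n}M$; since $\Sigma_n$-elementarity also decides $\Pi_n$ formulas, for $x\in V_\lambda$ we get $x\in j(A)$ iff $M\models\varphi_A(x)$ iff $V_\lambda\models\varphi_A(x)$. Because $\lambda\in C^{(n)}$ (automatic, as $C^{(n+1)}\subseteq C^{(n)}$), the last is equivalent to $x\in A$. Hence $j(A)\cap V_\lambda=A\cap V_\lambda$, so $A\cap V_\gamma=j(A)\cap V_\gamma$ with $\gamma<j(\kappa)$, witnessing that $\kappa$ is $A$-strong. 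Thus $\OR$ is Woodin.

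For $(5)\Rightarrow(3)$ I would fix $n\geq 1$ and apply ``$\OR$ is Woodin'' to the $\Pi_n$-definable class $A=C^{(n)}$, obtaining an $A$-strong cardinal $\alpha$. By monotonicity of $\lambda$-$\Pi_n$-strongness in $\lambda$, it suffices to verify $\lambda$-$\Pi_n$-strongness for every $\lambda\in C^{(n+1)}$. Given such $\lambda$, apply $A$-strongness with some $\gamma>\lambda$ to get $j:V\to M$ with $\crit(j)=\alpha$, $V_\gamma\subseteq M$, $j(\alpha)>\gamma$, and $C^{(n)}\cap V_\gamma=j(C^{(n)})\cap V_\gamma$. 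Since $\lambda\in C^{(n)}$ and $\lambda<\gamma$, we get $\lambda\in j(C^{(n)})$, i.e. $M\models$ ``$\lambda\in C^{(n)}$''; as $V_\lambda\subseteq M$, the characterization gives that $\alpha$ is $\lambda$-$\Pi_n$-strong. Hence $\alpha$ is $\Pi_n$-strong and (3) holds.

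The remaining step, $(3)\Rightarrow(4)$, i.e. the passage from the lightface to the boldface schema, is the main obstacle and I would handle it exactly as the corresponding step for $C^{(n)}$-extendible cardinals in the proof of Theorem \ref{thmVP}. The plan is to show that a $\Pi_{n+1}$-strong cardinal is a limit of $\Pi_n$-strong cardinals and that, under (3)/(5), the $\Pi_n$-strong cardinals are unbounded in $\OR$: for $\kappa$ being $\Pi_{n+1}$-strong and $\lambda\in C^{(n+2)}$ one takes $j:V\to M$ with $\crit(j)=\kappa$, $V_\lambda\subseteq M$ and $M\models$ ``$\lambda\in C^{(n+1)}$'', and then argues that the assertion ``$\kappa$ is $\Pi_n$-strong'', being witnessed by extenders living in a bounded initial segment of the universe, reflects into $M$ and down to cofinally many cardinals below $\lambda$. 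The genuinely delicate point—the reason this is the crux rather than a routine computation—is the absoluteness of $\Pi_n$-strongness between $V$ and the target models $M$: one must verify that the extenders witnessing $\lambda'$-$\Pi_n$-strongness are computed correctly, using that $M$ agrees with $V$ up to $V_\lambda$ and that $C^{(n)}$ is reflected by $M$ below $\lambda$. Granting this, (3) yields (4), and all five conditions are equivalent, with (5) playing the role for $\PSR$ that Vop\v{e}nka's Principle plays for $\SR$.
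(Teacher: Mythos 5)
Your reduction of the theorem to the per-level equivalences is exactly what the paper intends (the paper gives no proof of this statement, deferring to \cite{BW}; the two displayed theorems just above yield $(1)\Leftrightarrow(4)$ and $(2)\Leftrightarrow(3)$ precisely as you say), and your proof of $(3)\Leftrightarrow(5)$ via the $C^{(n+1)}$-characterization of $\lambda$-$\Pi_n$-strength is correct. The problem is the step you yourself single out as the crux, $(3)\Rightarrow(4)$: the mechanism you propose cannot produce what is needed. Showing that a $\Pi_{n+1}$-strong cardinal $\kappa$ is a limit of $\Pi_n$-strong cardinals, or reflecting ``$\kappa$ is $\Pi_n$-strong'' into the target model $M$ and pulling it back through $j$, only ever yields $\Pi_n$-strong cardinals \emph{below} $\crit(j)=\kappa$: elementarity converts ``$M\models$ there is a $\Pi_n$-strong cardinal below $j(\kappa)$'' into ``there is one below $\kappa$'', and nothing in this scheme places a $\Pi_n$-strong cardinal above an arbitrary ordinal $\beta$. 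Since (3) only guarantees one $\Pi_m$-strong cardinal for each $m$, and these could a priori all be bounded, ``cofinally many below $\lambda$'' for $\lambda$ far above $\kappa$ does not follow, and the passage from (3) to the proper-class statement (4) remains open; it is genuinely the content the theorem imports from \cite{BW}.

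Two standard ways to close it, and you would need to carry out one of them. (a) Prove $(5)\Rightarrow(4)$ by the Woodin-cardinal stationarity trick: apply ``$\OR$ is Woodin'' to the class $C^{(n)}$ augmented with the club of ordinals above $\beta$, so that an $A$-strong cardinal for the augmented class must lie above $\beta$. This requires ``$\OR$ is Woodin'' for classes definable \emph{with parameters}, whereas your $(3)\Rightarrow(5)$ as written only delivers the parameter-free version, since Definition \ref{defSigmaStrong} quantifies over parameter-free classes; the gap between the two versions is itself nontrivial when the parameter has rank above the witnessing cardinal. (b) Argue upward rather than downward: from $j:V\to M$ with $\crit(j)=\kappa$, $V_\lambda\subseteq M$ and $\lambda>\beta$, elementarity gives $M\models$ ``$j(\kappa)$ is $\Pi_n$-strong'' with $j(\kappa)>\lambda>\beta$, and one must then verify that $M$ is sufficiently correct about $\Pi_n$-strength (that the extenders computed in $M$ really witness $\lambda'$-$\Pi_n$-strength in $V$, including for $\lambda'\geq\lambda$) to transfer this to $V$. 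You gesture at this absoluteness issue, but in the opposite direction, and no argument is supplied. As it stands the proposal establishes $(2)\Leftrightarrow(3)\Leftrightarrow(5)$ and $(1)\Leftrightarrow(4)\Rightarrow(3)$, but not the converse implication that makes all five equivalent.
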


A close inspection of the proofs of Propositions \ref{prop1} and \ref{prop6} reveals that, for $n>0$,  if $\kappa$ is a $\Pi_n$-strong cardinal, then for every $\Pi_n$-definable (with parameters in $V_\kappa$)  class $\Ce$ of relational  structures of the same type $\tau$, and for every $\beta\geq \kappa$, there exists a set $S$ of structures of type $\tau$ (although possibly not in $\Ce$) that contains $\Ce \cap V_\beta$ and there exists an elementary embedding  $h:\prod (\Ce \cap V_\kappa)\to \prod S$ with the following  properties:
\begin{enumerate}
\item  \emph{Faithful}: For every $f\in \prod (\Ce \cap V_\kappa)$,  $h(f)\restriction (\Ce \cap V_\kappa) =f$.
\item \emph{$\subseteq$-chain-preserving}: If $f\in \prod (\Ce \cap V_\kappa)$ is so that $f(\Ae)\subseteq  f(\Ae')$ whenever $A \subseteq A'$, then so is $h(f)$. 
\end{enumerate}
Moreover, if $\kappa$ witnesses $\Pi_n$-$\PSR$, then some cardinal les than or equal to $\kappa$ is $\Pi_n$-strong.
Thus, the following is an equivalent reformulation of   $\Gamma$-$\PSR$, for $\Gamma=\Gamma_n$ a lightface definability class with $n>0$:
\begin{quotation}
\begin{itemize}
\item[$\Gamma$-$\PSR$:] (\emph{$\Gamma$-Product Structural Reflection. Second version}) There exists a   cardinal $\kappa$ that \emph{product-reflects} all $\Gamma$-definable proper classes  $\Ce$ of relational structures of the same type $\tau$, i.e.,  for every $\beta$ there exists a set $S$ of structures of type $\tau$ that contains $\Ce \cap V_\beta$, and there exists  a faithful and  $\subseteq$-chain-preserving elementary embedding  $h:\prod (\Ce \cap V_\kappa)\to \prod S$.
\end{itemize}
\end{quotation}
The corresponding version for a boldface $\Gamma$ being as follows:
\begin{quotation}
\begin{itemize}
\item[] There exist a proper class of cardinals $\kappa$ that \emph{product-reflect} all $\Gamma$-definable, with parameters in $V_\kappa$, proper class $\Ce$ of relational structures of the same type. 
\end{itemize}
\end{quotation}

The next theorem implies that strong cardinals can be characterized in terms of $\Pi_1$-$\PSR$. The proof follows closely \cite[Theorem 5.1]{BW}, with the properties of faithfulness and $\subseteq$-chain preservation ((1) and (2) above) playing now a key role.

\begin{theorem}
\label{mainstrong}
There is a $\Pi_1$-definable, without parameters, class $\Ce$ of natural structures such that if a cardinal  $\kappa$ product-reflects $\Ce$ (second version), then $\kappa$ is a strong cardinal.
\end{theorem}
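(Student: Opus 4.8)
The plan is to realise the abstract product embedding $h$ as the derivation of a genuine extender embedding $j\colon V\to M$ witnessing $\lambda$-strength. I would take $\Ce$ to be the class of natural structures $\langle V_\alpha,\in\rangle$ (with trivial distinguished predicate) for which $\alpha\in C^{(1)}$. Since ``$x$ is some $V_\alpha$'' and ``$\mathrm{rank}(x)\in C^{(1)}$'' are both $\Pi_1$, the class $\Ce$ is $\Pi_1$-definable without parameters, as required. Now fix a cardinal $\lambda>\kappa$; the goal is to prove $\kappa$ is $\lambda$-strong, and then conclude that $\kappa$ is strong since $\lambda$ is arbitrary. Applying product-reflection for $\Ce$ (second version) I obtain, for a fixed $\beta\in C^{(1)}$ with $\beta>\lambda$, a set $S\supseteq\Ce\cap V_\beta$ together with a faithful and $\subseteq$-chain-preserving elementary embedding $h\colon\prod(\Ce\cap V_\kappa)\to\prod S$. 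Because $\Ce\cap V_\beta\subseteq S$ and $C^{(1)}$ is unbounded, I may fix a seed coordinate $\Ae^\ast=\langle V_\gamma,\in\rangle\in S$ with $\lambda\le\gamma<\beta$ and $\gamma\in C^{(1)}$; its universe $V_\gamma$ then contains every seed $a\in V_\lambda$.

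The central construction is to read off a $(\kappa,\lambda)$-extender $E=\langle E_a:a\in V_\lambda\rangle$ from $h$. For $X\subseteq V_\kappa$ in $V$, let $f_X\in\prod(\Ce\cap V_\kappa)$ be a $\subseteq$-monotone thread whose pointwise union recovers $X$, so that $f_X(\langle V_\alpha,\in\rangle)=X\cap V_{\rho(\alpha)}$ for a suitable $\rho(\alpha)<\alpha$ cofinal in $\kappa$ (the behaviour at limit points of $C^{(1)}$ is immaterial and handled routinely). By $\subseteq$-chain preservation $h(f_X)$ is again monotone, and I set $X\in E_a$ if and only if $a\in h(f_X)(\Ae^\ast)$. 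The Boolean operations on subsets of $V_\kappa$ correspond to operations on monotone threads that are preserved by the elementarity of $h$, so each $E_a$ is an ultrafilter and the $E_a$ cohere into an extender. Here faithfulness plays its role: since $h(f_X)(\langle V_\alpha,\in\rangle)=f_X(\langle V_\alpha,\in\rangle)=X\cap V_{\rho(\alpha)}$ for $\alpha\in C^{(1)}\cap\kappa$, the action of the resulting embedding below $\kappa$ is the identity, forcing $j_E\restriction V_\kappa=\mathrm{id}$ and $\crit(j_E)=\kappa$; placing seeds up to $\gamma\ge\lambda$ then yields $j_E(\kappa)>\lambda$.

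Let $j_E\colon V\to M=\mathrm{Ult}(V,E)$. Well-foundedness of $M$ ultimately rests on the observation that $\prod S$, carrying the pointwise membership relation, is itself well-founded (any $\in^\ast$-descending chain would descend in each coordinate, contradicting well-foundedness of the factors), which bounds the relevant ranks via the natural $\in$-preserving factoring of $M$ into $\prod S$; this part follows the template of the cited proof. The main obstacle, and precisely the step where $\subseteq$-chain preservation is indispensable, is proving $V_\lambda\subseteq M$: one must show that for every $X\subseteq V_\kappa$ the value $h(f_X)(\Ae^\ast)$ correctly computes $j_E(X)\cap V_\gamma$, so that the seeds $a\in V_\lambda$ together with the faithfully captured subsets of $V_\kappa$ generate all of $V_\lambda$ inside $M$ by the usual rank-by-rank extender analysis. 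Chain preservation is exactly what guarantees that these decoded images of subsets cohere across the seed coordinate, i.e.\ that $E$ is genuinely $\lambda$-strong rather than merely a long sequence of measures, faithfulness having already pinned the critical point and the behaviour below $\kappa$. Once $V_\lambda\subseteq M$, $\crit(j_E)=\kappa$, and $j_E(\kappa)>\lambda$ are in hand, $\kappa$ is $\lambda$-strong, and as $\lambda$ was arbitrary, $\kappa$ is strong.
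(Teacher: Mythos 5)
Your overall strategy -- decode an extender from the product embedding $h$ by projecting onto a high coordinate and evaluating $h$ on $\subseteq$-monotone threads $\{X\cap V_{\rho(\alpha)}\}_\alpha$, with faithfulness controlling the behaviour below $\kappa$ and chain-preservation supplying coherence -- is exactly the paper's strategy. But your choice of the class $\Ce$ omits the one feature that makes the argument go through, and this creates a genuine gap at the crucial step. The paper's structures are $\Ae_\alpha=\langle V_{\lambda_\alpha},\in,\alpha\rangle$, carrying the ordinal $\alpha$ as a distinguished element (which is why the universe must be $V_{\lambda_\alpha}$ with $\lambda_\alpha>\alpha$, since $\alpha\notin V_\alpha$). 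The interpretation of this constant in $\prod(\Ce\cap V_\kappa)$ is the thread $\{\alpha\}_{\alpha\in I}=\{\kappa\cap V_\alpha\}_{\alpha\in I}$, and any elementary embedding must send it to the interpretation of the same constant in $\prod S$; projecting onto the coordinate $\Ae_\beta$ then gives $k_\beta(\kappa)=\beta$, which is what ultimately yields $j_N(\kappa)=\sup(I_S)>\lambda$ and hence genuine $\lambda$-strength. With your bare structures $\langle V_\alpha,\in\rangle$ there is no such definable thread: applying your decoding to $X=\kappa$ gives $k(\kappa)=h(f_\kappa)(\Ae^\ast)\supseteq\kappa$ by chain-preservation and faithfulness, but nothing whatsoever forces $k(\kappa)\geq\lambda$ rather than $k(\kappa)=\kappa$, in which case the derived extender collapses to something trivial. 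Your sentence ``placing seeds up to $\gamma\ge\lambda$ then yields $j_E(\kappa)>\lambda$'' is precisely the unproved point, and it is not a technicality: it is the entire content of strength.

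Two smaller issues. First, faithfulness only tells you that $h(f)$ agrees with $f$ on the coordinates in $\Ce\cap V_\kappa$; it says nothing directly about the value $h(f_X)(\Ae^\ast)$ at your single high coordinate, so your claim that faithfulness ``forces $j_E\restriction V_\kappa=\mathrm{id}$'' is not immediate. The paper instead uses faithfulness to make the ultrafilters $E^\beta_a$ principal for the coordinates $\beta<\kappa$, and then must assemble a direct limit over \emph{all} coordinates $\beta\in I_S$ -- this is where Claim~\ref{claimcoherent}, the coherence of the $E^\beta_a$ across different $\beta$ via $\subseteq$-chain preservation, is indispensable -- in order to conclude $\crit(j_N)=\kappa$; a single-coordinate extender does not obviously inherit this. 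Second, the paper's class restricts to $\alpha$ of uncountable cofinality with $\alpha$ the $\alpha$-th element of $C^{(1)}$; these conditions feed into the $\omega_1$-completeness and well-foundedness arguments imported from \cite{BW}, and your class drops them as well. The fix is simply to use the paper's class of structures with the distinguished ordinal; most of the rest of your outline then becomes the paper's proof.
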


\begin{proof}
Let $C$ be the  class of all ordinals $\alpha <\kappa$ of uncountable cofinality such that $\alpha$ is the $\alpha$-th element of $C^{(1)}$. Let $\mathcal{C}$ be the $\Pi_1$-definable class of all structures  
$$\Ae_\alpha:=\langle V_{\lambda_\alpha}, \in ,  \alpha  \rangle$$  where  $\alpha\in C$, 
and $\lambda_\alpha$ is the least  cardinal in $C^{(1)}$ greater than $\alpha$

Note that, since by proposition \ref{Sigma2},  $\kappa\in C^{(2)}$,  $\kappa$ is a limit point of $C$.

Pick any $\gamma$  in $C$ greater than $\kappa$. We will show that $\kappa$ is $\gamma$-strong. By $\PSR$ there is  a faithful $\subseteq$-chain-preserving  elementary embedding $j:\prod (\Ce \cap V_\kappa)  \to \prod S$, where $S$ is some set with $(\Ce \cap V_\kappa)  \cup \{\Ae_\gamma\}\subseteq  S$. 

\medskip

Now pick any $\Ae_\beta \in \Ce \cap S$ and let  $$h_\beta :\prod S\to \Ae_\beta $$ be the projection map.
Let $I:=C\cap \kappa$ and define
$$k_\beta:V_{\kappa +1}\to V_{\beta+1}$$
by:
$$k_\beta(X)=h_\beta (  j(\{ X\cap V_{\alpha}\}_{\alpha \in I})).$$ Since $j$ is elementary, for all formulas $\varphi (x_1,\ldots ,x_n)$ and all $a_1,\ldots ,a_n \in \prod(\Ce \cap V_\kappa)$, if $\prod(\Ce \cap V_\kappa)\models \varphi [a_1,\ldots ,a_n]$, then $\prod S \models \varphi [j(a_1)\ldots ,j(a_n)].$
It easily follows that $k_\beta$ preserves Boolean operations, the subset relation, and is the identity on $\omega +1$. 

Note that $k_\beta(\kappa)=h_\beta( j(\{ \alpha\}_{\alpha \in I}))= \beta.$

\medskip

For each $a\in [\beta]^{<\omega}$,   define $E^\beta_a$ by
$$X\in E^\beta_a \quad \mbox{ iff }\quad X\subseteq [\kappa]^{|a|}  \mbox{ and } a\in k_\beta(X)\, .$$
Since $k_\beta(\kappa)=\beta$ and $k_\beta(|a|)=|a|$, we also have $k_\beta([\kappa]^{|a|})=[\beta]^{|a|}$, hence $[\kappa]^{|a|} \in E^\beta_a$. Since $k_\beta$ preserves Boolean operations and the $\subseteq$ relation, $E^\beta_a$ is a proper ultrafilter over $[\kappa]^{|a|}$. Moreover, since $k_\beta(\omega)=\omega$, a simple argument shows that $E^\beta_a$ is $\omega_1$-complete, hence the ultrapower ${\rm{Ult}}(V,E^\beta_a)$  is well-founded. Furthermore, since $j$ is faithful, if $\beta <\kappa$, then $E^\beta_a$ is the principal ultrafilter generated by $\{ a\}$. Let 
$$j^\beta_a:V\to M^\beta_a \cong {\rm{Ult}}(V, E^\beta_a)$$
with $M_a^\beta$ transitive, be the corresponding ultrapower embedding. Note that if $\beta  <\kappa$, then $M^\beta_a=V$ and $j^\beta_a$ is the identity.

\medskip

Let $\mathcal{E}_\beta:=\{ E^\beta_a: a\in [\beta]^{<\omega}\}$.  As in \cite{BW}, one can show that $\mathcal{E}_\beta$ is normal and coherent.
Thus, for each $a\subseteq b$ in $[\beta]^{<\omega}$ the maps $i^\beta_{ab}:M^\beta_a\to M^\beta_b$   given by 
$$i^\beta_{ab}([f]_{E^\beta_a})=[f\circ \pi_{ba}]_{E^\beta_b}$$
for all $f:[\kappa]^{|a|}\to V$,  are  well-defined and commute with the ultrapower embeddings $j^\beta_a$ (see \cite{Kan:THI} 26). 

Let   $M_{\Ee_\beta}$ be  the  direct limit of the directed system $$\langle \langle M^\beta_a:a\in [\beta]^{<\omega}\rangle, \langle i^\beta_{ab}:a\subseteq b\rangle\rangle$$   
and let $j_{\Ee_\beta} :V\to M_{\Ee_\beta}$ be the corresponding direct limit elementary embedding, i.e., 
$$j_{\Ee_\beta}(x)=[a,[c^a_x]_{E^\beta_a}]_{\Ee_\beta}$$
for some (any) $a\in [\beta]^{<\omega}$, and  where  $c^a_x:[\kappa]^{|a|}\to \{ x\}$. 


As in \cite{BW} one can also show  that
$M_{\Ee_\beta}$ is  well-founded. 
So, let $\pi_\beta:M_{\Ee_\beta} \to N_\beta$ be the transitive collapse, and let $j_{N_\beta}:V\to N_\beta$ be the corresponding elementary embedding, i.e., $j_{N_\beta}=\pi \circ j_{\Ee_\beta}$. Then, as in \cite{BW} we can show  that $V_\beta \subseteq N_\beta$ and 
$j_{N_\beta}(\kappa)\geq \beta$.
If $\beta >\kappa$, this implies that $\crit(j_{N_\beta})\leq \kappa$. (If $\beta <\kappa$, then $j_{N_\beta}:V\to V$ is the identity.)

\medskip

Let $I_S:=\{ \beta : \Ae_\beta \in \Ce \cap S\}$.

\begin{claim}
\label{claimcoherent}
If $\beta \leq \beta'$ are in $I_S$, then $E^\beta_a = E^{\beta'}_a$, for every $a\in [\beta]^{<\omega}$.
\end{claim}

\begin{proof}[Proof of claim]
Since $E^\beta_a, E^{\beta'}_a$ are proper ultrafilters over $[\kappa]^{|a|}$, it is sufficient to see that $E^\beta_a\subseteq E^{\beta'}_a$. So, suppose $X\in E^\beta_a$. Then $a\in k_\beta(X)=h_\beta (j(\{ X\cap V_\alpha\}_{\alpha \in I}))$. Since $\{ X\cap V_\alpha\}_{\alpha \in I}$ forms an $\subseteq$-chain and $j$ is $\subseteq$-chain-preserving, so does $j(\{ X\cap V_\alpha\}_{\alpha \in I})$. Hence, $k_\beta (X)\subseteq k_{\beta'}(X)$, and therefore $a\in k_{\beta'}(X)$, which yields $X\in E^{\beta'}_a$.
\end{proof}

By the claim above, for every $\beta <\beta'$ in $I_S$ the map
$$k_{\beta ,\beta'}:M_{\Ee_{\beta}}\to M_{\Ee_{\beta'}}$$
given by
$$k_{\beta ,\beta'}([a, [f]_{E^\beta_a}]_{\Ee_{\beta}})=[a,[f]_{E^{\beta'}_a}]_{\Ee_{\beta'}}$$
is well-defined and elementary. Moreover, it commutes with the embeddings  $j_{\Ee_\beta}:V\to M_{\Ee_\beta}$ and $j_{\Ee_{\beta'}}:V\to M_{\Ee_{\beta'}}$. Let $M$ be the direct limit of 
$$\langle \langle M_{\Ee_\beta}:\beta \in I' \rangle, \langle k_{\beta, \beta '}: \beta <\beta' \mbox{ in }I_S\rangle\rangle$$   
and let $j_M :V\to M$ be the corresponding direct limit elementary embedding, which is given by 
$$j_M(x)=[\beta , [a,[c^a_x]_{E^\beta_a}]_{\Ee_\beta}]$$
for some (any) $a\in [\beta]^{<\omega}$. Let $\pi^M:M\to N$ be the transitive collapse, and let $j_N=\pi^M\circ j_M:V\to N$.

\medskip

Let $\xi=\mbox{sup}(I_S)$. Note that, as $\gamma \in I_S$,  $\xi >\kappa$.

\begin{claim}
\label{claimcrit}
$j_N(\kappa)=\xi$
\end{claim}

\begin{proof}[Proof of claim]
As in \cite{BW}, we can show that $j_{N_\beta}(\kappa)\geq \beta$, for every $\beta \in I_S\setminus \kappa$. So, for such a  $\beta$, letting $\ell_{\beta ,N}$ be the unique elementary embedding such that   $j_N=\ell_{\beta,N}\circ j_{N_\beta}$, we have:
$$j_N(\kappa)=\ell_{\beta,N}(j_{N_\beta}(\kappa))\geq \ell_{\beta , N}(\beta)\geq \beta.$$
Hence, $j_N(\kappa)\geq \xi$. 
Also,    $j_{E^\beta_a}(\kappa)$ can be computed in $V_\beta$, for all $a\in [\beta]^{<\omega}$, and therefore  $j_{N_\beta}(\kappa)\leq \beta$. Hence, $j_N(\kappa)\leq \xi$. 
\end{proof}

Since $\kappa <\xi$, it follows from the claim above that $\crit(j_N)\leq \kappa$. But  since for $\beta <\kappa$ the map $j_{N_\beta}$  is the identity, we must have  $\crit(j_N)=\kappa$. Also, since  $\gamma \in I_S$, $V_\gamma \subseteq N_\gamma$, hence  $V_{\gamma}\subseteq N$. This shows that $\kappa$ is $\gamma$-strong, as wanted.
\end{proof}

From proposition \ref{prop1} and theorem \ref{mainstrong} we obtain now the following characterization of strong cardinals.

\begin{corollary}
A cardinal $\kappa$ is strong if and only if it witnesses $\Pi_1$-$\PSR$ (second version).
\end{corollary}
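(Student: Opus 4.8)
The plan is to read off the biconditional from the two results just established, with Theorem~\ref{mainstrong} supplying one direction and Proposition~\ref{prop1} (in the refined form recorded just before the second-version reformulation) supplying the other.

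For the implication that witnessing $\Pi_1$-$\PSR$ (second version) makes $\kappa$ strong, I would argue that this is immediate from Theorem~\ref{mainstrong}. By definition, a cardinal $\kappa$ that witnesses $\Pi_1$-$\PSR$ (second version) product-reflects \emph{every} $\Pi_1$-definable, parameter-free class of natural structures, by means of a faithful and $\subseteq$-chain-preserving elementary embedding of the associated products. In particular $\kappa$ product-reflects the single class $\Ce$ of natural structures exhibited in Theorem~\ref{mainstrong}, and that theorem then yields at once that $\kappa$ is strong.

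For the converse, that a strong $\kappa$ witnesses $\Pi_1$-$\PSR$ (second version), I would first note that every strong cardinal is $\Sigma_2$-strong, hence $\Pi_1$-strong (using that $\Pi_n$-strength coincides with $\Sigma_{n+1}$-strength, so that $\Pi_1$-strength coincides with $\Sigma_2$-strength). Then I would apply the inspection remark following Proposition~\ref{prop6} with $n=1$: for a $\Pi_1$-strong $\kappa$, every $\Pi_1$-definable class $\Ce$ of natural structures and every $\beta \geq \kappa$ admit a set $S \supseteq \Ce \cap V_\beta$ of structures of the same type together with a faithful and $\subseteq$-chain-preserving elementary embedding $h:\prod(\Ce \cap V_\kappa) \to \prod S$. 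This is exactly what it means for $\kappa$ to witness $\Pi_1$-$\PSR$ (second version), so the two cited results close the loop.

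The one point demanding care, and the step I would flag as the main obstacle, is confirming that the embedding produced in Proposition~\ref{prop1} really is faithful and $\subseteq$-chain-preserving, since these are precisely the properties consumed in the proof of Theorem~\ref{mainstrong}. Here I would unwind the construction: $h$ is the pointwise action of the strong embedding $j:V \to M$ on $\prod(\Ce \cap V_\kappa)$. Since $\crit(j)=\kappa$, the map $j$ fixes every structure in $\Ce \cap V_\kappa$ together with all of its elements; hence for $\Ae \in \Ce \cap V_\kappa$ and any $f$ we get $h(f)(\Ae)=j(f)(j(\Ae))=j(f(\Ae))=f(\Ae)$, so that $h(f)\restriction(\Ce \cap V_\kappa)=f$, which is faithfulness. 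And since $j$ is elementary it preserves the relation $\subseteq$ pointwise, giving $\subseteq$-chain-preservation. With these two observations in place the stated equivalence follows.
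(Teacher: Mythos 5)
Your proof is correct and takes essentially the same route as the paper, which obtains the corollary directly from Proposition~\ref{prop1} (read through the ``close inspection'' remark that upgrades its embedding to a faithful, $\subseteq$-chain-preserving one onto a set containing $\Ce\cap V_\beta$) together with Theorem~\ref{mainstrong}. Your explicit check that the restriction of the strongness embedding $j$ is faithful (since $\crit(j)=\kappa$ fixes $\Ce\cap V_\kappa$ pointwise) and $\subseteq$-chain-preserving (by elementarity) correctly supplies the detail the paper leaves implicit.
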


Similar results  can be proven for $\Gamma$-strong cardinals (definition \ref{defSigmaStrong}). On the one hand, proposition \ref{prop6} shows that if $\kappa$ is $\Pi_n$-strong, then $\kappa$ witnesses $\mathbf{\Pi_{n+1}}$-$\PSR$. On the other hand,  similarly as in \ref{mainstrong}, we can prove that if $\kappa$ witnesses $\Pi_n$-$\PSR$, then $\kappa$ is $\Pi_n$-strong.
This yields the following characterization of $\Pi_n$-strong cardinals:

\begin{theorem}
\label{thmnstrong}
For every $n>0$, a cardinal $\kappa$ is $\Pi_n$-strong if and only it it witnesses $\Pi_n$-$\PSR$ (second version).
\end{theorem}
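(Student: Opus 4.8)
\emph{The plan} is to establish both directions, treating the forward implication as essentially already recorded in the discussion preceding Theorem~\ref{mainstrong} and concentrating the work on the reversal, which is the analogue of Theorem~\ref{mainstrong} one level up in the definability hierarchy.

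For the forward direction I would invoke Proposition~\ref{prop6} together with the remark preceding Theorem~\ref{mainstrong}: a close inspection of the proofs of Propositions~\ref{prop1} and~\ref{prop6} shows that a $\Pi_n$-strong $\kappa$ produces, for each $\Pi_n$-definable class $\Ce$ of structures of a fixed type and each $\beta$, a set $S\supseteq\Ce\cap V_\beta$ together with a faithful and $\subseteq$-chain-preserving elementary embedding $h:\prod(\Ce\cap V_\kappa)\to\prod S$. This is exactly the assertion that $\kappa$ witnesses $\Pi_n$-$\PSR$ in its second version, so nothing further is required.

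For the reversal I would replay the proof of Theorem~\ref{mainstrong} almost verbatim, with $C^{(n)}$ in place of $C^{(1)}$ throughout. Fix an ordinal $\lambda$; I must produce an elementary embedding witnessing $\lambda$-$\Pi_n$-strength. Let $C$ be the class of ordinals $\alpha$ of uncountable cofinality that are the $\alpha$-th element of $C^{(n)}$, and let $\Ce$ be the parameter-free, $\Pi_n$-definable class of natural structures $\Ae_\alpha=\langle V_{\lambda_\alpha},\in,\alpha\rangle$, where $\alpha\in C$ and $\lambda_\alpha$ is the least element of $C^{(n)}$ above $\alpha$. Applying $\Pi_n$-$\PSR$ (second version) to $\Ce$, and choosing the witnessing set $S$ so that $\Ce\cap V_\beta\subseteq S$ for some $\beta$ above a fixed $\gamma\in C$ with $\gamma>\lambda$, the whole apparatus of Theorem~\ref{mainstrong} --- the derived extenders $E^\beta_a$, the ultrapowers $N_\beta$, Claims~\ref{claimcoherent} and~\ref{claimcrit}, and the final direct limit $j_N:V\to N$ --- carries over unchanged and delivers $\crit(j_N)=\kappa$, $j_N(\kappa)=\xi:=\sup(I_S)>\lambda$, and $V_\beta\subseteq N_\beta$ for every $\beta\in I_S$.

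The genuinely new point, and the one I expect to be the crux, is the extra clause of Definition~\ref{defSigmaStrong}, namely $A\cap V_\lambda\subseteq j_N(A)$ for every $\Pi_n$-definable $A$; for $n=1$ this was automatic, but for $n>1$ it must be extracted from the $C^{(n)}$-information built into the domains $V_{\lambda_\alpha}$. I would argue in three steps. First, $\kappa\in C^{(n)}$ (by Proposition~\ref{Sigma2}, just as $\kappa\in C^{(2)}$ is used in the proof of Theorem~\ref{mainstrong}); since $j_N$ is elementary and $j_N(\kappa)=\xi$, this yields $N\models\xi\in C^{(n)}$. Second, each connecting map $\ell_{\beta,N}:N_\beta\to N$ has $\crit(\ell_{\beta,N})\geq\beta$ and hence fixes $V_\beta$ pointwise, so that $V_\xi=\bigcup_{\beta\in I_S}V_\beta\subseteq N$ and in particular $V_\lambda\subseteq N$. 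Third, $\xi\in C^{(n)}$ holds in $V$ as well, being a limit of elements of $C^{(n)}$. Combining, $V_\xi\preceq_{\Sigma_n}N$ and $V_\xi\preceq_{\Sigma_n}V$, so $N$ and $V$ compute $\Pi_n$-truth on $V_\xi\supseteq V_\lambda$ identically; thus for any $\Pi_n$-definable $A$, given by a formula $\psi$, and any $x\in A\cap V_\lambda$ we have $N\models\psi(x)$, i.e.\ $x\in j_N(A)$. This establishes $\lambda$-$\Pi_n$-strength, and since $\lambda$ was arbitrary, $\kappa$ is $\Pi_n$-strong. The main obstacle will therefore be precisely this bookkeeping --- confirming that $V_\xi\subseteq N$ and $N\models\xi\in C^{(n)}$ survive the product and direct-limit constructions --- for once both are in hand the agreement of $\Pi_n$-truth is immediate and disposes of all classes $A$ simultaneously, everything else being a routine re-indexing of Theorem~\ref{mainstrong}.
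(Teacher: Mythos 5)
Your proposal is correct and follows essentially the same route as the paper, which likewise obtains the forward direction from Proposition~\ref{prop6} together with the faithfulness and $\subseteq$-chain-preservation remarks, and the reverse direction by rerunning the extender construction of Theorem~\ref{mainstrong} with $C^{(n)}$ in place of $C^{(1)}$ (compare the analogous adaptation carried out in Theorem~\ref{mainsuperstrong2}). Your three-step verification of the clause $A\cap V_\lambda\subseteq j_N(A)$ via $\xi=j_N(\kappa)\in C^{(n)}$ in both $V$ and $N$ and $V_\xi\subseteq N$ is exactly the bookkeeping the paper leaves implicit.
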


\subsection{Strong Product Structural reflection}

Let us consider next the following, arguably more natural, strengthening of $\PSR$:

\begin{quotation}
\begin{itemize}
\item[$\SPSR$:] (\emph{Strong Product Structural Reflection}) For every definable class of relational structures $\mathcal{C}$ of the same type, $\tau$, there exists an ordinal $\alpha$ that   {\emph{strongly product-reflects}} $\mathcal{C}$, i.e.,  
for every $\Ae$ in $\mathcal{C}$ there exists an ordinal  $\beta$ with $\Ae \in V_\beta$ and an elementary embedding $j:\prod (\Ce \cap V_\alpha)\to \prod (\Ce \cap V_\beta)$.
\end{itemize}
\end{quotation}

Similarly as in the case of $\PSR$, let us say that a cardinal $\kappa$ witnesses $\Gamma (P)$-$\SPSR$  if $\kappa$ strongly product-reflects all classes $\Ce$ that are $\Gamma$-definable (with parameters in $P$).
Also, a cardinal $\kappa$ witnesses $\mathbf{\Pi_n}$-$\SPSR$ if and only if it witnesses $\mathbf{\Sigma_{n+1}}$-$\SPSR$, and similarly for the lightface definability classes. Moreover, we obtain equivalent principles by restricting to classes of natural structures. Thus, we may formally define   $\Gamma$-$\SPSR$, for $\Gamma$ a lightface definability class, as follows:
\begin{quotation}
\begin{itemize}
\item[$\Gamma$-$\SPSR$:] (\emph{$\Gamma$-Strong Product Structural Reflection})    There exists a   cardinal $\kappa$ that  strongly product-reflects all $\Gamma$-definable class $\Ce$ of natural structures. 
\end{itemize}
\end{quotation}
The boldface version being as follows:
\begin{quotation}
\begin{itemize}
\item[]    There exist a proper class of cardinals $\kappa$ that  strongly product-reflect all $\Gamma$-definable, with parameters in $V_\kappa$, class $\Ce$ of natural structures. 
\end{itemize}
\end{quotation}

Note that $\Gamma$-$\SPSR$ implies $\Gamma$-$\PSR$, for any definability class $\Gamma$.

\medskip

We shall see next that the large cardinal notions that correspond to the $\SPSR$ principle are those of superstrong, globally superstrong, and $C^{(n)}$-globally superstrong cardinals.

\begin{definition}\cite{CT} 
A cardinal $\kappa$ is \emph{superstrong above $\lambda$}, for some $\lambda \geq \kappa$, if there exists an elementary embedding $j:V\to M$, with $M$ transitive, $\crit(j)=\kappa$, $j(\kappa)>\lambda$, and $V_{j(\kappa)}\subseteq M$. 

A cardinal $\kappa$ is \emph{globally superstrong} if it is superstrong above $\lambda$, for every $\lambda \geq \kappa$.

More generally, 
a cardinal $\kappa$ is \emph{$C^{(n)}$-superstrong above $\lambda$}, for some $\lambda \geq \kappa$, if there exists an elementary embedding $j:V\to M$, with $M$ transitive, $\crit(j)=\kappa$, $j(\kappa)>\lambda$, $V_{j(\kappa)}\subseteq M$, and $j(\kappa)\in C^{(n)}$. 

A cardinal $\kappa$ is \emph{$C^{(n)}$-globally superstrong} if it is $C^{(n)}$-superstrong above $\lambda$, for every $\lambda \geq \kappa$.
\end{definition}  

Note that every globally superstrong cardinal is $C^{(1)}$-globally superstrong. Also, every globally superstrong cardinal is superstrong, and every $C^{(n)}$-globally superstrong cardinal belongs to $C^{(n+2)}$ (\cite{CT}).
 As shown in \cite{CT}, on the one hand, if $\kappa$ is $C^{(n)}$-gobally superstrong, then there are many $C^{(n)}$-superstrong cardinals below $\kappa$. On the other hand, if $\kappa$ is $\kappa+1$-extendible, then $V_\kappa$ satisfies that there is a proper class of $C^{(n)}$-globally superstrong cardinals, for every $n$. Moreover, if $\kappa$ is $C^{(n)}$-extendible, then there are many $C^{(n)}$-globally superstrong cardinals below $\kappa$.

Similarly as in Proposition \ref{prop1} we can prove the following:

\begin{proposition}
\label{prop2}
If $\kappa$ is $C^{(n)}$-globally superstrong, then it witnesses $\mathbf{\Pi_n}$-$\SPSR$.
\end{proposition}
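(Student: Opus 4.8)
The plan is to follow the proof of Proposition \ref{prop1} almost verbatim, replacing the strong embedding used there by a $C^{(n)}$-globally superstrong one and exploiting the extra clauses $V_{j(\kappa)}\subseteq M$ and $j(\kappa)\in C^{(n)}$ to force the target product to be taken over \emph{genuine} members of $\Ce$. Fix a $\Pi_n$ formula $\varphi(x,p)$, with a parameter $p\in V_\kappa$, defining a proper class $\Ce$ of structures of a fixed type $\tau\in V_\kappa$, and fix $\Ae\in\Ce$. First I would pick $\lambda\geq\kappa$ in $C^{(n)}$ with $\Ae\in V_\lambda$ and apply $C^{(n)}$-global superstrongness at $\lambda$ to obtain $j:V\to M$ with $\crit(j)=\kappa$, $j(\kappa)>\lambda$, $V_{j(\kappa)}\subseteq M$ and $j(\kappa)\in C^{(n)}$. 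I then set $\beta:=j(\kappa)$, so that $\Ae\in V_\lambda\subseteq V_\beta$ and hence $\Ae\in\Ce\cap V_\beta$.

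The heart of the argument is the identification $j(\Ce\cap V_\kappa)=\Ce\cap V_\beta$. Since $V_{j(\kappa)}\subseteq M$ and $M$ is transitive, $V^M_{j(\kappa)}=V_{j(\kappa)}$, so by elementarity $j(\Ce\cap V_\kappa)=\{X\in V_{j(\kappa)}:M\models\varphi(X,p)\}$, using that $j(p)=p$ because $p\in V_\kappa$ and $\crit(j)=\kappa$. It remains to check that for $X\in V_{j(\kappa)}$ one has $M\models\varphi(X,p)$ if and only if $X\in\Ce$. On the $V$-side, $j(\kappa)\in C^{(n)}$ gives $V_{j(\kappa)}\preceq_{\Sigma_n}V$, so for the $\Pi_n$ formula $\varphi$ one has $V\models\varphi(X,p)$ iff $V_{j(\kappa)}\models\varphi(X,p)$. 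On the $M$-side, I would use that $\kappa\in C^{(n)}$ --- which holds because a $C^{(n)}$-globally superstrong cardinal lies in $C^{(n+2)}\subseteq C^{(n)}$ (\cite{CT}) --- so by elementarity of $j$ we get $M\models$ ``$j(\kappa)\in C^{(n)}$'', that is $V_{j(\kappa)}=V^M_{j(\kappa)}\preceq_{\Sigma_n}M$, whence $M\models\varphi(X,p)$ iff $V_{j(\kappa)}\models\varphi(X,p)$. Since $V_{j(\kappa)}$ is the same structure whether computed in $V$ or in $M$, the two chains of equivalences meet and yield the claim. Note that both superstrongness clauses are genuinely needed here: $V_{j(\kappa)}\subseteq M$ for the identification of the function spaces, the $V$-side $C^{(n)}$ for $\Pi_n$-absoluteness down to $V_{j(\kappa)}$, and $\kappa\in C^{(n)}$ for the transferred $M$-side elementarity.

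With this identification in hand, the restriction $h:=j\restriction\prod(\Ce\cap V_\kappa)$ is an elementary embedding of $\prod(\Ce\cap V_\kappa)$ into $j(\prod(\Ce\cap V_\kappa))=\prod(\Ce\cap V_\beta)$: for any formula $\theta$ and any tuple $\bar f$ from the domain, the statement $\prod(\Ce\cap V_\kappa)\models\theta(\bar f)$ transfers under $j$ to the corresponding satisfaction statement about $j(\bar f)=h(\bar f)$ in $j(\prod(\Ce\cap V_\kappa))$, and satisfaction in a set structure is absolute. This exhibits $\beta$ and $h$ as required, so $\kappa$ strongly product-reflects $\Ce$; as $\Ce$ was an arbitrary $\Pi_n$-definable class with parameters in $V_\kappa$, this shows $\kappa$ witnesses $\mathbf{\Pi_n}$-$\SPSR$.

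The step I expect to be the main obstacle is the comparison of the products computed in $M$ and in $V$. The embedding $j$ naturally lands in $j(\prod(\Ce\cap V_\kappa))$, which is the product $\prod(\Ce\cap V_\beta)$ \emph{as computed inside} $M$, whereas the $\SPSR$ conclusion asks for the product computed in $V$. Because superstrongness provides only $V_{j(\kappa)}\subseteq M$ and not $V_{j(\kappa)+1}\subseteq M$, the function space defining $\prod(\Ce\cap V_\beta)$ may a priori differ between $M$ and $V$, so one must verify --- via a Feferman--Vaught style analysis of direct products, exactly as in the proof of \cite[Theorem 5.1]{BW} --- that the $M$-product agrees with, or elementarily embeds into, the genuine $V$-product on the range of $h$. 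Everything else is a routine transcription of the proof of Proposition \ref{prop1}.
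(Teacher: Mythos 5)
Your proof is correct and follows essentially the same route as the paper's: pick $j:V\to M$ witnessing $C^{(n)}$-superstrongness above a $\lambda\in C^{(n)}$ with $\Ae\in V_\lambda$, use $V_{j(\kappa)}\subseteq M$ together with $j(\kappa)\in C^{(n)}$ (in $V$) and $\kappa\in C^{(n)}$ (transferred by elementarity to $M$) to identify $j(\Ce\cap V_\kappa)$ with $\Ce\cap V_{j(\kappa)}$, and take $h=j\restriction\prod(\Ce\cap V_\kappa)$. The one place you go beyond the paper is in explicitly flagging the comparison between $\bigl(\prod(\Ce\cap V_{j(\kappa)})\bigr)^M$ and the product computed in $V$ --- a point the paper's proof passes over in silence --- and you correctly defer it to the Feferman--Vaught analysis of direct products as in \cite{BW}.
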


\begin{proof}
Let $\kappa$ be $C^{(n)}$-globally superstrong and let $\Ce$ be a class of relational structures of the same type that is  definable by a $\Pi_n$ formula $\varphi (x)$, with parameters in $V_\kappa$.   

Given any $\mathcal{A}\in \Ce$, let $\lambda\in C^{(n)}$ be greater than or equal to $\kappa$ and with  $\mathcal{A}\in V_\lambda$.

Let $j:V\to M$ be an elementary embedding witnessing that $\kappa$ is $C^{(n)}$-superstrong above $\lambda$. Then  the restriction of $j$ to $\Ce \cap V_\kappa$ yields an elementary embedding
$$h:\prod (\Ce \cap V_\kappa)\to \prod (\{ X: M\models \varphi(X)\} \cap V^M_{j(\kappa)}).$$
Let $S:= \{ X: M\models \varphi(X)\} \cap V^M_{j(\kappa)}$. Since $j(\kappa)>\lambda$ and $j(\kappa)\in C^{(n)}$, we have $V_{j(\kappa)}\models \varphi(\Ae)$. Hence, since $\kappa \in C^{(n)}$ (in fact $\kappa \in C^{(n+2)}$ (\cite{CT}), by elementarity $V_{j(\kappa)}=V_{j(\kappa)}^M\preceq_{\Sigma_n} M$, and thus $M\models \varphi(\Ae)$.  It follows  that $\Ae \in S$. Moreover, if $\Be\in S$, then  $M\models \varphi(\Be)$. Hence, since  $V_{j(\kappa)}^M \preceq_{\Sigma_n} M$, we have that $V_{j(\kappa)}^M=V_{j(\kappa)}\models \varphi(\Be)$. Since $j(\kappa)\in C^{(n)}$, $\varphi(\Be)$ holds in $V$, and therefore $\Be \in \Ce$. This shows that $S=\Ce \cap V_{j(\kappa)}$, and so 
$$h:\prod (\Ce \cap V_\kappa)\to \prod (\Ce \cap V_{j(\kappa)}).$$
Hence, $h$ 
witnesses $\SPSR$ for $\Ae$.
\end{proof}

Observe that 
 since the function $h$ in the proof of the last proposition is the restriction of $j$ to $\prod (\Ce \cap V_\kappa)$, and $j$ is elementary, it preserves all first-order properties. In particular, it is $\subseteq$-chain-preserving; and since $\kappa=\crit (j)$, $h$ is faithful. Thus, taking into consideration our remarks from section \ref{SR}, as well as those made in the previous section before we stated  the second version of the $\PSR$ schema, we may reformulate $\Gamma$-$\SPSR$ for $\Gamma$ a lightface definability class as follows:

\begin{quotation}
\begin{itemize}
\item[$\Gamma$-$\SPSR$:] (\emph{$\Gamma$-Strong Product Structural Reflection. Second version}) There exists a  cardinal $\kappa$ that \emph{strongly-product-reflects} all $\Gamma$-definable classes $\Ce$ of natural structures, i.e.,  for every $\Ae\in \Ce$ there exists an ordinal $\beta$ with $\Ae \in V_\beta$  and  a faithful and  $\subseteq$-chain-preserving elementary embedding  $h:\prod (\Ce \cap V_\kappa)\to \prod (\Ce \cap V_\beta)$.
\end{itemize}
\end{quotation}
The corresponding version for boldface $\Gamma$ being:
\begin{quotation}
\begin{itemize}
\item[] There exist a proper class of cardinals $\kappa$ that \emph{strongly-product-reflect} all $\Gamma$-definable, with parameters in $V_\kappa$, proper classes $\Ce$ of natural structures.
\end{itemize}
\end{quotation}

\medskip

Arguing similarly as in the proof of Theorem \ref{mainstrong} (and \cite{BW}), we can  now prove the following: 

\begin{theorem}
\label{mainsuperstrong2}
For every $n>0$, there is a $\Pi_n$-definable, without parameters, class $\Ce$ of natural structures such that if a cardinal  $\kappa$ strongly-product-reflects $\Ce$, then $\kappa$  is a $C^{(n)}$-globally superstrong cardinal.
\end{theorem}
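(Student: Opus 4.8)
The plan is to mirror closely the proof of Theorem \ref{mainstrong}, replacing $C^{(1)}$ by $C^{(n)}$ throughout and exploiting that $\SPSR$ reflects into the \emph{full} product $\prod(\Ce \cap V_\beta)$ rather than into $\prod S$ for an uncontrolled $S$; it is precisely this control on the target that should upgrade the strength conclusion ``$V_\gamma \subseteq N$'' to the superstrength conclusion ``$V_{j(\kappa)} \subseteq N$''. First I would fix the witnessing class: let $C$ be the class of ordinals $\alpha$ of uncountable cofinality that are the $\alpha$-th element of $C^{(n)}$, and let $\Ce$ be the class of structures $\Ae_\alpha := \langle V_{\lambda_\alpha}, \in, \alpha \rangle$ with $\alpha \in C$ and $\lambda_\alpha$ the least element of $C^{(n)}$ above $\alpha$. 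Since membership ``$\alpha \in C^{(n)}$'' is $\Pi_n$, the class $\Ce$ is $\Pi_n$-definable without parameters, as required.

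Assume now that $\kappa$ strongly-product-reflects $\Ce$ (second version), fix $\lambda \geq \kappa$, and aim to produce a witness that $\kappa$ is $C^{(n)}$-superstrong above $\lambda$. As $\SPSR$ implies $\PSR$, Proposition \ref{Sigma2} gives $\kappa \in C^{(n+1)}$, whence $\kappa \in C^{(n)}$ and $\kappa$ is a limit point of $C$. Choose $\gamma \in C$ with $\gamma > \lambda$ and apply $\SPSR$ to $\Ae_\gamma$ to obtain an ordinal $\beta$ with $\Ae_\gamma \in V_\beta$ and a faithful, $\subseteq$-chain-preserving elementary embedding $j : \prod(\Ce \cap V_\kappa) \to \prod(\Ce \cap V_\beta)$. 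Put $I_S := \{\delta \in C : \Ae_\delta \in V_\beta\}$ and $\xi := \sup(I_S)$; since $\gamma \in I_S$ we have $\xi \geq \gamma > \lambda$.

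From here I would run the extender machinery of Theorem \ref{mainstrong} verbatim: for each $\delta \in I_S$ set $k_\delta(X) = h_\delta(j(\{X \cap V_\alpha\}_{\alpha \in C \cap \kappa}))$, with $h_\delta$ the projection to the $\Ae_\delta$-coordinate, so that $k_\delta(\kappa) = \delta$; read off the coherent, normal system of $\omega_1$-complete extenders $\Ee_\delta = \{E^\delta_a : a \in [\delta]^{<\omega}\}$; form the well-founded ultrapowers $N_\delta$ and then the direct limit $N$ along the maps $k_{\delta,\delta'}$, obtaining $j_N : V \to N$. The coherence computation (Claim \ref{claimcoherent}) goes through because $j$ is $\subseteq$-chain-preserving, and the critical-point computation (Claim \ref{claimcrit}) yields $j_N(\kappa) = \xi$ and, via faithfulness, $\crit(j_N) = \kappa$.

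Two points then remain, and the first is the crux. I must establish the \emph{superstrength} clause $V_\xi \subseteq N$, i.e. correctness all the way up to $j_N(\kappa) = \xi$, rather than merely $V_\gamma \subseteq N$. The idea is that, because the target is the full product $\prod(\Ce \cap V_\beta)$, the index set $I_S$ contains \emph{all} markers below $\beta$ and is thus cofinal in $\xi$, so that for every $\delta \in I_S$ one gets $V_\delta \subseteq N_\delta$ with the factor map $N_\delta \to N$ having critical point $\delta$, whence $V_\xi = \bigcup_{\delta \in I_S} V_\delta \subseteq N$. Securing this \emph{uniform} fullness across all $\delta \in I_S$ — rather than only at the single reflected structure, which is all the uncontrolled target of $\PSR$ guarantees — is the main obstacle, and it is exactly where faithfulness, $\subseteq$-chain-preservation of $j$, and the presence in $S$ of every $\Ae_\delta \in \Ce \cap V_\beta$ must be combined. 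Second, $\xi \in C^{(n)}$: each $\delta \in I_S$ lies in $C \subseteq C^{(n)}$ and $\xi = \sup(I_S)$ is a limit ordinal, so $\xi \in C^{(n)}$ by closedness of $C^{(n)}$. Together with $j_N(\kappa) = \xi > \lambda$ and $\crit(j_N) = \kappa$, this exhibits $\kappa$ as $C^{(n)}$-superstrong above $\lambda$; since $\lambda$ was arbitrary, $\kappa$ is $C^{(n)}$-globally superstrong, which together with Proposition \ref{prop2} closes the characterization.
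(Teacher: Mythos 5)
Your proposal is correct and follows essentially the same route as the paper's proof: the same $\Pi_n$-definable class $\Ae_\alpha=\langle V_{\lambda_\alpha},\in,\alpha\rangle$ for $\alpha$ the $\alpha$-th element of $C^{(n)}$ of uncountable cofinality, the appeal to Proposition \ref{Sigma2} for $\kappa\in C^{(n+1)}$, the extender system and direct limit lifted verbatim from Theorem \ref{mainstrong}, and the key upgrade that the full-product target makes the index set cofinal in $\xi=j_N(\kappa)$, yielding $V_\xi\subseteq N$ with $\xi\in C^{(n)}$ by closedness. The point you flag as the crux is resolved exactly as you describe (and as in the paper), via $V_\delta\subseteq N_\delta$ for every index $\delta$ together with faithfulness and $\subseteq$-chain-preservation.
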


\begin{proof}
Let $\mathcal{C}$ be the $\Pi_n$-definable class of all structures  
$$\Ae_\alpha:=\langle V_{\lambda_\alpha}, \in ,  \alpha \rangle$$   where  $\alpha$ has uncountable cofinality and is the $\alpha$-th element of $C^{(n)}$,  and  
$\lambda_\alpha$ is the least  cardinal in $C^{(n)}$ greater than $\alpha$.

Let $\kappa$ witness $\SPSR$ for $\Ce$. 
Let $I:=\{ \alpha :\Ae_\alpha \in V_\kappa \}$. 
Since $\kappa \in C^{(n+1)}$ (proposition \ref{Sigma2}), ${\rm{sup}}(I)=\kappa$.

Pick any ordinal $\lambda \geq \kappa$ and let us show that $\kappa$ is $\lambda$-$C^{(n)}$-superstrong. 
Let $\Ae_\beta$ in $\Ce$ with $\lambda <\beta$.
Let  $\kappa'$ be such that $\Ae_\beta \in V_{\kappa'}$ and there is a faithful $\subseteq$-chain-preserving  elementary embedding
$$j:\prod (\Ce \cap V_\kappa)\to \prod(\Ce \cap V_{\kappa'}).$$
Let  $$h_\beta :\prod (\Ce \cap V_{\kappa'})\to \Ae_\beta $$ be the projection map and define
$k_\beta:V_{\kappa +1}\to V_{\beta+1}$
by:
$$k_\beta(X)=h_\beta (  j(\{ X\cap V_{\alpha}\}_{\alpha \in I})).$$ 
As in \ref{mainstrong}, for each $a\in [\beta]^{<\omega}$,   define $E^\beta_a$ by
$$X\in E^\beta_a \quad \mbox{ iff }\quad X\subseteq [\kappa]^{|a|}  \mbox{ and } a\in k_\beta(X)\, .$$
Then $E^\beta_a$ is an $\omega_1$-complete proper ultrafilter over $[\kappa]^{|a|}$, and so the ultrapower ${\rm{Ult}}(V,E^\beta_a)$  is well-founded. Furthermore, since $j$ is faithful, if $\beta \in I$, then $E^\beta_a$ is the principal ultrafilter generated by $\{ a\}$. Let 
$$j^\beta_a:V\to M^\beta_a \cong {\rm{Ult}}(V, E^\beta_a)$$
 and let  $\mathcal{E}_\beta:=\{ E^\beta_a: a\in [\beta]^{<\omega}\}$.  As in \cite{BW},  $\mathcal{E}_\beta$ is normal and coherent.
Let   $M_{\Ee_\beta}$ be  the  direct limit of $$\langle \langle M^\beta_a:a\in [\beta]^{<\omega}\rangle, \langle i^\beta_{ab}:a\subseteq b\rangle\rangle$$
where the $i_{ab}^\beta$ are the standard projection maps,    
and let $j_{\Ee_\beta} :V\to M_{\Ee_\beta}$ be the corresponding limit elementary embedding. 
As in \cite{BW}, 
$M_{\Ee_\beta}$ is  well-founded. 
So, let $\pi_\beta:M_{\Ee_\beta} \to N_\beta$ be the transitive collapse, and let $j_{N_\beta}=\pi \circ j_{\Ee_\beta}:V\to N_\beta$. We have that  $V_\beta \subseteq N_\beta$ and 
$j_{N_\beta}(\kappa)\geq \beta$ (see \cite{BW}).

By claim \ref{claimcoherent}, if $\beta \leq \beta'$ are in $I':=\{\alpha : \Ae_\alpha \in V_{\kappa'}\}$, then $E^\beta_a = E^{\beta'}_a$, for every $a\in [\beta]^{<\omega}$. Hence, for every $\beta <\beta'$ in $I'$, the map
$$k_{\beta ,\beta'}:M_{\Ee_{\beta}}\to M_{\Ee_{\beta'}}$$
given by
$$k_{\beta ,\beta'}([a, [f]_{E_a}]_{\Ee_{\beta}})=[a,[f]_{E_a}]_{\Ee_{\beta'}}$$
is well-defined, elementary, and  commutes with the embeddings  $j_{\Ee_\beta}:V\to M_{\Ee_\beta}$ and $j_{\Ee_{\beta'}}:V\to M_{\Ee_{\beta'}}$. Let $M$ be the direct limit of 
$$\langle \langle M_{\Ee_\beta}:\beta \in I' \rangle, \langle k_{\beta, \beta '}: \beta <\beta' \mbox{ in }I'\rangle\rangle$$   
and let $j_M :V\to M$ be the corresponding limit elementary embedding. Let $\pi^M:M\to N$ be the transitive collapse, and let $j_N=\pi^M\circ j_M:V\to N$.

Let $\xi=\mbox{sup}(I')$. Note that $\xi \in C^{(n)}$ and $\xi>\kappa$. 
As in claim \ref{claimcrit}, $j_N(\kappa)=\xi$, hence  $\crit(j_N)\leq \kappa$. But  since for $\beta \in I$ the map $j_{N_\beta}$  is the identity,  $\crit(j_N)=\kappa$. Also, since $V_{\xi}=\bigcup_{\beta \in I}V_\beta$, and $V_\beta \subseteq N_\beta$ for  all $\beta \in I$, it follows that  $V_{\xi}\subseteq N$. This shows that $\kappa$ is $\xi$-superstrong, hence also $\lambda$-$C^{(n)}$-superstrong, as wanted.
\end{proof}

We have thus proved the following:

\begin{theorem}
\label{mainsuperstrong3}
For every $n\geq 1$, the following are equivalent for any cardinal $\kappa$:
\begin{enumerate}
\item $\kappa$ witnesses $\Pi_n$-$\SPSR$
\item $\kappa$ is a $C^{(n)}$-globally superstrong cardinal.
\end{enumerate}
\end{theorem}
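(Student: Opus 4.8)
The plan is to read off the equivalence directly from the two results just established, Proposition \ref{prop2} and Theorem \ref{mainsuperstrong2}, which bracket the notion of $C^{(n)}$-globally superstrong cardinal from above and below. No new construction is needed; the work consists in matching the two halves and reconciling the boldface/lightface conventions.

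First I would handle (2)$\Rightarrow$(1). Assuming $\kappa$ is $C^{(n)}$-globally superstrong, Proposition \ref{prop2} already gives that $\kappa$ witnesses $\mathbf{\Pi_n}$-$\SPSR$, i.e., $\kappa$ strongly product-reflects every class of natural structures that is $\Pi_n$-definable with parameters in $V_\kappa$. Since every $\Pi_n$-definable class without parameters is in particular $\Pi_n$-definable with parameters in $V_\kappa$ (vacuously), it follows a fortiori that $\kappa$ strongly product-reflects every lightface $\Pi_n$-definable class, which is exactly the assertion that $\kappa$ witnesses $\Pi_n$-$\SPSR$.

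Next I would treat the converse (1)$\Rightarrow$(2). If $\kappa$ witnesses $\Pi_n$-$\SPSR$, then by definition $\kappa$ strongly product-reflects \emph{every} $\Pi_n$-definable, parameter-free, class of natural structures. In particular $\kappa$ strongly product-reflects the one distinguished class $\Ce$ of structures $\Ae_\alpha=\langle V_{\lambda_\alpha},\in,\alpha\rangle$ exhibited in Theorem \ref{mainsuperstrong2}. That theorem then concludes outright that $\kappa$ is $C^{(n)}$-globally superstrong, finishing the argument.

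All the genuine difficulty is thus quarantined inside Theorem \ref{mainsuperstrong2}, whose extender/direct-limit construction (modelled on Theorem \ref{mainstrong}) is the substantive step; here the assembly is routine. The only point deserving a line of care is the passage in (2)$\Rightarrow$(1) from the boldface conclusion of Proposition \ref{prop2} to the lightface statement (1): one must note that witnessing $\SPSR$ for the larger family of parameter-carrying classes entails witnessing it for the parameter-free subfamily. This inclusion is immediate, so no real obstacle arises at the level of this theorem.
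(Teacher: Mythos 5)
Your proposal is correct and is exactly the paper's own argument: the theorem is stated immediately after Theorem \ref{mainsuperstrong2} with the words ``We have thus proved the following,'' the two directions being Proposition \ref{prop2} (boldface, hence a fortiori lightface, witnessing) and the single lightface $\Pi_n$-definable class of Theorem \ref{mainsuperstrong2}. Your remark on passing from the boldface conclusion of Proposition \ref{prop2} to the lightface statement is the right (and only) point needing care.
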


\begin{corollary}
The following are equivalent:
\begin{enumerate}
\item $\SPSR$, i.e., $\mathbf{\Pi_n}$-$\SPSR$ for every $n$.
\item $\Pi_n$-$\SPSR$ for every $n$.
\item There exists a $C^{(n)}$-globally superstrong cardinal, for every $n$.
\item There exist a proper class of $C^{(n)}$-globally superstrong cardinals.
\end{enumerate}
\end{corollary}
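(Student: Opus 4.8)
The plan is to close the cycle $(4)\Rightarrow(1)\Rightarrow(2)\Leftrightarrow(3)$ and then prove the single hard implication $(3)\Rightarrow(4)$. Four of the links are immediate from the results already in hand. The implication $(1)\Rightarrow(2)$ is trivial, since for each $n$ the lightface principle $\Pi_n$-$\SPSR$ is the parameter-free special case of $\mathbf{\Pi_n}$-$\SPSR$. The equivalence $(2)\Leftrightarrow(3)$ is Theorem \ref{mainsuperstrong3} read as a schema: a cardinal $\kappa$ witnesses $\Pi_n$-$\SPSR$ iff $\kappa$ is $C^{(n)}$-globally superstrong, so the existence of a witness for each $n$ is the same as the existence of a $C^{(n)}$-globally superstrong cardinal for each $n$. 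For $(4)\Rightarrow(1)$, Proposition \ref{prop2} shows that every $C^{(n)}$-globally superstrong cardinal witnesses $\mathbf{\Pi_n}$-$\SPSR$; hence a proper class of such cardinals yields a proper class of witnesses, which is precisely $\mathbf{\Pi_n}$-$\SPSR$, for each $n$. Finally $(4)\Rightarrow(3)$ is trivial.

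The whole weight therefore falls on $(3)\Rightarrow(4)$, i.e. on upgrading ``for every $n$ there exists a $C^{(n)}$-globally superstrong cardinal'' to ``for every $n$ the $C^{(n)}$-globally superstrong cardinals form a proper class''. I would isolate this as a lemma: if there is a $C^{(n+1)}$-globally superstrong cardinal, then the $C^{(n)}$-globally superstrong cardinals are unbounded. Granting the lemma, (3) applied at index $n+1$ gives (4) at index $n$, for every $n$. To prove the lemma, fix $n$ and an ordinal $\delta$; I want a $C^{(n)}$-globally superstrong cardinal above $\delta$. Let $\kappa$ be $C^{(n+1)}$-globally superstrong and, choosing $\lambda\in C^{(n+1)}$ with $\lambda>\max(\kappa,\delta)$, take $j\colon V\to M$ with $M$ transitive, $\crit(j)=\kappa$, $j(\kappa)>\lambda$, $V_{j(\kappa)}\subseteq M$, and $j(\kappa)\in C^{(n+1)}$. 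Since $C^{(n+1)}\subseteq C^{(n)}$, the cardinal $\kappa$ is itself $C^{(n)}$-globally superstrong in $V$, so by elementarity $M\models$ ``$j(\kappa)$ is $C^{(n)}$-globally superstrong'', with $j(\kappa)>\delta$. The goal is to convert this into a genuine $C^{(n)}$-globally superstrong cardinal of $V$ lying above $\delta$.

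The main obstacle is exactly this last conversion, and it must be handled with care rather than by brute absoluteness. The property ``$x$ is $C^{(n)}$-globally superstrong'' is only $\Pi_{n+2}$ (this is why every such cardinal lies in $C^{(n+2)}$, cf. \cite{CT} and the use of Proposition \ref{Sigma2} in Theorem \ref{mainsuperstrong2}), and a $\Pi_{n+2}$ statement need not transfer upward from $M$ to $V$: the witnesses to global superstrength of $j(\kappa)$ for targets $\mu\ge j(\kappa)$ involve extenders of rank above $j(\kappa)$, which are not controlled by $M$. The plan is to avoid transferring the full global statement and instead reflect locally. Concretely, a witnessing extender $E$ for ``$\kappa$ is $C^{(n)}$-superstrong above $\mu$'' with $\mu<j(\kappa)$ can be chosen of rank below $j(\kappa)$, hence lying in $V_{j(\kappa)}\subseteq M$; for such $E$ the ultrapower, the clause ``$V_{j_E(\kappa)}$ is contained in the target'', and the clause $j_E(\kappa)\in C^{(n)}$ are absolute between $V$, $V_{j(\kappa)}$ and $M$, because $j(\kappa)\in C^{(n+1)}$ gives $V_{j(\kappa)}\preceq_{\Sigma_{n+1}}V$ and hence the correct computation of $C^{(n)}\cap j(\kappa)$. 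Thus $\kappa$ is $C^{(n)}$-superstrong above every $\mu<j(\kappa)$ as seen inside $M$; reflecting the $\Sigma_{n+2}$-statement ``there is a $C^{(n)}$-globally superstrong cardinal above $\delta$'' through the $C^{(n+1)}$-correct structures and pulling the witness back along $j$ then locates a genuine such cardinal of $V$ in the interval $(\delta,j(\kappa))$. This reflection step, which mirrors the direct-limit-of-extenders constructions of \cite{BW} used in Theorems \ref{mainstrong} and \ref{mainsuperstrong2}, is the delicate point I expect to occupy the bulk of the work.

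With the lemma established, $(3)\Rightarrow(4)$ follows and the cycle $(3)\Rightarrow(4)\Rightarrow(1)\Rightarrow(2)\Rightarrow(3)$ closes, giving the equivalence of all four schemata. As a consistency check on (3), I would record a pigeonhole remark: if for some $n_0$ the $C^{(n_0)}$-globally superstrong cardinals were bounded by some $\delta_0$, then, since $C^{(m)}$-global superstrength for $m\ge n_0$ implies $C^{(n_0)}$-global superstrength, every witness supplied by (3) at indices $m\ge n_0$ would lie below $\delta_0$; as a single cardinal that is $C^{(m)}$-globally superstrong for arbitrarily large $m$ is $C^{(m)}$-globally superstrong for all $m$, one would obtain below $\delta_0$ a completely globally superstrong cardinal, to which the lemma applies to produce $C^{(n_0)}$-globally superstrong cardinals above $\delta_0$, the desired contradiction.
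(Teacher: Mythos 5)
Your reduction is the right one: the four easy links are exactly as you describe, they come straight from Proposition \ref{prop2} and Theorem \ref{mainsuperstrong3}, and the entire content of the corollary sits in $(3)\Rightarrow(4)$, a direction the paper itself leaves implicit. Your proposed lemma (a $C^{(n+k)}$-globally superstrong cardinal makes the $C^{(n)}$-globally superstrong cardinals unbounded) is also the right thing to aim for. The gap is in your proof of that lemma. In the only nontrivial case one has $\delta\geq\kappa$, and there ``pulling the witness back along $j$'' cannot succeed: pulling back along $j$ turns a witness below $j(\kappa)$ into a witness below $\crit(j)=\kappa\leq\delta$, so this mechanism only ever produces $C^{(n)}$-globally superstrong cardinals \emph{below} $\kappa$, never in $(\delta,j(\kappa))$. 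The only cardinal above $\delta$ your argument touches is $j(\kappa)$ itself, and, as you correctly observe, its $C^{(n)}$-global superstrength in $M$ does not transfer to $V$. Your local-reflection paragraph does correctly establish that $V_{j(\kappa)}\models$ ``$\kappa$ is $C^{(n)}$-globally superstrong'', but that statement concerns $\kappa\leq\delta$ and so locates nothing above $\delta$. (Two smaller slips: ``there is a $C^{(n)}$-globally superstrong cardinal above $\delta$'' is $\Sigma_{n+3}$, not $\Sigma_{n+2}$; and to move the $\Sigma_{n+2}$ statement ``$\gamma$ is \emph{not} $C^{(n)}$-globally superstrong'' down into $V_{j(\kappa)}$ you need $j(\kappa)\in C^{(n+2)}$, so the lemma should be run from a $C^{(n+2)}$-globally superstrong hypothesis --- harmless, since $(3)$ supplies every index.)

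The step can be repaired by turning it into a contradiction about the reflected \emph{bound}, rather than a search for a witness. Suppose the class $A_n$ of $C^{(n)}$-globally superstrong cardinals is nonempty and bounded, with supremum $\eta$. Let $\kappa$ be $C^{(n+2)}$-globally superstrong (so $\kappa\in A_n$, $\kappa\leq\eta$), pick $\lambda>\eta$ in $C^{(n+2)}$, and let $j:V\to M$ witness $C^{(n+2)}$-superstrength above $\lambda$. Since $j(\kappa)\in C^{(n+2)}$, $V_{j(\kappa)}\preceq_{\Sigma_{n+2}}V$, so $V_{j(\kappa)}\models$ ``$\kappa\in A_n$'' (downward $\Pi_{n+2}$) and $V_{j(\kappa)}\models$ ``$\forall\gamma>\eta\,(\gamma\notin A_n)$'' (downward $\Sigma_{n+2}$, using $\eta<j(\kappa)$); hence $V_{j(\kappa)}$ satisfies ``$A_n$ is nonempty and bounded'', and its least bound is $\geq\kappa$. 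Since $V_{j(\kappa)}=V_{j(\kappa)}^M$ and satisfaction in set structures is absolute, elementarity of $j$ gives $V_\kappa\models$ ``$A_n$ is bounded'', with least bound some $\eta^{**}<\kappa$; applying $j$ to ``the least such bound'' yields $j(\eta^{**})=\eta^{**}<\kappa$ as the least bound in $V_{j(\kappa)}$, contradicting $V_{j(\kappa)}\models\kappa\in A_n$. This is the analogue of the limit arguments for $C^{(n)}$-extendible and $\Pi_n$-strong cardinals in \cite{Ba:CC} and \cite{BW}; with it your lemma holds (at index $n+2$) and your cycle closes.
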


\medskip

\subsection{Bounded Product Structural Reflection}
Let us consider next some \emph{bounded} forms of $\PSR$. Namely, for $\Gamma$ a lightface definability class and any ordinal $\beta$ let:
\begin{quotation}
\begin{itemize}
\item[$\Gamma$-$\PSR_\beta$:]  There exists a cardinal $\kappa$ that $\beta$-\emph{product-reflects}  every $\Gamma$-definable proper class $\Ce$ of natural structures, i.e., for every $\Ae$ in $\Ce$ of rank $\leq \kappa +\beta$ there exists  a  set $S$ with $\Ae$ in $S$  and  an elementary embedding  $h:\prod (\Ce \cap V_\kappa)\to \prod S$.
\end{itemize}
\end{quotation}
Thus $\Gamma$-$\PSR$ holds if and only if there exists a cardinal $\kappa$ that witnesses $\Gamma$-$\PSR_\beta$ for all (equivalently, a proper class of) ordinals $\beta$. 

\smallskip

The following theorem shows that measurable cardinals can be characterized  in terms of  bounded $\PSR$.
\begin{theorem}
\label{PSRmeasurable}
The following are equivalent:
\begin{enumerate}
\item $\Pi_1$-$\PSR_1$ 
\item There exists a measurable cardinal.
\end{enumerate}

\end{theorem}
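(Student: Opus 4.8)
The plan is to prove the two implications separately, modelling (2)$\Rightarrow$(1) on Proposition \ref{prop1} and (1)$\Rightarrow$(2) on Theorem \ref{mainstrong}, the latter specialised so as to extract a single measure rather than a whole extender; the guiding heuristic is that a measurable cardinal is what results when one reflects only \emph{one} power-set level above $\kappa$, which is exactly what the rank bound $\kappa+1$ in $\PSR_1$ permits. For (2)$\Rightarrow$(1), let $\kappa$ be measurable, fix a normal measure on $\kappa$, and let $j\colon V\to M$ be the associated ultrapower embedding, so that $\crit(j)=\kappa$, $j(\kappa)>\kappa$, and, crucially, $V_{\kappa+1}\subseteq M$. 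I would show that $\kappa$ itself witnesses $\Pi_1$-$\PSR_1$. Given a $\Pi_1$ formula $\varphi$ defining a class $\Ce$ of natural structures and some $\Ae\in\Ce$ of rank $\le\kappa+1$, put $S:=j(\Ce\cap V_\kappa)=\{X\in V_{j(\kappa)}^{M}:M\models\varphi(X)\}$; then, exactly as in Proposition \ref{prop1}, $h:=j\restriction\prod(\Ce\cap V_\kappa)$ is an elementary embedding of $\prod(\Ce\cap V_\kappa)$ into $\prod S$. The only place the bound $\beta=1$ enters is the verification that $\Ae\in S$: since $\Ae$ has rank $\le\kappa+1$ all of its components lie in $V_{\kappa+1}\subseteq M$, so $\Ae\in M$ (as $M$ is closed under pairing) and $\Ae\in V^{M}_{j(\kappa)}$, while $M\models\varphi(\Ae)$ holds because $\varphi$ is $\Pi_1$ and therefore downward absolute to the transitive class $M$. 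Thus $h$ product-reflects $\Ae$, and the precise match between the rank bound $\kappa+1$ and the closure $V_{\kappa+1}\subseteq M$ of a measurable ultrapower is what makes $\beta=1$ the correct bound.

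For (1)$\Rightarrow$(2), let $\kappa$ witness $\Pi_1$-$\PSR_1$. I would first check that $\kappa\in C^{(1)}$ by the method of Proposition \ref{Sigma2}, now applied only to structures of rank $\le\kappa+1$. Next I would fix a $\Pi_1$-definable, parameter-free class $\Ce$ of natural structures of the form $\langle V_{\lambda_\alpha},\in,\alpha\rangle$, analogous to the witnessing class of Theorem \ref{mainstrong} but truncated so that the structure $\Ae$ to be reflected sits one level above $\kappa$, i.e.\ has rank $\le\kappa+1$. Reflecting $\Ae$ produces a set $S\ni\Ae$ and a faithful, $\subseteq$-chain-preserving elementary embedding $h\colon\prod(\Ce\cap V_\kappa)\to\prod S$; composing $h$ with the projection onto a factor $\Ae_\beta\in S$ whose marked ordinal $\beta$ lies strictly above $\kappa$ yields, exactly as in \ref{mainstrong}, a map $k_\beta\colon V_{\kappa+1}\to V_{\beta+1}$ that preserves Boolean operations and the $\subseteq$-relation and is the identity on $\omega+1$.

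From $k_\beta$ I would read off the measure as in Theorem \ref{mainstrong}, but using only the single seed $\{\kappa\}$: set $E:=\{X\subseteq\kappa:\kappa\in k_\beta(X)\}$. Faithfulness of $h$ gives $\crit(k_\beta)=\kappa$, so $E$ is nonprincipal, and since $k_\beta$ preserves Boolean operations, $E$ is an ultrafilter on $\kappa$. As $k_\beta$ is the identity on $\omega+1$, the argument of \ref{mainstrong} shows $E$ is $\omega_1$-complete, so ${\rm Ult}(V,E)$ is well founded; this already yields a measurable cardinal and hence (2). To pin the measurable down to $\kappa$ itself one upgrades $\omega_1$-completeness to $\kappa$-completeness (equivalently, normality of $E$), using the $\subseteq$-chain-preservation of $k_\beta$ just as the coherence and normality of the family $\mathcal{E}_\beta$ are obtained in \ref{mainstrong} and \cite{BW}.

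The main obstacle is producing, from a \emph{single} reflection of a rank-$(\kappa+1)$ structure, an embedding $k_\beta$ with $\crit(k_\beta)=\kappa$ and $k_\beta(\kappa)=\beta>\kappa$: if one could only project onto a factor whose universe has rank $\le\kappa+1$, then $k_\beta(\kappa)\le\kappa$ would force $E$ to be principal and useless. Hence the witnessing class must be engineered so that the bound-$1$ reflection nevertheless delivers a target factor $\Ae_\beta$ with marked ordinal above $\kappa$; this is the precise point at which $\beta=1$ must be shown to yield exactly one normal measure rather than the full tower of ultrafilters that drives the strong-cardinal case of Theorem \ref{mainstrong}, and making nonprincipality survive the rank restriction is the heart of the proof.
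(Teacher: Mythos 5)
Your direction (2)$\Rightarrow$(1) is essentially the paper's argument and is fine: the ultrapower embedding $j$ of a normal measure on $\kappa$ restricts to the required $h$, and the combination of $V_{\kappa+1}\subseteq M$, $j(\kappa)>\kappa+1$, and downward $\Pi_1$-absoluteness for transitive classes puts any $\Ae\in\Ce$ of rank $\leq\kappa+1$ into $S$. The observation that the rank bound $1$ is matched exactly by the closure $V_{\kappa+1}\subseteq M$ of a measurable ultrapower is the right one.

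The direction (1)$\Rightarrow$(2), however, has a genuine gap, and it is precisely the one you flag in your last paragraph without resolving. A structure $\langle V_{\lambda_\beta},\in,\beta\rangle$ with marked ordinal $\beta>\kappa$ has rank well above $\kappa+1$ (its universe already has rank $\lambda_\beta>\beta>\kappa$), so it is not eligible for reflection under $\PSR_1$; and the set $S$ delivered by $\PSR_1$ is only guaranteed to contain the one reflected structure $\Ae$, so you cannot hope to find in $S$ some \emph{other} factor $\Ae_\beta$ with marked ordinal above $\kappa$. Hence no ``engineering'' of a \ref{mainstrong}-style class of marked-ordinal structures can succeed: every member of such a class of rank $\leq\kappa+1$ lives inside $V_{\kappa+1}$ and marks an ordinal below $\kappa$, which, as you yourself note, forces the derived ultrafilter to be principal. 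The paper's resolution is different and simpler: take $\Ce$ to be the class of plain structures $\langle V_\gamma,\in\rangle$ with $\gamma\geq\omega$, and reflect $\langle V_{\kappa+1},\in\rangle$ itself, which has rank exactly $\kappa+1$ and so just barely satisfies the bound. No marked ordinal is needed, because the order type of the ordinals of the \emph{universe} of this target factor is already $\kappa+1>\kappa$: the induced map $k\colon V_{\kappa+1}\to V_{\kappa+2}$ given by $k(X)=h_{\kappa+1}\bigl(h(\{X\cap V_\gamma\}_{\gamma<\kappa})\bigr)$ satisfies $k(\kappa)=\kappa+1$. Nonprincipality is then obtained not from the single seed $\kappa$ (your argument for which presupposes exactly the missing target) but by a counting argument over the family $\Ue_a$ of derived $\omega_1$-complete ultrafilters: there are $2^{|V_\kappa|}>|\kappa|$ many seeds $a$, so not all $\Ue_a$ can be principal. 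Finally, the theorem only asserts the existence of \emph{some} measurable cardinal (below or equal to $\kappa$), so your proposed upgrade to $\kappa$-completeness or normality is unnecessary and is not carried out in the paper.
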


\begin{proof}
(1) implies (2): Let $\Ce$ be the $\Pi_1$-definable class of $\langle V_{\gamma }, \in \rangle$, $\gamma \geq \omega$. Let $\kappa$ witness $\PSR_1$ for $\Ce$, and let $S$ be a set that contains $V_{\kappa +1}$ such that there exists an elementary embedding
$$h:\prod (\Ce \cap V_{\kappa})\to \prod S.$$
Define
$k:V_{\kappa +1}\to V_{\kappa +2}$
by:
$$k(X)=h_{\kappa +1}(h(\{ X\cap V_\gamma\}_{\gamma <\kappa}))$$
where $h_{\kappa +1}$ is the projection on $V_{\kappa +1}$.
Then $k$ preserves Boolean operations and  the subset relation, and   is the identity on $\omega +1$. Moreover, $k(\kappa)=\kappa +1$.
Now for each $a\in V_{\kappa +1}$, define $\Ue_a$ by
$$X\in\Ue_a  \quad \mbox{ iff }\quad X\subseteq \kappa \mbox{ and } a \in k(X)\, .$$
Clearly, $\kappa \in \Ue_a$. Also, since $k$ preserves Boolean operations and the $\subseteq$ relation, $\Ue_a$ is a proper ultrafilter over $\kappa$. Moreover, since $k(\omega)=\omega$,  $\Ue_a$ is $\omega_1$-complete. Furthermore, since $|V_{\kappa +1}|=2^{|V_\kappa | } > |\kappa |$, some $\Ue_a$ is non-principal. So, some cardinal less than or equal to $\kappa$ is measurable.  

(2) implies (1): Let $\kappa$ be a measurable cardinal, and  let $\varphi (x)$ be a $\Pi_1$ formula (we may allow parameters in $V_\kappa$)  that defines a proper class $\Ce$ of natural structures. We claim that $\kappa$ $1$-product-reflects $\Ce$.

Let $j:V\to M$ be an ultrapower elementary embedding, given by some $\kappa$-complete normal measure over $\kappa$. Thus, $\crit (j)=\kappa$  and $V_{\kappa + 1}^M=V_{\kappa +1}$.
By elementarity, the restriction of $j$ to $\Ce \cap V_\kappa$ yields an elementary embedding
$$h:\prod (\Ce \cap V_\kappa)\to \prod (\{ X: M\models \varphi(X)\} \cap V^M_{j(\kappa)}).$$
Let $S:=  \{ X: M\models \varphi(X)\} \cap V^M_{j(\kappa)}$. If $\mathcal{A}=\langle V_\gamma ,\in\rangle \in \Ce$, with $\gamma \leq \kappa +1$, then $\Ae \in M$, and  since $\varphi (\Ae)$ holds in $V$, by $\Pi_1$ downward absoluteness for transitive classes it also holds in $M$. Moreover, since $j(\kappa)>\kappa +1$, $\Ae \in  V^M_{j(\kappa)}$, hence  $\Ae\in S$.
\end{proof}

Let us note that the proof of the theorem above also shows that the least measurable cardinal is precisely the least cardinal $\kappa$ that witnesses $\PSR_1$ for all classes $\Ce$ that are $\Pi_1$-definable with parameters in $V_\kappa$.

\section{Large cardinals below measurability}

We shall next consider  Structural Reflection for classes of relational structures that are $\Sigma_1$-definable in the language of set theory extended with additional $\Pi_1$ predicates. That is,  classes of structures of complexity between $\Sigma_1$ and $\Sigma_2$.

Let $\Rcal$ be a  set of $\Pi_1$ predicates or relations. A class $\Ce$ of structures in a fixed countable relational type is said to be $\Sigma_1(\Rcal)$-definable  if it is definable by means of a $\Sigma_1$ formula of the first-order language of set theory with additional predicate symbols for the predicates in  $\Rcal$, without parameters. We  define the following form of $\SR$:

\begin{quote}
\begin{itemize}
\item[$\Sigma_1(\Rcal)$-$\SR$:] 
For every $\Sigma_1(\Rcal)$-definable   class $\Ce$ of   structures  of the same type there exists a cardinal $\kappa$ that \emph{reflects} $\mathcal{C}$, i.e.,  for every $A$ in $\mathcal{C}$ there exists $B$ in $\mathcal{C}\cap V_\kappa$ and an elementary embedding from $B$ into $A$.\end{itemize}
\end{quote}


For the rest of this section we shall  write $\SR_\Rcal$ instead of the more cumbersome $\Sigma_1(\Rcal)$-$\SR$. Also, if $\Rcal=\{ R_1,\ldots ,R_n\}$, then we may write $\SR_{R_1,\ldots,R_n}$ for $\SR_\Rcal$.

We have that $\SR_\emptyset$, i.e., $\Sigma_1$-$\SR$, is provable in ZFC (Proposition \ref{sigma1}). However, if $R$ is the $\Pi_1$ relation ``$x$ is an ordinal and $y=V_x$", then $\SR_R$ holds if and only if there exists a supercompact cardinal (\cite{Ba:CC, BCMR}; see also Theorem \ref{thmpi1}). Moreover, if $\kappa$ is supercompact, then $\SR_\Rcal$ holds for  $\kappa$, for any set $\Rcal$ of $\Pi_1$ predicates (cf. Theorem \ref{thmpi1}).

\subsection{The principle $\SR^-_\Rcal$}
\label{subsection8.1}

For $\Gamma$ any lightface definability class, the following is a natural  restricted form of $\Gamma$-$\SR$:

\begin{quote}
\begin{itemize}
\item[$\Gamma$-$\SR^-:$] 
There exists a cardinal $\kappa$ such that for every $\Gamma$-definable   class $\Ce$   of structures of the same type and every $A\in \Ce$ of cardinality $\kappa$ there exists $B\in \Ce\cap H_\kappa$ and an elementary embedding from $B$ into $A$. We say that the cardinal $\kappa$ $\kappa$-\emph{reflects} $\mathcal{C}$.
\end{itemize}
\end{quote}

The restriction of $\SR^-$ to $\Sigma_1(\mathcal{R})$-definable classes of structures was first introduced in \cite{BV}. Namely, for $\Rcal$ a finite set of $\Pi_1$ predicates or relations, let 

\begin{quote}
\begin{itemize}
\item[$\SR^-_\Rcal:$] 
There exists a cardinal $\kappa$ that $\kappa$-reflects every  $\Sigma_1(\Rcal)$-definable  (with  parameters in $H_\kappa$) class $\Ce$ of structures of the same type.
\end{itemize}
\end{quote}


\subsubsection{The Cardinality predicate}
Let $\Cd$ be the $\Pi_1$ predicate $``x$ is a cardinal". 
Magidor and V\"a\"an\"anen  \cite{MaVa} show that  the principle $\SR_{\Cd}$ implies $0^\sharp$, and much more, e.g., there are no good scales.
The principle $\SR^-_{\Cd}$ is much weaker, but it does have some large-cardinal strength, as the next theorem shows.

\begin{theorem}[\cite{BV}]
\label{weakCd}
$\SR^-_{\Cd}$ holds, witnessed by $\kappa$, then   there exists a weakly inaccessible cardinal $\lambda \leq \kappa$.
\end{theorem}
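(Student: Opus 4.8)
The plan is to probe $\SR^-_{\Cd}$ with a single, well-chosen class that makes the cardinal structure of an initial segment of $V$ first-order visible, and then to analyse the resulting elementary embedding. Let $\Ce$ be the class of all structures $\langle\gamma,\in,\Cd\cap\gamma\rangle$ with $\gamma$ an ordinal, the unary predicate being interpreted as the set of \emph{genuine} cardinals below $\gamma$; membership in $\Ce$ is the conjunction of ``the domain is an ordinal'' and ``the predicate agrees with $\Cd$ on it'', so $\Ce$ is $\Sigma_1(\Cd)$-definable without parameters. Since $A:=\langle\kappa,\in,\Cd\cap\kappa\rangle$ belongs to $\Ce$ and has cardinality $\kappa$, the principle hands us a structure $B\in\Ce\cap H_\kappa$ and an elementary embedding $e\colon B\to A$. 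As $B\in H_\kappa$ and $\kappa$ is a cardinal, the domain of $B$ is an ordinal $\gamma<\kappa$, and crucially $B=\langle\gamma,\in,\Cd\cap\gamma\rangle$ is a genuine cardinal-structure: its predicate is the \emph{true} set of cardinals below $\gamma$. This last feature, which is exactly what distinguishes $\Cd$ from an arbitrary $\Pi_1$ predicate, is where the strength must come from, since any would-be counterexample structure must simultaneously satisfy first-order elementarity and carry the real cardinal predicate of its domain.

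Next I would record the rigidity of $e$. It is an embedding of well-orders, hence order-preserving and injective with $e(\xi)\geq\xi$; being elementary for the predicate it sends cardinals to cardinals and non-cardinals to non-cardinals, and it preserves every order-definable notion built from $\Cd$ -- in particular ``immediate successor'', ``$x$ is the cardinal successor of $y$'', ``$x$ is a limit cardinal'', and each finite iterate of the limit-cardinal operation. Consequently $e$ fixes each finite ordinal and $\aleph_0$, and $\gamma,\kappa$ are uncountable. Because $B$ is a genuine cardinal-structure, these preserved first-order features are pinned to the real cardinal arithmetic of $\gamma$ rather than being free, and this rigidity propagates through $\range(e)$. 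Exploiting that the \emph{same} $\kappa$ reflects \emph{every} $\Sigma_1(\Cd)$-definable class, I would run the embedding construction also for the classes coding the iterated limit-cardinal ranks, and argue that the least cardinal $\lambda\leq\kappa$ admitting an embedding of a strictly shorter genuine cardinal-structure must be a limit cardinal, indeed a fixed point of the $\aleph$-function: a successor or low target is excluded because its marked points would have a definable largest element whose $e$-preimage would force a proper initial segment of the same cardinal type, contradicting minimality.

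The crux, and the step I expect to be the main obstacle, is \emph{regularity}: the language $\{\in,\Cd\}$ cannot express cofinality, so regularity of $\lambda$ cannot be transferred as a sentence across $e$ and must instead be wrung out of the embedding itself. The route I would attempt is a cofinality / minimal-counterexample argument: if $\lambda$ were singular, fix a cofinal $c\colon\theta\to\lambda$ with $\theta<\lambda$ and use the transitive collapse of $\range(e)$ -- an order isomorphism onto $\gamma$ that commutes with $\Cd$ -- to transport $c$ into a cofinal sequence witnessing that a \emph{strictly shorter} genuine cardinal-structure, still of limit-cardinal length, embeds elementarily below $\lambda$, contradicting the minimality of $\lambda$. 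A regular limit cardinal is weakly inaccessible and $\lambda\leq\kappa$, which is the desired conclusion. The delicate points to discharge -- and the reason this is the hard part -- are that every shortened or collapsed structure again genuinely lies in $\Ce$ (its predicate is once more the \emph{true} cardinal predicate of its domain, which is where $\Pi_1$-absoluteness of ``is a cardinal'' is used) and that the shortened length is itself a cardinal rather than a mere ordinal; only when these are verified does the minimal-counterexample machinery close up and deliver the weakly inaccessible $\lambda\leq\kappa$.
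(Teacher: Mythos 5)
There is a genuine gap, and it sits exactly where you predicted: regularity. (The paper itself gives no proof of this theorem --- it is quoted from \cite{BV} --- so I am comparing your argument with the one given there.) Two things go wrong. First, the minimal-counterexample framework has nothing to grip: $\SR^-_{\Cd}$ only $\kappa$-reflects structures of cardinality exactly $\kappa$, so the only cardinal you know to ``admit an embedding of a strictly shorter genuine cardinal-structure'' is $\kappa$ itself; for $\lambda<\kappa$ the principle is silent, and the ``least such $\lambda$'' may simply be $\kappa$, about which the minimality argument then says nothing. Second, and more fundamentally, a cofinal map $c\colon\theta\to\lambda$ is neither an element of the structure $\langle\lambda,\in,\Cd\cap\lambda\rangle$ nor definable in the language $\{\in,\Cd\}$, so elementarity of $e$ gives you no control over it; the transitive collapse of $\range(e)$ is just the order isomorphism $e(\xi)\mapsto\xi$ back onto $\gamma$, and ``transporting $c$ through it'' does not produce a new elementary embedding into any $\lambda'<\lambda$. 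Indeed the bare structures $\langle\gamma,\in,\Cd\cap\gamma\rangle$ cannot suffice for this theorem: they are ordinals with a distinguished closed set of order-indiscernible-style tameness, elementary embeddability between them is a combinatorial condition on order types decidable in ZFC, and one can have $\gamma<\kappa$ with $\langle\gamma,\in,\Cd\cap\gamma\rangle$ elementarily embeddable into $\langle\kappa,\in,\Cd\cap\kappa\rangle$ in models with no weakly inaccessible cardinals at all. Your ``iterated limit-cardinal ranks'' classes only address the limit-cardinal half, which is in any case not fully argued (a successor $\lambda=\nu^+$ receiving an embedding from $\gamma=\bar\nu^+$ with $e(\bar\nu)=\nu$ produces no immediate contradiction).

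The missing idea, which is how \cite{BV} proceeds, is to argue by contradiction and build the \emph{failure} of weak inaccessibility into the structures. Assume no $\lambda\leq\kappa$ is weakly inaccessible; then every uncountable cardinal $\mu\leq\kappa$ is either a successor or a singular limit, and one expands $\langle\kappa,\in,\Cd\cap\kappa\rangle$ by relations recording, for each such $\mu$, an explicit witness: the cardinal predecessor in the successor case, a cofinal subset of $\mu$ of order type (equivalently, cardinality) less than $\mu$ in the singular case. The conditions ``$C$ is the true cardinal predicate on the domain'', ``$X$ is cofinal in $\mu$'', and ``$|X|<\mu$'' are all $\Sigma_1$ in $\Cd$, so the expanded class is still $\Sigma_1(\Cd)$-definable and $\SR^-_{\Cd}$ applies to it; the elementary embedding must now respect the singularity/successorhood witnesses, and that is what yields the contradiction. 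Without putting those witnesses inside the structures, the language is too weak to see cofinality, and no analysis of the bare order embedding will recover it.
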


It is shown in \cite{MaVa} that, starting form a supercompact cardinal, one can produce a model of ZFC in which $\SR_{\Cd}$ holds for the first weakly inaccessible cardinal. Thus,  no large-cardinal properties beyond  weak inaccessibility   may be proved in ZFC to hold for the least cardinal witnessing $\SR_{\Cd}$.

\subsubsection{The Regularity predicate}

Let $\Rg$ be the $\Pi_1$ predicate $``x$ is a regular ordinal".

\begin{theorem}[\cite{BV}]
\label{propRg}
If $\SR^-_{\Rg}$ holds, witnessed by $\kappa$, then there exists a weakly Mahlo cardinal $\lambda \leq \kappa$.
\end{theorem}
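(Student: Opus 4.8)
The plan is to extract the weakly Mahlo cardinal as the \emph{critical point} of a reflecting elementary embedding supplied by $\SR^-_{\Rg}$, after first anchoring the weaker conclusion of weak inaccessibility. As that anchor, I would note that the cardinality predicate $\Cd$ is $\Delta_0(\Rg)$-definable: an infinite ordinal $x$ is a cardinal if and only if $x\in\Rg$, or $x$ is a limit ordinal with $\sup(\Rg\cap x)=x$ (a singular cardinal is the supremum of a cofinal set of regular cardinals, and conversely any limit ordinal that is a supremum of regular cardinals is a limit of cardinals, hence a cardinal). Substituting this definition into any $\Sigma_1(\Cd)$ formula keeps it $\Sigma_1(\Rg)$, so the same $\kappa$ also witnesses $\SR^-_{\Cd}$, and Theorem \ref{weakCd} already yields a weakly inaccessible cardinal $\le\kappa$. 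This only guarantees that weak inaccessibility is available; the genuinely new content, namely stationarity of the regular cardinals, must be obtained by using $\Rg$ directly.

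For the main argument I would fix a $\Sigma_1(\Rg)$-definable class $\Ce$ of \emph{rich} natural structures $\mathcal{M}=\langle \gamma,\in,\Rg\cap\gamma, F, c\rangle$, where $F$ codes, for each limit $\delta<\gamma$, a fixed cofinal sequence in $\delta$ of length $\mathrm{cf}(\delta)$, and $c$ is a distinguished constant forcing nontriviality of any embedding, in the spirit of Magidor's characterization of supercompactness (cf.\ Theorem \ref{thmpi1}). I would take $A\in\Ce$ of cardinality $\kappa$ in which $c$ is interpreted so high that its preimage is pushed below $\kappa$, apply $\SR^-_{\Rg}$ to obtain $B\in\Ce\cap H_\kappa$ and an elementary $j\colon B\to A$, and set $\nu:=\crit(j)$. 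Since $c^{B}<\kappa$ must be sent upward, $j$ moves an ordinal and $\nu<\kappa$ exists.

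I would then show that $\nu$ is weakly Mahlo. \emph{Regularity of $\nu$}: if $\mathrm{cf}(\nu)=\rho<\nu$, then the cofinal map $F^{B}(\nu,\cdot)$ has domain $\rho<\nu$ and values below $\nu$, all fixed by $j$; but by elementarity $j(F^{B}(\nu,\cdot))=F^{A}(j(\nu),\cdot)$ must be cofinal in $j(\nu)>\nu$, contradicting that its range stays below $\nu$. \emph{Limit cardinal}: if $\nu=\mu^{+}$ then $j(\mu)=\mu$ forces $j(\nu)=\nu$, contradicting $\nu=\crit(j)$; hence $\nu$ is weakly inaccessible. \emph{Stationarity}: for any club $C\subseteq\nu$ present in $B$, since $j$ fixes ordinals below $\nu$ we get $j(C)\cap\nu=C$, so $\nu$ is a limit point of $j(C)$ and therefore $\nu\in j(C)$; as $\nu$ is regular and $j$ preserves $\Rg$, the structure $A$ satisfies ``$j(C)$ contains a regular cardinal,'' whence by elementarity $B$ satisfies ``$C$ contains a regular cardinal,'' which (by correctness of $B$ about $\Rg$) is a genuine regular cardinal in $C$. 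Thus $\nu$ is weakly Mahlo and $\nu\le\kappa$.

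The hard part will be the tension between the two demands on $B$: it must have cardinality $<\kappa$ (to lie in $H_\kappa$) yet be rich enough both to compute cofinalities and to contain the clubs below its critical point. Computing cofinalities costs only the single function $F$, adding $\gamma$-many values and keeping $|A|=\kappa$; but capturing \emph{every} club below $\nu$ would naively require $2^{\nu}$ much information, which need not be below $\kappa$ when $\kappa$ is merely weakly inaccessible. The real work, therefore, is to secure full stationarity without insisting that a single $B$ literally contain all clubs: since any club $C\subseteq\nu$ with $\nu<\kappa$ does lie in $H_\kappa$, one reflects, for each such $C$ as a parameter, an auxiliary $\Sigma_1(\Rg)$ class and assembles the local conclusions — precisely the symbiosis and Löwenheim--Skolem--Tarski analysis carried out in \cite{BV}. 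I expect this assembly step, rather than the extraction of the critical point, to be the crux of the proof.
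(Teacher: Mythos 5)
Your setup (reflecting a class of structures of the form $\langle \gamma,\in,\Rg\cap\gamma,F,\dots\rangle$, extracting a critical point $\nu\le\kappa$, and proving its regularity via a coded family of strictly increasing cofinal maps of regular length) is sound, and the reduction of $\Cd$ to $\Rg$ is fine though not needed. The genuine gap is the stationarity step, which you correctly identify as the crux but do not actually bridge. Verifying that $\Rg\cap\nu$ is stationary requires handling \emph{every} club $C\subseteq\nu$, and your proposed repair --- reflect, for each such $C$ as a parameter, an auxiliary $\Sigma_1(\Rg)$ class and ``assemble the local conclusions'' --- does not work: each application of $\SR^-_{\Rg}$ produces its own structure $B_C$ and its own embedding $j_C$ with its own critical point $\nu_C$, and nothing forces these critical points to coincide with the $\nu$ you fixed in the first reflection. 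Adding $\nu$ or $C$ as a constant to the auxiliary structures does not help, since the preimage of that constant in $B_C$ will in general lie below $\nu$. So the ``assembly'' yields a family of possibly distinct cardinals, each satisfying one instance of club-meets-$\Rg$, not a single weakly Mahlo cardinal.

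The way the argument is actually completed is by contraposition, which lets you package the needed clubs into a \emph{single} structure before you know where the critical point will land. Assume no $\lambda\le\kappa$ is weakly Mahlo. Then every limit ordinal $\delta\le\kappa$ carries a club $C_\delta\subseteq\delta$ with $C_\delta\cap\Rg=\emptyset$ (for singular $\delta$ take a club of order type and minimum above $\cof{\delta}$, or an $\omega$-sequence of successor ordinals). Let $A=\langle\kappa+1,\in,\Rg\cap(\kappa+1),F,G\rangle$, where $F(\delta,\cdot)$ enumerates $C_\delta$ and $G(\delta,\cdot)$ is a strictly increasing cofinal map into $\delta$ of regular length, for every limit $\delta\le\kappa$; the class of all such structures $\langle\gamma+1,\in,\Rg\cap(\gamma+1),F,G\rangle$ is $\Sigma_1(\Rg)$-definable and $|A|=\kappa$. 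Reflect to get $j:B\to A$ with $B=\langle\bar\gamma+1,\dots\rangle\in H_\kappa$; since $j(\bar\gamma)=\kappa>\bar\gamma$, the critical point $\nu=\crit(j)$ exists and is a limit ordinal. Your $G$-argument shows $\nu$ is regular. But $F^{B}(\nu,\cdot)$ enumerates a club in $\nu$ all of whose indices and values are fixed by $j$, so its range is contained in $F^{A}(j(\nu),\cdot)$'s range $=C_{j(\nu)}$ and is unbounded in $\nu$; as $C_{j(\nu)}$ is closed in $j(\nu)>\nu$, we get $\nu\in C_{j(\nu)}$, so $\nu$ is \emph{not} regular --- a contradiction. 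This is the uniformization your sketch is missing: the counterexample clubs are indexed by all limit ordinals up to $\kappa$ inside one structure of size $\kappa$, rather than being chased club-by-club at an unknown $\nu$.
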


It follows from \cite{MaVa}  that one cannot hope to get from $\SR_{\Rg}$ more than a  weakly Mahlo cardinal $\leq \kappa$, for starting from a weakly Mahlo cardinal one can obtain a model in which $\SR_{\Rg}$ is witnessed by the least weakly Mahlo cardinal. One cannot hope either to show that the least $\kappa$ witnessing $\SR_{\Rg}$  is strongly inaccessible, for in \cite{SV} it is shown that one can have $\SR_{\Rg}$ witnessed by $\kappa =2^{\aleph_0}$.

Let us note that, since the predicate $\Cd$ is  $\Sigma_1(\Rg)$-definable (see \cite{BV}), the principle  $\SR_{\Cd ,\Rg}$ is equivalent to $\SR_{\Rg}$. 

\subsubsection{The Weakly Inaccessible predicate}

There is a principle between $\SR^-_{\Cd}$ and $\SR^-_{\Rg}$, namely $\SR^-_{\Cd, \WI}$, where 
$\WI$ is the $\Pi_1$ predicate $``x$ is weakly inaccessible".

\begin{proposition}[\cite{BV}]
\label{propWIg}
If $\SR^-_{\Cd, \WI}$ holds, witnessed by $\kappa$, then    there exists a  $2$-weakly  inaccessible cardinal $\lambda \leq \kappa$.
\end{proposition}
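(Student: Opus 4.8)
The plan is to reproduce, one level higher, the argument behind Theorem~\ref{weakCd}, now exploiting the extra predicate $\WI$. The heuristic is that for a $\Pi_1$ predicate $Q$ naming a class of cardinals, $\SR^-_{\Cd,Q}$ should force the witness to lie above a cardinal $\lambda$ that is weakly inaccessible (this being the contribution of $\Cd$, exactly as in Theorem~\ref{weakCd}) and is in addition a limit of $Q$-points (the contribution of $Q$). With $Q=\WI$ this gives a weakly inaccessible $\lambda$ that is a limit of weakly inaccessibles, i.e. a $2$-weakly inaccessible cardinal, as required. Note first that $\SR^-_{\Cd,\WI}$ reflects in particular every $\Sigma_1(\Cd)$-definable class, so the same $\kappa$ witnesses $\SR^-_{\Cd}$ and Theorem~\ref{weakCd} already yields a weakly inaccessible cardinal $\le\kappa$; the work is to upgrade this to a $2$-weakly inaccessible one.

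Concretely, I would work with the class $\Ce$ of natural structures $A_\gamma=\langle\gamma,\in,\Cd^\gamma,\WI^\gamma\rangle$, where $\gamma$ is an ordinal, $\Cd^\gamma=\{\mu<\gamma:\Cd(\mu)\}$ and $\WI^\gamma=\{\mu<\gamma:\WI(\mu)\}$. Treating $\Cd$ and $\WI$ as atomic $\Pi_1$ predicates, $\Ce$ is $\Sigma_1(\Cd,\WI)$-definable. Applying $\SR^-_{\Cd,\WI}$ (witnessed by $\kappa$) to the target $A_\kappa$, which has cardinality $\kappa$, produces some $A_\delta\in\Ce\cap H_\kappa$, hence with $\delta<\kappa$, and an elementary embedding $j:A_\delta\to A_\kappa$. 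This $j$ is order-preserving and respects both predicates, so it carries the cardinal and weakly-inaccessible structure of $\delta$ into that of $\kappa$. The $\Cd$-part of the analysis extracts from $j$ a weakly inaccessible $\lambda\le\kappa$ exactly as in Theorem~\ref{weakCd}; rerunning that proof with the present richer structures keeps the very same $\lambda$ available for the second half of the argument. The $\WI$-part then shows that weakly inaccessibles are unbounded below $\lambda$: the predicate $\WI^{A_\gamma}$ plays, relative to the weakly inaccessibles, precisely the role that $\Cd^{A_\gamma}$ plays relative to the cardinals, so the same forcing of cofinal appearance that makes $\lambda$ a limit \emph{cardinal} makes it a limit of \emph{weakly inaccessibles}. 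Combining the two halves, $\lambda$ is weakly inaccessible and a limit of weakly inaccessibles, whence $2$-weakly inaccessible.

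The main obstacle is the regularity half of weak inaccessibility. Regularity is not a first-order property of the order reducts $\langle\gamma,<\rangle$, so it cannot be read off from the elementarity of $j$ alone; as in Theorem~\ref{weakCd} it must instead come from a closure (union-of-chains) argument exploiting the reflection property of the witness $\kappa$ --- roughly, a singular $\lambda$ would allow one to assemble the reflected structure $A_\delta$ out of fewer than $\lambda$ pieces of size $<\lambda$, contradicting the $\kappa$-reflection of $\Ce$. The second, genuinely new, difficulty is organisational: one must guarantee that the \emph{single} cardinal $\lambda$ delivered by the $\Cd$-analysis is exactly the cardinal below which the $\WI$-analysis supplies \emph{cofinally many} weakly inaccessibles --- not merely one weakly inaccessible somewhere below $\kappa$. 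Carrying both inferences through the one embedding $j$, rather than invoking Theorem~\ref{weakCd} and a separate $\WI$-reflection, is where care is needed. As with the relation between $\SR_{\Rg}$ and weak Mahloness recorded after Theorem~\ref{propRg}, the resulting bound should not be expected to be optimal; the goal is only to force a $2$-weakly inaccessible cardinal $\le\kappa$.
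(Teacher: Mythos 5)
Your high-level plan is the right one, and the two difficulties you flag (regularity, and making the weakly inaccessibles accumulate at the \emph{same} $\lambda$) are exactly the right ones; but the proposal does not overcome either of them, and the class of structures you choose cannot carry the argument. Note first that this paper states the proposition without proof, citing \cite{BV}; the proof there (like that of Theorem~\ref{weakCd}, on which your whole $\Cd$-half rests and which you also do not supply) is a proof by \emph{contradiction}: one assumes there is no $2$-weakly inaccessible cardinal $\leq\kappa$, and it is precisely this assumption that allows one to attach to every relevant cardinal $\leq\kappa$ a concrete \emph{witness} to the failure of the property --- a cofinal map from a smaller ordinal for each singular cardinal, collapsing surjections for each successor cardinal, and a bound on $\WI\cap\alpha$ for each weakly inaccessible $\alpha$ that is not a limit of weakly inaccessibles --- and to package all these witnesses as functions and relations into a single structure of cardinality $\kappa$ lying in a $\Sigma_1(\Cd,\WI)$-definable class. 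The contradiction is then extracted from the closure of $\range(j)$ under the witnessing functions. Your structures $\la\gamma,\in,\Cd^\gamma,\WI^\gamma\ra$ omit all of this data: they are well-orders with two unary predicates, and elementary equivalence and embeddability of well-orders decorated with sparse predicates are governed by coarse order-type invariants that detect neither cofinality nor cardinality. In particular, the $\WI$-half of your argument is circular: elementarity of $j:A_\delta\to A_\kappa$ transfers ``$\WI$ is unbounded'' from $A_\kappa$ down to $A_\delta$, but you have no means of establishing it for $A_\kappa$ in the first place --- if $\WI\cap\kappa$ is bounded or empty, the predicate on your structures contributes nothing. The predicate $\WI$ earns its keep only through the \emph{definition} of a richer class (one needs it to say, in a $\Sigma_1(\Cd,\WI)$ way, that a structure carries correct witnessing data about weakly inaccessible cardinals), not as a decoration on the reflected structures.

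Relatedly, your proposed fix for regularity --- that a singular $\lambda$ would let one ``assemble the reflected structure $A_\delta$ out of fewer than $\lambda$ pieces of size $<\lambda$, contradicting the $\kappa$-reflection of $\Ce$'' --- is not an argument: $\SR^-$ supplies one elementary embedding of one small structure, and nothing in its statement is contradicted by $A_\delta$ being a union of small pieces. Regularity must instead come from the witnessing functions being part of the vocabulary, so that $\range(j)$ (or a hull built from it) is closed under them and the least cardinal not ``reached'' by this hull is forced to be simultaneously regular, a limit cardinal, and --- via the $\WI$-witnesses --- a limit of weakly inaccessibles; this simultaneity is also what resolves your second, ``organisational'' difficulty. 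As written, the proposal defers the entire engine of the proof to ``rerunning Theorem~\ref{weakCd}'' without supplying that engine.
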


We may also consider predicates $\alpha$-$\WI$, for $\alpha$ an ordinal. That is, the predicate ``$x$ is $\alpha$-weakly inaccessible". Then, similar arguments as in \cite{BV} would show that the principle $\SR^-_{\Cd, \, \alpha \,\mbox{-} \WI}$ holding for $\kappa$ implies that there is an $(\alpha +1)$-weakly inaccessible cardinal $\lambda\leq \kappa$.  

 \subsubsection{Weak compactness}
 
 Let $\WC(x,\alpha)$ be the $\Pi_1$ relation $``\alpha$ is a limit ordinal and $x$ is a partial ordering with no chain of  order-type $\alpha$".

\begin{theorem}[\cite{BV}]
\label{propWC}
If $\SR^-_{\Cd, \WC}$ holds, witnessed  by some $\kappa$ such that if $\gamma \leq \kappa$ is weakly inaccessible, then $2^\delta \leq \gamma$ for all cardinals $\delta <\gamma$, then there exists a weakly compact cardinal $\lambda \leq \kappa$.\footnote{In \cite{BV}  it is only assumed that $\kappa$ witnesses $\SR^-_{\Cd, \WC}$. However, Philipp L\"ucke \cite{Lu:SR} has shown that some additional assumption on $\kappa$ is needed. Note that our assumption on $\kappa$ in the current statement of the Theorem follows form the GCH.} 
\end{theorem}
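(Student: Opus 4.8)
The plan is to reduce the statement to the tree property and then contradict $\SR^-_{\Cd, \WC}$ by manufacturing a cofinal branch through a genuinely Aronszajn tree. Recall that a cardinal is weakly compact exactly when it is strongly inaccessible and carries no Aronszajn tree. First I would secure inaccessibility below $\kappa$: since dropping the $\WC$ predicate shows that $\SR^-_{\Cd, \WC}$ implies $\SR^-_{\Cd}$, Theorem \ref{weakCd} yields a weakly inaccessible $\lambda \le \kappa$, and the cardinal-arithmetic hypothesis ($2^\delta \le \gamma$ for weakly inaccessible $\gamma \le \kappa$) upgrades every such $\lambda$ to a strongly inaccessible cardinal. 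Thus it suffices to find a strongly inaccessible $\lambda \le \kappa$ with the tree property. Arguing by contradiction, I assume that no $\lambda \le \kappa$ is weakly compact; then every inaccessible $\gamma \le \kappa$ carries a $\gamma$-Aronszajn tree.

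Next I would design the class to be reflected. Let $\Ce$ be the $\Sigma_1(\Cd, \WC)$-definable class of thin (e.g.\ $L[T]$-like) transitive models $\mathfrak{M} = \langle M, \in, \gamma, T\rangle$ such that $M \models$ ``$\gamma$ is the least inaccessible cardinal and $T$ is a $\gamma$-Aronszajn tree'', together with the \emph{genuine} requirements $\Cd(\gamma)$ and $\WC(T,\gamma)$. The reason for including $\WC$ is precisely that ``$T$ has no cofinal branch'' is a $\Pi_1$ condition that could not otherwise occur inside a $\Sigma_1$ definition; by fiat $\WC(T,\gamma)$ forces the tree of any member of $\Ce$ to be \emph{really} Aronszajn, not merely Aronszajn in the sense of the thin model $M$. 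Using the cardinal-arithmetic hypothesis to control cardinalities, I would check that $\Ce$ has a member $A = \langle L_\theta[T], \in, \gamma^A, T\rangle$ of cardinality exactly $\kappa$.

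Now I would apply $\kappa$-reflection to $A$, obtaining $B = \langle \bar{M}, \in, \bar\gamma, \bar T\rangle \in \Ce \cap H_\kappa$ and an elementary embedding $j : B \to A$. The heart of the matter is the analysis of $j$. Since $B \models$ ``$\bar\gamma$ is the least inaccessible'' and the critical point of any elementary embedding is inaccessible in its domain, every ordinal moved by $j$ is $\ge \bar\gamma$; as $j(\bar\gamma) = \gamma^A \ge \bar\gamma$ and—granting that $\bar\gamma$ is in fact moved, the delicate point discussed below—$\bar\gamma$ is moved, this forces $\crit(j) = \bar\gamma$. Consequently $j$ fixes $V^B_{\bar\gamma}$ pointwise, so for each $\alpha < \bar\gamma$ the level $\bar T_\alpha = j(\bar T_\alpha) = (j(\bar T))_\alpha = T_\alpha$; that is, $T \restriction \bar\gamma = \bar T$. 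Because $T = j(\bar T)$ has height $\gamma^A > \bar\gamma$, its level $\bar\gamma$ is nonempty, and the predecessors of any node there form a chain of order-type $\bar\gamma$ lying inside $\bar T$—a genuine cofinal branch through $\bar T$, contradicting $\WC(\bar T, \bar\gamma)$. This contradiction shows that some $\lambda \le \kappa$ is weakly compact.

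The main obstacle is making the critical-point analysis actually bite, and it splits into two delicate points. The first, controlling $\crit(j)$, is handled as above by passing to thin models and coding $\gamma$ as the least inaccessible, so that ``no inaccessible below $\gamma$'' is internal to the model and forces $\crit(j) = \bar\gamma$. The second, and genuinely harder, point is guaranteeing that $\bar\gamma$ is \emph{strictly} moved, i.e.\ $\bar\gamma < \gamma^A$: in the degenerate case $j(\bar\gamma) = \bar\gamma$ one gets $\crit(j) > \bar\gamma$, hence $j(\bar T) = \bar T = T$, and no new branch appears. Ruling this out is exactly where more than the bare witnessing of $\SR^-_{\Cd, \WC}$ is needed—as Lücke observed—and it is here that the cardinal-arithmetic hypothesis on $\kappa$ must enter, both to arrange a size-$\kappa$ witness $A$ whose coded inaccessible necessarily exceeds that of any $B \in \Ce \cap H_\kappa$ admitting an embedding into $A$, and to keep the thin-model computations of inaccessibility and tree height correct.
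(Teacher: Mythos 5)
Your overall strategy --- reflect a $\Sigma_1(\Cd,\WC)$-definable class of transitive structures coding ``an Aronszajn tree on a distinguished inaccessible'', with $\WC$ guaranteeing the tree is genuinely branchless, and then extract a cofinal branch of the reflected tree from the critical point of the embedding --- is the right one, and it is essentially the shape of the argument in \cite{BV} (the paper itself only cites this theorem and does not reprove it). But the proposal has three genuine gaps, and the central one rests on a false lemma. You assert that ``the critical point of any elementary embedding is inaccessible in its domain'' and use this, together with $B\models$ ``$\bar\gamma$ is the least inaccessible'', to conclude $\crit(j)\geq\bar\gamma$. For an elementary embedding $j:B\to A$ between transitive \emph{set} structures this is false: one only gets that $\crit(j)$ is a regular cardinal \emph{in the sense of} $B$ (take $B$ the transitive collapse of a countable $X\prec H_{\omega_2}$ and $j$ the inverse collapse; then $\crit(j)=\omega_1^B$, a successor cardinal of $B$). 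Inaccessibility of the critical point needs the domain to compute power sets correctly, which a thin transitive model does not. So nothing stops $\crit(j)<\bar\gamma$, in which case $j(\bar T)\neq T\restriction\bar\gamma$ at low levels and the branch argument never starts. Repairing this requires building genuine correctness of the cardinal structure below $\gamma$ into the class (using $\Cd$ both positively, ``every $M$-cardinal $\leq\gamma$ is a cardinal'', and negatively, ``every cardinal $\leq\gamma$ in $M$ is an $M$-cardinal'') so that a critical point strictly below $\bar\gamma$ is outright contradictory; you have not done this, and it is the heart of the proof.

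The two remaining gaps are: (i) you explicitly leave open the non-degeneracy $j(\bar\gamma)>\bar\gamma$ and only gesture that the cardinal-arithmetic hypothesis ``must enter here''; but the mechanism you sketch --- a size-$\kappa$ witness $A$ whose coded inaccessible exceeds that of every $B\in\Ce\cap H_\kappa$ --- would force $\gamma^A=\kappa$ and hence require $\kappa$ itself to be inaccessible and non-weakly-compact, which is not among your hypotheses (the witness of $\SR^-$ need not even be a strong limit). (ii) Even granting $\crit(j)=\bar\gamma$, elementarity only gives $\bar T = T\cap B$ below level $\bar\gamma$, i.e.\ $\bar T\subseteq T\restriction\bar\gamma$, not equality: $B$ is a set of size $<\kappa$ and need not contain every node of $T$ of rank $<\bar\gamma$. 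Hence the predecessors in $T$ of a node at level $\bar\gamma$ form a cofinal branch of $T\restriction\bar\gamma$ but not necessarily a chain \emph{in} $\bar T$, and no contradiction with $\WC(\bar T,\bar\gamma)$ is obtained. This is exactly where the hypothesis $2^\delta\leq\gamma$ for weakly inaccessible $\gamma\leq\kappa$ does its real work --- it lets the structures be fat enough below $\gamma$ (e.g.\ contain all of ${}^{<\gamma}2$) that the reflected tree really is a full initial segment --- so you have located the use of the extra hypothesis in the wrong place. In short: right architecture, but the three steps that make the architecture bite (critical point exactly $\bar\gamma$, $\bar\gamma$ moved, and fullness of $\bar T$) are each either unproved or justified by an incorrect general principle.
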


Since the first weakly Mahlo cardinal may satisfy $\SR_{\Rg}$ (\cite{MaVa}), we cannot prove the existence of a weakly compact cardinal $\leq \kappa$ just from $\SR_{\Rg}$. Hence, $\SR_{\Cd, \WC}$ is stronger than $\SR_{\Rg}$.

\medskip

Let $\PW$ be the $\Pi_1$ relation $\{ (x,y): y=\mathcal{P}(x)\}$.  Then we have the following:

\begin{theorem}[\cite{Mag}]
$\kappa$ is the least cardinal witnessing $\SR_{\PW}$ if and only if $\kappa$ is the first supercompact cardinal.
\end{theorem}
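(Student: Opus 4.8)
The plan is to prove the equivalence by showing that the least cardinal witnessing $\SR_{\PW}$ is exactly the first supercompact cardinal. The strategy is a sandwiching argument. On the one hand I will isolate a single $\Sigma_1(\PW)$-definable class whose reflection already forces supercompactness, yielding a lower bound on any witness of $\SR_{\PW}$; on the other hand I will use the fact, recalled in the discussion preceding subsection \ref{subsection8.1}, that a supercompact cardinal witnesses $\SR_\Rcal$ for every set $\Rcal$ of $\Pi_1$ predicates, which gives the matching upper bound. The class in question is Magidor's class $\Ce = \{\langle V_\alpha ,\in\rangle : \alpha\in\OR\}$ appearing in Theorem \ref{thmpi1}.

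The key technical step, and the one I expect to be the main obstacle, is verifying that $\Ce$ is $\Sigma_1(\PW)$-definable (without parameters). First I would write ``$u=V_\alpha$ for some ordinal $\alpha$'' as $\exists f\,\varphi(f,u)$, where $\varphi$ asserts that $f$ is a function whose domain is a successor ordinal $\gamma+1$, that $f(0)=\emptyset$, that $\PW(f(\beta),f(\beta+1))$ holds for every $\beta<\gamma$ (this is the only place the predicate $\PW$ is needed, to express $f(\beta+1)=\mathcal P(f(\beta))$), that $f(\lambda)=\bigcup_{\beta<\lambda}f(\beta)$ at limits $\lambda\le\gamma$, and that $f(\gamma)=u$. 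Every quantifier in $\varphi$ other than $\exists f$ is bounded by $f$ and its transitive closure, so $\varphi$ is $\Delta_0(\PW)$ and the whole formula is $\Sigma_1(\PW)$; correctness is just the uniqueness of the rank-recursion. Conjoining the $\Delta_0$ clause ``$E={\in}\restriction u$'' then defines the structures $\langle V_\alpha,\in\rangle$ by a $\Sigma_1(\PW)$ formula, so $\Ce$ is $\Sigma_1(\PW)$-definable.

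With this in hand the two bounds are immediate. For the lower bound, any cardinal $\kappa$ witnessing $\SR_{\PW}$ reflects every $\Sigma_1(\PW)$-definable class, in particular $\Ce$; by Magidor's characterization (\cite{Mag}, as recalled before Theorem \ref{thmpi1}) the least cardinal reflecting $\Ce$ is the first supercompact cardinal, whence $\kappa$ is at least the first supercompact. For the upper bound, if $\kappa$ is the first supercompact cardinal then, by the remark cited above, $\kappa$ witnesses $\SR_\Rcal$ for every set $\Rcal$ of $\Pi_1$ predicates; taking $\Rcal=\{\PW\}$ shows that the first supercompact witnesses $\SR_{\PW}$. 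Thus the least witness of $\SR_{\PW}$ is both $\le$ and $\ge$ the first supercompact cardinal, so the two cardinals coincide, which is exactly the asserted equivalence.

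Apart from the definability computation, the only point requiring care is to confirm that the reflection of $\Ce$ extracted from $\SR_{\PW}$ is literally the reflection to which Magidor's theorem applies: the same class $\Ce$ of natural structures $\langle V_\alpha,\in\rangle$ and the same notion of elementary embedding between the $\langle V_\beta,\in\rangle$. Once this is checked, everything else reduces to quoting Theorem \ref{thmpi1} and the supercompactness remark, so no further calculation is needed.
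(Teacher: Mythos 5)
Your argument is correct and is essentially the one the paper intends: the theorem is stated as a citation of Magidor, and the surrounding text supplies exactly your two bounds --- the least cardinal reflecting the class of structures $\langle V_\alpha,\in\rangle$ is the first supercompact (Magidor's characterization, as recalled before Theorem \ref{thmpi1}), and a supercompact cardinal witnesses $\SR_{\Rcal}$ for any set $\Rcal$ of $\Pi_1$ predicates. Your explicit check that this class is $\Sigma_1(\PW)$-definable without parameters, by coding the rank recursion with a function $f$ whose successor steps are expressed by the atomic predicate $\PW$ and whose remaining quantifiers are all bounded, is the one nontrivial point and is carried out correctly.
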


It follows from Theorem \ref{thmpi1} that $\SR_{\PW}$ is in fact equivalent to $\Pi_1$-$\SR$, and also equivalent to $\Sigma_2$-$\SR$.
However,  L\"ucke \cite{Lu:SR} has established  that $\SR^-_{\PW}$ is much weaker than $\SR_{\PW}$. Indeed, he shows that  $\SR^-_{\PW}$ is  equivalent to the existence of a weakly shrewd cardinal, a large cardinal notion obtained by weakening the definition of shrewd cardinal studied  by M. Rathjen in \cite{Ra:RAOA} and whose consistency strength is strictly between the large cardinal notions of total indescribability and subtleness. However, as shown in \cite{Lu:SR},  shrewd and weakly shrewd cardinal are equiconsistent.

\begin{definition}[\cite{Lu:SR}] A cardinal $\kappa$ is \emph{weakly shrewd} if for every formula $\varphi(x,y)$ of the language of set theory, every cardinal $\theta >\kappa$, and every $A\subseteq \kappa$ such that $\varphi(A,\kappa)$ holds in $H_\theta$, there exist cardinals $\bar{\kappa} <\bar{\theta}$ such that $\bar{\kappa}<\kappa$ and $\varphi (A\cap \bar{\kappa}, \bar{\kappa})$ holds in $H_{\bar{\theta}}$.
\end{definition}

\begin{theorem}[\cite{Lu:SR}] The following are equivalent:
\begin{enumerate}
\item $\kappa$ is the least weakly shrewd cardinal.
\item $\kappa$ is the least cardinal witnessing $\SR^-_{\PW}$.
\item $\kappa$ is the least cardinal witnessing  $\Sigma_2$-$\SR^-$.
\end{enumerate}
\end{theorem}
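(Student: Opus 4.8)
The plan is to prove the three characterizations in two stages: a syntactic identification of the relevant definability classes, giving (2)$\Leftrightarrow$(3), and a semantic translation between structural reflection and the reflection of satisfaction in the $H_\theta$-hierarchy, giving (1)$\Leftrightarrow$(3). I would carry out each equivalence at the level of witnessing cardinals, so that the three ``least'' clauses follow at once. For (2)$\Leftrightarrow$(3) the key is that, for classes of structures, $\Sigma_1(\PW)$-definability coincides with $\Sigma_2$-definability. Since $\PW=\{(x,y):y=\mathcal{P}(x)\}$ is $\Pi_1$, every $\Sigma_1(\PW)$ class is $\Sigma_2$; conversely, by the L\'evy characterization of $\Sigma_2$ truth (a $\Sigma_2$ statement holds iff it holds in $V_\alpha$, equivalently some suitable $H_\theta$, for some, equivalently all, sufficiently large $\alpha\in C^{(1)}$) and because the map $\alpha\mapsto V_\alpha$ is uniformly $\Sigma_1(\PW)$-definable, every $\Sigma_2$ formula can be rewritten as a $\Sigma_1$ formula in the predicate $\PW$. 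Hence $\SR^{-}_{\PW}$ and $\Sigma_2$-$\SR^-$ have exactly the same instances, so a cardinal witnesses one iff it witnesses the other.

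Next I would show that a weakly shrewd $\kappa$ witnesses $\Sigma_2$-$\SR^-$. Given a $\Sigma_2$-definable class $\Ce$ and $A\in\Ce$ with $|A|=\kappa$, transport $A$ onto a structure with universe $\kappa$ and let $\bar A\subseteq\kappa$ code its \emph{complete Henkinised elementary diagram}, with a chosen witnessing ordinal $<\kappa$ for each true existential. For a large enough cardinal $\theta$ the statement $\varphi(\bar A,\kappa)$ asserting ``$\bar A$ is the complete Henkinised elementary diagram of a $\Ce$-member with universe $\kappa$'' holds in $H_\theta$; here $\Ce$-membership is captured inside $H_\theta$ by the $\Sigma_2$-to-$H_\theta$ absoluteness from the first step, while satisfaction for the set-sized structure is $\Delta_1$. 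Weak shrewdness yields $\bar\kappa<\kappa$ and $\bar\theta$ with $H_{\bar\theta}\models\varphi(\bar A\cap\bar\kappa,\bar\kappa)$. The structure $B$ coded by $\bar A\cap\bar\kappa$ has universe $\bar\kappa$ and relations equal to those of $A$ restricted to $\bar\kappa$; crucially, since $\bar A\cap\bar\kappa$ remains a \emph{complete Henkinised} diagram, the restriction $A\restriction\bar\kappa$ satisfies the Tarski--Vaught criterion in $A$, so $B=A\restriction\bar\kappa\preceq A$ and the inclusion is the required elementary embedding. As $\bar\kappa<\kappa$ we have $B\in H_\kappa$, and $B\in\Ce$ follows by recording a $\Sigma_1(\PW)$-witness for $\Ce$-membership below $\theta$ and reflecting it below $\bar\theta$.

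For the converse I would show that a witness $\kappa$ of $\Sigma_2$-$\SR^-$ bounds a weakly shrewd cardinal. Given $\varphi$, a cardinal $\theta>\kappa$, and $A\subseteq\kappa$ with $H_\theta\models\varphi(A,\kappa)$, let $\Ce$ be the $\Sigma_2$-definable class of natural structures $\langle V_\gamma,\in,\lambda,B\rangle$ in which $\lambda$ is a cardinal, $B\subseteq\lambda$, and some $H_\eta\models\varphi(B,\lambda)$. The instance built from $(\kappa,A,\theta)$ lies in $\Ce$ and has cardinality $\kappa$, so $\Sigma_2$-$\SR^-$ provides $B^{*}\in\Ce\cap H_\kappa$ and an elementary $j\colon B^{*}\to A^{*}$. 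Reading off the ordinal part, $j$ is order-preserving and is the identity below $\crit(j)$, so with $\bar\kappa=\crit(j)$ the reflected predicate agrees with $A\cap\bar\kappa$, and membership of $B^{*}$ in $\Ce$ delivers a cardinal $\bar\theta$ with $H_{\bar\theta}\models\varphi(A\cap\bar\kappa,\bar\kappa)$; thus some cardinal $\leq\kappa$ is weakly shrewd. Combining this with the previous paragraph shows that the least weakly shrewd cardinal equals the least witness of $\Sigma_2$-$\SR^-$, and with the first paragraph that it equals the least witness of $\SR^{-}_{\PW}$.

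The technical heart, and the step I expect to be hardest, is the conversion between \emph{reflection of satisfaction}, which is all the definition of weak shrewdness provides, and \emph{reflection by elementary embeddings with exact restriction}, which $\SR^-$ demands. The decisive device is encoding the complete Henkinised elementary diagram, so that bare preservation of the single formula $\varphi$ under weak shrewdness forces the inclusion $\bar\kappa\hookrightarrow\kappa$ to be elementary; dually, in the converse one must arrange the coded structures so that the embedding supplied by $\Sigma_2$-$\SR^-$ is the identity below its critical point and so that the marked cardinal of $B^{*}$ is exactly $\crit(j)$, guaranteeing that the reflected set is \emph{literally} $A\cap\bar\kappa$ rather than an isomorphic copy. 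Keeping every auxiliary class $\Sigma_2$ (equivalently $\Sigma_1(\PW)$) while simultaneously securing enough $\Sigma_2$-correctness of the small models $H_{\bar\theta}$ used to certify $\Ce$-membership is the delicate bookkeeping on which the whole argument turns.
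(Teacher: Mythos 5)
The paper does not prove this theorem; it is quoted from L\"ucke's \cite{Lu:SR}, so your proposal can only be judged on its own terms. Your first paragraph (the interchangeability of $\Sigma_2$-definability and $\Sigma_1(\PW)$-definability via the normal form ``$\exists \alpha\, V_\alpha\models\sigma$'' and the $\Pi_1$-definability of the map $\alpha\mapsto V_\alpha$) is standard and fine, modulo the small mismatch that $\SR^-_{\PW}$ as defined in the paper allows parameters in $H_\kappa$ while $\Sigma_2$-$\SR^-$ is lightface; for the ``least witness'' formulation this is harmless. The real problems are in the two semantic directions.

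In the converse direction there is a concrete error: you apply $\Sigma_2$-$\SR^-$ to the class of natural structures $\langle V_\gamma,\in,\lambda,B\rangle$, but these have cardinality $\beth_\gamma$, not $\lambda$, and $\SR^-$ only reflects structures of cardinality \emph{exactly} $\kappa$ into $H_\kappa$; the principle says nothing about this class, so the argument does not start. You must instead use size-$\kappa$ structures (e.g.\ copies of elementary submodels $X\preceq H_\eta$ with $\lambda+1\subseteq X$ and $|X|=\lambda$), and then verify that the reflected structure really certifies satisfaction in a genuine $H_{\bar\theta}$ rather than in some ill-founded or non-$\Sigma_2$-correct impostor. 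Moreover, the step you yourself flag as decisive --- arranging that the marked cardinal of $B^{*}$ equals $\crit(j)$ so that the reflected predicate is literally $A\cap\bar\kappa$ rather than $j^{-1}(A)$ --- is left entirely unresolved; nothing in $\SR^-$ forces $\crit(j)\geq j^{-1}(\kappa)$, and without that the definition of weak shrewdness is not met. In the forward direction there is a parallel gap: your Henkinised diagram certifies, inside $H_\theta$, membership in $\Ce$ of the \emph{transported} copy of $A$ with universe $\kappa$ (and, after reflection, of $B$ with universe $\bar\kappa$), but $\Sigma_2$-$\SR^-$ is asserted for arbitrary $\Sigma_2$ classes, which need not be closed under isomorphism; so the reflected witness only places $B$ in the isomorphism closure of $\Ce$, not in $\Ce$ itself, and an additional device (coding transitive collapses and controlling $\mathrm{trcl}(B)$ so that $B\in H_\kappa$) is needed. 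These are precisely the points where L\"ucke's argument does real work --- he first establishes an elementary-embedding characterization of weakly shrewd cardinals ($j\colon X\to H_\theta$ with $j(\bar\kappa)=\kappa$ and prescribed elements in the range) and only then derives the reflection principle --- so the skeleton of your plan is reasonable, but as written it has genuine holes at its load-bearing joints.
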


Since, as shown in \cite{Lu:SR}, weakly shrewd cardinals may be smaller than $2^{\aleph_0}$,   the principle $\Sigma_2$-$\SR^-$, and therefore also $\SR^-_{\mathcal{R}}$, for any set $\mathcal{R}$ of $\Pi_1$ predicates, does not imply the existence of a strongly inaccessible cardinal. Moreover, \cite{Lu:SR} shows that it is consistent, modulo the existence of a weakly shrewd cardinal that is not shrewd (a large cardinal notion consistency-wise weaker than subtleness), that there exists a cardinal less than $2^{\aleph_0}$ witnessing the principle $\SR^-$ for all definable classes of structures of the same type, taken as a schema, i.e., $\Sigma_n$-$\SR^-$, for all $n<\omega$. Thus, even $\SR^-$ cannot  imply the existence of a strongly inaccessible cardinal.

\subsection{Strong $\Sigma_1(\mathcal{R})$-definability}

Notice that a  class $\Ce$ is $\Sigma_1$-definable  iff there is a $\Sigma_1$ formula $\varphi$ such that for every $A$, $A\in \Ce$ if and only if some transitive structure $\langle M,\in\rangle$ that contains $A$ satisfies $\varphi(A)$. Now let $\mathcal{L}_{\dot{R}}$ be the language of set theory expanded with an additional predicate symbol $\dot{R}$, and suppose $R$ is a predicate. Naturally, one may define a class $\Ce$ to be $\Sigma_1(R)$ if it is $\Sigma_1$-definable in the language $\mathcal{L}_{\dot{R}}$ with $\dot{R}$ being interpreted as $R$. However, unlike the case of $\Sigma_1$-definability, this is not equivalent to saying that there is a $\Sigma_1$ formula $\varphi$ of the language $\mathcal{L}_{\dot{R}}$ such that for every $A$, $A\in \Ce$ if and only if some transitive structure $\langle M,\in, R^M \rangle$ that contains $A$ satisfies $\varphi(A)$. For the equivalence to hold we need to require that $R^M$ is precisely $R\cap M$. Namely,

\begin{proposition}
The following are equivalent for all classes   $\Ce$ and predicates $R$:
\begin{enumerate}
\item $\Ce$ is $\Sigma_1(R)$, i.e., there exists a $\Sigma_1$ formula $\varphi(x)$ of $\mathcal{L}_{\dot{R}}$ such that $$\Ce=\{ A:\varphi(A), \mbox{with $\dot{R}$  interpreted as $R$}\}$$
\item There is a $\Sigma_1$ formula $\varphi (x)$ of the language $\mathcal{L}_{\dot{R}}$ such that for every $A$, $A\in \Ce$ if and only if $$\langle M,\in, R\cap M \rangle \models \varphi(A)$$
for some transitive structure  $\langle M,\in \rangle$ that contains $A$.
\end{enumerate}

\end{proposition}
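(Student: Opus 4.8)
The plan is to reduce both implications to a single absoluteness lemma: \emph{for every transitive set $M$, every $\Delta_0$ formula $\theta$ of $\mathcal{L}_{\dot R}$, and all parameters $\bar a\in M$, one has $\langle M,\in,R\cap M\rangle\models\theta(\bar a)$ if and only if $\langle V,\in,R\rangle\models\theta(\bar a)$.} I would prove this by induction on $\theta$. The only non-routine clause is the atomic formula $\dot R(\bar a)$: here $\langle M,\in,R\cap M\rangle\models\dot R(\bar a)$ means $\bar a\in R\cap M$, which, since $\bar a\in M$, is equivalent to $\bar a\in R$, i.e.\ to $\langle V,\in,R\rangle\models\dot R(\bar a)$. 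This step is exactly where the hypothesis $R^M=R\cap M$ is indispensable: with an arbitrary interpretation of $\dot R$ in $M$ the equivalence breaks, which is the whole content of the proposition. The remaining clauses (for $\in$, $=$, the Boolean connectives, and bounded quantifiers, using transitivity of $M$ so that a bounded quantifier ranges over the same set in both structures) are the usual L\'evy absoluteness argument, with $\dot R$ merely carried along as an atomic symbol.

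Granting the lemma, the implication (1)$\Rightarrow$(2) holds with the \emph{same} formula. Writing the given $\Sigma_1(R)$ definition in normal form as $\varphi(x)\equiv\exists y\,\theta(x,y)$ with $\theta\in\Delta_0(\mathcal{L}_{\dot R})$, I would argue that $A\in\Ce$ iff $\langle V,\in,R\rangle\models\exists y\,\theta(A,y)$, i.e.\ iff there is a witness $b$; then any transitive $M$ with $A,b\in M$ satisfies $\langle M,\in,R\cap M\rangle\models\theta(A,b)$ by the lemma, whence $\langle M,\in,R\cap M\rangle\models\varphi(A)$. Conversely, a transitive $M\ni A$ with $\langle M,\in,R\cap M\rangle\models\varphi(A)$ yields a witness $b\in M$ for $\theta(A,\cdot)$, and the lemma pushes $\theta(A,b)$ up to $\langle V,\in,R\rangle$, giving $A\in\Ce$. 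Thus $\varphi$ itself witnesses (2).

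For (2)$\Rightarrow$(1) I would internalize the existence of the transitive witness. Let $\varphi$ be the $\Sigma_1(\mathcal{L}_{\dot R})$ formula supplied by (2) and set
$$\psi(x)\equiv\exists M\,\exists r\,\big(M\text{ transitive}\wedge x\in M\wedge r=R\cap M\wedge \mathrm{Sat}(M,r,\lceil\varphi\rceil,x)\big),$$
where $\mathrm{Sat}(M,r,\lceil\varphi\rceil,x)$ expresses that the set structure $\langle M,\in,r\rangle$, with $r$ interpreting the predicate symbol, satisfies $\varphi(x)$. The crux is a complexity count: ``$M$ transitive'' is $\Delta_0$; the clause $r=R\cap M$ can be written $(\forall u\in M)(u\in r\leftrightarrow\dot R(u))\wedge(\forall u\in r)(u\in M)$, which is $\Delta_0$ in $\mathcal{L}_{\dot R}$ \emph{precisely because $\dot R$ is atomic and both quantifiers are bounded}; and $\mathrm{Sat}$ is the usual $\Delta_1$ satisfaction predicate for set-sized structures, which does \emph{not} mention $\dot R$ at all, since $r$ has already been extracted as a set parameter (it is a set by $\dot R$-Separation). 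Hence the matrix is $\Delta_1(\mathcal{L}_{\dot R})$ and $\psi$ is $\Sigma_1(\mathcal{L}_{\dot R})$, i.e.\ $\Sigma_1(R)$. Evaluating in $\langle V,\in,R\rangle$ forces $r=R\cap M$, so $\langle V,\in,R\rangle\models\psi(A)$ iff some transitive $M\ni A$ has $\langle M,\in,R\cap M\rangle\models\varphi(A)$, iff $A\in\Ce$ by (2); thus $\Ce$ is $\Sigma_1(R)$.

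The main obstacle is conceptual rather than computational: it is the verification that ``$r=R\cap M$'' stays at the $\Delta_0$ level in $\mathcal{L}_{\dot R}$, so that the two existential quantifiers over $M$ and $r$ in $\psi$ leave us at $\Sigma_1$ and do not inadvertently raise the complexity. This, together with the $\dot R$-atomic clause of the absoluteness lemma, is exactly where the requirement $R^M=R\cap M$ does its work; dropping it would both invalidate the lemma and decouple (2) from (1), which is the cautionary point the proposition is meant to record.
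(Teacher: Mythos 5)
Your proposal is correct. The paper states this proposition without proof, and your argument is precisely the standard one it leaves implicit: downward/upward absoluteness of $\Delta_0$ formulas of $\mathcal{L}_{\dot R}$ between $\langle V,\in,R\rangle$ and transitive $\langle M,\in,R\cap M\rangle$ (where the hypothesis $R^M=R\cap M$ enters exactly at the atomic $\dot R$-clause, as you note), giving (1)$\Rightarrow$(2) with the same formula, and internalization via the $\Delta_1$ satisfaction predicate together with the $\Delta_0(\mathcal{L}_{\dot R})$ expressibility of ``$r=R\cap M$'' for (2)$\Rightarrow$(1). Both the absoluteness lemma and the complexity count for $\psi$ are right, so nothing is missing.
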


Notice also that if $\Ce$ is a $\Sigma_1$-definable class of structures of the same type, then the closure of $\Ce$ under isomorphisms is also $\Sigma_1$-definable, and we have the following equivalences:

\begin{proposition}
The following are equivalent for any   class  $\Ce$ of structures of the same type that is closed under isomorphisms:
\begin{enumerate}
\item $\Ce$ is $\Sigma_1$.
\item There is a $\Sigma_1$ formula $\varphi (x)$ of the language of set theory such that for every $A$, $A\in \Ce$ if and only if $$\langle M,\in  \rangle \models \varphi(B)$$
for some  transitive structure  $\langle M,\in  \rangle$ of size $|A|$ that contains $B$, where $B$ is isomorphic to $A$.
\end{enumerate}

\end{proposition}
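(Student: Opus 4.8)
The plan is to prove the two implications separately, taking the formula $\varphi$ in (2) to be the very $\Sigma_1$ formula $\psi$ that defines $\Ce$ in (1). Throughout I write any $\Sigma_1$ formula in the form $\exists z\,\theta(x,z)$ with $\theta$ a $\Delta_0$ formula, and I use that $\Delta_0$ formulas are absolute, and $\Sigma_1$ formulas upward absolute, for transitive sets. I also record at the outset that the content is substantive only for infinite $A$: when $A$ is finite the clause $|M|=|A|$ is incompatible with $B\in M$ and $M$ transitive, so that case is degenerate and is handled by convention (the classes of interest consisting of infinite structures).

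For the easy direction (2)$\Rightarrow$(1), I would simply verify that the right-hand side of the biconditional in (2) is itself $\Sigma_1$. Writing $\varphi$ as $\exists z\,\theta(x,z)$, note that ``$M$ is transitive'' and ``$B\in M$'' are $\Delta_0$; ``$B\cong A$'' is $\Sigma_1$ (there exists a bijection of the two domains respecting the relations, a $\Delta_0$ condition on the witness); ``$|M|=|A|$'' is $\Sigma_1$ (there exists a bijection between $M$ and the domain of $A$); and for the fixed $\Sigma_1$ formula $\varphi$ and transitive $M$ one has $\langle M,\in\rangle\models\varphi(B)$ iff $\exists z\in M\,\theta(B,z)$, which is $\Delta_0$ in $M,B$. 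Since $\Sigma_1$ is closed under conjunction and under existential quantification, the whole statement $\exists B\,\exists M(\dots)$ is $\Sigma_1$, so $\Ce$ is $\Sigma_1$-definable.

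The substantive direction is (1)$\Rightarrow$(2), and the key step is to manufacture a \emph{small} transitive model. Given $A\in\Ce$ with $|A|=\kappa$ infinite, I would first use closure under isomorphism to replace $A$ by a copy $B\cong A$ whose universe is the ordinal $\kappa$; then $B\in\Ce$, so $\psi(B)$ holds, and, crucially, $\mathrm{trcl}(\{B\})$ has size $\kappa$. Fix $y$ with $\theta(B,y)$ and a regular $\lambda$ with $B,y\in H_\lambda$, so $H_\lambda\models\psi(B)$. By Löwenheim–Skolem pick $X\preceq H_\lambda$ with $\mathrm{trcl}(\{B\})\cup\{y\}\subseteq X$ and $|X|=\kappa$, and let $\pi\colon X\to M$ be the transitive collapse. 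Because $\mathrm{trcl}(\{B\})\subseteq X$, an $\in$-induction shows $\pi$ is the identity on $\mathrm{trcl}(\{B\})$, so $B=\pi(B)\in M$; moreover $M$ is transitive with $|M|=|X|=\kappa=|A|$, and elementarity together with $\pi(B)=B$ gives $\langle M,\in\rangle\models\psi(B)$. This is exactly the configuration demanded by (2). Conversely, whenever such $B,M$ exist, $\langle M,\in\rangle\models\psi(B)$ and transitivity of $M$ yield $\psi(B)$ in $V$ by $\Sigma_1$ upward absoluteness, whence $B\in\Ce$ and so $A\in\Ce$ by isomorphism closure.

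I expect the main obstacle to be arranging that the collapse fixes the chosen copy $B$ while keeping $|M|$ equal to $|A|$: this requires the two design choices to cooperate — selecting $B$ with a low-rank domain so that $|\mathrm{trcl}(\{B\})|=\kappa$, and then taking a Skolem hull containing all of $\mathrm{trcl}(\{B\})$ (so that $\pi\!\upharpoonright\!\mathrm{trcl}(\{B\})$ is the identity and $\pi(B)=B$) without inflating $|X|$ past $\kappa$. The isomorphism-closure hypothesis is used precisely to license the passage from $A$ to such a well-chosen $B$ in the forward direction, and the return from $B$ to $A$ in the backward direction.
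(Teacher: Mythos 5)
Your proof is correct. The paper states this proposition without giving a proof, and your argument --- verifying that the condition in (2) is itself $\Sigma_1$ using $\Delta_0$-absoluteness of satisfaction for a fixed $\Sigma_1$ formula over transitive sets, and, for the converse, replacing $A$ by an isomorphic copy $B$ with universe the ordinal $|A|$, taking a L\"owenheim--Skolem hull of some $H_\lambda$ containing $\mathrm{trcl}(\{B\})$ so that the Mostowski collapse fixes $B$ while keeping the collapsed model of size $|A|$, and invoking upward absoluteness of $\Sigma_1$ formulas --- is precisely the standard argument the author is taking for granted; your caveat that the equivalence is only meaningful for classes of infinite structures is also apt.
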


Based on the considerations above, the following is therefore a natural definition for a class of structures closed under isomorphisms to be  $\Sigma_1$-definable with an additional predicate $R$. This is a  reformulation, for the case $n=1$, of L\"ucke's \cite{Lu:SR} definition of \emph{local $\Sigma_n(R)$-class}:

\begin{definition}
A class $\Ce$ of structures of the same type and closed under isomorphisms is $\Sigma_1(R)^\ast$ if 
there is a $\Sigma_1$   formula $\varphi (x)$ of the language $\mathcal{L}_{\dot{R}}$ such that for every $A$, $A\in \Ce$ if and only if $$\langle M,\in,   R \cap M \rangle \models \varphi(B)$$
for some  transitive structure  $\langle M,\in  \rangle$ of size $|A|$ that contains $B$, where $B$ is isomorphic to $A$.
\end{definition}

Observe  that  although  every $\Sigma_1(R)^\ast$ class $\Ce$ is $\Sigma_1(R)$, the converse is not true, even assuming closure under isomorphisms. An example is the class $\Ce$ of all structures isomorphic to some transitive $\langle M,\in, {\rm{Cd}}\cap M\rangle$, where ${\rm{Cd}}$ is the class of cardinals.
%

The closure under isomorphisms of the  $\Sigma_1(\mathcal{R})$-definable classes of structures  that are used in the proofs of \ref{weakCd}, \ref{propRg}, \ref{propWIg},
and  \ref{propWC} (as given in \cite{BV}) are easily seen to be $\Sigma_1(\mathcal{R})^\ast$, for the corresponding  $\mathcal{R}$. Thus, the results follow from the weaker $\Sigma_1(\mathcal{R})^\ast$-$\SR^-$ corresponding assumptions. Also, the argument in the proof of Theorem 5.5 from  \cite{BV} can be adapted to show that if $\mathcal{L}^\ast$ and $\mathcal{R}$ are symbiotic, then the  $SLST(\mathcal{L}^\ast)$ property  implies $\Sigma_1(\mathcal{R})^\ast$-$\SR^-$ (see \cite{BV}).\footnote{However, as shown in \cite{Lu:SR}, it does not imply $(\SR)^-_\mathcal{R}$,  as claimed in \cite{BV}.} Thus, from the results in section 8 of \cite{BV} one may obtain  the following equivalences:

\begin{theorem}
$ $
\begin{enumerate}
\item $($\cite{Lu:SR}$)$ $\kappa$ is the least weakly inaccessible cardinal iff $\kappa$ is the least cardinal witnessing $\Sigma_1(\Cd)^\ast$-$\SR^-$.
 
\item $($\cite{Lu:SR}$)$ $\kappa$ is the least weakly Mahlo cardinal iff $\kappa$ is the least cardinal witnessing $\Sigma_1(\Rg)^\ast$-$\SR^-$.

\item $\kappa$ is the least $\alpha$-weakly inaccessible cardinal iff $\kappa$ is the least cardinal witnessing $\Sigma_1(\Cd, \alpha$-$\WI)^\ast$-$\SR^-$.

\item Suppose $\kappa$ is such that if $\gamma \leq \kappa$ is weakly inaccessible, then $2^\delta \leq \gamma$ for all cardinals $\delta <\gamma$. Then $\kappa$ is the least weakly compact cardinal iff $\kappa$ is the least cardinal witnessing $\Sigma_1(\Cd, \WC)^\ast$-$\SR^-$.

\end{enumerate}
\end{theorem}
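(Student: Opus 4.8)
The plan is to derive each of the four equivalences from two inequalities relating $\kappa_0$, the least cardinal witnessing the relevant $\Sigma_1(\mathcal{R})^\ast$-$\SR^-$ principle, and $\lambda_0$, the least large cardinal of the corresponding type (weakly inaccessible, weakly Mahlo, $\alpha$-weakly inaccessible, or weakly compact). Since the four items are structurally identical, I would argue uniformly, isolating the features particular to each predicate set $\mathcal{R}$ only where necessary.

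\emph{Lower bound} ($\lambda_0 \leq \kappa_0$). Here I would invoke the sharpened, $\ast$-versions of Theorems \ref{weakCd}, \ref{propRg}, \ref{propWIg}, and \ref{propWC}. As observed just before the statement, the isomorphism-closures of the $\Sigma_1(\mathcal{R})$-definable classes used in the proofs of those theorems are in fact $\Sigma_1(\mathcal{R})^\ast$, so the implication ``$\kappa$ witnesses $\Sigma_1(\mathcal{R})^\ast$-$\SR^-$ $\Rightarrow$ there is a large cardinal of the relevant type $\leq \kappa$'' holds verbatim. Applied to $\kappa_0$ this yields a cardinal of the relevant type $\leq \kappa_0$, hence $\lambda_0 \leq \kappa_0$. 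For item (3) this uses the $\alpha$-$\WI$ variant mentioned in the text, and for item (4) this is exactly the place where the cardinal-arithmetic hypothesis on $\kappa$ enters, precisely as in Theorem \ref{propWC}.

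\emph{Upper bound} ($\kappa_0 \leq \lambda_0$). The goal is to show that $\lambda_0$ itself witnesses $\Sigma_1(\mathcal{R})^\ast$-$\SR^-$. For this I would fix, for each $\mathcal{R}$, an abstract logic $\mathcal{L}^\ast$ that is \emph{symbiotic} with $\mathcal{R}$ in the sense of Magidor--V\"a\"an\"anen and \cite{BV}: the H\"artig (cardinality) logic for $\Cd$, the associated regularity logic for $\Rg$, and the appropriate extensions for $\{\Cd, \alpha\text{-}\WI\}$ and $\{\Cd, \WC\}$. I would then apply the adaptation of \cite[Theorem 5.5]{BV} noted in the text: for symbiotic $\mathcal{L}^\ast$ and $\mathcal{R}$, the $SLST(\mathcal{L}^\ast)$ property at $\kappa$ implies $\Sigma_1(\mathcal{R})^\ast$-$\SR^-$ at $\kappa$. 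Finally I would quote the L\"owenheim--Skolem--Tarski-number computations from section 8 of \cite{BV}, which identify the least cardinal with the $SLST(\mathcal{L}^\ast)$ property as $\lambda_0$; in particular $\lambda_0$ has this property. Composing these facts, $\lambda_0$ witnesses $\Sigma_1(\mathcal{R})^\ast$-$\SR^-$, whence $\kappa_0 \leq \lambda_0$. Together with the lower bound this gives $\kappa_0 = \lambda_0$, which is the asserted equivalence for each item.

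\emph{Main obstacle.} The delicate part is the upper bound: establishing, for each $\mathcal{R}$, the exact symbiosis with a logic $\mathcal{L}^\ast$ and checking that the $SLST \to \SR^-$ implication genuinely respects the $\ast$-restriction on definability. Since $\Sigma_1(\mathcal{R})^\ast$ is strictly stronger than $\Sigma_1(\mathcal{R})$ (as the example following the definition of $\Sigma_1(R)^\ast$ shows), one must verify that the classes produced by the symbiosis argument are truly $\ast$-definable, i.e. that the witnessing transitive structures $\langle M, \in, R \cap M\rangle$ carry precisely $R \cap M$ and not some larger predicate. A secondary difficulty is the weak-compactness case, where $\WC$ encodes a partial-order/tree property: the relevant LST computation is sensitive to cardinal arithmetic, the hypothesis on $\kappa$ cannot be dropped (per L\"ucke's observation cited in the footnote to Theorem \ref{propWC}), and one must align it with the hypothesis appearing in the symbiosis argument. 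Finally, item (3) requires a short bookkeeping check reconciling the indexing of the $\alpha$-$\WI$ predicate with $\alpha$-weak inaccessibility.
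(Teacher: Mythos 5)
Your proposal matches the paper's own argument: the paper derives this theorem in exactly the same two steps, namely the lower bound from the observation that the classes used in Theorems \ref{weakCd}, \ref{propRg}, \ref{propWIg} and \ref{propWC} have $\Sigma_1(\mathcal{R})^\ast$ isomorphism-closures, and the upper bound from the adapted symbiosis argument ($SLST(\mathcal{L}^\ast)$ implies $\Sigma_1(\mathcal{R})^\ast$-$\SR^-$) together with the LST-number computations in section 8 of \cite{BV}. The decomposition, the key lemmas invoked, and the handling of the cardinal-arithmetic hypothesis in item (4) all coincide with the paper's (sketched) proof.
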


Under the assumption of GCH, or  just assuming that every weakly inaccessible cardinal is  inaccessible, the theorem above yields exact characterizations in terms of $\SR$ for the first inaccessible, Mahlo, $\alpha$-inaccessible, and weakly-compact cardinals.


\begin{corollary}[GCH]
$ $
\begin{enumerate}
\item  $\kappa$ is the least  inaccessible cardinal iff $\kappa$ is the least cardinal witnessing $\Sigma_1(Cd)^\ast$-$\SR^-$.
 
\item  $\kappa$ is the least  Mahlo cardinal iff $\kappa$ is the least cardinal witnessing $\Sigma_1(Rg)^\ast$-$\SR^-$.

\item $\kappa$ is the least $\alpha$-inaccessible cardinal iff $\kappa$ is the least cardinal witnessing $\Sigma_1(Cd, \alpha$-$WI)^\ast$-$\SR^-$.

\item  $\kappa$ is the least weakly compact cardinal iff $\kappa$ is the least cardinal witnessing $\Sigma_1(Cd, WC)^\ast$-$\SR^-$.

\end{enumerate}
\end{corollary}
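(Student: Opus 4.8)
The plan is to obtain this corollary directly from the preceding theorem by observing that, under GCH, each ``weak'' large-cardinal notion appearing there coincides as a \emph{class} with its strong counterpart, so that the characterizations of the least weakly inaccessible, weakly Mahlo, $\alpha$-weakly inaccessible, and weakly compact cardinals automatically become characterizations of the least inaccessible, Mahlo, $\alpha$-inaccessible, and weakly compact cardinals. The single fact driving everything is that GCH implies every weakly inaccessible cardinal is strongly inaccessible: if $\gamma$ is a regular limit cardinal and $\delta<\gamma$, then since $\gamma$ is not a successor cardinal we have $\delta^+<\gamma$, whence $2^\delta=\delta^+<\gamma$ by GCH, so $\gamma$ is a strong limit and hence inaccessible. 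Thus the class of weakly inaccessible cardinals and the class of inaccessible cardinals literally coincide, and in particular their least elements agree (and if both classes are empty the relevant biconditional holds vacuously on both sides, since by the theorem no cardinal then witnesses the corresponding $\SR^-$ principle).

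For part (1) I would simply combine this coincidence with item (1) of the preceding theorem: the least cardinal witnessing $\Sigma_1(\Cd)^\ast$-$\SR^-$ is the least weakly inaccessible cardinal, which under GCH is the least inaccessible cardinal. For part (2) I would recall that a weakly Mahlo cardinal is a weakly inaccessible $\kappa$ for which the set of regular cardinals below $\kappa$ is stationary; by the coincidence above such a $\kappa$ is inaccessible, and since under GCH the strong limit cardinals are exactly the limit cardinals, the limit cardinals below $\kappa$ form a club, so intersecting the stationary set of regular cardinals with this club yields stationarily many weakly inaccessible, hence inaccessible, cardinals below $\kappa$. Therefore $\kappa$ is Mahlo; the converse is immediate, the two classes coincide, and item (2) of the theorem finishes the argument. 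For part (4) the hypothesis in item (4) of the theorem --- that every weakly inaccessible $\gamma\le\kappa$ is a strong limit --- holds automatically under GCH, as noted in the footnote to Theorem \ref{propWC}; dropping it gives exactly the stated biconditional, the notion of weak compactness being unaffected since every weakly compact cardinal is already inaccessible.

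Part (3) I would handle by a transfinite induction on $\alpha$ showing that under GCH the class of $\alpha$-weakly inaccessible cardinals equals the class of $\alpha$-inaccessible cardinals: the base case is the coincidence above, a successor step uses that ``regular limit of $\beta$-weakly inaccessibles'' becomes ``inaccessible limit of $\beta$-inaccessibles'' once the weak and strong notions at level $\beta$ have been identified, and limit stages follow at once from the inductive hypotheses at all smaller levels. The least elements then agree, and item (3) of the theorem gives the conclusion.

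I expect the only real work to lie in the bookkeeping for parts (2) and (3): verifying that the stationary-set formulation of Mahloness transfers cleanly through the identification of regular with inaccessible cardinals, and checking that the transfinite induction for $\alpha$-inaccessibility is stable at limit stages. Neither is a genuine obstacle once the basic GCH fact is in hand, and indeed the entire substance of the corollary is already contained in the preceding theorem; GCH is used only to collapse the weak notions onto the strong ones.
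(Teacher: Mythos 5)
Your proposal is correct and is essentially the paper's own argument: the paper derives the corollary from the preceding theorem purely via the observation that GCH (or even just ``every weakly inaccessible cardinal is inaccessible'') collapses each weak large-cardinal notion onto its strong counterpart, which is exactly the single fact you isolate and then propagate through Mahloness and the transfinite induction on $\alpha$-inaccessibility. The extra bookkeeping you supply (the club-of-limit-cardinals argument for part (2), the induction for part (3), and the verification that the cardinal-arithmetic hypothesis of item (4) follows from GCH) is all sound and merely makes explicit what the paper leaves implicit.
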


In items (1)-(4) above one may, equivalently, strengthen $\Sigma_1(\mathcal{R})^\ast$-$\SR^-$ by allowing  $\kappa$-reflection for classes of structures $\Ce$ that are $\Sigma_1(\mathcal{R})^\ast$-definable with  parameters in $H_\kappa$.

\section{Generic Structural Reflection}

If $A$ and $B$ are structures of the same type, we say that an elementary embedding $j:A\to B$ is  \emph{generic} if it exists in some forcing extension of $V$. We shall next consider the following \emph{generic} version of $\SR$:

\begin{quote}
\begin{itemize}
\item[$\GSR$:] (\emph{Generic  Structural Reflection})
For every definable (with parameters) class $\mathcal{C}$ of relational structures  of the same type there exists an ordinal $\alpha$ that \emph{generically-reflects} $\mathcal{C}$, i.e.,  for every $A$ in $\mathcal{C}$ there exists $B$ in $\mathcal{C}\cap V_\alpha$ and a generic elementary embedding from $B$ into $A$.
\end{itemize}
\end{quote}
Thus, $\GSR$ is just like $\SR$, but the elementary embeddings may not exist in $V$ but in some  forcing extension of $V$. The next proposition shows that this is equivalent to requiring that the elementary embedding exists in any forcing extension resulting from collapsing the structure $B$ to make it countable.

\begin{prop}[(\cite{BGS}]
The following are equivalent for structures $B$ and $A$ of the same type.
\begin{enumerate}
\item $V^{\Coll(\omega,B)}\models``\text{There is an elementary embedding }j:B\to A."$
\item For some forcing notion $\Pe$,  \newline $V^{\Pe}\models``\text{There is an elementary embedding }j:B\to A."$
\end{enumerate}
\end{prop}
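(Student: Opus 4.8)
The implication (1)$\Rightarrow$(2) is immediate: take $\Pe=\Coll(\omega,B)$. The plan for (2)$\Rightarrow$(1) is to reduce the existence of an elementary embedding out of the (now countable) structure $B$ to the ill-foundedness of a set-sized tree, and then to exploit the absoluteness of well-foundedness. First I would assume, passing below a condition if necessary, that the trivial condition of $\Pe$ forces the statement in (2), so that $V[G]\models$ ``there is an elementary embedding $j:B\to A$'' for every $\Pe$-generic $G$ over $V$. Fix now a $\Coll(\omega,B)$-generic $g$ over $V$; we must produce such an embedding in $V[g]$.

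Working in $V[g]$, where $B$ is countable, fix an enumeration $\langle b_n\mid n<\omega\rangle$ of the domain of $B$ and let $T$ be the tree, ordered by end-extension, whose nodes are the finite sequences $\langle a_0,\dots,a_{k-1}\rangle$ from $A$ such that $b_i\mapsto a_i$ is a partial elementary map, i.e.\ $B\models\varphi(b_0,\dots,b_{k-1})$ if and only if $A\models\varphi(a_0,\dots,a_{k-1})$ for every formula $\varphi$. Since $A$ is a set, $T\subseteq A^{<\omega}$ is a set, and an infinite branch through $T$ is exactly a total elementary embedding $j:B\to A$ with $j(b_n)=a_n$; thus it suffices to show that $T$ is ill-founded in $V[g]$. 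The crucial point is that, $T$ being a set relation, the existence of an infinite branch (equivalently, ill-foundedness of the reverse order on $T$) is absolute between $V[g]$ and any forcing extension of it, by the usual rank-function argument underlying Mostowski absoluteness. Moreover, because $\langle b_n\rangle\in V[g]$, any elementary embedding $j:B\to A$ living in an extension $W\supseteq V[g]$ immediately yields an infinite branch $k\mapsto\langle j(b_0),\dots,j(b_{k-1})\rangle$ of $T$ in $W$. Hence it is enough to exhibit a single extension of $V[g]$ that contains some elementary embedding of $B$ into $A$.

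Such an extension is supplied by a product-forcing argument. Force over $V$ with $\Coll(\omega,B)\times\Pe$ and let $(g,G)$ be generic, so that $V[g,G]$ extends both $V[g]$ and $V[G]$. By the product lemma $G$ is $\Pe$-generic over $V$, whence by our assumption $V[G]\models$ ``there is an elementary embedding $j:B\to A$''. This embedding persists to the larger model $V[g,G]\supseteq V[G]$, so $T$ has an infinite branch in $V[g,G]$; by the absoluteness of well-foundedness noted above, $T$ already has an infinite branch in $V[g]$, and therefore $V[g]\models$ ``there is an elementary embedding $j:B\to A$''. As $g$ was an arbitrary $\Coll(\omega,B)$-generic, this is exactly (1). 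The main delicate point is the enumeration-dependence of $T$: it is resolved precisely by the observation that an embedding in any over-model of $V[g]$ produces a branch of the particular tree built from the enumeration fixed inside $V[g]$, so one never needs to compare trees computed in different extensions.
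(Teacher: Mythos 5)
Your proof is correct and follows essentially the standard argument for this fact (the paper itself only cites \cite{BGS}, where the proof proceeds exactly this way): reduce to the ill-foundedness of the tree of finite partial elementary maps on a fixed enumeration of the collapsed $B$, obtain a branch in a common extension $V[g][G]$ via mutual genericity, and pull it back by absoluteness of well-foundedness. The point you flag about building $T$ from an enumeration inside $V[g]$ and only ever seeking branches of that one tree in outer models is precisely the right way to handle the only delicate step.
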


Taking into account similar considerations as in the case of $\SR$ and $\PSR$, we may properly formulate $\GSR$ as a schema. Namely, for $\Gamma$ a lightface definability class, let:

\begin{quote}
\begin{itemize}
\item[$\Gamma$-$\GSR$:] (\emph{$\Gamma$-Generic  Structural Reflection}) There exists a cardinal  $\kappa$ that \emph{generically-reflects} all $\Gamma$-definable, with parameters in $V_\kappa$, classes $\mathcal{C}$ of natural structures, i.e.,  
 for every $A$ in $\mathcal{C}$ there exists $B$ in $\mathcal{C}\cap V_\alpha$ such that in $V^{\Coll(\omega,B)}$ there is an elementary embedding from $B$ into $A$.
\end{itemize}
\end{quote}
The boldface version being:
\begin{quote}
\begin{itemize}
\item[] There exist a proper class of cardinals $\kappa$ that \emph{generically-reflect} all $\Gamma$-definable, with parameters in $V_\kappa$, classes $\mathcal{C}$ of natural structures.
\end{itemize}
\end{quote}

The assertion that $\kappa$ witnesses $\Gamma$-$\GSR$, for $\Gamma$ a boldface definability class, is equivalent to the \emph{Generic Vop\v{e}nka Principle} ${\rm{gVP}}(\kappa ,\Gamma)$  introduced in \cite{BGS}.

Similar considerations as in the case of $\SR$ (see the remarks before and after proposition \ref{equiv}) show that $\mathbf{\Pi_n}$-$\GSR$ and $\mathbf{\Sigma_{n+1}}$-$\GSR$ are equivalent; and also $\Pi_n$-$\GSR$ and $\Sigma_{n+1}$-$\GSR$ are equivalent.  

\medskip

We shall see next that some large cardinals, such as Schindler's \emph{remarkable} cardinals, can be characterized in terms of $\GSR$.

\begin{definition}[\cite{Sch:PFRC, Sch:RC}]
\label{defrem}
A cardinal $\kappa$ is \emph{remarkable}  if for every regular cardinal $\lambda>\kappa$, there is a regular cardinal $\bar{\lambda} <\kappa$ such that in $V^{\Coll(\omega,\lt\kappa)}$ there is an elementary embedding $j:H_{\bar{\lambda}}^V\to H_\lambda^V$ with $j(\crit(j))=\kappa$. 
\end{definition}

 A cardinal is remarkable if and only if it is $1$-remarkable (Definition \ref{n-rem}). Remarkable cardinals are downward absolute to $L$ and their consistency strength is strictly below a $2$-iterable cardinal.  
Remarkable cardinals are in $C^{(2)}$, and they are totally indescribable and ineffable, hence limits of totally indescribable cardinals (see \cite{BGS}). 

\begin{theorem}[\cite{BGS}]
\label{thmnrem}
The following are equiconsistent:
\begin{enumerate}
\item $\Pi_1$-$\GSR$
\item There exists a cardinal $\kappa$ that witnesses $\mathbf{\Pi_1}$-$\GSR$ 
\item There exists a remarkable cardinal.
\end{enumerate}
\end{theorem}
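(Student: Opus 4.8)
The plan is to prove that the three statements are equiconsistent by establishing the direct implications $(3)\Rightarrow(2)\Rightarrow(1)$ together with a consistency-strength lower bound $(1)\Rightarrow\mathrm{Con}(3)$, and then closing the cycle. The implication $(2)\Rightarrow(1)$ is immediate, since a witness of the boldface $\mathbf{\Pi_1}$-$\GSR$ is \emph{a fortiori} a witness of its lightface version. Throughout I would use the preceding Proposition to read ``generic elementary embedding'' as ``elementary embedding existing in $V^{\Coll(\omega,B)}$'', together with the reduction to \emph{natural} structures (as for $\SR$ and $\PSR$), so that it suffices to reflect $\Pi_1$-definable classes of structures of the form $\langle V_\alpha,\in,a\rangle$.

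For $(3)\Rightarrow(2)$ I would show that a remarkable $\kappa$ witnesses $\mathbf{\Pi_1}$-$\GSR$ outright, using the equivalence of remarkability with $1$-remarkability (Definition \ref{n-rem}). Let $\Ce$ be $\Pi_1$-definable by a formula $\varphi(x,p)$ with parameter $p\in V_\kappa$, and let $A\in\Ce$ be a natural structure. Choose $\lambda\in C^{(1)}$ with $A,p\in V_\lambda$; then $V_\lambda\models\varphi(A,p)$ by $\Pi_1$ downward absoluteness. By $1$-remarkability there are $\bar\lambda<\kappa$ in $C^{(1)}$ and, in $V^{\Coll(\omega,<\kappa)}$, an elementary embedding $j:V_{\bar\lambda}\to V_\lambda$ with $j(\crit(j))=\kappa$ and $A,p\in\range(j)$. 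The key point is that $p$ is automatically fixed: since $p\in\range(j)$ has rank $<\kappa=j(\crit(j))$, writing $p=j(q)$ and using $\mathrm{rank}(p)=j(\mathrm{rank}(q))$ forces $\mathrm{rank}(q)<\crit(j)$, whence $q\in V_{\crit(j)}$ and $p=j(q)=q$. Setting $B=j^{-1}(A)\in V_{\bar\lambda}\subseteq V_\kappa$, elementarity gives $V_{\bar\lambda}\models\varphi(B,p)$, and since $\bar\lambda\in C^{(1)}$ this $\Pi_1$ statement transfers to $V$, so $B\in\Ce\cap V_\kappa$. Then $j\restriction B:B\to A$ is the required embedding, living in a collapse extension and hence generic in the sense of $\GSR$.

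For the lower bound $(1)\Rightarrow\mathrm{Con}(3)$, let $\kappa$ witness $\Pi_1$-$\GSR$ and apply it to the $\Pi_1$-definable, parameter-free class $\Ce_0$ of all $\langle L_\alpha,\in\rangle$. For each ordinal $\alpha$ this yields $\bar\alpha<\kappa$ and a generic elementary embedding $\langle L_{\bar\alpha},\in\rangle\to\langle L_\alpha,\in\rangle$, existing after collapsing a set of size $<\kappa$ and so absorbed by $\Coll(\omega,<\kappa)$. I would then analyse the critical points of this family of embeddings and, restricting attention to targets $\alpha\in C^{(1)}$, argue that they give, inside $L$, the generic embeddings $H_{\bar\lambda}^L\to H_\lambda^L$ with $j(\crit(j))=\kappa$ required by the definition of remarkable (Definition \ref{defrem}). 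Since remarkability is downward absolute to $L$, this produces a model with a remarkable cardinal, yielding $\mathrm{Con}(3)$. Combining the cycle $\mathrm{Con}(1)\Rightarrow\mathrm{Con}(3)\Rightarrow\mathrm{Con}(2)\Rightarrow\mathrm{Con}(1)$ gives the equiconsistency; the implications $(3)\Rightarrow(2)\Rightarrow(1)$ are outright, whereas $(1)$ only returns a remarkable cardinal in an inner model, which is exactly why the result is an equiconsistency rather than a level-by-level equivalence.

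The hard part will be this last direction. Bare structural reflection delivers generic embeddings $L_{\bar\alpha}\to L_\alpha$ with no a priori control over the critical point, while remarkability demands the sharp condition $j(\crit(j))=\kappa$ pinning the image of the critical point to the witnessing cardinal. Extracting it requires choosing the targets in $C^{(1)}$, showing that the critical points of the reflecting embeddings cannot be cofinal below $\kappa$ but instead stabilise so that their common image is $\kappa$, and verifying that the whole configuration, being about $L$, reflects into $L$ where Schindler's characterisation of remarkability applies. It is precisely this passage from ``some generic embedding reflects $\Ce_0$'' to ``a generic embedding with $j(\crit(j))=\kappa$ inside $L$'' that carries the entire consistency-strength content of the theorem.
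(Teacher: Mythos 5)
Your directions $(3)\Rightarrow(2)\Rightarrow(1)$ are correct and essentially follow the paper's route: the observation that a parameter $p\in V_\kappa$ lying in $\range(j)$ is automatically fixed because $j(\crit(j))=\kappa$, and the use of $\bar\lambda\in C^{(1)}$ to transfer the $\Pi_1$ statement $\varphi(B,p)$ from $V_{\bar\lambda}$ back to $V$, are exactly the points behind the fact that a remarkable (indeed, an almost remarkable) cardinal witnesses $\mathbf{\Pi_1}$-$\GSR$; this is the forward direction of Theorem \ref{remgsr}.

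The direction $(1)\Rightarrow{\rm Con}(3)$, which you correctly identify as carrying all the content, does not work as sketched, and it fails at the first step. The class $\Ce_0$ of all structures $\langle L_\alpha,\in\rangle$ is $\Sigma_1$-definable (the relation $x=L_\alpha$ is $\Delta_1$), and reflection for $\Sigma_1$-definable classes is a theorem of ZFC with honest, non-generic embeddings (cf.\ Proposition \ref{sigma1}): for any $\alpha$, the inverse of the transitive collapse of a countable elementary submodel of $L_\alpha$ is, by condensation, an elementary embedding of some countable $L_{\bar\alpha}$ into $L_\alpha$, already in $V$. So $\Ce_0$ carries no large-cardinal strength, and no analysis of the critical points of these embeddings can extract any; to get strength one must build a genuinely $\Pi_1$ predicate into the class, as in $\langle L_\beta,\in,\gamma\rangle$ with $\gamma<\beta$ cardinals \emph{of $V$} for Theorem \ref{thmzerosharp}, or, for the present theorem, the class of $\langle V_{\lambda+1},\in\rangle$ with $\lambda\in C^{(1)}$ used in Theorem \ref{firstremarkable}.

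Even with the right class, the step you defer --- forcing $j(\crit(j))=\kappa$ --- is not obtained by showing the critical points ``stabilise with common image $\kappa$''; nothing of the sort is true or needed. The actual argument (see the proof of Theorem \ref{firstremarkable}) takes a \emph{single} generic $j:V_{\bar\lambda+1}\to V_{\lambda+1}$ with $\lambda\in C^{(2)}$ singular and $\bar\lambda<\kappa$, sets $\bar\alpha=\crit(j)$, and uses $j$ itself --- via the expressibility inside $V_{\lambda+1}$ of ``there is a generic elementary embedding'' and elementarity --- to show that $\bar\alpha$ is \emph{weakly} remarkable up to $\bar\lambda$, whence $j(\bar\alpha)$ is weakly remarkable, with no control on where its critical point is sent. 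The upgrade from weakly remarkable to remarkable is precisely Wilson's theorem \cite{W} and requires the dichotomy on whether there is an $\omega$-Erd\"os cardinal in $L$: if there is one, ${\rm Con}(3)$ follows outright since that hypothesis exceeds a remarkable cardinal in consistency strength; if there is none, every weakly remarkable cardinal is remarkable and one obtains an actual remarkable cardinal in $V$ (not merely in $L$, so the appeal to downward absoluteness to $L$ is not how the argument closes). Your sketch omits weak remarkability and the $\omega$-Erd\"os case split entirely, and these are exactly the ingredients that make the statement an equiconsistency rather than an equivalence.
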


Let us say that a cardinal $\kappa$ is \emph{almost remarkable} if it is almost-$1$-remarkable (Definition {\ref{defarem}), namely:   for all $\lambda >\kappa$ in $C^{(1)}$ and every $a\in V_\lambda$, there is $\bar{\lambda} <\kappa$ also in $C^{(1)}$ such that in $V^{\rm{Coll}(\omega , <\kappa)}$ there exists an elementary embedding $j:V_{\bar{\lambda}}\to V_{\lambda}$ with  $a\in {\rm{range}}(j)$.
Then theorem \ref{remgsr} yields the following:

\begin{theorem}
A cardinal $\kappa$ witnesses $\mathbf{\Pi_1}$-$\GSR$ if and only if $\kappa$ is almost remarkable.
\end{theorem}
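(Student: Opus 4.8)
The plan is to read this off directly from Theorem~\ref{remgsr} with $n=1$. That theorem asserts that $\kappa$ is almost remarkable (i.e.\ almost $1$-remarkable) if and only if, in $V^{\Coll(\omega,<\kappa)}$, the cardinal $\kappa$ witnesses $\SR(\Ce)$ for every class $\Ce$ that is $\Pi_1$-definable in $V$ with parameters in $V_\kappa$. It therefore suffices to prove that this last assertion is equivalent to the statement that $\kappa$ witnesses $\mathbf{\Pi_1}$-$\GSR$. Unwinding the two definitions, they have identical shape: in each case, for every $\Pi_1$-definable (with parameters in $V_\kappa$) class $\Ce$ of natural structures and every $A\in\Ce$, one asks for a structure $B\in\Ce\cap V_\kappa$ --- in both formulations $A$, $B$, the class $\Ce$, and the initial segment $V_\kappa$ are computed in $V$ --- together with an elementary embedding $j\colon B\to A$ that lives in a forcing extension of $V$. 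The sole difference is the forcing: for $\mathbf{\Pi_1}$-$\GSR$ the embedding is required to exist in $V^{\Coll(\omega,B)}$, while Theorem~\ref{remgsr} witnesses it in the fixed extension $V^{\Coll(\omega,<\kappa)}$.

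The key point, then, is that for any $B\in V_\kappa$ these two demands on $j$ coincide. For the implication from $\Coll(\omega,<\kappa)$ to $\Coll(\omega,B)$, observe that if $j\colon B\to A$ exists in $V^{\Coll(\omega,<\kappa)}$ then it exists in \emph{some} forcing extension of $V$, so by the proposition of \cite{BGS} quoted above it already exists in $V^{\Coll(\omega,B)}$. Conversely, since $B\in V_\kappa$ we have $\lvert B\rvert<\kappa$, and $\Coll(\omega,B)$ is, up to forcing equivalence, the factor $\Coll(\omega,\lvert B\rvert)$ of the L\'evy collapse; hence it completely embeds into $\Coll(\omega,<\kappa)$, making $V^{\Coll(\omega,<\kappa)}$ a generic extension of $V^{\Coll(\omega,B)}$. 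Any embedding $j\colon B\to A$ present in $V^{\Coll(\omega,B)}$ is a set and so persists to $V^{\Coll(\omega,<\kappa)}$, and by the weak homogeneity of these collapses the relevant existence statements do not depend on the chosen generic. Thus, for $B\in V_\kappa$, an elementary embedding $j\colon B\to A$ exists in $V^{\Coll(\omega,B)}$ exactly when one exists in $V^{\Coll(\omega,<\kappa)}$.

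Granting this equivalence, the two principles match witness for witness: a structure $B\in\Ce\cap V_\kappa$ witnesses generic reflection of $A$ in the sense of $\mathbf{\Pi_1}$-$\GSR$ if and only if it witnesses $\SR(\Ce)$ for $A$ in $V^{\Coll(\omega,<\kappa)}$. Quantifying over all $A\in\Ce$ and all $\Pi_1$-definable classes $\Ce$ with parameters in $V_\kappa$, this shows that $\kappa$ witnesses $\mathbf{\Pi_1}$-$\GSR$ iff $\kappa$ witnesses $\SR(\Ce)$ in $V^{\Coll(\omega,<\kappa)}$ for all such $\Ce$, which by Theorem~\ref{remgsr} is equivalent to $\kappa$ being almost remarkable. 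The one place demanding care --- and the step I would treat as the main obstacle --- is making sure that the reflecting structure $B$ can always be taken \emph{in} $V$ (indeed in $V_\kappa^{V}$) rather than merely in the collapse extension, so that the class $\Ce$ and the parameter set $V_\kappa$ occurring in the two formulations genuinely refer to the same ground-model objects. This is exactly what the proof of Theorem~\ref{remgsr} provides, since there the reflecting structure arises as the $j$-preimage of $A$ and hence lies in $V_\kappa^{V}$, only the embedding being generic.
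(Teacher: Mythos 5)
Your proposal is correct and follows the same route the paper intends: the paper derives this theorem directly from Theorem~\ref{remgsr} (the case $n=1$) with no further argument, and your bridge between $V^{\Coll(\omega,B)}$ and $V^{\Coll(\omega,<\kappa)}$ --- via the quoted absoluteness proposition in one direction and absorption of $\Coll(\omega,B)$ into the L\'evy collapse (using that $\kappa\in C^{(2)}$, so $|{\rm trcl}(B)|<\kappa$ for $B\in V_\kappa$) plus homogeneity in the other --- is exactly the detail the paper leaves implicit. Your closing observation that the reflecting structure $B$ lies in $V_\kappa^V$ because it arises as a $j$-preimage in the proof of Theorem~\ref{remgsr} is also the right point to flag.
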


It follows that the notions of remarkable cardinal and of almost-remarkable cardinal are equiconsistent.

\medskip
Magidor \cite{Mag} shows that a cardinal $\kappa$ is supercompact if and only if for every regular cardinal $\lambda >\kappa$ there is a regular cardinal $\bar{\lambda} <\kappa$ and an elementary embedding $j:H_{\bar{\lambda}}\to H_\lambda$ with $j(\crit(j))=\kappa$. One can thus view a remarkable cardinal as a \emph{virtually supercompact}\footnote{We choose to call it \emph{virtually supercompact}, as in \cite{BGS}, instead of the perhaps more natural \emph{generic supercompact}, for the latter notion  already exists in the literature with a different meaning.}  cardinal. In analogy with theorem \ref{thmpi1} one might therefore expect $\Pi_1$-$\GSR$ to be not just equiconsistent, but actually  equivalent with the existence of a remarkable cardinal. Even more, since the first supercompact cardinal is precisely the first cardinal that witnesses $\mathbf{\Pi_1}$-$\SR$, one might conjecture that the first remarkable cardinal is the first cardinal that witnesses $\mathbf{\Pi_1}$-$\GSR$. This is almost true, but not exactly. On the one hand, if $\kappa$ is a remarkable cardinal, then $\kappa$ witnesses $\mathbf{\Pi_1}$-$\GSR$ (\cite{BGS}).  On the other hand, as shown in theorem \ref{firstremarkable} below, if there is no $\omega$-Erd\"os cardinal in $L$, then the least cardinal witnessing $\mathbf{\Pi_1}$-$\GSR$ is also the first remarkable cardinal.

The following equivalent definition of remarkability was given in \cite{BGS}: a cardinal $\kappa$ is remarkable if and only if for every $\lambda >\kappa$ there exists some $\bar{\lambda}<\kappa$ and a generic elementary embedding $j:V_{\bar{\lambda}}\to V_\lambda$ with $j(\crit(j))=\kappa$.

Wilson \cite{W} defines the notion of weakly remarkable cardinal by not requiring that $\bar{\lambda}$ is strictly below $\kappa$. Namely,

\begin{definition}[\cite{W}]
$\kappa$ is \emph{weakly remarkable} if and only if for every $\lambda >\kappa$ there exists some $\bar{\lambda}$ and a generic elementary embedding $j:V_{\bar{\lambda}}\to V_\lambda$ with $j(\crit(j))=\kappa$.
\end{definition}

Wilson shows  that if there exists a weakly remarkable non-remarkable, cardinal $\kappa$, then some ordinal greater than $\kappa$ is an $\omega$-Erd\"os cardinal in $L$. Moreover, the statements ``There exists an $\omega$-Erd\"os cardinal" and ``There exists a weakly remarkable non-remarkable cardinal" are equiconsistent modulo ZFC, and equivalent assuming $V=L$. 

Observe that  if $\kappa$ is cardinal  witnessing $\mathbf{\Pi_1}$-$\GSR$, then $\kappa$ also witnesses $\mathbf{\Pi_1}$-$\GSR$ in $L$. Thus, if $\kappa$ witnesses  $\mathbf{\Pi_1}$-$\GSR$ and in $L$ $\lambda$ is the least inaccessible  cardinal above $\kappa$, then $L_\lambda$  is a   model of ZFC in which $\kappa$ satisfies  $\Pi_1$-$\GSR$ and  there is no $\omega$-Erd\"os cardinal above $\kappa$. By combining arguments from \cite{BGS}  and \cite{W} we have the following:

\begin{theorem}
\label{firstremarkable}
Assume there is no $\omega$-Erd\"os cardinal in $L$. Then, the least cardinal that satisfies $\Pi_1$-$\GSR$, if it exists,  is remarkable.

\end{theorem}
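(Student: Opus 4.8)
The plan is to prove the statement by showing that the least cardinal witnessing $\Pi_1$-$\GSR$ is \emph{weakly remarkable}, and then to invoke Wilson's dichotomy between weakly remarkable and remarkable cardinals, which under the hypothesis collapses the two notions. So let $\kappa$ be the least cardinal witnessing $\Pi_1$-$\GSR$. I would work throughout with the embedding-theoretic formulations recalled above: $\kappa$ is remarkable iff for every $\lambda>\kappa$ there are $\bar\lambda<\kappa$ and a generic elementary embedding $j:V_{\bar\lambda}\to V_\lambda$ with $j(\crit(j))=\kappa$, while $\kappa$ is weakly remarkable iff the same holds with the requirement $\bar\lambda<\kappa$ dropped.

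First I would extract raw reflecting embeddings from $\Pi_1$-$\GSR$. Applying it to the lightface $\Pi_1$-definable class $\mathcal{C}=\{\langle V_\alpha,\in\rangle : \alpha\in C^{(1)}\}$ (as in Magidor's characterization), for each $\lambda\in C^{(1)}$ above $\kappa$ one obtains some $\bar\lambda\in C^{(1)}\cap\kappa$ and a generic elementary embedding $j:V_{\bar\lambda}\to V_\lambda$. These already give $\bar\lambda<\kappa$; what is missing, and what separates them from a remarkability witness, is the normalization $j(\crit(j))=\kappa$. The heart of the argument is therefore to produce, for every $\lambda>\kappa$, a generic embedding $j:V_{\bar\lambda}\to V_\lambda$ with $j(\crit(j))=\kappa$. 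Here I would use the minimality of $\kappa$: since $\kappa$ is the least witness, its defining property is captured inside the reflected structures $\langle V_\lambda,\in\rangle$ for $\lambda\in C^{(2)}$ (the relevant instances of being a $\Pi_1$-$\GSR$ witness for structures of bounded rank being absolute to such $V_\lambda$), so that reflecting a class whose members encode this definition forces the critical point of $j$ to be sent precisely onto $\kappa$ rather than merely below it; allowing $\bar\lambda$ to range freely then yields weak remarkability of $\kappa$ in $V$. Securing the exact identity $j(\crit(j))=\kappa$, as opposed to $j(\crit(j))\le\kappa$, is the step I expect to be the main obstacle, and it is where I would lean on the techniques of \cite{BGS} and \cite{W}; the observation preceding the theorem, that witnessing $\Pi_1$-$\GSR$ is absolute to $L$, is what permits Wilson's $L$-based analysis to be applied to the embeddings manufactured in $V$.

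Finally I would close the argument. Having shown that $\kappa$ is weakly remarkable, suppose toward a contradiction that $\kappa$ is not remarkable. Then, by Wilson's theorem, some ordinal greater than $\kappa$ would be an $\omega$-Erd\"os cardinal in $L$, contradicting the hypothesis that there is no $\omega$-Erd\"os cardinal in $L$. Hence $\kappa$ is remarkable, as required. Combined with the fact that every remarkable cardinal witnesses $\mathbf{\Pi_1}$-$\GSR$, and hence $\Pi_1$-$\GSR$, so that the least witness is at most the least remarkable cardinal, this also shows that $\kappa$ is in fact the least remarkable cardinal.
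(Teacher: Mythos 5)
Your overall architecture (establish weak remarkability, then invoke Wilson's dichotomy under the no $\omega$-Erd\"os hypothesis) matches the endgame of the paper's proof, but the step you yourself flag as the main obstacle --- producing, for every $\lambda>\kappa$, a generic $j:V_{\bar\lambda}\to V_\lambda$ with $j(\crit(j))=\kappa$ --- is a genuine gap, and the mechanism you sketch for it does not work. First, ``$\kappa$ witnesses $\Pi_1$-$\GSR$'' (let alone ``$\kappa$ is the least such cardinal'') is not a $\Pi_1$ property, so no class of structures encoding this definition is available to the lightface $\Pi_1$-$\GSR$ you are applying; and even granting such a class, nothing in the definition of generic structural reflection constrains where the critical point of the reflecting embedding is sent, so minimality of $\kappa$ gives you no leverage on $j(\crit(j))$ at this stage. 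Indeed, the paper remarks after the theorem that it is not known to be provable in ZFC that the least witness of $\mathbf{\Pi_1}$-$\GSR$ is weakly remarkable; any argument deriving $j(\crit(j))=\kappa$ directly from minimality, before the $\omega$-Erd\"os hypothesis has been used, is therefore suspect.

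The paper's proof sidesteps the obstacle by not normalizing the embedding at all. It applies $\Pi_1$-$\GSR$ to the class of structures $\langle V_{\lambda+1},\in\rangle$ with $\lambda\in C^{(1)}$, choosing $\lambda\in C^{(2)}$ singular above $\kappa$, to get a generic $j:V_{\bar\lambda+1}\to V_{\lambda+1}$ with $\bar\lambda<\kappa$ and critical point $\bar\alpha<\bar\lambda$. The restrictions $j\restriction V_\delta:V_\delta\to V_{j(\delta)}$ for $\bar\alpha<\delta<\bar\lambda$ witness, inside $V_{\lambda+1}$, instances of weak remarkability with target $j(\bar\alpha)$; pulling these back through $j$ by elementarity shows that $\bar\alpha$ is weakly remarkable up to $\bar\lambda$ in $V_{\bar\lambda+1}$, and pushing forward again shows that $\alpha:=j(\bar\alpha)$ is weakly remarkable up to $\lambda$, hence weakly remarkable since $\lambda\in C^{(2)}$. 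Only now do Wilson's theorem and the hypothesis make $\alpha$ remarkable, whence $\kappa\le\alpha$ because remarkable cardinals witness $\Pi_1$-$\GSR$; and finally a separate reflection argument, using that $\alpha\in C^{(2)}$ and that $j(\gamma)=\gamma$ for $\gamma<\bar\alpha$, shows that $\kappa<\alpha$ would produce a witness of $\Pi_1$-$\GSR$ below $\bar\alpha\le\kappa$, contradicting minimality. So minimality does in the end force $j(\crit(j))=\kappa$, but only after the weakly remarkable cardinal $\alpha=j(\bar\alpha)$ has been located and upgraded to remarkable; this ordering of the deductions is essential and is precisely what your outline leaves unresolved.
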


\begin{proof}
Let $\kappa$ be the least cardinal witnessing $\Pi_1$-$\GSR$. Let $\Ce$ be the $\Pi_1$-definable class of structures of the form $\langle V_{\lambda +1},\in \rangle$ with $\lambda \in C^{(1)}$. Pick a singular cardinal $\lambda \in C^{(2)}$ greater than $\kappa$. By $\Pi_1$-$\GSR$, let $j:V_{\bar{\lambda}+1}\to V_{\lambda+1}$ be a generic elementary embedding with $\bar\lambda <\kappa$. Let $\bar\alpha =\crit(j)$. Note that $\bar\alpha <\bar\lambda$, because $\bar\alpha$ is regular and $\bar\lambda$ is not.

We claim that $\bar\alpha$ is weakly remarkable up to $\bar\lambda$. So, fix some $\delta >\bar\alpha$ smaller than $\bar\lambda$. Consider the restriction $j:V_\delta \to V_{j(\delta)}$, which has $j(\crit(j))=j(\bar\alpha)$. Then $V_{\lambda +1}$ satisfies that for some $\bar\delta$ there exists a generic elementary embedding $j^\ast :V_{\bar\delta}\to V_{j(\delta)}$ such that $j^\ast(\crit(j^\ast))=j(\bar\alpha)$. Hence, by elementarity $V_{\bar\lambda +1}$ satisfies that for some $\bar\delta$ there exists a generic elementary embedding $j^\ast :V_{\bar\delta}\to V_\delta$ with $j^\ast(\crit(j^\ast))=\bar\alpha$.

By elementarity, $\alpha:= j(\bar\alpha)$ is weakly remarkable up to $\lambda$; and since $\lambda \in C^{(2)}$, $\alpha$ is weakly remarkable. Since the existence of a weakly remarkable non-remarkable cardinal implies the existence of an $\omega$-Erd\"os cardinal in $L$ (\cite{W}), by our assumption we have that $\alpha$ is in fact remarkable. Hence, since every remarkable cardinal witnesses $\Pi_1$-$\GSR$ (\cite{BGS}), we have that $\kappa \leq \alpha$.

The theorem will be proved by showing that $\alpha =\kappa$. For suppose, aiming for a contradiction, that  $\kappa <\alpha$. Since $\alpha$ is remarkable and therefore belongs to $C^{(2)}$, we have 
$$V_\alpha \models ``\kappa \mbox{ witnesses }\Pi_1\mbox{-}\GSR".$$
By elementarity, there is some $\gamma <\bar\alpha$ such that
$$V_{\bar\alpha} \models ``\gamma \mbox{ witnesses }\Pi_1\mbox{-}\GSR".$$
Hence, since $j(\gamma)=\gamma$, again by elementarity,
$$V_\alpha \models ``\gamma \mbox{ witnesses }\Pi_1\mbox{-}\GSR"$$
and therefore $\gamma$ witnesses $\Pi_1$-$\GSR$, thus contradicting the minimality of $\kappa$.
\end{proof}

\begin{corollary}
Assume there is no $\omega$-Erd\"os cardinal in $L$. Then, the following are equivalent for a cardinal $\kappa$:
\begin{enumerate}
\item $\kappa$ is the least cardinal witnessing $\Pi_1$-$\GSR$.
\item $\kappa$ is the least cardinal witnessing $\mathbf{\Pi_1}$-$\GSR$.
\item $\kappa$ is the least almost remarkable cardinal.
\item $\kappa$ is the least weakly remarkable cardinal.
\item $\kappa$ is the least remarkable cardinal.
\end{enumerate}
\end{corollary}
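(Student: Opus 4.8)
The plan is to show that clauses $(1)$–$(5)$ all single out one and the same cardinal, by proving that the five ``least'' cardinals they name coincide; every ingredient needed is already in hand. Write $\kappa_i$ for the cardinal named in clause $(i)$ (when it exists), so that clause $(i)$ asserts precisely $\kappa = \kappa_i$. First I would record the two equivalences that require no real work. The equivalence proved just above (via Theorem~\ref{remgsr}), namely that a cardinal witnesses $\mathbf{\Pi_1}$-$\GSR$ iff it is almost remarkable, shows that the witness classes of $(2)$ and $(3)$ literally coincide, so $\kappa_2 = \kappa_3$. For $(4)\Leftrightarrow(5)$ I would combine the standing hypothesis with Wilson's theorem: every remarkable cardinal is weakly remarkable directly from the definitions (weak remarkability only drops the requirement $\bar\lambda < \kappa$), while a weakly remarkable cardinal that failed to be remarkable would produce an $\omega$-Erd\"os cardinal in $L$, contrary to hypothesis; hence the classes of remarkable and of weakly remarkable cardinals coincide and $\kappa_4 = \kappa_5$.

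Next I would link the lightface principle to remarkability, which is where Theorem~\ref{firstremarkable} does the heavy lifting. That theorem gives that $\kappa_1$, the least witness of $\Pi_1$-$\GSR$, is remarkable, whence $\kappa_1 \geq \kappa_5$. In the other direction, by \cite{BGS} every remarkable cardinal witnesses $\mathbf{\Pi_1}$-$\GSR$, and any witness of the boldface principle is \emph{a fortiori} a witness of its parameter-free lightface restriction; thus $\kappa_5$ witnesses $\Pi_1$-$\GSR$ and $\kappa_1 \leq \kappa_5$. Hence $\kappa_1 = \kappa_5$.

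To close the loop I would tie in the boldface witness. Every boldface witness is in particular a lightface witness, so $\kappa_1 \leq \kappa_2$; and since remarkability implies $\mathbf{\Pi_1}$-$\GSR$ and hence almost remarkability, $\kappa_5$ lies in the class defining $\kappa_3$, giving $\kappa_2 = \kappa_3 \leq \kappa_5 = \kappa_1$. Therefore $\kappa_1 = \kappa_2 = \kappa_3 = \kappa_5$, and together with $\kappa_4 = \kappa_5$ all five cardinals agree, so the five clauses are equivalent. One point needs a separate remark, namely existence: if no remarkable cardinal exists then $(5)$ holds of no $\kappa$, and the inequalities just displayed force $(1)$–$(4)$ to hold of no $\kappa$ either, so the equivalence is then vacuously true.

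I do not expect a serious obstacle, since the substantive facts — that the least $\Pi_1$-$\GSR$ witness is remarkable, and that absent $\omega$-Erd\"os cardinals in $L$ weak remarkability collapses to remarkability — are supplied by Theorem~\ref{firstremarkable} and Wilson's theorem. The genuinely delicate part is purely organizational: keeping the lightface/boldface and the remarkable/almost-remarkable/weakly-remarkable distinctions apart, and checking that each containment of witness classes and each one-sided inequality truly closes up to an equality rather than yielding merely equiconsistency or a strict comparison.
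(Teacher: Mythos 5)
Your proposal is correct and assembles exactly the ingredients the paper intends: Theorem~\ref{firstremarkable} for the remarkability of the least lightface witness, the $\mathbf{\Pi_1}$-$\GSR$ $\Leftrightarrow$ almost remarkable equivalence, the fact from \cite{BGS} that remarkable cardinals witness $\mathbf{\Pi_1}$-$\GSR$, and Wilson's theorem collapsing weak remarkability to remarkability when there is no $\omega$-Erd\"os cardinal in $L$. The paper states the corollary without proof as an immediate consequence of these results, and your chain of inequalities (including the vacuous case when no remarkable cardinal exists) closes correctly.
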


We don't know if the assumption that there is no $\omega$-Erd\"os cardinal in $L$ is necessary for the equivalence above to hold. However, we have the following:

\begin{proposition}
For every $n>0$, if  $\kappa$ witnesses $\mathbf{\Pi_n}$-$\GSR$, then $\kappa \in C^{(n+1)}$. In particular, if $\kappa$ witnesses $\mathbf{\Pi_1}$-$\GSR$, then $\kappa \in C^{(2)}$.
\end{proposition}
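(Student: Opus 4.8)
The plan is to prove, by induction on $n\geq 0$, the sharper statement that if $\kappa$ witnesses $\mathbf{\Pi_n}$-$\GSR$ then $\kappa\in C^{(n+1)}$; since any $\Pi_m$-definable class is also $\Pi_n$-definable for $m\leq n$, a cardinal witnessing $\mathbf{\Pi_n}$-$\GSR$ automatically witnesses $\mathbf{\Pi_m}$-$\GSR$ for every $m\leq n$, which is exactly what makes the induction run. The one structural feature I would exploit throughout is that a \emph{generic} elementary embedding $j\colon B\to A$ between two ground-model structures preserves all first-order sentences of the common language: satisfaction of a fixed first-order formula by a fixed structure is absolute between $V$ and any forcing extension, so if $A\models\sigma$ then $B\models\sigma$ holds in the extension and hence in $V$. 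This is precisely what lets us avoid the $\Pi_n$-relational-diagram device that was needed in the product setting of Proposition \ref{Sigma2}.

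For the base case $n=0$ I would show $\kappa\in C^{(1)}$, i.e. $V_\kappa\preceq_{\Sigma_1}V$. The upward direction is free, by upward absoluteness of $\Sigma_1$ formulas for the transitive class $V_\kappa$. For the downward direction, given a $\Sigma_0$ formula $\psi$, a parameter $a\in V_\kappa$, and a witness $b$ with $V\models\psi(b,a)$, I would fix a transitive set $M\ni a,b$, so that $\langle M,\in,a\rangle\models\exists x\,\psi(x,a)$ by $\Sigma_0$-absoluteness, and then apply $\mathbf{\Pi_0}$-$\GSR$ (that is, $\mathbf{\Sigma_0}$-$\GSR$) to the $\Sigma_0$-definable class of structures $\langle N,\in,a\rangle$ with $N$ transitive and $a\in N$. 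This returns some $\langle N,\in,a\rangle\in V_\kappa$ together with a generic elementary embedding into $\langle M,\in,a\rangle$; pulling the existential witness back through $j$ and using that $N\subseteq V_\kappa$ is transitive yields a witness in $V_\kappa$, so $V_\kappa\models\exists x\,\psi(x,a)$.

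For the inductive step ($n\geq 1$) I would first invoke the induction hypothesis applied to $\mathbf{\Pi_{n-1}}$-$\GSR$ to obtain $\kappa\in C^{(n)}$. To establish $\kappa\in C^{(n+1)}$ it then suffices to verify both directions of $\Sigma_{n+1}$-elementarity for sentences $\exists x\,\psi(x,a)$ with $\psi\in\Pi_n$ and $a\in V_\kappa$. The upward direction follows from $\kappa\in C^{(n)}$, since a witness $b\in V_\kappa$ with $V_\kappa\models\psi(b,a)$ satisfies $V\models\psi(b,a)$ by $\Pi_n$-absoluteness. For the downward direction, assuming $V\models\exists x\,\psi(x,a)$, I would pick $\lambda>\kappa$ in $C^{(n+1)}$, so that $V_\lambda\models\exists x\,\psi(x,a)$, and apply $\mathbf{\Pi_n}$-$\GSR$ to the $\Pi_n$-definable (with parameter $a$) class of natural structures $\langle V_\mu,\in,a\rangle$ with $\mu\in C^{(n)}$. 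Reflecting $\langle V_\lambda,\in,a\rangle$ yields some $\langle V_{\bar\mu},\in,a\rangle\in V_\kappa$ with $\bar\mu\in C^{(n)}$ and a generic elementary embedding; first-order preservation gives $V_{\bar\mu}\models\exists x\,\psi(x,a)$, a witness $b\in V_{\bar\mu}$ with $V_{\bar\mu}\models\psi(b,a)$ satisfies $V\models\psi(b,a)$ because $\bar\mu\in C^{(n)}$, and finally $\kappa\in C^{(n)}$ transfers this $\Pi_n$ truth down to $V_\kappa$ (as $b\in V_{\bar\mu}\subseteq V_\kappa$), giving $V_\kappa\models\exists x\,\psi(x,a)$.

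I expect the main obstacle to be the bootstrapping itself: the downward argument at each stage genuinely needs $\kappa\in C^{(n)}$, both to define the reflected class through the $\Pi_n$ predicate $C^{(n)}$ and to relocate the reflected witness into $V_\kappa$, so the induction — together with the observation that $\mathbf{\Pi_n}$-$\GSR$ entails $\mathbf{\Pi_m}$-$\GSR$ for $m<n$ — cannot be bypassed. A secondary point to verify carefully is that the class $\{\langle V_\mu,\in,a\rangle:\mu\in C^{(n)}\}$ really is $\Pi_n$-definable; this holds because being of the form $V_\mu$ is $\Pi_1$ and the rank is $\Delta_0$-definable, so the condition reduces to the $\Pi_n$ conjunction ``$\rank(N)\in C^{(n)}$'' and ``$N=V_{\rank(N)}$''.
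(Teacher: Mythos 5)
Your proposal is correct and takes essentially the same route as the paper: an induction whose base case is the transitive-set/$\Sigma_1$-absoluteness argument for $\kappa\in C^{(1)}$, and whose inductive step reflects the $\Pi_n$-definable class of structures $\langle V_\alpha,\in,a\rangle$ with $\alpha\in C^{(n)}$ and relocates the reflected witness using $\bar\mu,\kappa\in C^{(n)}$. The only (cosmetic) difference is that you transfer the witness from $V_{\bar\mu}$ up to $V$ and back down to $V_\kappa$, whereas the paper passes directly from $V_{\bar\lambda}$ to $V_\kappa$ via $V_\kappa\models``\bar\lambda\in C^{(n)}"$.
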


\begin{proof}
Let us prove the case $n=1$. The general case follows by induction, using a similar argument. So, suppose $a\in V_\kappa$, $\varphi (x,y)$ is a $\Pi_1$ formula with $x,y$ as the only free variables, and $V\models \exists x \varphi (x,a)$. Pick $\lambda\in C^{(2)}$ greater than $\kappa$, so that $V_\lambda \models \exists x \varphi (x,a)$. Since the class of structures of the form $\langle V_\alpha ,\in ,a\rangle$ is $\Pi_1$-definable with $a$ as a parameter, there exists  a generic elementary embedding $j:\langle V_{\bar\lambda}, \in , a\rangle \to \langle V_\lambda ,\in ,a\rangle$   with $\bar\lambda <\kappa$. Note that, on the one hand,  since $\lambda$ belongs to $C^{(1)}$ so does $\bar\lambda$, hence by downward absoluteness for $\Pi_1$ sentences, $V_\kappa \models ``\bar\lambda \in C^{(1)}"$. On the other hand, by elementarity of $j$, $V_{\bar\lambda}\models \exists x \varphi (x,a)$. Hence by upwards absoluteness, $V_\kappa \models \exists x \varphi (x,a)$. 

A simpler similar argument, using the fact that $\Sigma_1$ sentences are absolute for transitive sets, shows that $\kappa \in C^{(1)}$. Hence, if $\varphi(x,y)$ and $a$ are as above, and $V_\kappa \models  \exists x \varphi (x,a)$, then by upwards absoluteness, $V\models \exists x \varphi (x,a)$.

For the general case $n>1$, assume $\kappa\in C^{(n)}$, and consider the $\Pi_n$-definable (with $a$ as a parameter) class of of structures of the form $\langle V_\alpha ,\in ,a\rangle$  with $\alpha \in C^{(n)}$.
\end{proof}

As Wilson \cite{W} shows that a cardinal  is remarkable if and only if it is weakly remarkable and belongs to $C^{(2)}$, if the least cardinal $\kappa$ that witnesses $\mathbf{\Pi_1}$-$\GSR$ is not remarkable, then is not weakly remarkable. Thus, the question is if it is provable in ZFC that the least cardinal $\kappa$ witnessing $\mathbf{\Pi_1}$-$\GSR$, if it exists, is weakly remarkable. Notice, however, that the proof of theorem \ref{firstremarkable} does show that if $\kappa$ is the least cardinal witnessing $\Pi_1$-$\GSR$, then either $\kappa$ is remarkable or there is a weakly remarkable cardinal below $\kappa$. Also, if $\kappa$ is the least cardinal witnessing $\mathbf{\Pi_1}$-$\GSR$, then either $\kappa$ is remarkable or there are unboundedly-many weakly remarkable cardinals below $\kappa$. 

\medskip

More generally, recall  (Definition \ref{n-rem} above) that a  cardinal $\kappa$ is \emph{$n$-remarkable}, for $n>0$, if for every $\lambda>\kappa$ in $C^{(n)}$, there is $\bar\lambda < \kappa$ also in $C^{(n)}$ such that in $V^{\Coll(\omega,\lt\kappa)}$, there is an elementary embedding $j:V_{\bar\lambda} \to V_\lambda$ with $j(\crit(j))=\kappa$. Equivalently, we may additionally require that for any given $a\in V_\lambda$, $a$ is in the range of $j$. 
A cardinal $\kappa$ is \emph{completely remarkable} if it is $n$-remarkable for every $n>0$.
Remarkable cardinals are precisely the $1$-remarkable cardinals.

As shown in \cite{BGS}, if $0^\sharp$ exists, then every Silver indiscernible is completely remarkable in $L$. Moreover, if $\kappa$ is $2$-iterable, then $V_\kappa$ is a model of ZFC in which there exist a proper class of completely remarkable cardinals. 

Theorem \ref{thmnrem} also holds for $n$-remarkable cardinals. Namely,

\begin{theorem}
\label{nrem}
The following are equiconsistent for $n>0$:
 \begin{enumerate}
 \item $\Pi_n$-$\GSR$
\item There exists a cardinal $\kappa$ that witnesses $\mathbf{\Pi_n}$-$\GSR$ 
\item There exists an  $n$-remarkable cardinal.
\end{enumerate}
\end{theorem}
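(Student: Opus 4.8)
The plan is to establish the three statements as a cycle of consistency-strength implications: $(3)\Rightarrow(2)\Rightarrow(1)$ as outright ZFC-provable implications, and $(1)\Rightarrow\mathrm{Con}(3)$ as the single genuine lower bound on consistency strength. For $(3)\Rightarrow(2)$ I would not argue directly but route through almost $n$-remarkability: an $n$-remarkable cardinal is trivially almost $n$-remarkable (forget the clause $j(\crit(j))=\kappa$), and by Theorem~\ref{remgsr}, together with the absorption of the collapse $\Coll(\omega,B)$ of a natural structure $B\in V_\kappa$ into $\Coll(\omega,<\kappa)$, an almost $n$-remarkable cardinal witnesses $\mathbf{\Pi_n}$-$\GSR$. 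This is exactly the $n$-generalization of the equivalence ``witnesses $\mathbf{\Pi_1}$-$\GSR$ iff almost remarkable'' recorded just after Theorem~\ref{thmnrem}. The implication $(2)\Rightarrow(1)$ is immediate, since a boldface witness generically reflects in particular every parameter-free $\Pi_n$-definable class. Hence $\mathrm{Con}(3)\Rightarrow\mathrm{Con}(2)\Rightarrow\mathrm{Con}(1)$.

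The substantial direction is $(1)\Rightarrow\mathrm{Con}(3)$, and here I would lift the extraction argument in the proof of Theorem~\ref{firstremarkable}. Fix a witness $\kappa$ of $\Pi_n$-$\GSR$ and let $\Ce$ be the $\Pi_n$-definable class of structures $\langle V_{\lambda+1},\in\rangle$ with $\lambda\in C^{(n)}$. Choosing a singular $\lambda\in C^{(n+1)}$ above $\kappa$ and applying $\Pi_n$-$\GSR$ produces $\bar\lambda<\kappa$ in $C^{(n)}$ and a generic elementary embedding $j:V_{\bar\lambda+1}\to V_{\lambda+1}$; writing $\bar\alpha=\crit(j)$, the regularity of $\bar\alpha$ against the singularity of $\bar\lambda$ forces $\bar\alpha<\bar\lambda$. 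Restricting $j$ to initial segments and pulling the resulting generic embeddings down by elementarity—precisely as in Theorem~\ref{firstremarkable}, but tracking $C^{(n)}$ in place of $C^{(1)}$—should show that $\bar\alpha$ is \emph{weakly $n$-remarkable up to} $\bar\lambda$, so that $\alpha:=j(\bar\alpha)$ is weakly $n$-remarkable up to $\lambda$; and since $\lambda\in C^{(n+1)}$, $\alpha$ is weakly $n$-remarkable in the full sense (the variant of Definition~\ref{n-rem} obtained by dropping $\bar\lambda<\kappa$).

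The main obstacle is to pass from this weakly $n$-remarkable $\alpha$ to $\mathrm{Con}(\text{$n$-remarkable})$, i.e.\ to close the gap between the weak and the genuine notion. For $n=1$ this is exactly Wilson's theorem: a weakly remarkable non-remarkable cardinal yields an $\omega$-Erd\"os cardinal in $L$, whose consistency strength strictly exceeds that of a remarkable cardinal, so that in either case $\mathrm{Con}(\text{remarkable})$ follows. The crux of the general case is therefore the corresponding $n$-level dichotomy: either $\alpha$ is already $n$-remarkable (which, by an $n$-analog of Wilson's characterization, should hold once $\alpha$ is confirmed to lie in $C^{(n+1)}$), or $\alpha$ is weakly $n$-remarkable but not $n$-remarkable, in which case one must produce in $L$ a large cardinal strictly above $n$-remarkability. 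I expect this $n$-version of Wilson's analysis—carried out for the $C^{(n)}$-indexed embeddings and their $L$-absoluteness—to be the hard part; the two easy implications and the embedding extraction are routine liftings of the $n=1$ arguments already in the paper. Once the dichotomy is in place we obtain $\mathrm{Con}(1)\Rightarrow\mathrm{Con}(3)$, closing the cycle and yielding the claimed equiconsistency.
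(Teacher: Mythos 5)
The paper does not actually prove Theorem~\ref{nrem}: it is stated as the $n$-level generalization of Theorem~\ref{thmnrem}, which is cited from \cite{BGS}, with the supporting machinery appearing only in the $n=1$ discussion (Theorem~\ref{remgsr}, Theorem~\ref{firstremarkable}, and the quoted results of Wilson \cite{W}). Measured against that implicit blueprint, your architecture is the right one and essentially coincides with it. The two ZFC-provable implications are fine: $n$-remarkable trivially gives almost $n$-remarkable, Theorem~\ref{remgsr} (together with the absorption proposition for $\Coll(\omega,B)$) turns an almost $n$-remarkable cardinal into a witness of $\mathbf{\Pi_n}$-$\GSR$ --- this is exactly the equivalence the paper records after Theorem~\ref{thmnrem} --- and boldface-to-lightface is immediate. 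Your extraction of a weakly $n$-remarkable cardinal from a lightface $\Pi_n$-$\GSR$ witness, via the class of $\langle V_{\lambda+1},\in\rangle$ with $\lambda\in C^{(n)}$, a singular $\lambda\in C^{(n+1)}$, and the restriction-and-pull-down argument of Theorem~\ref{firstremarkable}, also goes through with only routine changes.

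The genuine gap is the one you flag yourself: the $n$-level analogue of Wilson's dichotomy (a weakly $n$-remarkable cardinal is either $n$-remarkable or yields an $\omega$-Erd\"os cardinal in $L$) is asserted as an expectation, and that is where the entire mathematical content of the lower bound $\mathrm{Con}(1)\Rightarrow\mathrm{Con}(3)$ lives; as written, the cycle does not close. Two remarks on repairing it. First, you need less than a full $n$-analogue of Wilson's characterization: since an $\omega$-Erd\"os cardinal in $L$ already sits above a $2$-iterable cardinal, which the paper notes yields models with completely remarkable cardinals, the ``bad'' branch of the dichotomy covers every $n$ at once; so the only statement to generalize is ``weakly $n$-remarkable but not $n$-remarkable implies an $\omega$-Erd\"os in $L$'', and Wilson's iteration argument uses just one generic embedding $j:V_{\bar\lambda}\to V_\lambda$ with $j(\crit(j))\leq\bar\lambda$, which the failure of $n$-remarkability supplies verbatim. (This is also, in substance, how \cite{BGS} originally obtained the equiconsistency, by deriving generic indiscernibles directly from the bad case rather than routing through Wilson's later terminology.) Second, a small slip: you say $\alpha$ is $n$-remarkable ``once $\alpha$ is confirmed to lie in $C^{(n+1)}$,'' but there is no reason the extracted $\alpha$ lies in $C^{(n+1)}$ --- whether it does is precisely the case distinction of the dichotomy, so this cannot be ``confirmed'' but only split on.
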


As it turns out, $n+1$-remarkable cardinals correspond precisely to the virtual form of $C^{(n)}$-extendible cardinals. Namely,

\begin{definition}[\cite{BGS}]
A cardinal $\kappa$ is \emph{virtually extendible} if for every $\alpha>\kappa$ there is a generic elementary embedding $j:V_\alpha\to V_\beta$ such that $\crit(j)=\kappa$ and $j(\kappa)>\alpha$. 

A cardinal $\kappa$ is \emph{virtually $C^{(n)}$-extendible} if additionally $j(\kappa)\in C^{(n)}$.
\end{definition}

Note that virtually extendible cardinals are virtually $C^{(1)}$-extendible because $j(\kappa)$ must be inaccessible in $V$.

In contrast with the definition of extendible cardinal, in which the requirement that $j(\kappa)>\alpha$ is superfluous, in the definition of virtually extendible cardinal it is necessary. The reason is that while there is no non-trivial elementary embedding $j:V_{\lambda +2}\to V_{\lambda +2}$, such an embedding may exist generically (see \cite{BGS}).

\begin{theorem}[\cite{BGS}]
A cardinal $\kappa$ is virtually extendible if and only if it is $2$-remarkable. More generally, $\kappa$ is virtually $C^{(n)}$-extendible if and only if it is $n+1$-remarkable.
\end{theorem}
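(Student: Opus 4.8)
The plan is to prove the two implications separately, in each case by \emph{restricting} a generic embedding of one shape to extract a generic embedding of the other shape, and then \emph{reflecting} the resulting instance back through that same embedding using its elementarity. Two facts will be used throughout. First, the collapse proposition quoted above, so that the compound predicate ``there is a generic elementary embedding $h\colon A\to B$ with $\crit(h)$ prescribed, $h(\crit(h))$ lying in $C^{(n)}$, and a named element in $\range(h)$'' is absolute between $V$ and all its forcing extensions. Second, that for $\theta\in C^{(m)}$ with $m$ chosen large enough relative to $n$, the structure $V_\theta$ computes both $C^{(n)}\cap\theta$ and this ``generic embedding exists'' predicate correctly; recall also (from the proposition preceding Definition \ref{n-rem}, and the fact that remarkable cardinals lie in $C^{(2)}$) that $(n+1)$-remarkable and virtually $C^{(n)}$-extendible cardinals lie in $C^{(n+1)}$, which is what lets bounded instances be lifted back to $V$.

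For the direction \emph{virtually $C^{(n)}$-extendible $\Rightarrow$ $(n+1)$-remarkable}, fix $\lambda\in C^{(n+1)}$ above $\kappa$ and $a\in V_\lambda$; I must produce $\bar\lambda<\kappa$ in $C^{(n+1)}$ and a generic $j\colon V_{\bar\lambda}\to V_\lambda$ with $j(\crit(j))=\kappa$ and $a\in\range(j)$. Applying virtual $C^{(n)}$-extendibility at $\alpha=\lambda$ yields a generic $i\colon V_\lambda\to V_\beta$ with $\crit(i)=\kappa$, $\kappa^{*}:=i(\kappa)>\lambda$ and $\kappa^{*}\in C^{(n)}$. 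The key observation is the \emph{reversal}: the very same $i$, read as a map $V_\lambda\to V_\beta$, is already a remarkable-shape witness for $\kappa^{*}$, since $\crit(i)=\kappa<\lambda<\kappa^{*}$, $i(\crit(i))=\kappa^{*}$, the domain index $\lambda$ lies in $C^{(n+1)}$ below $\kappa^{*}$, and $i(a)\in\range(i)$. Encoding this as a formula $\Theta(i(\kappa),i(\lambda),i(a))$ and transporting it back through the elementary $i$ gives $\Theta(\kappa,\lambda,a)$, which is exactly the desired instance of $(n+1)$-remarkability at $\kappa$ for $(\lambda,a)$; the passage from $V_\beta/V_\lambda$ to $V$ uses $\lambda\in C^{(n+1)}$ together with the absoluteness of the generic-embedding predicate.

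For the converse, \emph{$(n+1)$-remarkable $\Rightarrow$ virtually $C^{(n)}$-extendible}, fix $\alpha>\kappa$; I must produce a generic $i\colon V_\alpha\to V_\beta$ with $\crit(i)=\kappa$, $i(\kappa)>\alpha$ and $i(\kappa)\in C^{(n)}$. Choose $\lambda\in C^{(n+1)}$ with $\lambda>\alpha$ and apply $(n+1)$-remarkability to $\lambda$ with parameter $a=\alpha$, obtaining $\bar\lambda<\kappa$ in $C^{(n+1)}$ and a generic $j\colon V_{\bar\lambda}\to V_\lambda$ with $\crit(j)=\bar\kappa$, $j(\bar\kappa)=\kappa$, and $\alpha\in\range(j)$; set $\bar\alpha=j^{-1}(\alpha)$. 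The restriction $j\restriction V_{\bar\alpha}\colon V_{\bar\alpha}\to V_\alpha$ is an extendible-shape witness for $\bar\kappa$: it has critical point $\bar\kappa$, sends it to $\kappa=j(\bar\kappa)>\bar\alpha$, and $\kappa\in C^{(n)}$ (computed correctly since $\lambda\in C^{(n+1)}$). Thus the statement $\Psi(\bar\kappa,\bar\alpha)=$ ``$\bar\kappa$ is virtually $C^{(n)}$-extendible up to $\bar\alpha$'' is true in $V$; as $\Psi$ is $\Sigma_{n+1}$ (its only non-$\Sigma_2$ clause, ``$\crit$-image $\in C^{(n)}$,'' is $\Pi_n$) and $\bar\lambda\in C^{(n+1)}$, it holds in $V_{\bar\lambda}$, whence by elementarity of $j$ we get $V_\lambda\models\Psi(\kappa,\alpha)$, and finally $\lambda\in C^{(n+1)}$ lifts $\Psi(\kappa,\alpha)$ to $V$, giving the required embedding.

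The main obstacle, on which all the bookkeeping turns, is the precise \emph{absoluteness-and-level management} of the compound generic-embedding predicate, and in particular the \emph{one-step shift} between $C^{(n)}$ and $C^{(n+1)}$. This shift is forced by a genuine complexity asymmetry: in the extendibility side the distinguished clause ``$i(\crit(i))\in C^{(n)}$'' is $\Pi_n$, so the whole assertion stays $\Sigma_{n+1}$ and reflects cleanly to a $C^{(n+1)}$ level (this is why the converse above is straightforward); whereas the remarkability side carries the clause ``domain index $\in C^{(n+1)}$,'' which is $\Pi_{n+1}$ and, under an existential quantifier, threatens to push the reflected statement up to $\Sigma_{n+2}$. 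Making the forward direction work therefore requires pinning down exactly which $V_\theta$ compute the predicate correctly, so that the elementarity of $i$ (which applies to \emph{any} formula) can be combined with correct evaluation in $V_\beta$ and $V_\alpha$; I expect isolating this as a single lemma on the absoluteness and reflection of generic embeddings — and verifying that the $C^{(n)}$-clause of extendibility sits exactly one level below the $C^{(n+1)}$-clause of remarkability — to be where the real work lies, after which both directions reduce to the restriction-plus-elementarity arguments above, with the parameter handled uniformly via the ``$a\in\range(j)$'' requirement as in Definition \ref{n-rem}.
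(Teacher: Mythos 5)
This theorem is quoted from \cite{BGS} without proof in the present paper, so there is no in-text argument to compare against; I can only assess your proposal on its own terms. Your overall strategy --- restrict the given generic embedding to extract one of the other shape, then reflect the resulting first-order assertion back through that same embedding --- is indeed the right one, and your backward direction ($(n{+}1)$-remarkable $\Rightarrow$ virtually $C^{(n)}$-extendible) is essentially complete: $\Psi(\bar\kappa,\bar\alpha)$ really is $\Sigma_{n+1}$, it holds in $V$ (granting the lemma, which you cite but do not prove, that $(n{+}1)$-remarkable cardinals lie in $C^{(n)}$ so that the clause ``$i(\crit(i))\in C^{(n)}$'' is witnessed by $\kappa$ itself), it therefore reflects to $V_{\bar\lambda}$ since $\bar\lambda\in C^{(n+1)}$, pushes up through $j$, and lifts from $V_\lambda$ to $V$. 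That chain of correctness transfers is exactly where the hypothesis ``$\bar\lambda,\lambda\in C^{(n+1)}$'' earns its keep.

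The forward direction, however, contains a genuine unresolved gap, and it is precisely the one you flag at the end as ``where the real work lies.'' First, a repairable slip: with $i\colon V_\lambda\to V_\beta$ the expression $i(\lambda)$ is undefined ($\lambda\notin V_\lambda$), so you must apply virtual extendibility at $\lambda+1$ (or higher) and restrict to $V_\lambda$ to obtain a map into $V_{i(\lambda)}$. The serious problem is the clause ``$\bar\lambda\in C^{(n+1)}$'' inside $\Theta$: the only structure into which $i$ is elementary is $V_\beta$, so $\Theta(i(\kappa),i(\lambda),i(a))$ must be \emph{verified in} $V_\beta$ with witness $\bar\lambda=\lambda$, and this requires $V_\beta\models\lambda\in C^{(n+1)}$, i.e.\ $V_\lambda\preceq_{\Sigma_{n+1}}V_\beta$. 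That does not follow from $\lambda\in C^{(n+1)}$ in $V$: the definition of virtual $C^{(n)}$-extendibility guarantees only $i(\kappa)\in C^{(n)}$ --- one level short, exactly the ``one-step shift'' you describe --- and says nothing about the $\Sigma_{n+1}$-correctness of $\beta$. Your proposal names this obstacle but offers no mechanism to overcome it (e.g.\ replacing the $C^{(n+1)}$-clause by the local statement ``$V_{\bar\lambda}\preceq_{\Sigma_{n+1}}V_{i(\lambda)}$'' evaluated inside $V_\beta$, and then arguing separately that the witness satisfies it and that the pulled-back clause ``$V_{\bar\lambda}\preceq_{\Sigma_{n+1}}V_{\lambda}$'' together with $\lambda\in C^{(n+1)}$ recovers $\bar\lambda\in C^{(n+1)}$). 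Until that level-management is carried out, the forward implication is a plan rather than a proof.
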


The requirement that $j(\kappa)>\alpha$ in the definition of virtually extendible cardinals suggests the following strengthening of $\GSR$. Let us say that an elementary embedding $j:V_\alpha \to V_\beta$ is \emph{overspilling} if $j$ has a critical point and $j(\crit(j))>\alpha$. For $\Gamma$ a lightface definability class, let:

\begin{quote}
\begin{itemize}
\item[$\Gamma$-$\SGSR$:] (\emph{$\Gamma$-Strong Generic  Structural Reflection}) There exists a  cardinal $\kappa$ that \emph{strongly generically-reflects} all $\Gamma$-definable classes $\mathcal{C}$ of natural structures, i.e.,  
 for every $A$ in $\mathcal{C}$ there exists $B$ in $\mathcal{C}\cap V_\alpha$ such that in $V^{\Coll(\omega,B)}$ there is an overspilling elementary embedding from $B$ into $A$.
\end{itemize}
\end{quote}
With the boldface version being:
\begin{quote}
\begin{itemize}
\item[] There exist  a proper class of cardinals $\kappa$ that \emph{strongly generically-reflect} all $\Gamma$-definable, with parameters in $V_\kappa$, classes $\mathcal{C}$ of natural structures.
\end{itemize}
\end{quote}

Then we have the following:

\begin{theorem}
The following are equivalent for every $n\geq 1$:
\begin{enumerate}
\item $\Pi_n$-$\SGSR$ 
\item There exists an $n+1$-remarkable cardinal
\item There exists a virtually $C^{(n)}$-extendible cardinal.
\end{enumerate}
\end{theorem}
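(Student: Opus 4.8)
The plan is to take the equivalence of (2) and (3) as already established by the previous theorem (virtually $C^{(n)}$-extendible $\Leftrightarrow$ $(n+1)$-remarkable), and to close the cycle by proving $(3)\Rightarrow(1)$ and $(1)\Rightarrow(3)$. Throughout I will use the $\Coll(\omega,B)$-characterization of generic embeddings, so that the existence of a generic elementary embedding between two sets is absolute between $V$, its forcing extensions, and sufficiently closed rank-initial segments $V_\delta$.

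For $(3)\Rightarrow(1)$, suppose $\kappa$ is virtually $C^{(n)}$-extendible and let $\Ce$ be a $\Pi_n$-definable class of natural structures, defined by $\varphi$. Given $A\in\Ce$ of rank $\gamma\geq\kappa$, I would pick $\alpha\in C^{(n)}$ with $\gamma<\alpha$ and $A\in V_\alpha$, and take a generic $j:V_\alpha\to V_\beta$ with $\crit(j)=\kappa$, $j(\kappa)>\alpha$ and $j(\kappa)\in C^{(n)}$. The restriction $j\restriction A:A\to j(A)$ is then a generic elementary embedding with critical point $\kappa$, and since $j(\kappa)>\alpha>\gamma=\rank(A)$ it is \emph{overspilling}. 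Using $\alpha,j(\kappa)\in C^{(n)}$ one checks that $V_\beta\models\varphi(A)$ and $A\in V_{j(\kappa)}$, so $V_\beta$ satisfies ``there is a natural $C\in\Ce\cap V_{j(\kappa)}$ (namely $A$) admitting a generic overspilling embedding into $j(A)$''. Pulling this back along $j$, $V_\alpha$ satisfies the corresponding statement with $A$ and $\kappa$ in place of $j(A)$ and $j(\kappa)$, yielding $B\in\Ce\cap V_\kappa$ and a generic overspilling embedding $B\to A$; absoluteness of generic embeddings between $V_\alpha$ and $V$ then finishes this direction.

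For $(1)\Rightarrow(3)$, let $\kappa$ witness $\Pi_n$-$\SGSR$ (so in particular $\kappa\in C^{(n+1)}$, since $\SGSR$ implies $\GSR$). I would imitate the extraction in Theorem \ref{firstremarkable}, but using a $\Pi_n$-definable class that tracks the critical point by a distinguished ordinal, exactly as the class $\langle V_{\lambda_\alpha},\in,\alpha\rangle$ of Theorem \ref{mainsuperstrong2} (with $\alpha$ ranging over $C^{(n)}$ and $\lambda_\alpha$ the next element of $C^{(n)}$). Fixing a booster level $\lambda\in C^{(n+2)}$ above $\kappa$ and reflecting the structure whose distinguished ordinal sits near $\lambda$, the overspilling hypothesis supplies a generic $j:V_{\bar\lambda}\to V_\lambda$ whose critical point is pinned down by the distinguished ordinal and whose image $\alpha^\ast:=j(\crit(j))$ overspills $\bar\lambda$ and, by the coding, lies in $C^{(n)}$. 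Restricting $j$ to the intermediate $V_\gamma$ and arguing inside $V_\lambda$ as in \ref{firstremarkable} shows that $\alpha^\ast$ is virtually $C^{(n)}$-extendible \emph{up to} $\lambda$; since ``$x$ is virtually $C^{(n)}$-extendible'' is a $\Pi_{n+2}$ property and $\lambda\in C^{(n+2)}$, this reflects upward to $V$, so $\alpha^\ast$ is genuinely virtually $C^{(n)}$-extendible.

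The main obstacle is the simultaneous demand, in $(1)\Rightarrow(3)$, that the image of the critical point both overspill the domain \emph{and} lie in $C^{(n)}$: the bare overspilling clause of $\SGSR$ delivers only $j(\crit(j))>\bar\lambda$. This is precisely what the distinguished-ordinal coding is designed to overcome, forcing $j(\crit(j))$ to be a $C^{(n)}$-ordinal; the verification rests on the $\Sigma_n$-elementarity granted by $\bar\lambda,\lambda\in C^{(n)}$, which guarantees that $j$ preserves the ($\Pi_n$) relation ``belongs to $C^{(n)}$'' between the two rank-initial segments. A secondary, purely technical point throughout is the absoluteness of generic elementary embeddings between $V_{\bar\lambda}$, $V_\lambda$ and $V$, supplied by the $\Coll(\omega,B)$-proposition, which is what legitimizes moving the witnessing embeddings between these models.
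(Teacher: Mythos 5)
The paper states this theorem without giving any proof (it is presented as a companion to the cited results of \cite{BGS} on $n{+}1$-remarkable and virtually $C^{(n)}$-extendible cardinals), so there is no in-paper argument to compare yours against; I can only assess your proposal on its own terms. Your architecture is the natural one: take $(2)\Leftrightarrow(3)$ from the preceding theorem and close the cycle through $(1)$. The direction $(3)\Rightarrow(1)$ is essentially correct: it is the standard pull-back-through-$j$ argument, and the points needing care (evaluating the $\Pi_n$ formula $\varphi$ at the level $V_{j(\kappa)}$ rather than $V_\beta$, using $j(\kappa)\in C^{(n)}$ and $\kappa\in C^{(n)}$ to move $\varphi$ between $V$, $V_{j(\kappa)}$ and $V_\kappa$, and the absoluteness of generic embeddings) are routine.

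The direction $(1)\Rightarrow(3)$ has a genuine gap. First, the critical point of the reflecting embedding is \emph{not} pinned down by the distinguished ordinal: if $e:\langle V_{\lambda_{\bar\alpha}},\in,\bar\alpha\rangle\to\langle V_{\lambda_\alpha},\in,\alpha\rangle$ is the overspilling generic embedding, all you get is $e(\bar\alpha)=\alpha$, whence $\crit(e)\le\bar\alpha$; nothing forces $\crit(e)=\bar\alpha$, so $\mu:=\crit(e)$ need not lie in $C^{(n)}$ and neither need $e(\mu)$ --- the claim that $j(\crit(j))\in C^{(n)}$ ``by the coding'' is unjustified. Second, and more seriously, the transfer ``as in Theorem \ref{firstremarkable}'' does not adapt from remarkability to extendibility. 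In that proof the property being transferred has the form ``there exist $\bar\delta$ and a generic $f:V_{\bar\delta}\to V_\delta$ with $f(\crit(f))=\bar\alpha$'': the pinned datum is the \emph{image} of the critical point, a parameter lying in the domain of $e$ that pulls back through $e$, while the domain level $\bar\delta$ is existentially quantified away. Virtual $C^{(n)}$-extendibility of $\mu$ instead fixes the critical point itself and ranges over all domain levels $\delta>\mu$: the restrictions $e\restriction V_\delta$ only supply witnesses for $\delta<\bar\lambda$, their targets $V_{e(\delta)}$ generally have rank at least $\bar\lambda$, so they do not establish $V_{\bar\lambda}\models ``\mu$ is virtually $C^{(n)}$-extendible'', which is what you would need in order to push the property forward to $e(\mu)$ inside $V_\lambda$; and $\mu$ itself, being the critical point, is not in the range of $e$, so it cannot be carried as a parameter in a per-$\delta$ pullback. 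Some further idea (a minimality argument on $\kappa$, a richer class of structures, or the coherent-extender machinery of Theorem \ref{mainsuperstrong2}) is needed; as written, the extraction of a virtually $C^{(n)}$-extendible cardinal does not go through.
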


\section{Beyond $\VP$}

We have seen that a variety of large cardinal notions, ranging from weakly inaccessible to Vop\v{e}nka's Principle, can be characterised as some form of Structural Reflection for classes of relational structures of some degree of complexity. The question is now if the same is true for large-cardinal notions stronger than  $\VP$, up to rank-into-rank embeddings, or even for large cardinals that contradict the Axiom of Choice (see \cite{BKW:LCBC}). This is largely a yet unexplored realm, although there are some very recent results showing that this is indeed the case. In the forthcoming \cite{BL}, we introduce a simple form of $\SR$, which we call \emph{Exact Structural Reflection $(\ESR)$}, and show that some natural large-cardinal notions  in the region between almost-huge and superhuge cardinals can be characterised in terms of $\ESR$. Also, sequential forms of $\ESR$ akin to generalised versions of Chang's Conjecture yield large-cardinal principles at the highest reaches of the known large-cardinal hierarchy, and beyond. We give next a brief summary of the results.

\medskip

Given infinite cardinals $\kappa<\lambda$ and a class $\Ce$ of structures of the same type,  let 
 \begin{quote}
\begin{itemize}
\item[$\ESR_\Ce(\kappa,\!\lambda)$:](\emph{Exact Structural Reflection}) For every $A\in \Ce$ of rank $\lambda$, there exists some $B\in \Ce$ of rank $\kappa$ and an elementary embedding form $B$ into $A$. 
\end{itemize}
\end{quote}
We let $\Gamma(P)$-$\ESR(\kappa,\lambda)$ denote the statement that $\ESR_\Ce (\kappa,\lambda)$ holds for every  class $\Ce$ of structures of the same type that is $\Gamma$-definable with parameters from $P$.

The general $\ESR$ principle restricted to classes of structures that are closed under isomorphic images is just equivalent to $\VP$:

\begin{theorem}[\cite{BL}]
 Over the theory {\rm{ZFC}}, the following schemata of sentences are equivalent: 
 \begin{enumerate}     
     \item For every class $\Ce$  of structures of the same type that is closed under isomorphic images,  there is a  cardinal $\kappa$ with  the property that $\ESR_\Ce(\kappa,\lambda)$ holds for all  $\lambda>\kappa$. 
     
         \item $\VP$.

 \end{enumerate}
\end{theorem}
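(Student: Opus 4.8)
The plan is to route the entire argument through the equivalence between $\VP$ and $\SR$ already established in Theorem~\ref{thmVP}, and to treat closure under isomorphic images as the device that converts between the two flavours of reflection in play: the ``reflection into some $V_\alpha$'' of $\SR$ and the ``reflection at a single exact rank $\kappa$'' of $\ESR$. Thus I would not try to construct elementary embeddings directly, but instead read $\ESR_{\Ce}(\kappa,\lambda)$ as a sharpened form of $\SR(\Ce)$ and move back and forth using the freedom that an isomorphism type can be realized at many different ranks.

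For $(1)\Rightarrow(2)$ the key observation is that $(1)$ already \emph{contains} $\SR$ for isomorphism-closed classes. Given such a $\Ce$, let $\kappa$ be as in $(1)$, so that $\ESR_{\Ce}(\kappa,\lambda)$ holds for all $\lambda>\kappa$. Any witness $B\in\Ce$ of rank $\kappa$ lies in $V_{\kappa+1}$, so for every $A\in\Ce$ of rank $>\kappa$ we obtain $B\in\Ce\cap V_{\kappa+1}$ with an elementary embedding $B\to A$; members of rank $\le\kappa$ reflect trivially. Hence $\kappa+1$ reflects $\Ce$, i.e. $\SR(\Ce)$ holds. Applying this to the isomorphic closures of the canonical $\Pi_{n+1}$-definable classes of natural structures that characterize $C^{(n)}$-extendible cardinals in the proofs behind Theorem~\ref{thmVP}, I would extract a $C^{(n)}$-extendible cardinal for every $n$, and then conclude $\VP$ directly from Theorem~\ref{thmVP}. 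This avoids the delicate task of producing two \emph{distinct} members of an arbitrary $\Ce$ with an embedding between them, since $\VP$ is obtained at the level of large cardinals.

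For $(2)\Rightarrow(1)$, assume $\VP$, hence $\mathbf{\Pi_n}$-$\SR$ for every $n$. Let $\Ce$ be closed under isomorphic images and $\Gamma$-definable for some level $\Gamma$. First I would apply $\SR$ to obtain a reflecting cardinal $\mu$: every $A\in\Ce$ admits some $B\in\Ce\cap V_\mu$ with an elementary embedding $B\to A$. These witnesses all live in $V_\mu$, so they realize only boundedly many isomorphism types. Using closure under isomorphic images I would replace $B$ by an isomorphic copy $\hat B\in\Ce$ of one fixed rank $\kappa$ and compose the isomorphism with $B\to A$ to obtain $\hat B\to A$; since any copy of $B$ embeds into $A$, the only thing to secure is that a copy of $B$ of rank exactly $\kappa$ exists. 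Choosing $\kappa$ uniformly for all the types occurring in $V_\mu$ then yields $\ESR_{\Ce}(\kappa,\lambda)$ for every $\lambda>\kappa$.

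The hard part will be precisely this uniform exact-rank realization in $(2)\Rightarrow(1)$. Because $\kappa$ must be a cardinal, hence a limit ordinal, a small witness $B$ need not admit any isomorphic copy of rank \emph{exactly} $\kappa$: realizing a limit rank forces the universe to contain elements of cofinally many ranks below $\kappa$, which is impossible when the size of $B$ is below $\cof{\kappa}$. The obstacle is therefore to match the cofinality of the target rank to the available witnesses; I would handle it by selecting $\kappa$ of suitable cofinality and by working with the isomorphic closures of \emph{natural} structures $\langle V_\alpha,\in,\ldots\rangle$, whose large universes make every relevant type realizable at the prescribed rank and whose rigidity (well-foundedness and extensionality, so no nontrivial automorphisms) guarantees that distinct ranks yield genuinely distinct members. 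This same rigidity is what makes the descent to natural structures legitimate when invoking Theorem~\ref{thmVP} in the other direction.
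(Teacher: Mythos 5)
The paper contains no proof of this theorem --- it is quoted from the forthcoming work it cites --- so your proposal can only be measured against the intended argument. Your architecture is the right one: route both directions through Theorem~\ref{thmVP}, and use closure under isomorphic images to trade the ``reflection into some $V_\alpha$'' of $\SR$ for the ``reflection at exact rank $\kappa$'' of $\ESR$. For $(1)\Rightarrow(2)$ your plan works, with two points you should make explicit. First, $\ESR_\Ce(\kappa,\lambda)$ only speaks about members of $\Ce$ whose rank is a \emph{cardinal} $\lambda$, so the inference ``$\kappa+1$ reflects $\Ce$'' is immediate only for such members; this is harmless for your application because the natural structures $\langle V_\beta,\in,\ldots\rangle$ in the Magidor-type classes have $\beta\in C^{(n)}$, hence cardinal rank. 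Second, after obtaining $B'\in\Ce'$ of rank $\kappa$ isomorphic to some $\langle V_{\bar\beta},\in,\ldots\rangle$ together with an elementary embedding into $\langle V_\beta,\in,\ldots\rangle$, you need $\beth_{\bar\beta}=|B'|\le\beth_\kappa$, which forces $\bar\beta\le\kappa<\beta$; this is what keeps all the collapsed witnesses inside a fixed $V_{\kappa+\omega}$ and lets the arguments behind Theorem~\ref{thmVP} produce the $C^{(n)}$-extendible cardinals.

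The one place the proposal wobbles is your resolution of the exact-rank realization in $(2)\Rightarrow(1)$. There the class $\Ce$ is an \emph{arbitrary} isomorphism-closed definable class, so ``working with the isomorphic closures of natural structures'' is not available to you: you must realize whatever witness $\SR$ hands you at rank exactly $\kappa$. The correct completion is the other half of your own suggestion. Fix an $\SR$-reflecting ordinal $\mu$ for $\Ce$ and take $\kappa\ge\mu$ to be a singular cardinal with $\cof{\kappa}=\omega$, e.g.\ $\beth_{\mu+\omega}$. Any $A\in\Ce$ of cardinal rank $\lambda>\kappa$ has a universe containing elements of cofinally many ranks below the limit ordinal $\lambda$, hence is infinite; since an elementary embedding $B\to A$ makes $B$ elementarily equivalent to $A$, every witness $B$ is infinite as well, and an infinite set admits a bijection with a set of rank exactly $\kappa$ precisely because $\cof{\kappa}=\omega\le|B|$. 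Transporting the structure along that bijection (using closure under isomorphic images) and composing with the embedding into $A$ yields the required $\hat B\in\Ce$ of rank $\kappa$. With this substitution for the appeal to natural structures, your argument is complete.
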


However, even the principle $\Pi_1$-$\ESR (\kappa ,\lambda)$ holding for some $\kappa <\lambda$  already implies the existence of  large cardinals, the \emph{weakly exact cardinals}, whose consistency strength is beyond that of $\VP$.

\begin{definition}[\cite{BL}]
\label{defwexact}
 Given  a natural number $n>0$, an infinite cardinal $\kappa$ is \emph{weakly $n$-exact for a cardinal $\lambda>\kappa$} if for every $A\in V_{\lambda +1}$, there exists 
     a transitive, $\Pi_n(V_{\kappa+1})$-correct set $M$ with $V_\kappa\cup \{\kappa\} \subseteq M$,  a cardinal $\lambda'\in C^{(n-1)}$ greater than $\beth_\lambda$   
     and an elementary embedding $j:M\to H_{\lambda'}$ with $j(\kappa)=\lambda$ and $A\in\range(j)$.  
   
    If we further require that $j(\crit{j})=\kappa$, then we say that $\kappa$ is \emph{weakly parametrically $n$-exact for $\lambda$}.
 
\end{definition}

We  have the following equivalence:

\begin{theorem}[\cite{BL}]
 The following statements are equivalent for all cardinals $\kappa$ and all natural numbers $n>0$: 
 \begin{enumerate}
     
     \item $\kappa$ is the least cardinal such that $\Pi_n(V_\kappa)$-$\ESR(\kappa, \lambda)$ holds for some $\lambda$. 
     
     \item $\kappa$ is the least 
      cardinal that is weakly $n$-exact for some  $\lambda$. 
     
     \item $\kappa$ is the least cardinal that is weakly parametrically  $n$-exact for some  $\lambda$. 
 \end{enumerate}
\end{theorem}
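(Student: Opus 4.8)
The plan is to read the three items as statements about the least witnessing cardinal and to show they isolate the same ordinal. Write $\kappa_1$, $\kappa_2$, $\kappa_3$ for the least cardinals witnessing (1), (2), (3). I would first record the two soft implications, valid for every cardinal $\kappa$,
$$\text{weakly parametrically } n\text{-exact} \ \Longrightarrow\ \text{weakly } n\text{-exact} \ \Longrightarrow\ \Pi_n(V_\kappa)\text{-}\ESR(\kappa,\lambda),$$
which already give $\kappa_1 \le \kappa_2 \le \kappa_3$. The first is immediate, as weak parametric $n$-exactness is weak $n$-exactness plus the clause $j(\crit(j)) = \kappa$. The whole weight of the theorem then rests on the converse movement: that the \emph{least} cardinal witnessing $\Pi_n(V_\kappa)$-$\ESR$ is in fact weakly parametrically $n$-exact, giving $\kappa_3 \le \kappa_1$ and hence $\kappa_1 = \kappa_2 = \kappa_3$.

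For the second soft implication, fix $\Ce$ defined by a $\Pi_n$ formula $\varphi(x,p)$ with $p \in V_\kappa$ and let $A \in \Ce$ have rank $\lambda$, so $A \in V_{\lambda+1}$. Weak $n$-exactness applied to $A$ yields a transitive, $\Pi_n(V_{\kappa+1})$-correct $M \supseteq V_\kappa \cup \{\kappa\}$, a cardinal $\lambda' \in C^{(n-1)}$ above $\beth_\lambda$, and an elementary $j \colon M \to H_{\lambda'}$ with $j(\kappa) = \lambda$ and $A \in \range(j)$. Setting $B = j^{-1}(A)$, elementarity together with $\mathrm{rank}(A) = \lambda = j(\kappa)$ forces $\mathrm{rank}(B) = \kappa$, and $j \restriction B \colon B \to A$ is an elementary embedding of structures. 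That $B \in \Ce$ follows by first noting $H_{\lambda'} \models \varphi(A,p)$ --- for $n = 1$ by downward $\Pi_1$-absoluteness into the transitive set $H_{\lambda'}$, and for $n \ge 2$ because $\lambda' \in C^{(n-1)} \subseteq C^{(1)}$ makes $H_{\lambda'} = V_{\lambda'} \preceq_{n-1} V$ --- then pulling back through $j$ to get $M \models \varphi(B,p)$, and finally invoking $\Pi_n(V_{\kappa+1})$-correctness of $M$ (with $B \in V_{\kappa+1}$, $p \in V_\kappa$) to conclude $V \models \varphi(B,p)$. The one delicate point is that $p$ must be fixed by $j$; since $p \in V_\kappa$ this is arranged by applying exactness to a rank-$\lambda$ coding of $A$ from which $p$ is recovered below the critical point, and I would check this explicitly.

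For the decisive implication, let $\kappa = \kappa_1$ be least with $\Pi_n(V_\kappa)$-$\ESR(\kappa,\lambda)$ for some $\lambda$, and fix $A \in V_{\lambda+1}$. I would apply $\ESR$ to a single $\Pi_n(V_\kappa)$-definable class $\Ce$ of \emph{coded models}: structures $\langle X, E, c, d, A^\ast \rangle$ with $\langle X, E \rangle$ well-founded and extensional, Mostowski collapse some $H_\theta$ with $\theta \in C^{(n-1)}$ and $H_\theta$ correct in the requisite sense, $c$ collapsing to the prescribed cardinal, $d$ collapsing to the least $\ESR$-cardinal of $H_\theta$, and $A^\ast$ naming $A$. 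The coding is set so that the rank-$\lambda$ members collapse to $H_{\lambda'}$ with $c \mapsto \lambda$, and $\ESR$ returns a rank-$\kappa$ member collapsing to a transitive $M$ with an elementary embedding which, decoded through the two collapses, is $j \colon M \to H_{\lambda'}$ with $j(\kappa) = \lambda$ and (because $A^\ast$ names $A$) $A \in \range(j)$; the correctness clause built into $\Ce$ makes $M$ the required $\Pi_n(V_{\kappa+1})$-correct model, so $\kappa$ is weakly $n$-exact. To obtain $j(\crit(j)) = \kappa$ I would exploit minimality exactly as in the proof of Theorem~\ref{firstremarkable}: the marker $d$ is the least $\ESR$-cardinal, which $H_{\lambda'}$ (being sufficiently correct) computes as $\kappa$, so $j$ sends $M$'s value of $d$ to $\kappa$; were $\crit(j)$ strictly below that marker, elementarity plus correctness of $H_{\lambda'}$ would manufacture an $\ESR$-cardinal below $\kappa$, contradicting minimality, and so $\crit(j)$ coincides with the marker and $j(\crit(j)) = \kappa$.

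The main obstacle is the construction of this class $\Ce$. One must produce a \emph{single} $\Pi_n(V_\kappa)$-definable family of coded models whose reflection yields embeddings that are simultaneously rank-exact (giving $j(\kappa) = \lambda$ on the nose rather than $j(\kappa) \ge \lambda$), rich enough that an arbitrary prescribed $A \in V_{\lambda+1}$ is named and hence lands in $\range(j)$, and whose small side decodes to a genuinely $\Pi_n(V_{\kappa+1})$-correct $M$ sitting under $\lambda' \in C^{(n-1)}$. Expressing ``$\theta \in C^{(n-1)}$ and $H_\theta$ is $\Pi_n$-correct'' by a $\Pi_n$ formula while pinning the rank of the coded structure exactly to its collapse height is the technical heart; once the class is in place, the minimality step that refines weak $n$-exactness to weak parametric $n$-exactness is comparatively routine.
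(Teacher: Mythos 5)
The paper itself gives no proof of this theorem --- it is quoted from the forthcoming \cite{BL} --- so I can only assess your argument on its merits. Your skeleton (two soft implications giving $\kappa_1\leq\kappa_2\leq\kappa_3$, plus a hard direction $\kappa_3\leq\kappa_1$ via reflection of a single definable class of coded models together with a minimality argument to secure $j(\crit(j))=\kappa$) is the right shape for a ``least cardinal'' equivalence, and you correctly locate the delicate points. But two of the steps have genuine gaps. First, the implication ``weakly $n$-exact at $\kappa$ $\Rightarrow$ $\Pi_n(V_\kappa)$-$\ESR(\kappa,\lambda)$'' is not justified: to see that $B=j^{-1}(A)$ lies in the class defined by $\varphi(x,p)$ you must pull $p$ back through $j$, and without the clause $j(\crit(j))=\kappa$ there is no reason for $j^{-1}(p)$ to equal $p$; you would only conclude $V\models\varphi(B,j^{-1}(p))$, i.e.\ membership in a different class. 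The clause $j(\crit(j))=\kappa$ is precisely what repairs this: if $p=j(q)$ with $\mathrm{rank}(p)<\kappa=j(\crit(j))$, then $\mathrm{rank}(q)<\crit(j)$, so $q$ is fixed by $j$ and $q=p$. The sequential analogue quoted later in the paper confirms this asymmetry (weak $n$-exactness is paired with \emph{lightface} $\Pi_n$-$\ESR$, and only the parametric version is claimed to give $\Pi_n(V_{\lambda_0})$-$\ESR$). So your inequality $\kappa_1\leq\kappa_2$ is unsupported; closing the circle through $\kappa_2$ needs an extra argument, e.g.\ a further minimality argument showing the least weakly $n$-exact cardinal is already weakly parametrically $n$-exact.

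Second, the class of coded models you propose for the decisive implication cannot exist as described, for cardinality reasons: a structure of rank $\lambda$ is a subset of $V_\lambda$ and hence has cardinality at most $\beth_\lambda$, whereas $H_{\lambda'}$ with $\lambda'\in C^{(n-1)}$ and $\lambda'>\beth_\lambda$ has cardinality at least $\lambda'>\beth_\lambda$; so no rank-$\lambda$ member of $\Ce$ can Mostowski-collapse onto $H_{\lambda'}$. The target model and the $\Pi_n(V_{\kappa+1})$-correct $M$ must instead be \emph{derived} from the elementary embedding between a rank-$\kappa$ and a rank-$\lambda$ structure (say of the form $\langle V_\gamma,\in,\dots\rangle$ with predicates coding enough of $V_{\gamma+1}$), by an extender or extension-of-embeddings construction in the style of Theorem \ref{mainstrong}; this is the technical heart you identify, but your coding bypasses it in a way that cannot work. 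A related worry attaches to your marker $d$: the statement ``$\kappa$ is the least cardinal witnessing $\Pi_n(V_\kappa)$-$\ESR$'' has complexity above $\Pi_n$, so an $H_\theta$ with $\theta\in C^{(n-1)}$ --- all the definition of weak $n$-exactness provides --- need not compute it correctly, and the minimality argument needs its correctness levels audited before the contradiction you describe goes through.
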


In contrast with the $\SR$ principles considered in previous sections, the $\ESR$ principles for $\Pi_n$-definable  and $\Sigma_{n+1}$-definable  classes of structures are not equivalent.  Indeed, for $\Sigma_n$-definable classes, the relevant large cardinals are  the \emph{exact cardinals}:

\begin{definition}[\cite{BL}]
\label{defexact}
 Given a natural number $n$, an infinite  cardinal $\kappa$ is \emph{$n$-exact for some cardinal $\lambda>\kappa$} if for every $A\in V_{\lambda +1}$, there exists a cardinal $\kappa'\in C^{(n)}$ greater than $\beth_\kappa$, a cardinal  $\lambda'\in C^{(n+1)}$ greater than $\lambda$, an   $X\preceq H_{\kappa'}$ with  $V_\kappa \cup \{\kappa\}   \subseteq X$, and an elementary embedding $j:X\to H_{\lambda'}$ with  $j(\kappa)=\lambda$ and   $A\in\range{j}$. 

     If we further require that   $j(\crit{(j)})=\kappa$ holds,  then we say that $\kappa$ is \emph{parametrically $n$-exact for $\lambda$}.
 
\end{definition}

The characterization of $\ESR$ for $\Sigma_n$-definable classes of structures in terms of exact cardinals is now given by the following:

\begin{theorem}[\cite{BL}]
 The following statements are equivalent for all cardinals $\kappa$ and all natural numbers $n>0$: 
 \begin{enumerate}
     
     \item $\kappa$ is the least cardinal such that $\Sigma_{n+1}(V_\kappa)$-$\ESR(\kappa ,\lambda)$ holds for some $\lambda$. 
     
     \item $\kappa$ is the least cardinal that is  $n$-exact for some  $\lambda$. 
     
     \item $\kappa$ is the least cardinal that is  parametrically  $n$-exact for some  $\lambda$. 
 \end{enumerate}
\end{theorem}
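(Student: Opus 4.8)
The plan is to show the three ``least cardinal'' statements coincide by a cyclic chain of inequalities. Write $\kappa_1,\kappa_2,\kappa_3$ for the least cardinal satisfying (1), (2), (3) respectively. Since every parametrically $n$-exact cardinal is trivially $n$-exact, the cardinals of type (3) form a subclass of those of type (2), whence $\kappa_3\geq\kappa_2$. I would then prove the two substantive implications: (B) every $n$-exact cardinal witnesses $\Sigma_{n+1}(V_\kappa)$-$\ESR$, giving $\kappa_1\leq\kappa_2$; and (C) every cardinal witnessing $\Sigma_{n+1}(V_\kappa)$-$\ESR$ is parametrically $n$-exact, giving $\kappa_3\leq\kappa_1$. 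Together these yield $\kappa_1\geq\kappa_3\geq\kappa_2\geq\kappa_1$, so all three agree.

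For (B), suppose $\kappa$ is $n$-exact for $\lambda$, let $\Ce$ be $\Sigma_{n+1}(V_\kappa)$-definable by $\varphi(x,\vec p)\equiv\exists z\,\psi(x,z,\vec p)$ with $\psi\in\Pi_n$ and $\vec p\in V_\kappa$, and fix $A\in\Ce$ of rank $\lambda$. Applying $n$-exactness to $A$ produces $\kappa'\in C^{(n)}$, $\lambda'\in C^{(n+1)}$, $X\preceq H_{\kappa'}$ and $j\colon X\to H_{\lambda'}$ with $j(\kappa)=\lambda$ and $A\in\range(j)$. Put $B:=j^{-1}(A)$. That $\rank(B)=\kappa$ follows from $j(\rank(B))=\rank(A)=\lambda=j(\kappa)$ and injectivity, and $j$ restricted to the universe of $B$ is the required elementary embedding $B\to A$ (as in the treatment of $\Ce^{\ast}$ in Section~\ref{SR}). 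The heart of the matter is $B\in\Ce$, and here the two correctness levels do complementary work: since $\lambda'\in C^{(n+1)}$ and $\varphi$ is $\Sigma_{n+1}$, from $A\in\Ce$ we obtain $H_{\lambda'}\models\varphi(A,\vec p)$; pulling this back through $j$ and using $X\preceq H_{\kappa'}$ yields a witness $z_0\in H_{\kappa'}$ with $H_{\kappa'}\models\psi(B,z_0,\vec p)$; and since $\kappa'\in C^{(n)}$ and $\psi$ is merely $\Pi_n$, this single $\Pi_n$ fact transfers upward to $V$, giving $V\models\exists z\,\psi(B,z,\vec p)$, i.e.\ $B\in\Ce$. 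This asymmetry is exactly why the definition pairs $\kappa'\in C^{(n)}$ with $\lambda'\in C^{(n+1)}$, and why it is $\Sigma_{n+1}$-$\ESR$ (rather than $\Pi_n$-$\ESR$, which matches the weakly exact cardinals) that corresponds to exactness. The one delicate point is the parameter: the naive pullback lands in $\Ce$ with parameter $j^{-1}(\vec p)$, so to land in $\Ce$ itself one must ensure $j$ fixes $\vec p$ — which I would secure either by taking $\crit(j)$ above the rank of $\vec p$ so that $j\restriction V_\kappa$ fixes $\vec p$, or by absorbing $\vec p$ into the natural structures via the $\Ce\mapsto\Ce^{\ast}$ device of Section~\ref{SR}.

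For (C), the task is to manufacture, from reflection of a single class, an embedding carrying all of the exactness data. I would fix a $\Pi_n$-definable (hence $\Sigma_{n+1}$-definable) class $\Ce$ of natural structures of the form $\langle V_\gamma,\in,c,T_\gamma\rangle$, where $\gamma\in C^{(n)}$, $c$ is a distinguished marker ordinal, and $T_\gamma$ encodes the $\Sigma_n$-satisfaction relation of $V_\gamma$ (so that membership of $\gamma$ in $C^{(n)}$ and $C^{(n+1)}$ becomes internally recognizable). Applying $\Sigma_{n+1}(V_\kappa)$-$\ESR(\kappa,\lambda)$ to the rank-$\lambda$ member coding a tall enough $H_{\lambda'}$ with marker $\kappa$ and a prescribed $A$ yields a rank-$\kappa$ member $B$, coding some $X\preceq H_{\kappa'}$ with $\kappa'\in C^{(n)}$, together with an elementary embedding that unpacks to $j\colon X\to H_{\lambda'}$. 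The marker is arranged so that $j$ sends it to $\kappa$; its minimality then forces it to equal $\crit(j)$, delivering $j(\crit(j))=\kappa$ and hence the parametric clause, while the height datum gives $j(\kappa)=\lambda$ and the predicate $T_\gamma$ certifies $\kappa'\in C^{(n)}$ and $\lambda'\in C^{(n+1)}$.

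I expect the construction in (C) to be the main obstacle: one must design the single class of natural structures so that the bare elementary embedding furnished by $\ESR$ can be read off as an exact embedding with $j(\kappa)=\lambda$, $j(\crit(j))=\kappa$, and both correctness conditions holding at once. In particular, forcing the critical-point behaviour $j(\crit(j))=\kappa$ out of mere reflection is what upgrades the seemingly weaker $n$-exactness to its parametric form and closes the cycle at the least cardinal; a secondary but genuine difficulty is reconciling the parameter bookkeeping in (B) with the requirement $\crit(j)<\kappa$ demanded by parametricity in (C).
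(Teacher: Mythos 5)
The paper itself contains no proof of this theorem --- it is quoted from the forthcoming \cite{BL} --- so your proposal can only be measured against the structure that the surrounding results (in particular the sequential analogue stated later, where lightface $\Sigma_{n+1}$-$\ESR$ matches plain $n$-exactness while the \emph{parametric} notion is what yields the boldface version) impose. Your overall cyclic strategy is reasonable, and your reading of direction (B) in the parameter-free case is correct, including the observation that the pairing of $\kappa'\in C^{(n)}$ with $\lambda'\in C^{(n+1)}$ is exactly what lets a $\Sigma_{n+1}$ fact about $A$ descend through $j$ as a $\Pi_n$ fact about $B$ and then propagate up to $V$. But there are two genuine gaps. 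First, the parameter bookkeeping in (B): neither of your proposed fixes is available. You cannot ``take $\crit(j)$ above the rank of $\vec p$'' --- the definition of $n$-exactness hands you an embedding whose critical point you do not control, and the pullback lands in the class defined with parameter $j^{-1}(\vec p)$, which need not equal $\vec p$; the $\Ce\mapsto\Ce^{\ast}$ device repackages the structures, not the parameters of the defining formula. The mechanism that actually fixes parameters is the \emph{parametric} clause itself: if $j(\crit(j))=\kappa$ and $\vec p\in V_\kappa\cap\range(j)$, then $j^{-1}(\vec p)\in V_{\crit(j)}$ by elementarity, and $j$ is the identity there, so $j^{-1}(\vec p)=\vec p$. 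Hence plain $n$-exactness gives only the lightface implication, and it is parametric $n$-exactness that gives $\Sigma_{n+1}(V_\kappa)$-$\ESR$; your (B) as written conflates the two.

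Second, step (C) is both overstated and under-argued. You claim every cardinal witnessing $\Sigma_{n+1}(V_\kappa)$-$\ESR$ is parametrically $n$-exact, but the theorem is deliberately phrased only for the \emph{least} such cardinal, and the clause $j(\crit(j))=\kappa$ is precisely what cannot be read off from a single application of $\ESR$. Your mechanism for it --- a marker ordinal $c$ with $j(c)=\kappa$ whose ``minimality forces it to equal $\crit(j)$'' --- fails: from $j(c)=\kappa>c$ one gets only $\crit(j)\leq c$, and nothing forces $j$ to be the identity below $c$. The parametric clause has to come from the minimality of $\kappa$ itself, by a reflection argument of the kind used in Theorem \ref{firstremarkable}: one pulls the witnessing configuration back through the embedding and uses leastness to exclude a smaller witness. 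The cycle then closes differently from the way you arranged it: (i) lightface $\Sigma_{n+1}$-$\ESR(\kappa,\lambda)$ is equivalent to $n$-exactness cardinal-by-cardinal; (ii) parametric $n$-exactness implies the boldface principle; (iii) the least $n$-exact cardinal is parametrically $n$-exact. Minimality is indispensable in (iii) and cannot be replaced by a coding trick inside the class of natural structures.
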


The strength of weakly $n$-exact and $n$-exact cardinals, and therefore also of their corresponding equivalent forms of $\ESR$, goes  beyond $\VP$, for as shown in  \cite{BL} they imply the existence of almost huge cardinals:\footnote{Recall that a cardinal $\kappa$ is \emph{almost huge} if there exists a transitive class $M$ and a non-trivial elementary embedding $j:V\to M$ with  $\crit{j}=\kappa$ and ${}^{{<}j(\kappa)}M\subseteq M$.
We then say that a 
cardinal $\kappa$ is almost huge  with \emph{target $\lambda$} if there exists such a $j$  with $j(\kappa)=\lambda$. } 
If $\kappa<\lambda$ are cardinals such that $\kappa$ is either parametrically $0$-exact for $\lambda$ or weakly parametrically $1$-exact for $\lambda$, then the set of cardinals  that are almost huge with target $\kappa$ is stationary in $\kappa$.
 Also, if $\kappa$  is parametrically $0$-exact for some cardinal $\lambda>\kappa$, then it is almost huge with target $\lambda$. 
 
As for upper bounds,  if $\kappa$ is huge with target $\lambda$, then it is weakly parametrically $1$-exact for $\lambda$.   Hence, $\Pi_1(V_\mu )$-$\ESR(\mu,\nu)$ holds for some $\mu \leq \kappa$ and $\nu >\mu$.   Moreover,  $\Pi_1(V_\kappa)$-$\ESR(\kappa , \lambda')$ holds in $V_\lambda$, for some $\lambda'$. However, if $\kappa$ is the least huge cardinal, then $\kappa$ is not $1$-exact for any cardinal $\lambda>\kappa$. The best upper bound for the consistency strength of exact cardinals is given by the following:
 
  \begin{prop}[\cite{BL}]
  If $\kappa$ is a $2$-huge cardinal,\footnote{I.e., there is an elementary embedding $j:V\to M$ with $M$ transitive, $\crit{(j)}=\kappa$, and $^{j^2(\kappa)}M\subseteq M$.} then there exists an inaccessible cardinal $\lambda>\kappa$ and a cardinal $\rho>\lambda$ such that $V_\rho$ is a model of \rm{ZFC} and, in $V_\rho$, the cardinal $\kappa$ is weakly parametrically $n$-exact for $\lambda$, for all $n>0$. 
 \end{prop}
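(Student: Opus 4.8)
The plan is to extract everything from a single $2$-huge embedding and then to certify the conclusion inside the model $V_\rho$, with $\rho$ taken high enough in the huge tower. First I would fix $j\colon V\to M$ witnessing $2$-hugeness, with $\crit(j)=\kappa$, and set $\lambda:=j(\kappa)$ and $\mu:=j(\lambda)=j^2(\kappa)$, so that ${}^{\mu}M\subseteq M$. Standard closure computations then give $V_{\lambda+1},V_{\mu+1}\subseteq M$ with $V_{\mu+1}^{M}=V_{\mu+1}$, that $\kappa<\lambda<\mu$ are all inaccessible in $V$, and that the normal fine ultrafilter on $P_\kappa(\lambda)$ derived from $j$ has size $<\mu$ and hence lies in $M$; in particular $\kappa$ is huge with target $\lambda$ and $M\models{}$``$\kappa$ is huge with target $\lambda$''. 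I would take $\rho:=\mu$: since $\mu$ is inaccessible, $V_\rho\models\mathrm{ZFC}$, and $\lambda$ is an inaccessible cardinal with $\kappa<\lambda<\rho$, as the statement requires. (Any inaccessible $\rho\in(\lambda,\mu]$ would serve equally well.)

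The engine of the argument is a reflection that manufactures the \emph{double} critical structure $\bar\kappa\mapsto\kappa\mapsto\lambda$ demanded by weak parametric exactness. The object $j\restriction V_{\lambda+1}\colon V_{\lambda+1}\to V_{\mu+1}$ is elementary, has critical point $\kappa$, and satisfies $\kappa\mapsto\lambda\mapsto\mu$; by the size computation above it belongs to $M$. Hence, writing
$$\chi(b,c)\ :\equiv\ \exists a<b\,\exists e\,\bigl[e\colon V_{b+1}\to V_{c+1}\text{ elementary},\ \crit(e)=a,\ e(a)=b,\ e(b)=c\bigr],$$
we have $M\models\chi(\lambda,\mu)$, that is $M\models\chi(j(\kappa),j(\lambda))$. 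Elementarity of $j$ then yields $V\models\chi(\kappa,\lambda)$: there are $\bar\kappa<\kappa$ and an elementary $e\colon V_{\kappa+1}\to V_{\lambda+1}$ with $\crit(e)=\bar\kappa$, $e(\bar\kappa)=\kappa$ and $e(\kappa)=\lambda$. This $e$ already realises the pattern $\crit\mapsto\kappa\mapsto\lambda$ on a domain containing $V_\kappa\cup\{\kappa\}$, which is precisely the skeleton of a weakly parametrically exact embedding.

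Next I would upgrade this skeleton to the full definition and push it into $V_\rho$. For a fixed $n>0$ and a prescribed $A\in V_{\lambda+1}$ I would enrich $\chi$ so that, in addition, the domain is required to be a transitive $\Pi_n(V_{\kappa+1})$-correct set, the codomain is a genuine $H_{\lambda'}$ with $\lambda'\in C^{(n-1)}$ above $\beth_\lambda$, and a designated parameter is required to lie in the range; reflecting this enriched statement through $j$ (which transports $\lambda=j(\kappa)$, $\mu=j(\lambda)$, and, for the base level $n=1$, the hugeness measure $U\in M$ together with the quoted implication that huge with target $\lambda$ yields weakly parametrically $1$-exact for $\lambda$) produces the desired witnesses in $V$. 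Finally I would check that these witnesses lie in $V_\mu$ and that the notions $C^{(n-1)}$, $H_{\lambda'}$ and $\Pi_n$-correctness, when recomputed inside $V_\mu$, agree with their ambient counterparts for the relevant $\lambda',\theta<\mu$. Since the construction is uniform in $n$ and the required correctness degrees are all available below $\mu$, this would give $V_\rho\models{}$``$\kappa$ is weakly parametrically $n$-exact for $\lambda$'' for every $n>0$.

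The hard part is the upgrade step, and specifically forcing an \emph{arbitrary} $A\in V_{\lambda+1}$ into the range while keeping the two critical points $\bar\kappa\mapsto\kappa\mapsto\lambda$ intact. Pure reflection through $j$ delivers $A$ in the range only when $A\in\range(j)$, so for general $A$ I would instead form an elementary hull of $\range(e)\cup\{A\}$ inside a large $H_{\lambda'}$ and transitively collapse it, taking $j^{*}$ to be the inverse collapse. The delicate point is that adjoining $A$ must not inflate the hull below $\lambda$, for otherwise the collapse would move $\lambda$ to something above $\kappa$ and destroy the identity $j^{*}(\kappa)=\lambda$; controlling this, simultaneously with securing $\Pi_n$-correctness of the collapsed domain and $\lambda'\in C^{(n-1)}$ for every $n$ at once, is exactly where the surplus closure ${}^{\mu}M\subseteq M$ of $2$-hugeness (rather than mere hugeness) is consumed, since it is what keeps $U$, the embeddings $e$, and all the hull-and-collapse structures inside $M$ and below $\rho=\mu$ uniformly across levels.
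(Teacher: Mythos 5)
This proposition is only quoted in the survey from the forthcoming \cite{BL}; the paper contains no proof of it, so I am assessing your argument on its own terms. Your setup is fine: with $\lambda=j(\kappa)$ and $\rho=\mu=j^{2}(\kappa)$ both inaccessible, $V_\rho\models\mathrm{ZFC}$, and the reflection of $\chi(j(\kappa),j(\lambda))$ from $M$ to $V$ via the witness $j\restriction V_{\lambda+1}\in M$ is correct. The problem is that this reflected embedding $e\colon V_{\kappa+1}\to V_{\lambda+1}$ is the wrong starting point, and the ``upgrade step'' you yourself flag as the hard part does not go through as described. Taking a Skolem hull of $\range(e)\cup\{A\}$ inside some $H_{\lambda'}$ and collapsing will in general destroy the identity $i^{*}(\kappa)=\lambda$: the hull must contain ordinals in the interval $(\kappa,\lambda)$ that are definable from $A$ (suprema, order types, cardinality data of an arbitrary $A\in V_{\lambda+1}$), so the collapse of $\lambda$ strictly exceeds $\kappa$. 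The extra closure ${}^{\mu}M\subseteq M$ is irrelevant to this obstruction --- it is a fact about definable hulls in $V$, not about which objects lie in $M$ --- so invoking it as the resource that ``controls'' the hull is not an argument. There is also an unaddressed quantifier problem in your plan to ``reflect the enriched statement through $j$'': to pull the full statement ``for all $A$ there is a witness'' back from $M$ you would first have to verify it in $M$ at the parameters $(j(\kappa),j(\lambda))$ for \emph{all} $A'\in V^{M}_{\mu+1}$, which is exactly as hard as the original problem, since most such $A'$ are not in $\range(j)$.

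The standard (and, I expect, intended) mechanism is different and dissolves both difficulties at once: fix $A\in V_{\lambda+1}$ and $n$ \emph{first}, and apply elementarity of $j$ to the single existential statement ``there is a witness for $A$ for the pair $(\kappa,\lambda)$ relative to $V_\rho$,'' which is equivalent to $M\models{}$``there is a witness for $j(A)$ for the pair $(j(\kappa),j(\lambda))=(\lambda,\rho)$ relative to $V^{M}_{j(\rho)}$.'' This latter statement is witnessed directly by a restriction $j\restriction H_\theta\colon H_\theta\to H^{M}_{j(\theta)}$ for a suitable $\theta\in(\lambda,\rho)$ chosen so that $H_\theta$ has the required correctness relative to $V_\rho$: such a restriction automatically satisfies $i(\crit(i))=j(\kappa)$ and $i(j(\kappa))=j(\lambda)$ (the double critical pattern comes for free from the single embedding viewed from $M$), and it automatically has $j(A)=i(A)$ in its range because $A$ lies in its domain --- no hull manipulation is needed. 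The role of $2$-hugeness is then exactly to guarantee that $j\restriction H_\theta$ and the relevant $H_\theta$, $V_\rho$ are elements of $M$ and are computed correctly there (here $|H_\theta|<\rho$ since $\rho$ is inaccessible), and relativizing the conclusion to $V_\rho$ is what makes the $\Pi_n$-correctness and $C^{(n-1)}$ requirements satisfiable uniformly in $n$. Your write-up never reaches this argument, so as it stands the proof has a genuine gap at its central step.
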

 
  As for direct implication, the best known upper bound for the existence of exact cardinals is given by the following:
 
\begin{prop}
 [\cite{BL}]
  Let $\kappa$ be an $I3$-cardinal\footnote{I.e.,  the critical point of a non-trivial elementary embedding $j:V_{\delta}\to V_{\delta}$, for some limit ordinal $\delta$. Then $V_\delta$ is a model of \rm{ZFC} and the sequence $\seq{j^m(\kappa)}{m<\omega}$  is cofinal in $\delta$.} $j:V_\delta\to V_\delta$. If  $l,m,n<\omega$, then, in $V_\delta$, the  cardinal $j^l(\kappa)$ is parametrically $n$-exact for $j^{l+m+1}(\kappa)$.    
 \end{prop}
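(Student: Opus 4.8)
\emph{Proof proposal.} Throughout write $\kappa_i := j^i(\kappa)$, so that $\langle \kappa_i \mid i<\omega\rangle$ is the critical sequence of $j$, cofinal in $\delta$, and each power $j^s\colon V_\delta\to V_\delta$ is elementary with $\crit(j^s)=\kappa=\kappa_0$, $j^s(\kappa_i)=\kappa_{i+s}$, and $j^s(n)=n$ for every standard $n$ (since $n<\omega\le\kappa$). The plan is to reduce the whole schema to a single base instance and obtain that instance from an explicit iterate of $j$, leaving only the packaging into Definition \ref{defexact} to be carried out. First I would note that, for a fixed standard $n$, the assertion ``$x$ is parametrically $n$-exact for $y$'' is expressed by one first-order formula $\Phi_n(x,y)$ interpreted in $V_\delta$: it quantifies over the \emph{sets} $\kappa'$, $\lambda'$, $X$ and over set-sized elementary embeddings $X\to H_{\lambda'}$, and the classes $C^{(n)},C^{(n+1)}$ are definable in $V_\delta\models\mathrm{ZFC}$. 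Consequently, elementarity of $j^s$ yields the biconditional
\[
V_\delta\models\Phi_n(\kappa_a,\kappa_{a+m+1})\quad\Longleftrightarrow\quad V_\delta\models\Phi_n(\kappa_{a+s},\kappa_{a+s+m+1})
\]
for all $a,s<\omega$. Hence it suffices to prove $\Phi_n(\kappa_{m+1},\kappa_{2m+2})$ for each $m,n$: given any target $(l,m)$, if $l\ge m+1$ apply $j^{\,l-(m+1)}$ to transfer the base instance upward, and if $l<m+1$ apply the reverse direction of the biconditional with $s=(m+1)-l$.

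For the base instance set $\lambda:=\kappa_{2m+2}$ and take the embedding $e_0:=j^{m+1}$. This is elementary with $\crit(e_0)=\kappa_0<\kappa_{m+1}$, $e_0(\kappa_0)=\kappa_{m+1}$ and $e_0(\kappa_{m+1})=\kappa_{2m+2}=\lambda$. Thus $e_0$ already realises \emph{exactly} the parametric configuration demanded by Definition \ref{defexact}, with base cardinal $\kappa_{m+1}$: its critical point lies strictly below $\kappa_{m+1}$ and is sent precisely to $\kappa_{m+1}$, while $\kappa_{m+1}$ is sent to $\lambda$. This is the conceptual core of the argument, and it is what singles out $j^{m+1}$ rather than an arbitrary iterate.

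It then remains to package $e_0$ into the exact form of Definition \ref{defexact} for an arbitrary $A\in V_{\lambda+1}$. The plan is to fix $\lambda'\in C^{(n+1)}$ with $\lambda'>\lambda$ and $\kappa'\in C^{(n)}$ with $\kappa'>\beth_{\kappa_{m+1}}$, chosen large enough that $A$ and the relevant restriction of $j^{m+1}$ belong to $H_{\lambda'}$; since for $n\ge 1$ every element of $C^{(n)}$ is a $\beth$-fixed point, $H_{\kappa'}=V_{\kappa'}\prec_{\Sigma_n}V_\delta$ and $H_{\lambda'}=V_{\lambda'}\prec_{\Sigma_{n+1}}V_\delta$, which delivers the $C^{(n)}/C^{(n+1)}$ membership and the correctness side-conditions for free. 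The restriction $e_0\restriction H_{\kappa'}$ retains the three key values, so the only missing requirement is $A\in\range(e)$. To force this I would pass to a Skolem hull of $H_{\lambda'}$ containing $A$ together with the image of $e_0\restriction H_{\kappa'}$ and the genuine $V_{\kappa_{m+1}}$, take its transitive collapse, and combine it with $e_0$ so as to obtain $e\colon X\to H_{\lambda'}$ with $X\preceq H_{\kappa'}$, $V_{\kappa_{m+1}}\cup\{\kappa_{m+1}\}\subseteq X$, and $A\in\range(e)$.

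The hard part will be precisely this last reconciliation. The rigid embedding $j^{m+1}$ has exactly the required critical behaviour but its range is far too thin to contain an arbitrary $A\in V_{\lambda+1}$; conversely, the natural device for capturing $A$—inverting the transitive collapse of a hull—tends to \emph{fix} $\kappa_{m+1}$ (forced by $V_{\kappa_{m+1}}\subseteq X$) and to \emph{collapse} $\lambda$, thereby destroying the two equalities $e(\crit e)=\kappa_{m+1}$ and $e(\kappa_{m+1})=\lambda$. The delicate step is to interleave the collapse with $j^{m+1}$, or, more robustly, to replace $j^{m+1}$ by a sufficiently long extender derived from it whose ultrapower is fat enough to place $A$ in the range while keeping $\crit$ mapped onto $\kappa_{m+1}$, $\kappa_{m+1}$ mapped onto $\lambda$, and $V_{\kappa_{m+1}}$ inside the domain. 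It is here that the full strength of the $I3$ embedding, rather than mere measurability of $\kappa$, is used; the bookkeeping needed to place $\kappa'$ and $\lambda'$ simultaneously in $C^{(n)}$, $C^{(n+1)}$ and above the relevant $\beth$-bounds is then a routine secondary matter.
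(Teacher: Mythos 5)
The paper itself does not prove this proposition (it is quoted from \cite{BL}), so I am comparing your proposal against the intended argument rather than a printed proof. Your reduction of the schema to a single instance via elementarity of the iterates $j^s$ is sound, and you have correctly isolated why $j^{m+1}$ is the relevant iterate: it is the unique power of $j$ satisfying both $j^{m+1}(\crit(j^{m+1}))=\kappa_{m+1}$ and $j^{m+1}(\kappa_{m+1})=\kappa_{2m+2}$. But the step you yourself flag as ``the hard part'' is a genuine gap, and the repairs you sketch do not close it. Interleaving a Skolem hull and its collapse with $j^{m+1}$ destroys exactly the two equalities $e(\crit(e))=\kappa_{m+1}$ and $e(\kappa_{m+1})=\lambda$, as you observe; and passing to a long extender derived from $j^{m+1}$ does not help either, since the range of an extender ultrapower embedding is just as thin (it consists of the classes of constant functions), and Definition \ref{defexact} requires the target of $e$ to be the genuine $H_{\lambda'}$, not an ultrapower. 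No amount of fattening the domain of $j^{m+1}$ will put an arbitrary $A\in V_{\lambda+1}$ into its range: $\range(j^{m+1})\cap V_{\kappa_{2m+2}+1}=j^{m+1}[V_{\kappa_{m+1}+1}]$ has size $2^{\beth_{\kappa_{m+1}}}$, against $2^{\beth_{\kappa_{2m+2}}}$ candidates for $A$.

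The missing idea is to transfer the \emph{instance} rather than repair the \emph{embedding}, which also forces the base case down to $l=0$ rather than $l=m+1$. Write $k:=j^{m+1}$ and let $\Psi(A,\mu,\nu)$ be the first-order statement over $V_\delta$ that some tuple $(\kappa',\lambda',X,e)$ witnesses parametric $n$-exactness of $\mu$ for $\nu$ with $A\in\range(e)$. Fix $\bar A\in V_{\kappa_{m+1}+1}$. By elementarity of $k$,
$$V_\delta\models\Psi(\bar A,\kappa_0,\kappa_{m+1})\quad\Longleftrightarrow\quad V_\delta\models\Psi(k(\bar A),\kappa_{m+1},\kappa_{2m+2}),$$
and the right-hand instance is witnessed \emph{tautologically}: choose $\theta\in (C^{(n+1)})^{V_\delta}$ with $\bar A\in H_\theta$ and $\theta>\beth_{\kappa_{m+1}}$, and take $\kappa'=\theta$, $\lambda'=k(\theta)$, $X=H_\theta$, $e=k\restriction H_\theta\colon H_\theta\to H_{k(\theta)}$. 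Then $e(\crit(e))=k(\kappa_0)=\kappa_{m+1}$, $e(\kappa_{m+1})=\kappa_{2m+2}$, and $k(\bar A)=e(\bar A)\in\range(e)$ precisely because the set whose membership in the range we must certify is itself of the form $k(\bar A)$. Pulling back gives $\Psi(\bar A,\kappa_0,\kappa_{m+1})$ for every $\bar A$, i.e.\ $\kappa_0$ is parametrically $n$-exact for $\kappa_{m+1}$ in $V_\delta$, and applying $j^{l}$ yields the statement for $\kappa_l$ and $\kappa_{l+m+1}$. So your scaffolding is reusable, but the core of the proof is this per-$A$ upward transfer, not a construction that forces $A$ into the range of a fixed embedding.
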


The existence of an $I3$-cardinal is a very strong  principle which implies the consistency of  of $n$-huge cardinals, for every $n$, and much more (see \cite[24]{Kan:THI}, also \cite[Theorem 7.1]{Ba:CC}). Yet even stronger  large-cardinal principles bordering the inconsistency with ZFC are implied by the following sequential forms of $\ESR$, also introduced in \cite{BL}.

\subsection{Sequential $\ESR$}
 Let $0<\eta\leq\omega$  and  let $\calL$ be a first-order language containing unary predicate symbols $\vec{P}=\seq{\dot{P}_i}{i<\eta}$.

  Given a sequence $\vec{\mu}=\seq{\mu_i}{i<\eta}$ of cardinals with supremum $\mu$, an $\calL$-structure $A$ has \emph{type $\vec{\mu}$ (with respect to $\vec{P}$)} if the universe  of $A$ has rank $\mu$ and $\rank{(\dot{P}_i^{A})}=\mu_i$ for all $i<\eta$.

 Given a class $\Ce$ of $\calL$-structures and  a strictly increasing sequence $\vec{\lambda}=\seq{\lambda_i}{i<1+\eta}$ of cardinals,  let
 \begin{quotation}
\begin{itemize}
\item[$\ESR_{\Ce}$$ ({\vec{\lambda}})$:](\emph{Sequential $\ESR$)} For every structure $B$ in $\Ce$ of type  $\seq{\lambda_{i+1}}{i<1+\eta}$, there exists an elementary embedding of a structure $A$ in $\Ce$ of type $\seq{\lambda_i}{i<\eta}$ into $B$. 
\end{itemize}
\end{quotation}

The large-cardinal notions corresponding to sequential $\ESR$ are the  sequential analogs of weakly exact and exact cardinals   (given in definitions \ref{defwexact} and \ref{defexact}).   Namely,

\begin{definition}[\cite{BL}]
 Let $0<\eta\leq\omega$ and let $\vec{\lambda}=\seq{\lambda_m}{m<\eta}$ be a strictly increasing sequence of  cardinals with supremum $\lambda$. 
 
 \begin{enumerate}
  \item Given $0<n<\omega$, a cardinal $\kappa <\lambda_0$ is \emph{weakly $n$-exact for $\vec{\lambda}$} if for every $A\in V_{\lambda+1}$, there is a cardinal $\rho$, a  transitive,   $\Pi_n(V_{\rho+1})$-correct set $M$ with $V_\rho \cup \{\rho\}\subseteq M$, a cardinal $\lambda'\in C^{(n-1)}$ greater than $\beth_\lambda$    and an e. e. $j:M\to H_{\lambda'}$ with  $A\in\range{j}$, $j(\rho)=\lambda$,  $j(\kappa)=\lambda_0$ and  $j(\lambda_{m-1})=\lambda_m$, all $m$.

   If we further require that $j(\crit{j})=\kappa$, then we say that $\kappa$ is \emph{parametrically weakly $n$-exact for $\vec{\lambda}$}.

     \item Given $n<\omega$, a cardinal $\kappa<\lambda_0$ is \emph{$n$-exact for $\vec{\lambda}$}  if for every $A\in V_{\lambda +1}$, there is a cardinal $\rho$, a cardinal $\kappa'\in C^{(n)}$ greater than $\beth_\rho$, a cardinal $\lambda' \in C^{(n+1)}$ greater than $\lambda$,  an  $X\preceq H_{\kappa'}$ with $V_{\rho}\cup \{\rho\}\subseteq X$, and 
 an e. e. $j:X\to H_{\lambda'}$ with $A\in\range{j}$, $j(\rho)=\lambda$, $j(\kappa)=\lambda_0$ and $j(\lambda_{m-1})=\lambda_m$,  all $m$.

   If we further require that $j(\crit{j})=\kappa$, then we say that \emph{$\kappa$ is parametrically $n$-exact for $\vec{\lambda}$}.

 \end{enumerate}
\end{definition}

Then we have the following equivalences:

\begin{theorem}[\cite{BL}]
 Let $0<n<\omega$, let $0<\eta\leq\omega$ and let $\vec{\lambda}=\seq{\lambda_i}{i<1+\eta}$ be a strictly increasing sequence of cardinals. 
  \begin{enumerate}
   \item The cardinal $\lambda_0$ is weakly $n$-exact for $\seq{\lambda_{i+1}}{i<\eta}$ if and only if  $\Pi_n$-$\ESR$$ ({\vec{\lambda}})$ holds. 
   
   \item If $\lambda_0$ is weakly parametrically $n$-exact for $\seq{\lambda_{i+1}}{i<\eta}$, then $\Pi_n(V_{\lambda_0})$-$\ESR$$(\vec{\lambda})$ holds. \qed 
  \end{enumerate}
  
  Also,
 
  \begin{enumerate}
   \item The cardinal $\lambda_0$ is  $n$-exact for $\seq{\lambda_{i+1}}{i<\eta}$ if and only if  $\Sigma_{n+1}$-$\ESR$$(\vec{\lambda})$ holds. 
   
   \item If $\lambda_0$ is  parametrically $n$-exact for $\seq{\lambda_{i+1}}{i<\eta}$, then $\Sigma_{n+1}(V_{\lambda_0})$-$\ESR$$(\vec{\lambda})$ holds. \qed 
  \end{enumerate}
\end{theorem}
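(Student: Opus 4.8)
The plan is to treat all four items as the sequential counterparts of the non-sequential characterizations established after Definitions \ref{defwexact} and \ref{defexact}, following the same two-way template; the only genuinely new feature is that the witnessing embedding must now shift the \emph{entire} sequence $\vec\lambda$ up by one index rather than a single pair. I would prove the $\Pi_n$-equivalence (item (1) of the first group) in full and then indicate how the $\Sigma_{n+1}$-equivalence and the two parametric implications follow by the same argument, after adjusting the correctness clauses.

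For the direction from weak $n$-exactness to $\Pi_n$-$\ESR(\vec\lambda)$, I fix a $\Pi_n$-definable class $\Ce$ of structures of a fixed type and a $B\in\Ce$ of type $\seq{\lambda_{i+1}}{i<\eta}$. Since the universe of $B$ has rank $\le\lambda$, a code for $B$ lies in $V_{\lambda+1}$, so I may apply weak $n$-exactness of $\lambda_0$ for $\seq{\lambda_{i+1}}{i<\eta}$ with the parameter $A:=B$. This produces a cardinal $\rho$, a transitive $\Pi_n(V_{\rho+1})$-correct $M\supseteq V_\rho\cup\{\rho\}$, a cardinal $\lambda'\in C^{(n-1)}$ above $\beth_\lambda$, and an elementary embedding $j\colon M\to H_{\lambda'}$ with $B\in\range(j)$, $j(\rho)=\lambda$ and $j(\lambda_m)=\lambda_{m+1}$ for all $m$. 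Put $A:=j^{-1}(B)$. The crux of this direction is to check that $A$ is the structure we want. First, since $\rank$ is absolute between transitive structures and $\lambda_{i+1}=j(\lambda_i)$, elementarity gives $\rank(\dot P_i^{A})=\lambda_i$ for each $i<\eta$, while injectivity of $j$ applied to $j(\rho)=\lambda=j(\sup_i\lambda_i)$ forces $\rho=\sup_i\lambda_i$, so $A$ has universe-rank $\sup_i\lambda_i$ and hence type $\seq{\lambda_i}{i<\eta}$. Second, $V\models B\in\Ce$ is a $\Pi_n$ fact, so by downward $\Pi_n$-absoluteness (using $\lambda'\in C^{(n-1)}$ for $n\ge 2$, and mere $\Pi_1$-downward absoluteness to transitive sets for $n=1$) we get $H_{\lambda'}\models B\in\Ce$; elementarity of $j$ yields $M\models A\in\Ce$; and the $\Pi_n(V_{\rho+1})$-correctness of $M$, with $A\in V_{\rho+1}$, returns $A\in\Ce$ in $V$. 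Finally $j\restriction A\colon A\to B$ is elementary, which is exactly $\ESR_\Ce(\vec\lambda)$ for this $B$.

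For the converse, from $\Pi_n$-$\ESR(\vec\lambda)$ to weak $n$-exactness, I would, given $A\in V_{\lambda+1}$, design a single $\Pi_n$-definable class $\Ce$ of natural structures of the form $\langle V_\gamma,\in,\ldots\rangle$ with $\gamma\in C^{(n-1)}$, whose unary predicates $\dot P_i$ mark the ranks of the sequence and which carries enough extra structure to code the parameter $A$. The ``top'' structure $B$ of type $\seq{\lambda_{i+1}}{i<\eta}$ is taken with universe $V_{\lambda'}=H_{\lambda'}$ for a suitable $\lambda'\in C^{(n-1)}$ above $\beth_\lambda$. Applying $\ESR_\Ce(\vec\lambda)$ to $B$ yields an $A^{\ast}\in\Ce$ of type $\seq{\lambda_i}{i<\eta}$ and an elementary embedding $A^{\ast}\to B$; transitively collapsing the $\in$-reduct of $A^{\ast}$ gives the transitive $M$, its universe-rank gives $\rho$, the predicates of $A^{\ast}$ decode to the shift $j(\lambda_m)=\lambda_{m+1}$ and $j(\rho)=\lambda$, and the $\Pi_n$ conditions built into the definition of $\Ce$ are precisely what delivers the $\Pi_n(V_{\rho+1})$-correctness of $M$. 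Reading off the embedding then witnesses weak $n$-exactness of $\lambda_0$, with $A\in\range(j)$ secured by the coding.

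The $\Sigma_{n+1}$-equivalence (item (1) of the second group) runs along identical lines, replacing the transitive $\Pi_n(V_{\rho+1})$-correct $M$ by some $X\preceq H_{\kappa'}$ with $\kappa'\in C^{(n)}$ on the domain side and taking $\lambda'\in C^{(n+1)}$ on the target side: the full elementarity of $X\preceq H_{\kappa'}$ together with the $\Sigma_n$-correctness of $H_{\kappa'}$ and the $\Sigma_{n+1}$-correctness of $H_{\lambda'}$ supply exactly the absoluteness needed to transfer the $\Sigma_{n+1}$ defining formula of $\Ce$. The two parametric implications (item (2) in each group) are the one-directional refinements in which one additionally records $j(\crit(j))=\lambda_0$; tracking the critical point this way is what licenses parameters from $V_{\lambda_0}$ in $\Ce$, yielding $\Pi_n(V_{\lambda_0})$- and $\Sigma_{n+1}(V_{\lambda_0})$-$\ESR(\vec\lambda)$ respectively. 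I expect the main obstacle to be the rank-and-type bookkeeping along the whole sequence: one must verify that a \emph{single} embedding $j$ can simultaneously send every $\lambda_m$ to $\lambda_{m+1}$ and $\rho$ to $\lambda$, so that the preimage $j^{-1}(B)$ lands in exactly the lower type $\seq{\lambda_i}{i<\eta}$. This is delicate precisely when $\eta=\omega$, where the common supremum $\lambda$ is not among the $\lambda_m$ and one must arrange $j(\lambda)=\lambda$ (whence $\rho=\lambda$) by exploiting $\sup\{j(\xi) : \xi<\lambda\}=\lambda$; reconciling this with the correctness clauses is the technical heart of the proof.
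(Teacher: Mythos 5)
This theorem is quoted from the forthcoming \cite{BL} and the survey gives no proof of it, so there is nothing in the paper itself to compare your argument against; I can only assess the proposal on its own terms. The forward implications (from weak/parametric $n$-exactness to the corresponding $\ESR(\vec\lambda)$) are essentially right: taking $A:=B$ (or a pair coding $B$ together with the parameter), pulling $B$ back along $j$, checking the type of $j^{-1}(B)$ via $j(\lambda_m)=\lambda_{m+1}$ and $j(\rho)=\lambda$, and transferring the defining formula through $H_{\lambda'}$, $j$, and the correctness of $M$ (resp.\ of $X\preceq H_{\kappa'}$) is the expected argument, and your treatment of the $\eta=\omega$ case ($\rho=\lambda$ forced by $\lambda_m<\rho\le j(\rho)=\lambda$) and of how $j(\crit(j))=\lambda_0$ fixes parameters in $V_{\lambda_0}$ is correct.

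The gap is in the converse of each biconditional, which is where the actual content of the theorem lies. First, your choice of top structure is inconsistent with the definitions: a structure of type $\seq{\lambda_{i+1}}{i<\eta}$ must have universe of rank $\lambda=\sup_{i<\eta}\lambda_{i+1}$, whereas $H_{\lambda'}$ for $\lambda'>\beth_\lambda$ has rank $\lambda'>\lambda$; so $B$ cannot have universe $V_{\lambda'}=H_{\lambda'}$. Consequently the $\ESR$-embedding you obtain goes between structures of ranks $\sup_i\lambda_i$ and $\lambda$, and it is not at all immediate how to manufacture from it a transitive $\Pi_n(V_{\rho+1})$-correct $M$ together with an elementary $j:M\to H_{\lambda'}$ (resp.\ an $X\preceq H_{\kappa'}$ with $\kappa'\in C^{(n)}$ and $j:X\to H_{\lambda'}$ with $\lambda'\in C^{(n+1)}$). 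The missing construction is precisely this: one must pick an elementary submodel of $H_{\lambda'}$ of cardinality $\beth_\lambda$ containing $V_\lambda\cup\{\lambda\}$ and the given $A\in V_{\lambda+1}$, code it as a relation on a universe of rank $\lambda$ so that the resulting expanded structures still form a $\Pi_n$- (resp.\ $\Sigma_{n+1}$-) definable class of the prescribed type, and then verify that on the reflected side the transitive collapse of the coded submodel is $\Pi_n(V_{\rho+1})$-correct (resp.\ embeds elementarily into some $H_{\kappa'}$ with $\kappa'\in C^{(n)}$) and that the induced map has the required shift behaviour $j(\lambda_m)=\lambda_{m+1}$, $j(\rho)=\lambda$, $A\in\range(j)$. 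Your sketch asserts that ``the $\Pi_n$ conditions built into the definition of $\Ce$'' deliver the correctness of $M$, but designing $\Ce$ so that this is true while keeping its definition at complexity $\Pi_n$ (and not, say, $\Sigma_{n+1}$ or worse, which would collapse the two halves of the theorem into one another) is exactly the delicate point that distinguishes the $\Pi_n$ and $\Sigma_{n+1}$ cases, and it is not carried out.
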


%
%
%
%
%
%
%
%
%
%

\medskip

In the case of finite sequences $\vec{\lambda}$ of length $n$, the sequentially 1-exact cardinals correspond roughly to $n$-huge cardinals. More precisely (\cite{BL}): 
 If $\kappa$ is an $n$-huge cardinal, witnessed by an elementary embedding $j:V\to M$, then  $\kappa$ is weakly parametrically $1$-exact for the sequence $\seq{j^{m+1}(\kappa)}{m<n}$. Also, if $\kappa$ is a cardinal and $\vec{\lambda}=\seq{\lambda_m}{m\leq n}$ is a sequence of cardinals such that $\kappa$ is either weakly $1$-exact for $\vec{\lambda}$ or $0$-exact for $\vec{\lambda}$, then  some cardinal less than $\kappa$ is $n$-huge.

As for infinite sequences $\vec{\lambda}=\seq{\lambda_m}{m<\omega}$, there is a dramatic increase in consistency strength,  as shown by the following facts proved in \cite{BL}: 
Let $\lambda$ be the supremum of $\vec{\lambda}$ and let $\kappa<\lambda_0$ be a cardinal.
 If $\kappa$ is either   weakly $1$-exact  for $\vec{\lambda}$ or  $0$-exact for $\vec{\lambda}$, then there exists an  $I3$-embedding $j:V_\lambda\to V_\lambda$. Also, if $\kappa$ is either  parametrically weakly $1$-exact  for $\vec{\lambda}$ or parametrically $0$-exact for $\vec{\lambda}$, then the set  $I3$-cardinals is stationary in $\kappa$. 
 
To prove the existence of a weakly parametrically $1$-exact cardinal, for some infinite sequence $\vec{\lambda}$, the best known upper bound is an $I1$-cardinal\footnote{I.e., the critical point of a non-trivial elementary embedding $j:V_{\lambda +1}\to V_{\lambda +1}$ for some limit ordinal $\lambda$.}  (\cite{BL}): If $\kappa$ is  an $I1$-cardinal and $k>0$ is a natural number, then $\kappa$ is weakly parametrically $1$-exact for the sequence $\seq{j^{k(m+1)}(\kappa)}{m<\omega}$.  
 In particular, for $k=1$, $\kappa$ is weakly parametrically $1$-exact for $\seq{j^{(m+1)}(\kappa)}{m<\omega}$, hence $\Pi_1(V_\kappa)$-$\ESR$$(\vec{\lambda})$ holds.

%
%
%
%
%
%
%
\bigskip

Many open questions remain (see \cite{BL}), the most pressing one being  the consistency with ZFC of the principle $\Sigma_2$-$\ESR$$(\vec{\lambda})$  for some sequence $\vec{\lambda}$ of length $\omega$.

\newpage


\section{Summary}
The following tables summarize the results exposed in previous sections. Presented in this form, the equivalences between the various kinds of $\SR$ and (mostly already well-known) different large cardinal notions, illustrate the fact that $\SR$ is a general reflection principle that underlies (many stretches of) the large cardinal hierarchy, thus unveiling its concealed uniformity.
Table 1 encompasses the region of the large-cardinal hierarchy comprised between supercompact and VP, Table 2 the region between strong and ``$\OR$ is Woodin", and Table 3 the region between globally superstrong and $C^{(n)}$-globally superstrong, all $n$. The tables are read as, e.g., $\Gamma$-$\SR$ for $\Gamma$ being $\Pi_1$ or $\Sigma_2$ is equivalent to the existence of a supercompact cardinal (Table 1). For the boldface definability classes we have the equivalence of $\Gamma$-$\SR$ with a proper class of the corresponding large cardinals. In the limit cases, i.e., VP in Table 1, ``$\OR$ is Woodin" in Table 2, and $C^{(n)}$-globally superstrong , all $n$, in Table 3, the lightface and the boldface versions are equivalent.

\bigskip

\begin{center}
Table 1
\vspace{.3cm}

\begin{tabular}{ |c|c| } 
\hline
$\Gamma$ & $\SR$  \\
\hline
\hline

$\Sigma_1$ & ZFC  \\ 
\hline
$\Pi_1$, $\Sigma_2$ & Supercompact  \\
\hline
$\Pi_2$, $\Sigma_3$ & Extendible   \\ 
\hline
$\Pi_3$, $\Sigma_4$ & $C^{(2)}$-Extendible    \\ 
\hline
$\vdots$ & $\vdots$  \\
\hline
$\Pi_n$, $\Sigma_{n+1}$ & $C^{(n-1)}$-Extendible   \\
\hline
$\vdots$ & $\vdots$  \\
\hline
$\Pi_n$, all $n$ & $\VP$   \\
\hline
\end{tabular}
\end{center}

\bigskip

\begin{center}
Table 2
\vspace{.3cm}

\begin{tabular}{ |c|c| } 
\hline
Complexity &   $\PSR$ \\
\hline
\hline

$\Sigma_1$  & ZFC \\ 
\hline
$\Pi_1$, $\Sigma_2$   & Strong \\
\hline
$\Pi_2$, $\Sigma_3$   & $\Pi_2$-Strong \\ 
\hline
$\Pi_3$, $\Sigma_4$    & $\Pi_3$-Strong \\ 
\hline
$\vdots$ & $\vdots$   \\
\hline
$\Pi_n$, $\Sigma_{n+1}$ &   $\Pi_n$-Strong \\
\hline
$\vdots$ & $\vdots$   \\
\hline
$\Pi_n$, all $n$ &   $\OR$ is Woodin \\
\hline
\end{tabular}
\end{center}

\medskip

We also have the equivalence between $\Pi_1$-$\PSR_1$ and the existence of a measurable cardinal (Theorem \ref{PSRmeasurable}).

\bigskip

\begin{center}
Table 3 
\vspace{.3cm}

\begin{tabular}{ |c|c| } 
\hline
Complexity &   $\SPSR$ \\
\hline
\hline

$\Sigma_1$  & ZFC \\ 
\hline
$\Pi_1$, $\Sigma_2$   & Globally Superstrong \\
\hline
$\Pi_2$, $\Sigma_3$   & $C^{(2)}$-Globally Superstrong \\ 
\hline
$\Pi_3$, $\Sigma_4$    & $C^{(3)}$-Globally Superstrong \\ 
\hline
$\vdots$ & $\vdots$   \\
\hline
$\Pi_n$, $\Sigma_{n+1}$ &   $C^{(n)}$-Globally Superstrong \\
\hline
$\vdots$ & $\vdots$   \\
\hline
$\Pi_n$, all $n$ &   $C^{(n)}$-Globally Superstrong, all $n$ \\
\hline
\end{tabular}
\end{center}

\bigskip

Table 4 summarizes the results on $\SR$ relative to inner models. The classes $\Ce$, $\Ce_X$, $\Ce^U$ and $\Ce^U_X$ are defined in section \ref{sectionL}. Similar results should hold for canonical inner models for stronger large cardinals, and their corresponding \emph{sharps}.

\medskip
\begin{center}
Table 4
\vspace{.3cm}

\begin{tabular}{ |c|c|c|c| } 
\hline
Class & Inner Model $M$ & $\SR(M)$ \\
\hline
\hline

$\Sigma_1$ & Any & ZFC \\ 
\hline
$\Ce$  & $L$ & $0^\sharp$ exists \\
\hline
$\Ce_X$  & $L[X]$ & $X^\sharp$ exists \\ 
\hline
$\Ce^U$  & $L[U]$  & $0^\dagger$ exists \\ 
\hline
$\Ce^U_X$  & $L[U,X]$  & $X^\dagger$ exists \\
\hline
$\vdots$ & $\vdots$ & $\vdots$ \\
\hline

\end{tabular}
\end{center}

\bigskip

Tables 5 and 6 below summarize the results characterizing large cardinals of consistency strength below $0^\sharp$ in terms of restricted $\SR$ and generic $\SR$, respectively. 

\medskip

\begin{center}
Table 5

\vspace{.3cm}
\begin{tabular}{ |c|c|c|c| } 
\hline
Complexity &   $\SR^-$ & $\SR^- +{\rm GCH}$ \\
\hline
\hline

$\Sigma_1(\PW)$, $\Sigma_2$  & Weakly shrewd & \\ 
\hline
$\Sigma_1(\Cd)^\ast$   & Weakly inaccessible & Inaccessible \\
\hline
$\Sigma_1(\Cd, \alpha$-$\WI)^\ast$   & $\alpha$-Weakly inaccessible & $\alpha$-Inaccessible \\ 
\hline
$\Sigma_1(\Rg)^\ast$   & Weakly Mahlo & Mahlo \\ 
\hline
$\Sigma_1(\Cd, \WC)^\ast$ &  & Weakly-compact  \\
\hline

\end{tabular}
\end{center}

\bigskip

\begin{center}
Table 6

\vspace{.3cm}
\begin{tabular}{ |c|c|c|c| }  
\hline
Complex. &   $\GSR$ & $\GSR +$No $\omega$-Erd\"os  in $L$ & $\SGSR$ \\
\hline
\hline

$\Pi_1$, $\Sigma_2$  & Almost- & Remarkable & \\ 
   & Remarkable  & Almost-remarkable & \\ 
      &   & Weakly-remarkable & \\ 
\hline
$\Pi_n$, $\Sigma_{n+1}$   &  Almost- & & $n+1$-Remarkable   \\
   & $n$-remarkable  & & Virt. $C^{(n)}$-Extend.  \\ 

\hline

\end{tabular}
\end{center}
\bigskip

Finally, Tables 7-8 cover some equivalences in the region above Vop\v{e}nka's Principle.

\begin{center}
Table 7

\vspace{.3cm}
\begin{tabular}{ |c|c| }  
\hline
Complex. &   $\ESR$  \\
\hline
\hline

$\Pi_1$  & Weakly $1$-exact   \\ 
   & (Between almost huge and huge)    \\ 
   
\hline

$\Pi_n$  & Weakly $n$-exact   \\ 
  &(Consistency-wise below $2$-huge)       \\ 
   
\hline
$ \Sigma_{n+1}$   &  $n$-Exact     \\
   & ($I3$-embedding  is an upper bound)   \\ 
\hline

\end{tabular}
\end{center}

\bigskip

\begin{center}

Table 8

\vspace{.3cm}
\begin{tabular}{ |c|c|c| }  
\hline
Complex. &    $\ESR$$ (\vec{\lambda})$,  ${\rm{lh}}(\vec{\lambda})=\eta +1$ &  $\ESR$$ (\vec{\lambda})$,  ${\rm{lh}}(\vec{\lambda})=\omega$ \\
\hline
\hline

$\Pi_1$  &  Weak. $1$-exact for $\vec{\lambda}$ of lh. $\eta$ & Weak. $1$-exact for $\vec{\lambda}$ of lh. $\omega$ \\ 
      & (Implied by $\eta$-huge)&  (Implies $I3$-cardinals. \\ 
   & & $I1$-cardinal  an upper bound) \\
\hline

$\Pi_n$  &  Weak. $n$-exact for $\vec{\lambda}$ of lh. $\eta$ & Weak. $n$-exact for $\vec{\lambda}$ of lh. $\omega$\\ 
    & &   \\ 
   
\hline
$ \Sigma_{n+1}$   &   $n$-Exact for $\vec{\lambda}$ of length $\eta$  & $n$-Exact for $\vec{\lambda}$ of length $\omega$ \\
    & &   \\ 
    & &   \\
\hline

\end{tabular}
\end{center}

\bibliographystyle{alpha} 
\bibliography{../masterbiblio}

\end{document}